\documentclass[a4paper]{amsart}
\usepackage{amsmath}
\usepackage{amssymb}
\usepackage{amsthm,amsxtra}
\usepackage{cite}
\usepackage{enumerate}
\usepackage{enumitem}
\usepackage[mathscr]{eucal}
\usepackage{adjustbox}
\usepackage{multirow}
\usepackage{multicol}
\usepackage{float}
\usepackage{tikz-cd}
\usepackage{tkz-graph}
\usepackage{tkz-berge}
\usetikzlibrary{positioning,arrows,shapes.geometric,trees}
\usepackage{graphicx}
\usepackage{mathtools}
\usetikzlibrary{positioning}
\usetikzlibrary{decorations.text}
\usepackage{refcount}
\newtheorem{Def}{Definition}[section]
\newtheorem{Th}{Theorem}[section]
\newtheorem{Ex}{Example}[section]
\newtheorem{Lemma}{Lemma}[section]

\newtheorem{Cor}{Corollary}[section]

{}
\newtheorem{Prob}{Problem}[section]
\DeclareMathOperator{\cov}{\sf cov}
\begin{document}
\title[Further investigations on certain star selection principles]{Further investigations on certain star selection principles}

\author[ D. Chandra, N. Alam ]{ Debraj Chandra$^*$, Nur Alam$^*$ }
\newcommand{\acr}{\newline\indent}
\address{\llap{*\,}Department of Mathematics, University of Gour Banga, Malda-732103, West Bengal, India}
\email{debrajchandra1986@gmail.com, nurrejwana@gmail.com}

\thanks{ The second author
is thankful to University Grants Commission (UGC), New Delhi-110002, India for granting UGC-NET Junior Research Fellowship (1173/(CSIR-UGC NET JUNE 2017)) during the tenure of which this work was done.}

\subjclass{Primary: 54D20; Secondary: 54B05, 54C10, 54D99}

\maketitle

\begin{abstract}
We consider certain star versions of the Menger, Hurewicz and Rothberger properties. Few important observations concerning these properties are presented, which have not been investigated in earlier works. A variety of investigations is performed using Alster covers and critical cardinalities $\mathfrak{d}$, $\mathfrak{b}$ and $\cov(\mathcal{M})$. Our study explores further ramifications on the extent and Alexandroff duplicate. In the process we present investigations on the star versions of the Rothberger property and compare with similar prior observations of the star versions of the Menger and Hurewicz properties. We sketch few tables that interpret (mainly preservation-kind of) properties of the star selection principles obtained so far. We also present implication diagrams to explicate the interplay between the star selection principles.
\end{abstract}
\smallskip

\noindent{\bf\keywordsname{}:} {star-Menger, star-Hurewicz, star-Rothberger, star-K-Menger, star-K-Hurewicz, strongly star-Menger, strongly star-Hurewicz, strongly star-Rothberger.}

\section{Introduction}
The study of star selection principles is well known. For recent development of star selection principles one can consult the papers \cite{dcna21,dcna22,dcna22-1,LjSM,sHR,survey,survey1,SVM,SSSP} and references therein. Let $\mathcal{A}$ and $\mathcal{B}$ be collections of sets. In \cite{coc1} (see also \cite{coc2}), Scheepers began the systematic study of selection principles by introducing the following.

\noindent $S_1(\mathcal{A},\mathcal{B})$: For each sequence $(\mathcal{U}_n)$ of elements of $\mathcal{A}$ there exists a sequence $(V_n)$ such that for each $n$ $V_n\in\mathcal{U}_n$ and $\{V_n : n\in\mathbb{N}\}\in\mathcal{B}$.

\noindent $S_{fin}(\mathcal{A},\mathcal{B})$: For each sequence $(\mathcal{U}_n)$ of elements of $\mathcal{A}$ there exists a sequence $(\mathcal{V}_n)$ such that for each $n$ $\mathcal{V}_n$ is a finite subset of $\mathcal{U}_n$ and $\cup_{n\in\mathbb{N}}\mathcal{V}_n\in\mathcal{B}$.

\noindent $U_{fin}(\mathcal{A},\mathcal{B})$: For each sequence $(\mathcal{U}_n)$ of elements of $\mathcal{A}$ there exists a sequence $(\mathcal{V}_n)$ such that for each $n$ $\mathcal{V}_n$ is a finite subset of $\mathcal{U}_n$ and $\{\cup\mathcal{V}_n : n\in\mathbb{N}\}\in\mathcal{B}$ or $\cup\mathcal{V}_n=X$ for some $n$.

For a subset $A$ of a space $X$ and a collection $\mathcal{P}$
of subsets of $X$, $St(A,\mathcal{P})$ denotes the star of $A$
with respect to $\mathcal{P}$, that is the set
$\cup\{B\in\mathcal{P} : A\cap B\neq\emptyset\}$. For $A=\{x\}$,
$x\in X$, we write $St(x,\mathcal{P})$ instead of
$St(\{x\},\mathcal{P})$ \cite{Engelking}. In \cite{LjSM}, Ko\v{c}inac introduced the next four selection principles in the following way.

\noindent $S_1^*(\mathcal{A},\mathcal{B})$: For each sequence $(\mathcal{U}_n)$ of elements of $\mathcal{A}$ there exists a sequence $(V_n)$ such that for each $n$ $V_n\in\mathcal{U}_n$ and $\{St(V_n,\mathcal{U}_n) : n\in\mathbb{N}\}\in\mathcal{B}$.

\noindent $S_{fin}^*(\mathcal{A},\mathcal{B})$: For each sequence
$(\mathcal{U}_n)$ of elements of $\mathcal{A}$ there exists a
sequence $(\mathcal{V}_n)$ such that for each $n$
$\mathcal{V}_n$ is a finite subset of $\mathcal{U}_n$ and
$\cup_{n\in\mathbb{N}}\{St(V,\mathcal{U}_n) :
V\in\mathcal{V}_n\}\in\mathcal{B}$.

\noindent $U_{fin}^*(\mathcal{A},\mathcal{B})$: For each sequence
$(\mathcal{U}_n)$ of elements of $\mathcal{A}$ there exists a
sequence $(\mathcal{V}_n)$ such that for each $n$
$\mathcal{V}_n$ is a finite subset of $\mathcal{U}_n$ and
$\{St(\cup\mathcal{V}_n,\mathcal{U}_n) :
n\in\mathbb{N}\}\in\mathcal{B}$ or there is some $n$ such that
$St(\cup\mathcal{V}_n,\mathcal{U}_n)=X$.

Let $\mathcal{K}$ be a collection of subsets of $X$. Then we
define

\noindent $S_{\mathcal{K}}^*(\mathcal{A},\mathcal{B})$: For each sequence $(\mathcal{U}_n)$ of elements of $\mathcal{A}$ there exists a sequence $(K_n)$ of elements of $\mathcal{K}$ such that
$\{St(K_n,\mathcal{U}_n) : n\in\mathbb{N}\}\in\mathcal{B}$.
If $\mathcal{K}$ is the collection of one-point (respectively, finite, compact) subsets of $X$, then we use $SS_1^*(\mathcal{A},\mathcal{B})$ (respectively, $SS_{fin}^*(\mathcal{A},\mathcal{B})$, $SS_{comp}^*(\mathcal{A},\mathcal{B})$) instead of $S_{\mathcal{K}}^*(\mathcal{A},\mathcal{B})$.

In this article we study star variants of the Menger, Hurewicz and Rothberger properties, namely star-Menger (-Hurewicz, -Rothberger) property, star-K-Menger (-Hurewicz) property and strongly star-Menger (-Hurewicz, -Rothberger) property. We present interesting observations on these properties which have not been investigated before. In particular certain investigations are carried out using Alster covers and critical cardinalities ($\mathfrak{d}$, $\mathfrak{b}$ and $\cov(\mathcal{M})$), and we also obtain few interesting observations on the extent and Alexandroff duplicate. Besides, in \cite[Example 2.4]{sKM}, it was observed that there exists a $T_1$ star-Menger space which is not star-K-Menger. We show that the space $X$ as in \cite[Example 2.4]{sKM} is indeed star-K-Menger. Our investigation contradicts the following result of Ko\v{c}inac \cite{LjSM} (given without proof).
\begin{Th}[{\!\cite[Theorem 2.13]{LjSM}}]
If $X$ is star-Rothberger and $Y$ is compact, then $X\times Y$ is star-Rothberger.
\end{Th}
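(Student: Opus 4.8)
The plan is to treat the assertion as a genuine product theorem and try to push the standard compactness-plus-selection scheme through. Recall that \emph{star-Rothberger} is the property $S_1^*(\mathcal{O},\mathcal{O})$, where $\mathcal{O}$ denotes the family of open covers: for every sequence $(\mathcal{U}_n)$ of open covers of $X$ there is a choice $V_n\in\mathcal{U}_n$ with $\{St(V_n,\mathcal{U}_n):n\in\mathbb{N}\}$ an open cover of $X$. So, starting from a sequence $(\mathcal{W}_n)$ of open covers of $X\times Y$, I must produce single members $W_n\in\mathcal{W}_n$ whose stars $St(W_n,\mathcal{W}_n)$ cover $X\times Y$.

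First I would exploit the compactness of $Y$ through the tube lemma. For each $n$ and each $x\in X$ the fibre $\{x\}\times Y$ is compact, hence covered by finitely many members of $\mathcal{W}_n$; by the tube lemma there is an open neighbourhood $O_{x,n}$ of $x$ in $X$ with $O_{x,n}\times Y$ contained in that finite union. Setting $\mathcal{U}_n=\{O_{x,n}:x\in X\}$ gives an open cover of $X$. Applying the star-Rothberger property of $X$ to the sequence $(\mathcal{U}_n)$ yields points $x_n$ with $V_n=O_{x_n,n}\in\mathcal{U}_n$ and $\bigcup_n St(V_n,\mathcal{U}_n)=X$.

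The decisive step is the passage back to $X\times Y$. Writing $S_n=St(V_n,\mathcal{U}_n)$, it would suffice to select, for each $n$, a single $W_n\in\mathcal{W}_n$ with $S_n\times Y\subseteq St(W_n,\mathcal{W}_n)$; then any $(x,y)$ lands in some $S_n\times Y$ (because the sets $S_n$ cover $X$) and hence in $St(W_n,\mathcal{W}_n)$, giving the required cover of the product. Thus the whole argument reduces to producing, from the finitely many members of $\mathcal{W}_n$ that cover the tube over $V_n$, one set whose one-step star swallows the entire cylinder $S_n\times Y$.

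This last reduction is exactly where I expect the real obstacle to sit, and it is the point that separates the Rothberger case from its Menger and Hurewicz analogues. In the $S_{fin}^*$ (resp.\ $U_{fin}^*$) setting one is permitted to keep \emph{all} the finitely many tube-covering members of $\mathcal{W}_n$, so the cylinder is captured at once and the argument closes. Here a single choice $W_n$ must do the work, yet a lone open set generally cannot even contain a whole fibre $\{x_n\}\times Y$, and its one-step star need not reach every member of $\mathcal{W}_n$ meeting $S_n\times Y$. To force $S_n\times Y\subseteq St(W_n,\mathcal{W}_n)$ I would try to engineer the covers $\mathcal{U}_n$ and the selections $W_n$ together---for instance arranging that each tube over $V_n$ is covered by members all meeting one distinguished $W_n$, or refining the covers so that a single member meets every relevant set---but none of these devices is available for an arbitrary compact $Y$. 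Overcoming this single-selection bottleneck is the crux of the proof, and it is precisely this step that the subsequent analysis of this paper scrutinises.
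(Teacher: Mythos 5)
Your tube-lemma reduction is sound as far as it goes, but the ``single-selection bottleneck'' you flag at the end is not a difficulty awaiting a clever device --- it is unfillable, because the statement is false, and the paper never proves it. The theorem is quoted from Ko\v{c}inac (where it appears without proof) precisely in order to \emph{contradict} it. The paper's refutation is indirect and uses closure properties rather than a direct cover analysis: the star-Rothberger property is preserved under continuous images, clopen subspaces and countable unions. If the product of every star-Rothberger space with every compact space were star-Rothberger, then for any nonempty star-Rothberger $X$ the space $X\times\mathbb{R}=\bigcup_{n\in\mathbb{N}}X\times[-n,n]$ would be a countable union of star-Rothberger subspaces, hence star-Rothberger, and projecting continuously onto the second factor would make $\mathbb{R}$ star-Rothberger. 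But $\mathbb{R}$ is paracompact Hausdorff, and for paracompact Hausdorff spaces star-Rothberger coincides with Rothberger (\cite[Theorem 2.9]{LjSM}), while $\mathbb{R}$ is not Rothberger (Rothberger sets of reals have strong measure zero). This contradiction disproves the product theorem, and the same argument shows the star-Rothberger and strongly star-Rothberger properties are not inverse invariant under open perfect mappings --- the projection $X\times Y\to X$ with $Y$ compact being the relevant open perfect map.

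That said, your diagnosis of \emph{where} the direct argument breaks is exactly right and consistent with the paper's surrounding analysis: in the $S_{fin}^*$ and $U_{fin}^*$ settings one may keep all finitely many members of $\mathcal{W}_n$ covering the tube, which is why products with compact (indeed $\sigma$-compact) spaces do preserve the star-Menger and star-Hurewicz properties, whereas the lone selection demanded by $S_1^*(\mathcal{O},\mathcal{O})$ cannot in general swallow a cylinder $S_n\times Y$. The gap in your proposal is the final inference: having correctly isolated an obstruction that cannot be engineered away for arbitrary compact $Y$, the conclusion to draw is not that this step is ``the crux of the proof,'' but that no proof exists --- your last paragraph should have been converted into the counterexample-style argument above.
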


We also give answers to the following problems posed in \cite{rsM,rssM}.
\begin{enumerate}[wide=0pt,label={\upshape(\arabic*)},leftmargin=*]
\item Is there a space $X$ such that $A(X)$ is star-Menger, but $X$ is not star-Menger?
\item Is there a space $X$ such that $A(X)$ is strongly star-Menger, but $X$ is not strongly star-Menger?
\end{enumerate}

Finally the interrelationship between star selection principles are outlined into implication diagrams (Figures~\ref{dig1}-\ref{dig4}).

\section{Definitions and terminologies}
By a space we always mean a topological space. For undefined notions and terminologies, see \cite{Engelking}. Let $\mathcal O$ denote the collection of all open covers of $X$. We use the following notions of open covers throughout the paper from \cite{coc1,coc2,cocVII,cocVIII}.
\begin{enumerate}[leftmargin=*]
  \item[$\Gamma$:] The collection of all $\gamma$-covers of $X$. An open cover $\mathcal{U}$ of $X$ is said to be a $\gamma$-cover if it is infinite and each element of $X$ does not belong to at most finitely many members of $\mathcal{U}$ \cite{coc1,coc2}.
  \item[$\Omega$:] The collection of all $\omega$-covers of $X$. An open cover $\mathcal{U}$ of $X$ is said to be an $\omega$-cover if $X$ does not belong to $\mathcal{U}$ and for each finite subset $F$ of $X$ there is a set $U\in\mathcal{U}$ such that $F\subseteq U$ \cite{coc1,coc2}.
  \item[$\Lambda$:] The collection of all large covers of $X$. An open cover $\mathcal{U}$ of $X$ is said to be a large cover if for each $x\in X$, the set $\{U\in\mathcal{U} : x\in U\}$ is infinite \cite{coc1,coc2}.
  \item[$\mathcal{O}^{gp}$:] The collection of all groupable covers of $X$. An open cover $\mathcal{U}$ of $X$ is said to be groupable if it can be expressed as a countable union of finite, pairwise disjoint subfamilies $\mathcal{U}_n$, $n\in\mathbb{N}$, such that each $x\in X$ belongs to $\cup\mathcal{U}_n$ for all but finitely many $n$ \cite{cocVII}.
  \item[$\mathcal{O}^{wgp}$:] The collection of all weakly groupable covers of $X$. An open cover $\mathcal{U}$ of $X$ is said to be weakly groupable if it can be expressed as a countable union of finite, pairwise disjoint subfamilies $\mathcal{U}_n$, $n\in\mathbb{N}$, such that for each finite set $F\subseteq X$ we have $F\subseteq\cup\mathcal{U}_n$ for some $n$ \cite{cocVIII}.
\end{enumerate}

Note that $\Gamma\subseteq\Omega\subseteq\Lambda\subseteq\mathcal{O}$.

A space $X$ is said to have the Menger (respectively, Hurewicz, Rothberger) property if it satisfies the selection hypothesis $S_{fin}(\mathcal{O},\mathcal{O})$ (respectively, $U_{fin}(\mathcal{O},\Gamma)$, $S_1(\mathcal{O},\mathcal{O})$) \cite{coc1,coc2}. A space $X$ is said to have the (1) star-Menger property, (2) star-Hurewicz property, (3) star-Rothberger property, (4) star-K-Menger property, (5) star-K-Hurewicz property, (6) strongly star-Menger property, (7) strongly star-Hurewicz property and (8) strongly star-Rothberger property if it satisfies the selection hypothesis (1) $S_{fin}^*(\mathcal{O},\mathcal{O})$, (2) $U_{fin}^*(\mathcal{O},\Gamma)$, (3) $S_1^*(\mathcal{O},\mathcal{O})$, (4) $S_{comp}^*(\mathcal{O},\mathcal{O})$, (5) $S_{comp}^*(\mathcal{O},\Gamma)$, (6) $SS_{fin}^*(\mathcal{O},\mathcal{O})$, (7) $SS_{fin}^*(\mathcal{O},\Gamma)$ and (8) $SS_1^*(\mathcal{O},\mathcal{O})$ respectively \cite{LjSM,sHR}
(see also \cite{rsM,rssM,sKH,sKM,SR}).

A space $X$ is said to be starcompact (respectively, star-Lindel\"{o}f) if for every open cover $\mathcal{U}$ of $X$ there exists a finite (respectively, countable) $\mathcal{V}\subseteq\mathcal{U}$ such that $St(\cup\mathcal{V},\mathcal{U})=X$ \cite{LjSM,sCP}. $X$ is said to be strongly starcompact (respectively, K-starcompact, strongly star-Lindel\"{o}f, star-L-Lindel\"{o}f) if for every open cover $\mathcal{U}$ of $X$ there exists a finite (respectively, compact, countable, Lindel\"{o}f) set $K\subseteq X$ such that $St(K,\mathcal{U})=X$ \cite{LjSM,sCP,sKM}. It is to be noted that every starcompact (respectively, K-starcompact, strongly starcompact) space is star-Hurewicz (respectively, star-K-Hurewicz, strongly star-Hurewicz) and every star-Menger (respectively, star-K-Menger, strongly star-Menger) space is star-Lindel\"{o}f (respectively, star-L-Lindel\"{o}f, strongly star-Lindel\"{o}f).

The eventual dominance relation $\leq^*$ on the Baire space $\mathbb{N}^\mathbb{N}$ is defined by $f\leq^*g$ if and only if $f(n)\leq g(n)$ for all but finitely many $n$. A subset $A$ of $\mathbb{N}^\mathbb{N}$ is said to be dominating if for each $g\in\mathbb{N}^\mathbb{N}$ there exists a $f\in A$ such that $g\leq^* f$. A subset $A$ of $\mathbb{N}^\mathbb{N}$ is said to be bounded if there is a $g\in\mathbb{N}^\mathbb{N}$ such that $f\leq^*g$ for all $f\in A$. Moreover a set $A\subseteq\mathbb{N}^\mathbb{N}$ is said to be guessed by $g\in\mathbb{N}^\mathbb{N}$ if $\{n\in\mathbb{N} : f(n)=g(n)\}$ is infinite for all $f\in A$. The minimum cardinality of a dominating subset of $\mathbb{N}^\mathbb{N}$ is denoted by $\mathfrak{d}$, and the minimum cardinality of an unbounded subset of $\mathbb{N}^\mathbb{N}$ is denoted by $\mathfrak{b}$. Let $\cov(\mathcal{M})$ be the minimum cardinality of a family of meager subsets of the set of reals $\mathbb{R}$ that covers $\mathbb{R}$. In \cite{CAMC} (see also \cite[Theorem 2.4.1]{TBHJ}), $\cov(\mathcal{M})$ is described as the minimum cardinality of a subset $F\subseteq\mathbb{N}^\mathbb{N}$ such that for every $g\in\mathbb{N}^\mathbb{N}$ there is a $f\in F$ such that $f(n)\neq g(n)$ for all but finitely many $n$. Thus we can say that if $F\subseteq\mathbb{N}^\mathbb{N}$ and $|F|<\cov(\mathcal{M})$, then $F$ can be guessed by a $g\in\mathbb{N}^\mathbb{N}$. Let $\mathfrak{c}$ be the cardinality of the set of reals. It is well known that $\omega_1\leq\mathfrak{b}\leq\mathfrak{d}\leq\mathfrak{c}$ and $\omega_1\leq\cov(\mathcal{M})\leq\mathfrak{d}$. For any cardinal $\kappa$, $\kappa^+$ denotes the smallest cardinal greater than $\kappa$.

A family $\mathcal{A}\subseteq P(\mathbb{N})$ is said to be an almost disjoint family if each $A\in\mathcal{A}$ is infinite and for any two distinct elements $B,C\in\mathcal{A}$, $|B\cap C|<\omega$. Also $\mathcal{A}$ is said to be a maximal almost disjoint (in short, MAD) family if $\mathcal{A}$ is not contained in any larger almost disjoint family. For an almost disjoint family $\mathcal{A}$, let $\Psi(\mathcal{A})=\mathcal{A}\cup\mathbb{N}$ be the Isbell-Mr\'{o}wka space (or, $\Psi$-space) (see \cite{Mrowka}). It is well known that $\Psi(\mathcal{A})$ is pseudocompact if and only if $\mathcal{A}$ is a maximal almost disjoint family. In general, when talking about Isbell-Mr\'{o}wka space we do not require almost disjoint family to be maximal or the space to be pseudocompact. The Alexandroff duplicate $A(X)$ of a space $X$ (see \cite{AD,Engelking}) is defined as follows. $A(X)=X\times\{0,1\}$; each point of $X\times\{1\}$ is isolated and a basic neighbourhood of $\langle x,0\rangle\in X\times\{0\}$ is a set of the form $(U\times\{0\})\cup((U\times\{1\})\setminus\{\langle x,1\rangle\})$, where $U$ is a neighbourhood of $x$ in $X$. For a Tychonoff space $X$, $\beta X$ denotes the \v{C}ech-Stone compactification of $X$. For any two collections $\mathcal{A}$ and $\mathcal{B}$ of subsets of a space $X$, we denote $\mathcal{A}\wedge\mathcal{B}$ by the set $\{A\cap B : A\in\mathcal{A}, B\in\mathcal{B}\}$.

\section{Star versions of the Menger, Hurewicz and Rothberger properties}
\subsection{Interrelationships}
We begin with a basic implication diagram (Figure~\ref{dig1}) for star selection principles.
\begin{figure}[h!]
\begin{adjustbox}{max width=0.7\textwidth,max
height=0.7\textheight,keepaspectratio,center}
\begin{tikzcd}[column sep=3ex,row sep=6ex]
%level 4
U_{fin}^*(\mathcal{O},\Gamma)\arrow[rr] &&
S_{fin}^*(\mathcal{O},\mathcal{O}) &&
S_1^*(\mathcal{O},\mathcal{O})\arrow[ll]
\\
%level 3
SS_{comp}^*(\mathcal{O},\Gamma)\arrow[rr]\arrow[u]&&
SS_{comp}^*(\mathcal{O},\mathcal{O})\arrow[u]&&
\\
%level 2
SS_{fin}^*(\mathcal{O},\Gamma)\arrow[rr]\arrow[u]&&
SS_{fin}^*(\mathcal{O},\mathcal{O})\arrow[u]&&
SS_1^*(\mathcal{O},\mathcal{O})\arrow[ll]\arrow[uu]
\\
%level 1 (from bottom)
 U_{fin}(\mathcal{O},\Gamma) \arrow[rr]\arrow[u]&&
S_{fin}(\mathcal{O},\mathcal{O})\arrow[u]&&
S_1(\mathcal{O},\mathcal{O})\arrow[ll]\arrow[u]
\end{tikzcd}
\end{adjustbox}
\caption{Star variations of the Menger, Hurewicz and Rothberger properties}
\label{dig1}
\end{figure}

The reverse implications for the first and second columns of Figure~\ref{dig1} do not hold in general, see \cite{sHR,sKH,RSSH,RSH} and \cite{LjSM,rsM,rsM-II,rssM,sKM,RSKM} respectively for details.
Note that the space as in \cite[Example 2.4]{sKM} was claimed to be star-Menger but not star-K-Menger. Such space is indeed star-K-Menger. For convenience we present the corrected version of this example.
\begin{Ex}\rm
\label{E1}
Let $X$ be the space given by $X=[0,\omega_1)\cup D$, where $D=\{d_\alpha : \alpha<\omega_1\}$ is a set with cardinality $\omega_1$ and the topology on $X$ is defined as follows. The space $[0,\omega_1)$ has the usual order topology and it is an open subspace of $X$, a basic neighbourhood of a point $d_\alpha\in D$ takes the form $O_\beta(d_\alpha)=\{d_\alpha\}\cup(\beta,\omega_1),\;
\text{where}\;\beta<\omega_1.$ Then $X$ is star-K-Menger.
\end{Ex}
\begin{proof}
Since every K-starcompact space is star-K-Menger, it is enough to show that $X$ is K-starcompact. We first show that for each $\alpha<\omega_1$, $\omega_1\cup\{d_\alpha\}$ is a compact subset of $X$. Let $\alpha<\omega_1$ be fixed and $\mathcal{U}$ be a cover of $\omega_1\cup\{d_\alpha\}$ by open sets in $X$. Then we get a $U\in\mathcal{U}$ such that $O_\alpha(d_\alpha)\subseteq U$. Since $[0,\alpha]$ is compact, we have a finite subset $\mathcal{V}\subseteq\mathcal{U}$ such that $[0,\alpha]\subseteq\cup\mathcal{V}$. This gives us $$\omega_1\cup\{d_\alpha\}\subseteq\cup(\{U\}\cup\mathcal{V})$$ and consequently $\omega_1\cup\{d_\alpha\}$ is compact. It follows that for each $\alpha<\omega_1$, $\omega_1\cup\{d_\alpha\}$ is a compact subset of $X$. To show that $X$ is K-starcompact we pick an open cover $\mathcal{W}$ of $X$. Let $\beta<\omega_1$ be fixed and $K=\omega_1\cup\{d_\beta\}$. Since $K$ intersects every member of $\mathcal{W}$, it follows that $X=St(K,\mathcal{W})$. Thus $X$ is K-starcompact and this completes the proof.
\end{proof}

It is also interesting to observe that the reverse implications for the third column need not hold. Below we present two examples of Tychonoff spaces one of which is strongly star-Rothberger but not Rothberger and other is star-Rothberger but not strongly star-Rothberger. Under the assumption $\omega_1<\cov(\mathcal{M})$, the space $\Psi(\mathcal{A})$ with $|\mathcal{A}|=\omega_1$ is strongly star-Rothberger (Theorem~\ref{TN56}). Since $\mathcal{A}$ is a closed and discrete subset of $\Psi(\mathcal{A})$ with $|\mathcal{A}|=\omega_1$, $\Psi(\mathcal{A})$ is not Lindel\"{o}f (hence not Rothberger). Next let $\mathcal{A}\subseteq P(\mathbb{N})$ be an almost disjoint family with $|\mathcal{A}|=\mathfrak{d}$ such that for each function $A\mapsto f_A$ from $\mathcal{A}$ to $\mathbb{N}^\mathbb{N}$ there are elements $A_1,A_2,\dotsc\in\mathcal{A}$ such that for each $A\in\mathcal{A}$ there exists a $n$ such that $(A\setminus f_A(n))\cap(A_n\setminus f_{A_n}(n))\neq\emptyset$. The space $\Psi(\mathcal{A})$ is star-Rothberger (Theorem~\ref{TN59}) but not strongly star-Menger (Theorem~\ref{TN9}) and hence not strongly star-Rothberger.

On a similar note, none of the implications for the rows of Figure~\ref{dig1} are reversible. To see this, we reproduce classical examples of spaces (from \cite{coc2}) that were used to distinguish the Hurewicz, Menger and Rothberger properties. Such examples serve the purpose as the considered spaces are paracompact Hausdorff and in the context of paracompact Hausdorff spaces, all the star variations coincide with the corresponding classical selection principles (see \cite[Theorem 2.8]{LjSM}, \cite[Theorem 2.9]{sKH} and \cite[Theorem 2.9]{LjSM}). Every Sierpi\'{n}ski set (i.e. an uncountable subset of reals which has countable intersection with every set of Lebesgue measure zero) is Hurewicz and hence Menger but not Rothberger. Also every Lusin set (i.e. an uncountable subset of reals whose intersection with every first category set of reals is countable) is Rothberger and hence Menger but not Hurewicz. Counter examples for such reverse implications of Figure~\ref{dig1} can also be constructed using $\Psi$-spaces.

\begin{Ex}\rm
\hfill
\begin{enumerate}[wide=0pt,label={\upshape(\bf\arabic*)},leftmargin=*]
\item Assume that $\cov(\mathcal{M})<\mathfrak{b}$. There exists an almost disjoint family $\mathcal{A}\subseteq P(\mathbb{N})$ with cardinality $\cov(\mathcal{M})$ such that $\Psi(\mathcal{A})$ is not star-Rothberger (see \cite[Example 5]{SCPP}) and hence not strongly star-Rothberger. Since $|\mathcal{A}|<\mathfrak{b}$, by Theorem~\ref{TN55}, $\Psi(\mathcal{A})$ is strongly star-Hurewicz (hence strongly star-Menger, star-K-Menger, star-K-Hurewicz, star-Menger and star-Hurewicz).

  \item Next assume that $\mathfrak{b}=\aleph_1<\cov(\mathcal{M})$. Let $\mathcal{A}\subseteq P(\mathbb{N})$ be an almost disjoint family with $|\mathcal{A}|=\mathfrak{b}$. By \cite[Theorem 2.4]{CASC}, $\Psi(\mathcal{A})$ is not star-Hurewicz (hence not star-K-Hurewicz and not strongly star-Hurewicz) but strongly star-Rothberger (hence strongly star-Menger, star-K-Menger, star-Rothberger and star-Menger) by Theorem~\ref{TN56}.
\end{enumerate}
\end{Ex}
We mention the following combinatorial characterizations for $\Psi$-spaces.
\begin{Th}[{\!\cite[Theorem 2.1]{CASC}}]
\label{TN57}
The following assertions are equivalent.
\begin{enumerate}[wide=0pt,label={\upshape(\arabic*)},leftmargin=*]
  \item $\Psi(\mathcal{A})$ is star-Menger.
  \item For each function $A\mapsto f_A$ from $\mathcal{A}$ to $\mathbb{N}^\mathbb{N}$ there are finite sets $\mathcal{F}_1,\mathcal{F}_2,\dotsc\subseteq\mathcal{A}$ such that for each $A\in\mathcal{A}$ there exists a $n$ such that $(A\setminus f_A(n))\cap(\cup_{B\in\mathcal{F}_n}(B\setminus f_B(n)))\neq\emptyset$.
\end{enumerate}
\end{Th}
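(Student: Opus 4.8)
The plan is to prove both implications by translating the topology of $\Psi(\mathcal{A})$ into the combinatorics of the sets $A\setminus f_A(n)$. The basic dictionary is this: a basic neighbourhood of a point $A\in\mathcal{A}$ has the form $\{A\}\cup(A\setminus m)$ for some $m\in\mathbb{N}$, so a function $A\mapsto f_A$ in $\mathbb{N}^{\mathbb{N}}$ is exactly a way of assigning, at each stage $n$, the basic neighbourhood $\{A\}\cup(A\setminus f_A(n))$ to $A$. Since distinct $A,B\in\mathcal{A}$ both lie outside $\mathbb{N}$, the two neighbourhoods $\{A\}\cup(A\setminus f_A(n))$ and $\{B\}\cup(B\setminus f_B(n))$ meet if and only if their traces on $\mathbb{N}$ meet, i.e. $(A\setminus f_A(n))\cap(B\setminus f_B(n))\neq\emptyset$. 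Moreover, the only member of any such assignment containing the point $A$ is $A$'s own neighbourhood, so membership of $A$ in a star $St(V,\mathcal{U}_n)$ reduces precisely to the neighbourhood of $A$ meeting $V$. This is the mechanism that converts the defining clause of $S_{fin}^*(\mathcal{O},\mathcal{O})$ into condition $(2)$. I shall use throughout that $\mathbb{N}$ is a dense set of isolated points and $\mathcal{A}$ is the set of non-isolated points.

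For $(2)\Rightarrow(1)$ I would begin with an arbitrary sequence $(\mathcal{U}_n)$ of open covers. For each $A\in\mathcal{A}$ and each $n$ choose $U_{A,n}\in\mathcal{U}_n$ with $A\in U_{A,n}$, and let $f_A(n)$ be least with $\{A\}\cup(A\setminus f_A(n))\subseteq U_{A,n}$; openness guarantees this exists, and it produces a legitimate function $A\mapsto f_A$. Applying $(2)$ yields finite sets $\mathcal{F}_n\subseteq\mathcal{A}$, and I set $\mathcal{V}_n=\{U_{B,n}:B\in\mathcal{F}_n\}$. Given $A\in\mathcal{A}$, there are $n$ and $B\in\mathcal{F}_n$ with some $k\in(A\setminus f_A(n))\cap(B\setminus f_B(n))$; then $k\in U_{A,n}\cap U_{B,n}$, so $U_{A,n}\subseteq St(U_{B,n},\mathcal{U}_n)$ and hence $A\in St(U_{B,n},\mathcal{U}_n)$, covering $\mathcal{A}$. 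The isolated points are handled by the usual enumeration trick: fixing $\mathbb{N}=\{m_n:n\in\mathbb{N}\}$, at stage $n$ I also add to $\mathcal{V}_n$ one member of $\mathcal{U}_n$ containing $m_n$, which keeps each $\mathcal{V}_n$ finite and makes $\bigcup_n\{St(V,\mathcal{U}_n):V\in\mathcal{V}_n\}$ a cover of $X$.

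For $(1)\Rightarrow(2)$, given $A\mapsto f_A$ I would feed star-Menger the open covers $\mathcal{U}_n=\{\{A\}\cup(A\setminus f_A(n)):A\in\mathcal{A}\}\cup\{\{k\}:k\in\mathbb{N}\}$ (the singletons make each $\mathcal{U}_n$ a genuine cover, irrespective of whether $\mathcal{A}$ covers $\mathbb{N}$). Star-Menger delivers finite $\mathcal{V}_n\subseteq\mathcal{U}_n$ whose stars cover $X$; write $\mathcal{G}_n=\{A\in\mathcal{A}:\{A\}\cup(A\setminus f_A(n))\in\mathcal{V}_n\}$ and $K_n=\{k:\{k\}\in\mathcal{V}_n\}$, both finite. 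For a point $A\in\mathcal{A}$, the star covering it forces $A$'s neighbourhood to meet the chosen $V$, and by the dictionary above this gives, at that stage $n$, either a $B\in\mathcal{G}_n$ with $(A\setminus f_A(n))\cap(B\setminus f_B(n))\neq\emptyset$ (including the degenerate case $B=A\in\mathcal{G}_n$, where the intersection is $A\setminus f_A(n)\neq\emptyset$), or a selected singleton $\{k\}$ with $k\in A\setminus f_A(n)$.

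The main obstacle is precisely this last case: a single selected singleton $\{k\}$ can, through its star, capture infinitely many points of $\mathcal{A}$ at once—namely every $A$ with $k\in A$ and $k\geq f_A(n)$—so one cannot simply drop all these $A$ into a finite set. The resolution, on which the whole equivalence turns, is that all such $A$ share the same witness $k$. Hence it suffices, for each $k\in K_n$ admitting such an $A$, to choose a single representative $B_k\in\mathcal{A}$ with $k\in B_k$ and $k\geq f_{B_k}(n)$ (one exists, since the captured $A$ is itself such a set); then $k\in(A\setminus f_A(n))\cap(B_k\setminus f_{B_k}(n))$ for \emph{every} $A$ captured by $k$. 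Setting $\mathcal{F}_n=\mathcal{G}_n\cup\{B_k:k\in K_n\}$ keeps $\mathcal{F}_n$ finite and makes condition $(2)$ hold for every $A\in\mathcal{A}$, which completes the argument.
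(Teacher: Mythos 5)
Your argument is correct, and it is essentially the standard proof of this equivalence; the paper itself states this theorem only as a quoted result from Tsaban's work (\cite[Theorem 2.1]{CASC}) and gives no proof of its own. Both directions are handled properly, including the two genuinely delicate points: covering the isolated points of $\mathbb{N}$ in $(2)\Rightarrow(1)$, and, in $(1)\Rightarrow(2)$, replacing each selected singleton $\{k\}$ by a single representative $B_k$ so that the infinitely many points of $\mathcal{A}$ captured through $k$ are all witnessed by one element of the finite set $\mathcal{F}_n$.
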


\begin{Th}[{\!\cite[Theorem 2.2]{CASC}}]
\label{TN58}
The following assertions are equivalent.
\begin{enumerate}[wide=0pt,label={\upshape(\arabic*)},leftmargin=*]
  \item $\Psi(\mathcal{A})$ is star-Hurewicz.
  \item For each function $A\mapsto f_A$ from $\mathcal{A}$ to $\mathbb{N}^\mathbb{N}$ there are finite sets $\mathcal{F}_1,\mathcal{F}_2,\dotsc\subseteq\mathcal{A}$ such that for each $A\in\mathcal{A}$, $(A\setminus f_A(n))\cap(\cup_{B\in\mathcal{F}_n}(B\setminus f_B(n)))\neq\emptyset$ for all but finitely many $n$.
\end{enumerate}
\end{Th}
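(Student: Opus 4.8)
My plan is to translate the topological selection statement into the arithmetic of the almost disjoint family $\mathcal{A}$, running the same dictionary that underlies Theorem~\ref{TN57} but upgrading the quantifier ``for some $n$'' to ``for all but finitely many $n$'', which is precisely the difference between $S_{fin}^*(\mathcal{O},\mathcal{O})$ and $U_{fin}^*(\mathcal{O},\Gamma)$. Two features of $\Psi(\mathcal{A})$ drive everything: the points of $\mathbb{N}$ are isolated, and a neighbourhood base at a point $A\in\mathcal{A}$ is given by the sets $\{A\}\cup(A\setminus k)$, $k\in\mathbb{N}$, where $A\setminus k=\{m\in A:m\geq k\}$. Consequently, for an open cover in which each $A$ is assigned the single neighbourhood $\{A\}\cup(A\setminus f_A(n))$, the only member containing the point $A$ is that neighbourhood itself, and this uniqueness is what converts star computations into the intersection conditions of (2).

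For $(2)\Rightarrow(1)$ I would take an arbitrary sequence $(\mathcal{U}_n)$ of open covers. For every $A\in\mathcal{A}$ and every $n$ I pick $U_A^n\in\mathcal{U}_n$ with $A\in U_A^n$ and then $f_A(n)\in\mathbb{N}$ with $\{A\}\cup(A\setminus f_A(n))\subseteq U_A^n$, obtaining a function $A\mapsto f_A$; feeding it to (2) yields finite sets $\mathcal{F}_n\subseteq\mathcal{A}$. I set $\mathcal{V}_n=\{U_B^n:B\in\mathcal{F}_n\}$ and adjoin finitely many members of $\mathcal{U}_n$ covering the isolated points $0,\dots,n$. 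To see that $\{St(\cup\mathcal{V}_n,\mathcal{U}_n):n\in\mathbb{N}\}$ is a $\gamma$-cover, note that a fixed isolated point $k$ lies in $\cup\mathcal{V}_n\subseteq St(\cup\mathcal{V}_n,\mathcal{U}_n)$ for every $n\geq k$, while for a fixed $A$ the conclusion of (2) supplies, for all but finitely many $n$, an integer $k\in(A\setminus f_A(n))\cap(B\setminus f_B(n))$ with $B\in\mathcal{F}_n$; then $k\in U_A^n\cap U_B^n$ witnesses $A\in St(\cup\mathcal{V}_n,\mathcal{U}_n)$. Thus every point is covered by all but finitely many stars.

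For $(1)\Rightarrow(2)$ I fix $A\mapsto f_A$ and apply star-Hurewicz to the open covers $\mathcal{U}_n=\{\{A\}\cup(A\setminus f_A(n)):A\in\mathcal{A}\}\cup\{\{k\}:k\in\mathbb{N}\}$, obtaining finite $\mathcal{V}_n\subseteq\mathcal{U}_n$; write $\mathcal{F}_n=\{A:\{A\}\cup(A\setminus f_A(n))\in\mathcal{V}_n\}$ and let $S_n=\{k:\{k\}\in\mathcal{V}_n\}$, a finite set. By the uniqueness remark, $A\in St(\cup\mathcal{V}_n,\mathcal{U}_n)$ holds iff $(A\setminus f_A(n))$ meets $\bigl(\bigcup_{B\in\mathcal{F}_n}(B\setminus f_B(n))\bigr)\cup S_n$. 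The stray finite set $S_n$ is the first nuisance: an isolated point of $S_n$ lying in $A\setminus f_A(n)$ could produce an intersection not recorded by the families $B\setminus f_B(n)$; note one cannot avoid it by covering $\mathbb{N}$ with a single open set, since that would make every star equal to $X$ and trivialise the property. I remove it by absorption: for each $k\in S_n$ admitting some $B\in\mathcal{A}$ with $k\in B\setminus f_B(n)$, adjoin one such witness $B_{n,k}$, forming the still finite family $\mathcal{F}_n'=\mathcal{F}_n\cup\{B_{n,k}:k\in S_n\}$. Whenever $k\in(A\setminus f_A(n))\cap S_n$ one may take $B_{n,k}=A$, so $k\in(A\setminus f_A(n))\cap(B_{n,k}\setminus f_{B_{n,k}}(n))$; hence $A\in St(\cup\mathcal{V}_n,\mathcal{U}_n)$ now forces $(A\setminus f_A(n))\cap\bigcup_{B\in\mathcal{F}_n'}(B\setminus f_B(n))\neq\emptyset$, exactly the clause of (2).

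Call $n$ \emph{good} for $A$ when $(A\setminus f_A(n))\cap\bigcup_{B\in\mathcal{F}_n'}(B\setminus f_B(n))\neq\emptyset$. The genuinely delicate point, and the step I expect to cost the most, is the degenerate alternative in $U_{fin}^*(\mathcal{O},\Gamma)$, namely that the selection may instead satisfy $St(\cup\mathcal{V}_n,\mathcal{U}_n)=X$ for a single $n$, which by itself yields only one good stage and hence mere Menger-type information. I would dispose of it by splitting $\mathbb{N}$ according to the covers above: let $E$ be the set of $n$ for which some finite subfamily of $\mathcal{U}_n$ has star equal to $X$, and $H$ its complement. For $n\in E$ I take such a subfamily; since its star is all of $X$, \emph{every} $A$ satisfies the intersection condition at $n$ (after the same absorption), so every $n\in E$ is good for all $A$ at once. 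For $n\in H$ no finite subfamily of $\mathcal{U}_n$ has star $X$, so the degenerate alternative is impossible for the subsequence $(\mathcal{U}_n)_{n\in H}$; applying star-Hurewicz to this subsequence (when it is infinite) therefore forces the $\gamma$-cover alternative, giving finite $\mathcal{F}_n'$ ($n\in H$) for which each $A$ is good at all but finitely many $n\in H$. Combining, each $A$ is good at every $n\in E$ and at all but finitely many $n\in H$, hence for all but finitely many $n$, which is (2). The remaining checks, that the constructed covers are genuine open covers and that the collections produced in $(2)\Rightarrow(1)$ are infinite so as to be honest $\gamma$-covers, are routine.
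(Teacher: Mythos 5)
The paper does not prove this statement: Theorem~\ref{TN58} is quoted verbatim from Tsaban's paper \cite[Theorem 2.2]{CASC}, so there is no in-paper argument to compare against. Judged on its own, your proof is correct and follows the natural dictionary between open covers of $\Psi(\mathcal{A})$ and the combinatorics of $\mathcal{A}$. The direction $(2)\Rightarrow(1)$ is clean: the witness $k\in(A\setminus f_A(n))\cap(B\setminus f_B(n))$ lies in $U_A^n\cap U_B^n$, and adjoining covers of $\{0,\dots,n\}$ handles the isolated points. In $(1)\Rightarrow(2)$ you correctly identify and dispose of the two genuine obstacles: the singletons $\{k\}$ selected into $\mathcal{V}_n$ (your absorption of witnesses $B_{n,k}$ works for any fixed admissible choice of $B_{n,k}$, not just $B_{n,k}=A$ --- the phrase ``one may take $B_{n,k}=A$'' is slightly loose, but the subsequent inequality only uses $k\in B_{n,k}\setminus f_{B_{n,k}}(n)$, which holds by definition), and the degenerate alternative in $U_{fin}^*(\mathcal{O},\Gamma)$, which your $E$/$H$ splitting handles correctly: stages in $E$ are good for every $A$ simultaneously, and the subsequence over $H$ cannot trigger the degenerate clause, so it yields a genuine $\gamma$-cover. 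The deferred checks (infinitude of the $\gamma$-cover, which either holds or collapses into the degenerate alternative) are indeed routine.
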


\begin{Th}[{\!\cite[Theorem 4.3]{CASC}}]
\label{TN59}
The following assertions are equivalent.
\begin{enumerate}[wide=0pt,label={\upshape(\arabic*)},leftmargin=*]
  \item $\Psi(\mathcal{A})$ is star-Rothberger.
  \item For each function $A\mapsto f_A$ from $\mathcal{A}$ to $\mathbb{N}^\mathbb{N}$ there are elements $A_1,A_2,\dotsc\in\mathcal{A}$ such that for each $A\in\mathcal{A}$ there exists a $n$ such that $(A\setminus f_A(n))\cap(A_n\setminus f_{A_n}(n))\neq\emptyset$.
\end{enumerate}
\end{Th}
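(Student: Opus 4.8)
The plan is to prove both implications through a dictionary between open covers of $\Psi(\mathcal{A})$ and functions $A\mapsto f_A$, exploiting that $\mathbb{N}$ is a dense set of isolated points while $\mathcal{A}$ is closed and discrete, so the only genuine content of any star-cover is whether it reaches the points of $\mathcal{A}$. Throughout I write $A\setminus m=\{k\in A:k\geq m\}$ and use the basic neighbourhoods $U_A^m=\{A\}\cup(A\setminus m)$ of a point $A\in\mathcal{A}$.

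For $(1)\Rightarrow(2)$, given $A\mapsto f_A$ I would form, for each $n$, the open cover $\mathcal{U}_n=\{U_A^{f_A(n)}:A\in\mathcal{A}\}\cup\{\{k\}:k\in\mathbb{N}\}$ and apply $S_1^*(\mathcal{O},\mathcal{O})$ to $(\mathcal{U}_n)$ to obtain selections $V_n\in\mathcal{U}_n$ whose stars cover $X$. The first key observation is that in such a cover the only member containing the \emph{point} $A$ is $U_A^{f_A(n)}$ itself, so $A\in St(V_n,\mathcal{U}_n)$ holds exactly when $U_A^{f_A(n)}\cap V_n\neq\emptyset$. If $V_n=U_B^{f_B(n)}$, I set $A_n:=B$; the intersection condition for $A$ at stage $n$ is then literally $U_A^{f_A(n)}\cap U_{A_n}^{f_{A_n}(n)}\neq\emptyset$, that is $(A\setminus f_A(n))\cap(A_n\setminus f_{A_n}(n))\neq\emptyset$.

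The step I expect to be the main obstacle is the case where the Rothberger selection $V_n$ is an isolated singleton $\{k\}$: there is then no canonical element of $\mathcal{A}$ to declare as $A_n$, and worse, a single such $V_n$ may witness the covering of many different $A\in\mathcal{A}$ simultaneously, all of which must be served by one common choice $A_n$. The resolution is that $A\in St(\{k\},\mathcal{U}_n)$ forces $k\in A\setminus f_A(n)$; so I fix $A_n$ to be any element $B_0\in\mathcal{A}$ with $k\in B_0\setminus f_{B_0}(n)$ (such $B_0$ exists since every $A$ served at this stage is itself a candidate). Then the single point $k$ lies in both $A\setminus f_A(n)$ and $A_n\setminus f_{A_n}(n)$, so it witnesses the required intersection at once for all $A$ covered by that singleton. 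As the stars cover $\mathcal{A}$, every $A$ is handled at some stage, giving $(2)$.

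For $(2)\Rightarrow(1)$, let $(\mathcal{U}_n)$ be given; I reduce to basic covers and, for each $A$ and $n$, choose $W_{A,n}\in\mathcal{U}_n$ and $f_A(n)$ with $U_A^{f_A(n)}\subseteq W_{A,n}$, which defines $A\mapsto f_A$. Because the single-selection principle must cover both $\mathcal{A}$ and all of $\mathbb{N}$ at once, I split the index set: on even indices $2n$ I apply the combinatorial hypothesis to $f_A(n)$ to obtain $A_1,A_2,\dots$ and set $V_{2n}:=W_{A_n,n}$, so that the intersection condition yields $W_{A,n}\cap V_{2n}\neq\emptyset$ with $A\in W_{A,n}$, whence $\{St(V_{2n},\mathcal{U}_{2n})\}$ covers $\mathcal{A}$; on odd indices $2n+1$ I enumerate $\mathbb{N}=\{m_n\}$ and pick $V_{2n+1}\in\mathcal{U}_{2n+1}$ with $m_n\in V_{2n+1}$, covering the isolated points. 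Together $\{St(V_n,\mathcal{U}_n):n\in\mathbb{N}\}$ covers $X$, establishing star-Rothberger, the only bookkeeping subtlety being this even/odd partition (any injection with infinite co-infinite range works equally well).
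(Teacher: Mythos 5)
The paper does not prove this statement: it is quoted verbatim from Tsaban's work (\cite[Theorem 4.3]{CASC}) and used as a black box, so there is no in-paper proof to compare against. Your argument is a correct, self-contained proof along the standard lines of that reference: the dictionary $\mathcal{U}_n=\{\{A\}\cup(A\setminus f_A(n)):A\in\mathcal{A}\}\cup\{\{k\}:k\in\mathbb{N}\}$ in one direction, the choice of $f_A(n)$ from a selected $W_{A,n}\in\mathcal{U}_n$ in the other, and the even/odd index split to cover the countable set $\mathbb{N}$ separately. You also correctly isolated and resolved the one delicate point, namely that a singleton selection $V_n=\{k\}$ may simultaneously witness the covering of many $A\in\mathcal{A}$, all of which are then served by a single $A_n$ through the common point $k$.
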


\subsection{Groupability and finite powers}
\begin{Th}[{\!\cite[Theorem 2.1]{sHR}}]
\label{TN33}
If every finite power of a space $X$ is star-Menger, then $X$ satisfies $U_{fin}^*(\mathcal{O},\Omega)$.
\end{Th}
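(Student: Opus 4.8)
The plan is to reduce the star-Menger property of the finite powers $X^k$ to a statement in $X$ via box-product covers, following the pattern of the classical (non-star) argument showing that Mengerness of all powers yields $U_{fin}(\mathcal{O},\Omega)$.

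First I would fix a sequence $(\mathcal{U}_n)$ of open covers of $X$ together with a partition $\mathbb{N}=\bigsqcup_{k\in\mathbb{N}}N_k$ into pairwise disjoint infinite sets $N_k$. For each $k$ and each $n\in N_k$, form the box-product cover $\mathcal{U}_n^{(k)}=\{U_1\times\cdots\times U_k : U_1,\ldots,U_k\in\mathcal{U}_n\}$, which is an open cover of $X^k$ because $\mathcal{U}_n$ covers $X$. Applying the star-Menger property $S_{fin}^*(\mathcal{O},\mathcal{O})$ of $X^k$ to the sequence $(\mathcal{U}_n^{(k)})_{n\in N_k}$ yields, for each $n\in N_k$, a finite subfamily $\mathcal{H}_n\subseteq\mathcal{U}_n^{(k)}$ such that $\bigcup_{n\in N_k}\{St(H,\mathcal{U}_n^{(k)}) : H\in\mathcal{H}_n\}$ covers $X^k$. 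I would then let $\mathcal{V}_n\subseteq\mathcal{U}_n$ be the finite collection of all factors $U_i$ occurring in the boxes $H=U_1\times\cdots\times U_k\in\mathcal{H}_n$. Since the $N_k$ partition $\mathbb{N}$, this unambiguously assigns a finite $\mathcal{V}_n\subseteq\mathcal{U}_n$ to every $n$.

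The technical heart is a star identity in the product. For a box $H=U_1\times\cdots\times U_k$ one checks directly that $St(H,\mathcal{U}_n^{(k)})=St(U_1,\mathcal{U}_n)\times\cdots\times St(U_k,\mathcal{U}_n)$, since a box $W_1\times\cdots\times W_k\in\mathcal{U}_n^{(k)}$ meets $H$ exactly when $W_i\cap U_i\neq\emptyset$ for every $i$. As each factor satisfies $U_i\subseteq\cup\mathcal{V}_n$ and the star operation is monotone in its first argument, this gives $St(H,\mathcal{U}_n^{(k)})\subseteq[St(\cup\mathcal{V}_n,\mathcal{U}_n)]^k$. Consequently the sets $[St(\cup\mathcal{V}_n,\mathcal{U}_n)]^k$, $n\in N_k$, already cover $X^k$.

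Finally I would assemble the conclusion. If $St(\cup\mathcal{V}_n,\mathcal{U}_n)=X$ for some $n$, the second alternative in $U_{fin}^*(\mathcal{O},\Omega)$ holds; otherwise every $St(\cup\mathcal{V}_n,\mathcal{U}_n)$ is proper, so $X\notin\{St(\cup\mathcal{V}_n,\mathcal{U}_n):n\in\mathbb{N}\}$. Given a finite set $F=\{x_1,\ldots,x_k\}\subseteq X$, the point $(x_1,\ldots,x_k)\in X^k$ lies in $[St(\cup\mathcal{V}_n,\mathcal{U}_n)]^k$ for some $n\in N_k$ by the previous paragraph, whence $F\subseteq St(\cup\mathcal{V}_n,\mathcal{U}_n)$. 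Thus $\{St(\cup\mathcal{V}_n,\mathcal{U}_n):n\in\mathbb{N}\}$ is an $\omega$-cover and $X$ satisfies $U_{fin}^*(\mathcal{O},\Omega)$. I expect the main obstacle to be verifying the product star identity and its monotone consequence cleanly, as this is precisely the bridge transferring the covering of $X^k$ back to the $\omega$-covering property in $X$.
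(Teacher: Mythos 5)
Your argument is correct, and it is essentially the standard approach: the paper itself only cites this theorem from \cite{sHR} without reproving it, but your partition of $\mathbb{N}$ into infinite pieces $N_k$, the box-product covers of $X^k$, and the identity $St(U_1\times\cdots\times U_k,\mathcal{U}_n^{(k)})=St(U_1,\mathcal{U}_n)\times\cdots\times St(U_k,\mathcal{U}_n)$ are exactly the technique the paper uses for the parallel results (Theorems~\ref{T4} and~\ref{TN28}). The star identity and the monotonicity step you flag as the technical heart do check out, and your handling of the $X\in\mathcal{U}$ clause in the definition of $\omega$-cover via the second alternative of $U_{fin}^*$ is the right way to close the argument.
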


\begin{Th}[{\!\cite[Theorem 2.2]{sHR}}]
\label{TN34}
For a space $X$ the following assertions are equivalent.
\begin{enumerate}[wide=0pt,label={\upshape(\arabic*)},leftmargin=*]
  \item $X$ satisfies $U_{fin}^*(\mathcal{O},\Omega)$.
  \item $X$ satisfies $U_{fin}^*(\mathcal{O},\mathcal{O}^{wgp})$.
\end{enumerate}
\end{Th}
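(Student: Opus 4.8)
The plan is to establish the two implications separately, the forward one being essentially formal and the converse carrying all of the content.

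For $(1)\Rightarrow(2)$ I would observe that, at the level of a single countable family, every $\omega$-cover is already weakly groupable. Indeed, if $\{W_n : n\in\mathbb{N}\}$ is an $\omega$-cover, then the partition of the family into the singleton subfamilies $\{W_n\}$ consists of finite, pairwise disjoint pieces, and by the defining property of an $\omega$-cover each finite $F\subseteq X$ lies in some single $W_n=\cup\{W_n\}$; hence the family is weakly groupable. Consequently, if $(\mathcal{U}_n)$ is a sequence of open covers and $(\mathcal{V}_n)$ is a selection witnessing $U_{fin}^*(\mathcal{O},\Omega)$, so that $\{St(\cup\mathcal{V}_n,\mathcal{U}_n) : n\in\mathbb{N}\}$ is an $\omega$-cover (or some $St(\cup\mathcal{V}_n,\mathcal{U}_n)=X$), then the very same selection witnesses $U_{fin}^*(\mathcal{O},\mathcal{O}^{wgp})$: the produced $\omega$-cover of stars is weakly groupable and the escape clause is identical in the two principles.

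For $(2)\Rightarrow(1)$ I would start from a sequence $(\mathcal{U}_n)$ and feed the principle an auxiliary sequence built from consecutive \emph{runs} of copies of the given covers: a block of copies of $\mathcal{U}_1$, then a block of copies of $\mathcal{U}_2$, and so on, with run lengths increasing. Applying $U_{fin}^*(\mathcal{O},\mathcal{O}^{wgp})$ yields finite selections $\mathcal{T}_i$ from the respective copies, a weakly groupable family of stars, and its witnessing partition into consecutive blocks. Putting $\mathcal{V}_n=\bigcup\{\mathcal{T}_i : i\text{ in the run of }\mathcal{U}_n\}$ gives a finite subset of $\mathcal{U}_n$ (each run being finite), and since the star operator distributes over unions in its first argument, namely $St(A\cup B,\mathcal{U})=St(A,\mathcal{U})\cup St(B,\mathcal{U})$, we get $St(\cup\mathcal{V}_n,\mathcal{U}_n)=\bigcup\{St(\cup\mathcal{T}_i,\mathcal{U}_n) : i\text{ in the run}\}$. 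Thus the whole contribution of a run collapses into one single star, and this is exactly what lets me transfer the selection back to the original index set.

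The hard part is converting weak groupability of the auxiliary family into the $\omega$-cover property of $\{St(\cup\mathcal{V}_n,\mathcal{U}_n) : n\in\mathbb{N}\}$. Weak groupability only guarantees that each finite $F\subseteq X$ sits inside the \emph{union} of the stars indexed by a single block, and such a block will in general meet several runs, hence several distinct covers $\mathcal{U}_n$; this union therefore cannot be rewritten as one single star (the discrete space with $\mathcal{W}=\{\{m\}\}$ already shows that a weakly groupable cover need not be an $\omega$-cover). To get around this I would not try to absorb $F$ in one step, but use the reformulation that a countable family is an $\omega$-cover precisely when, for every $k$, the family of $k$-th powers of its members covers $X^k$ — equivalently, when every $k$-element subset of $X$ lies in a single member. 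I would therefore split $\mathbb{N}$ into infinitely many infinite pieces, dedicate one piece to each power $k$, and on that piece run the above construction so that a \emph{plain} covering of $X^k$ by single power-stars (which, unlike single-member $\omega$-absorption, weak groupability does yield after grouping) produces single-star absorption of all $k$-element subsets of $X$. Reassembling the selections across the powers, and invoking the escape clause whenever some star already equals $X$, delivers a selection witnessing $U_{fin}^*(\mathcal{O},\Omega)$. The delicate bookkeeping in this last step — aligning the block partition handed back by the principle with the self-imposed run structure and the power-by-power decomposition, while keeping each $\mathcal{V}_n$ a finite subset of the single cover $\mathcal{U}_n$ — is where I expect the real obstacle to lie.
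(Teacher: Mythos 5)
Your $(1)\Rightarrow(2)$ is fine: a countable $\omega$-cover is weakly groupable via the singleton blocks, exactly as you say, and the escape clauses match. The genuine gap is in $(2)\Rightarrow(1)$, and it is precisely the one you flag at the end. The block partition witnessing weak groupability is handed back by the principle and is not under your control; a block will in general straddle several of your runs, so it contains stars taken with respect to several different covers $\mathcal{U}_n$, none of which refines the others, and the union over such a block cannot be absorbed into a single $St(\cup\mathcal{V}_n,\mathcal{U}_n)$. Increasing the run lengths does not help, since blocks can be arbitrarily long. The detour through powers does not repair this either: the hypothesis $U_{fin}^*(\mathcal{O},\mathcal{O}^{wgp})$ is a property of $X$, not of $X^k$, so you cannot feed it covers of $X^k$; and on $X$ itself weak groupability only ever yields \emph{block-union} absorption of finite sets, so the ``plain covering of $X^k$ by single power-stars'' you invoke is exactly the single-member absorption you are trying to prove, in disguise.

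The missing idea is to make all covers occurring in any one block comparable \emph{before} applying $(2)$: pass to the meets $\mathcal{V}_n=\wedge_{i\leq n}\mathcal{U}_i$, so that $\mathcal{V}_i$ refines $\mathcal{U}_m$ whenever $m\leq i$. Applying $(2)$ to $(\mathcal{V}_n)$ gives finite $\mathcal{W}_n\subseteq\mathcal{V}_n$ with $\{St(\cup\mathcal{W}_n,\mathcal{V}_n):n\in\mathbb{N}\}$ weakly groupable, witnessed by blocks $[n_k,n_{k+1})$. For each $i$ in the block starting at $n_k$, replace every element $U_1\cap\dotsb\cap U_i\in\mathcal{W}_i$ by its $n_k$-th coordinate $U_{n_k}\in\mathcal{U}_{n_k}$; since $\mathcal{V}_i$ refines $\mathcal{U}_{n_k}$ and the centre set only grows, $St(\cup\mathcal{W}_i,\mathcal{V}_i)\subseteq St(\cup\mathcal{T}_{n_k},\mathcal{U}_{n_k})$, where $\mathcal{T}_{n_k}\subseteq\mathcal{U}_{n_k}$ is the finite set of replacements. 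The whole block-union of stars then collapses into one star with respect to the single cover $\mathcal{U}_{n_k}$, so each finite $F\subseteq X$ lies in a single member of $\{St(\cup\mathcal{T}_{n_k},\mathcal{U}_{n_k}):k\in\mathbb{N}\}$, and filling in the remaining indices arbitrarily yields the desired $\omega$-cover. This meet-refinement device is exactly what drives the paper's proof of the analogous Theorem~\ref{T5}, and it is what your run-of-copies construction cannot replace.
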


\begin{Cor}
\label{CN3}
If every finite power of a space $X$ is star-Menger, then $X$ satisfies $U_{fin}^*(\mathcal{O},\mathcal{O}^{wgp})$.
\end{Cor}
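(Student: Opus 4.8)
The plan is to obtain this statement as an immediate consequence of the two preceding results by composing them, since the corollary is nothing more than the concatenation of Theorem~\ref{TN33} and Theorem~\ref{TN34}. The hypothesis ``every finite power of $X$ is star-Menger'' is precisely the antecedent of Theorem~\ref{TN33}, so the first step is to invoke that theorem directly, yielding that $X$ satisfies $U_{fin}^*(\mathcal{O},\Omega)$.

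The second step is to feed this intermediate conclusion into Theorem~\ref{TN34}, using the direction $(1)\Rightarrow(2)$ of the stated equivalence, which upgrades $U_{fin}^*(\mathcal{O},\Omega)$ to $U_{fin}^*(\mathcal{O},\mathcal{O}^{wgp})$ — exactly the desired conclusion. Since both ingredients are already established in the excerpt, the result follows by transitivity of the implications, and there is no substantive obstacle to overcome: the whole argument is a one-line composition. The only points meriting even minor attention are ensuring that the hypothesis matches the antecedent of Theorem~\ref{TN33} verbatim and selecting the correct direction of the equivalence in Theorem~\ref{TN34}, neither of which presents any genuine difficulty.
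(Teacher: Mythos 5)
Your proposal is correct and matches exactly what the paper intends: Corollary~\ref{CN3} is stated immediately after Theorems~\ref{TN33} and~\ref{TN34} with no proof precisely because it is their composition, first obtaining $U_{fin}^*(\mathcal{O},\Omega)$ and then applying the equivalence with $U_{fin}^*(\mathcal{O},\mathcal{O}^{wgp})$. No issues.
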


\begin{Th}[{\!\cite[Theorem 4.3]{sHR}}]
\label{TN60}
For a space $X$ the following assertions are equivalent.
\begin{enumerate}[wide=0pt,label={\upshape(\arabic*)},leftmargin=*]
  \item $X$ is star-Hurewicz.
  \item $X$ satisfies $U_{fin}^*(\mathcal{O},\mathcal{O}^{gp})$.
\end{enumerate}
\end{Th}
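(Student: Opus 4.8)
The plan is to prove the two implications separately. Throughout, for a finite $\mathcal{V}\subseteq\mathcal{U}_n$ I abbreviate the star $St(\bigcup\mathcal{V},\mathcal{U}_n)$. Since every $\gamma$-cover is trivially groupable, the implication $(1)\Rightarrow(2)$ will be routine, and the substance of the theorem will lie in $(2)\Rightarrow(1)$: there a family of stars that is \emph{groupable} --- with the grouping merging several of the covers $\mathcal{U}_n$ --- must be reorganised into an honest $\gamma$-cover each of whose members is a single star $St(\bigcup\mathcal{V}_n,\mathcal{U}_n)$.

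For $(1)\Rightarrow(2)$, I would start from a sequence $(\mathcal{U}_n)$ of open covers and apply the hypothesis that $X$ is star-Hurewicz, i.e. $U_{fin}^*(\mathcal{O},\Gamma)$. If the selection lands in the escape clause, so that $St(\bigcup\mathcal{V}_n,\mathcal{U}_n)=X$ for some $n$, then the same clause witnesses $U_{fin}^*(\mathcal{O},\mathcal{O}^{gp})$ and nothing more is needed. Otherwise the family $\{St(\bigcup\mathcal{V}_n,\mathcal{U}_n):n\in\mathbb{N}\}$ is a $\gamma$-cover, so each $x\in X$ lies in all but finitely many of its members. Enumerating its distinct members and taking each as a one-element block exhibits it as a countable union of finite, pairwise disjoint subfamilies for which every point belongs to the union of all but finitely many blocks; that is precisely the definition of a groupable cover. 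Hence $X$ satisfies $U_{fin}^*(\mathcal{O},\mathcal{O}^{gp})$.

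For the reverse implication $(2)\Rightarrow(1)$, I would first reduce to a refining sequence. Given $(\mathcal{U}_n)$, replace it by the common refinements $\mathcal{W}_n=\mathcal{U}_1\wedge\cdots\wedge\mathcal{U}_n$, so that $\mathcal{W}_{n+1}$ refines $\mathcal{W}_n$. Each member of $\mathcal{W}_n$ sits inside a member of $\mathcal{U}_n$, so a selection from $\mathcal{W}_n$ can be pushed forward to a selection from $\mathcal{U}_n$ while only enlarging the corresponding star; consequently a $\gamma$-cover built from the $\mathcal{W}_n$-stars yields one built from the $\mathcal{U}_n$-stars, and it suffices to treat the refining case. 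Applying $(2)$ to $(\mathcal{W}_n)$ produces finite $\mathcal{V}_n\subseteq\mathcal{W}_n$ together with consecutive finite blocks $I_1<I_2<\cdots$ partitioning $\mathbb{N}$ such that (barring the escape clause, which again transfers directly) every $x$ lies in $G_j:=\bigcup_{n\in I_j}St(\bigcup\mathcal{V}_n,\mathcal{W}_n)$ for all but finitely many $j$. The refining structure is what lets a single cover absorb a whole block: for the coarsest index $m_j=\min I_j$ one checks that $St(\bigcup\mathcal{V}_n,\mathcal{W}_n)\subseteq St(A_j,\mathcal{W}_{m_j})$ for every $n\in I_j$, where $A_j=\bigcup_{n\in I_j}\bigcup\mathcal{V}_n$, and then picks a finite $\mathcal{F}_{m_j}\subseteq\mathcal{W}_{m_j}$ with $\bigcup\mathcal{F}_{m_j}\supseteq A_j$, so that $St(\bigcup\mathcal{F}_{m_j},\mathcal{W}_{m_j})\supseteq G_j$.

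The main obstacle is exactly here, and it is what distinguishes this theorem from its existence-type counterpart Theorem~\ref{TN34}. The dominating selections $\mathcal{F}_{m_j}$ naturally sit only at the block-starts $m_j=\min I_j$, whereas $U_{fin}^*(\mathcal{O},\Gamma)$ demands a $\gamma$-cover over \emph{all} indices $n\in\mathbb{N}$; since the $\gamma$-condition is cofinite rather than existential, the intervening (non-start) indices cannot simply be padded with arbitrary stars, as that would leave each point outside infinitely many members and destroy the $\gamma$-property. To overcome this I would control the grouping so that the block-representatives exhaust a cofinite set of indices --- for instance by re-running $(2)$ on successive tails, or by a diagonal recombination of the blocks, so as to force the blocks to be eventually singletons --- so that every sufficiently large index is itself a dominating representative of its own block. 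The resulting selection $\{St(\bigcup\mathcal{F}_n,\mathcal{U}_n):n\in\mathbb{N}\}$ is then a $\gamma$-cover, which, together with the escape clause already disposed of, yields $U_{fin}^*(\mathcal{O},\Gamma)$ and completes the proof.
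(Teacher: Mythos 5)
Your direction $(1)\Rightarrow(2)$ and the first half of $(2)\Rightarrow(1)$ --- passing to the refining sequence $\mathcal{W}_n=\wedge_{i\leq n}\mathcal{U}_i$, taking the grouping as consecutive intervals $I_j$, and absorbing each block into a single star at the block-start $m_j=\min I_j$ --- are correct, and this is essentially the technique the paper itself uses for the analogous Theorem~\ref{T5} (the present statement is only quoted from \cite{sHR} without proof; Theorem~\ref{T5} and the remark preceding Theorem~\ref{TN32} are the in-paper models). But there is a genuine gap exactly at what you call the main obstacle: you never actually produce the selection at the indices that are not block-starts, and the devices you float would not produce it. Forcing the blocks to be ``eventually singletons'' is circular, since a groupable cover with eventually singleton blocks is already essentially a $\gamma$-cover, which is what is to be proved; and re-running $(2)$ on successive tails only yields new groupings with the same defect.

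The missing step is, however, a standard one-line trick, and it is precisely the ``necessary modification'' the paper alludes to for Theorem~\ref{TN32}: instead of trying to make every index a block-start, let each index absorb a \emph{later} block. Concretely, for $n\in I_j$ choose a finite $\mathcal{F}_n\subseteq\mathcal{U}_n$ with $\bigcup\mathcal{F}_n\supseteq A_{j+1}=\bigcup_{i\in I_{j+1}}\bigcup\mathcal{V}_i$; this is possible because every $i\in I_{j+1}$ satisfies $i>n$, so $\mathcal{W}_i$ refines $\mathcal{U}_n$ and each member of $\mathcal{V}_i$ lies inside a member of $\mathcal{U}_n$. The same refinement computation you carried out at block-starts then gives $G_{j+1}\subseteq St(\bigcup\mathcal{F}_n,\mathcal{U}_n)$ for every $n\in I_j$. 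Since each $x$ lies in $G_j$ for all but finitely many $j$, it lies in $St(\bigcup\mathcal{F}_n,\mathcal{U}_n)$ for all but finitely many $n$, so $\{St(\bigcup\mathcal{F}_n,\mathcal{U}_n):n\in\mathbb{N}\}$ is the required $\gamma$-cover. With this replacement (and your treatment of the escape clause), the argument is complete.
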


\begin{Th}
\label{TN28}
If every finite power of a space $X$ is star-Rothberger, then $X$ satisfies $S_1^*(\Omega,\Omega)$.
\end{Th}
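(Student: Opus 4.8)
The plan is to imitate the classical argument that a space all of whose finite powers are Rothberger satisfies $S_1(\Omega,\Omega)$, transporting it to the star setting. The starting observation is that $\omega$-covers of $X$ lift to open covers of the finite powers: if $\mathcal{U}$ is an $\omega$-cover of $X$, then $\mathcal{U}^{(k)}:=\{U^k : U\in\mathcal{U}\}$, where $U^k$ denotes the $k$-fold product $U\times\cdots\times U$, is an open cover of $X^k$, since for any $(x_1,\dots,x_k)\in X^k$ the finite set $\{x_1,\dots,x_k\}$ is contained in some $U\in\mathcal{U}$ and hence $(x_1,\dots,x_k)\in U^k$. Conversely, a point of $X^k$ lying in $V^k$ encodes a finite subset of $X$ contained in $V$, and this is exactly the mechanism that will let us read off an $\omega$-cover of $X$ at the end.

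The crucial ingredient is a comparison between the star computed in the power and the star computed in $X$. For $V\in\mathcal{U}$ I claim that $St(V^k,\mathcal{U}^{(k)})\subseteq\bigl(St(V,\mathcal{U})\bigr)^k$. Indeed, $U^k\cap V^k=(U\cap V)^k$, so $U^k$ meets $V^k$ if and only if $U$ meets $V$; therefore any point of $St(V^k,\mathcal{U}^{(k)})$ lies in some $U^k$ with $U\cap V\neq\emptyset$, and each of its coordinates lies in $U\subseteq St(V,\mathcal{U})$. This one-sided inclusion (the star need not commute with taking powers, but this is the direction we need) is what makes the star version go through, and I expect establishing and correctly applying it to be the main point of the argument.

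With these two facts in hand the construction is a diagonalization over the powers. Fix a partition $\mathbb{N}=\bigcup_{k\in\mathbb{N}}N_k$ into infinitely many pairwise disjoint infinite sets. Given a sequence $(\mathcal{U}_n)$ of $\omega$-covers of $X$, for each fixed $k$ the family $(\mathcal{U}_n^{(k)})_{n\in N_k}$ is a sequence of open covers of $X^k$; since $X^k$ is star-Rothberger, that is, satisfies $S_1^*(\mathcal{O},\mathcal{O})$, there are selections $W_n\in\mathcal{U}_n^{(k)}$, necessarily of the form $W_n=V_n^k$ with $V_n\in\mathcal{U}_n$, such that $\{St(V_n^k,\mathcal{U}_n^{(k)}) : n\in N_k\}$ covers $X^k$. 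Ranging over all $k$ and using that the $N_k$ partition $\mathbb{N}$, this defines $V_n\in\mathcal{U}_n$ for every $n\in\mathbb{N}$.

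It then remains to check that $\{St(V_n,\mathcal{U}_n) : n\in\mathbb{N}\}$ is an $\omega$-cover of $X$. Given a nonempty finite $F\subseteq X$, pick $k\geq|F|$ and regard $F$ as a point $p\in X^k$ by listing its elements and repeating one entry if necessary. Since $\{St(V_n^k,\mathcal{U}_n^{(k)}) : n\in N_k\}$ covers $X^k$, we have $p\in St(V_n^k,\mathcal{U}_n^{(k)})$ for some $n\in N_k$, whence $p\in\bigl(St(V_n,\mathcal{U}_n)\bigr)^k$ by the inclusion above, and so $F\subseteq St(V_n,\mathcal{U}_n)$. Thus every finite subset of $X$ is contained in a member of $\{St(V_n,\mathcal{U}_n)\}$, which is therefore an $\omega$-cover, yielding the required witnesses for $S_1^*(\Omega,\Omega)$ (the proviso that $X$ itself not belong to the family being a routine technical matter). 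The only genuinely nontrivial step is the star-inclusion of the second paragraph; everything else is the standard power/diagonal bookkeeping.
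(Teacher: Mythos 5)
Your proposal is correct and follows essentially the same route as the paper: lift each $\omega$-cover to an open cover of $X^k$ via $k$-fold products, diagonalize over a partition of $\mathbb{N}$ into infinite sets $N_k$ using the star-Rothberger property of each power, and recover the $\omega$-cover condition by reading a finite set $F$ as a point of a suitable power. The star-inclusion $St(V^k,\mathcal{U}^{(k)})\subseteq(St(V,\mathcal{U}))^k$ that you isolate is exactly the step the paper performs inline when it observes that the point lies in some $U^p$ with $U^p\cap U_{n_0}^p\neq\emptyset$ and concludes $F\subseteq St(U_{n_0},\mathcal{U}_{n_0})$.
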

\begin{proof}
Consider a sequence $(\mathcal{U}_n)$ of $\omega$-covers of $X$. Let $\{N_k : k\in\mathbb{N}\}$ be a partition of $\mathbb{N}$ into infinite subsets. Now for each $k\in\mathbb{N}$ and for each $n\in N_k$, $\mathcal{W}_n=\{U^k : U\in\mathcal{U}_n\}$ is an open cover of $X^k$. Let $k\in\mathbb{N}$. Apply the star-Rothberger property of $X^k$ to $(\mathcal{W}_n : n\in N_k)$ to obtain a sequence $(U_n : n\in N_k)$ such that for each $n\in N_k$, $U_n\in\mathcal{U}_n$ and $\{St(U_n^k,\mathcal{W}_n) : n\in N_k\}$ covers $X^k$. It now remains to show that $\{St(U_n,\mathcal{U}_n) : n\in\mathbb{N}\}$ is an $\omega$-cover of $X$. Let $F=\{x_1,x_2,\dotsc,x_p\}$ be a finite subset of $X$. Now $\langle x_1,x_2,\dotsc,x_p\rangle\in X^p$ gives a $n_0\in N_p$ such that $\langle x_1,x_2,\dotsc,x_p\rangle\in St(U_{n_0}^p,\mathcal{W}_{n_0})$. It follows that $\langle x_1,x_2,\dotsc,x_p\rangle\in U^p$ for some $U\in\mathcal{U}_{n_0}$ with $U^p\cap U_{n_0}^p\neq\emptyset$. Clearly $F\subseteq St(U_{n_0},\mathcal{U}_{n_0})$.
\end{proof}

\begin{Cor}
If every finite power of a space $X$ is star-Rothberger, then $X$ satisfies $S_1^*(\Omega,\mathcal{O}^{wgp})$.
\end{Cor}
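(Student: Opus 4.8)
The plan is to obtain this as a soft consequence of Theorem~\ref{TN28}, in exact parallel with the way Corollary~\ref{CN3} is deduced from Theorems~\ref{TN33} and~\ref{TN34}. The guiding observation is that every \emph{countable} $\omega$-cover is automatically weakly groupable; since an $S_1$-style selection is indexed by $\mathbb{N}$ and hence countable, once Theorem~\ref{TN28} delivers an $\omega$-cover of selected stars there is nothing further to do.

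Concretely, I would start from an arbitrary sequence $(\mathcal{U}_n)$ of $\omega$-covers of $X$ and invoke Theorem~\ref{TN28}: because every finite power of $X$ is star-Rothberger, $X$ satisfies $S_1^*(\Omega,\Omega)$, so there is a choice $V_n\in\mathcal{U}_n$ for which $\mathcal{V}=\{St(V_n,\mathcal{U}_n):n\in\mathbb{N}\}$ is an $\omega$-cover of $X$. I claim the \emph{same} sequence $(V_n)$ witnesses $S_1^*(\Omega,\mathcal{O}^{wgp})$, so the only remaining task is to check $\mathcal{V}\in\mathcal{O}^{wgp}$. For this, note that $\mathcal{V}$ is countable; listing its distinct members as $W_1,W_2,\dotsc$, take the partition of $\mathcal{V}$ into the singleton blocks $\mathcal{V}_k=\{W_k\}$, which are finite, pairwise disjoint, and have union $\mathcal{V}$. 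Given any finite $F\subseteq X$, the $\omega$-cover property of $\mathcal{V}$ supplies an index $k$ with $F\subseteq W_k=\cup\mathcal{V}_k$, which is precisely the defining condition for weak groupability. Hence $\mathcal{V}\in\mathcal{O}^{wgp}$ and the corollary follows.

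I do not expect a genuine obstacle here, since the real work is already carried by Theorem~\ref{TN28} and the passage from $\Omega$ to $\mathcal{O}^{wgp}$ costs nothing for a countable selected family. The only point deserving a word of care is the bookkeeping in the definition of $\mathcal{O}^{wgp}$: one should pass to the \emph{distinct} members of $\mathcal{V}$ before forming the singleton blocks, so that the subfamilies $\mathcal{V}_k$ are honestly pairwise disjoint. By contrast, had the target been the stronger class $\mathcal{O}^{gp}$ of groupable covers, grouping into singletons would no longer suffice and a diagonalization in the spirit of the classical $S_1(\Omega,\Omega)\Rightarrow S_1(\Omega,\mathcal{O}^{gp})$ argument would be required; for weak groupability, however, no such effort is needed.
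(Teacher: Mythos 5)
Your argument is correct and matches the paper's intended route: the corollary is stated without proof as an immediate consequence of Theorem~\ref{TN28}, the implicit point being exactly the one you make, namely that the countable $\omega$-cover $\{St(V_n,\mathcal{U}_n):n\in\mathbb{N}\}$ is automatically weakly groupable via singleton blocks (after discarding repetitions). Your side remark that the paper records only an implication $S_1^*(\Omega,\Omega)\Rightarrow S_1^*(\Omega,\mathcal{O}^{wgp})$ here, rather than an equivalence as in the $S_{fin}$/$U_{fin}$ cases, is also consistent with Table~\ref{tab1}.
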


\begin{Th}
\label{T4}
If every finite power of a space $X$ is star-K-Menger, then $X$ satisfies $SS_{comp}^*(\mathcal{O},\Omega)$.
\end{Th}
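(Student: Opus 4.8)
The plan is to adapt the argument of Theorem~\ref{TN28} to the compact-selection setting. The essential change is that we now begin from arbitrary open covers (members of $\mathcal{O}$) rather than $\omega$-covers, so the diagonal covers of the finite powers must be built from \emph{products} of members of $\mathcal{U}_n$ instead of from single members. First I would fix a sequence $(\mathcal{U}_n)$ of open covers of $X$ and choose a partition $\{N_k : k\in\mathbb{N}\}$ of $\mathbb{N}$ into infinitely many infinite sets, exactly as in Theorem~\ref{TN28}. For each $k\in\mathbb{N}$ and each $n\in N_k$ I would put
\[
\mathcal{W}_n=\{U_1\times U_2\times\cdots\times U_k : U_1,U_2,\dotsc,U_k\in\mathcal{U}_n\}.
\]
Since $\mathcal{U}_n$ covers $X$, every point $\langle x_1,\dotsc,x_k\rangle\in X^k$ lies in some such product, so $\mathcal{W}_n$ is an open cover of $X^k$. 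Note that here, unlike in Theorem~\ref{TN28}, the family $\{U^k : U\in\mathcal{U}_n\}$ would \emph{not} cover $X^k$, because $\mathcal{U}_n$ is only an open cover and not an $\omega$-cover; this is precisely why products are needed.

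Next, for each fixed $k$ I would apply the star-K-Menger property of $X^k$ to the sequence $(\mathcal{W}_n : n\in N_k)$, obtaining a sequence $(L_n : n\in N_k)$ of compact subsets of $X^k$ such that $\{St(L_n,\mathcal{W}_n) : n\in N_k\}$ is an open cover of $X^k$. I would then transfer each $L_n$ back to $X$ by taking the union of its coordinate projections: for $n\in N_k$ set $K_n=\bigcup_{i=1}^{k}\pi_i(L_n)$. Each $\pi_i(L_n)$ is a continuous image of a compact set, hence compact, so $K_n$ is a compact subset of $X$, and this produces the required sequence $(K_n)$ of compact sets.

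It remains to verify that $\{St(K_n,\mathcal{U}_n) : n\in\mathbb{N}\}$ is an $\omega$-cover of $X$, and this is the crux. Given a finite set $F=\{x_1,\dotsc,x_p\}\subseteq X$, I would regard $\langle x_1,\dotsc,x_p\rangle$ as a point of $X^p$ and use the covering property over $N_p$ to find $n_0\in N_p$ with $\langle x_1,\dotsc,x_p\rangle\in St(L_{n_0},\mathcal{W}_{n_0})$. Unwinding the star yields a product $U_1\times\cdots\times U_p\in\mathcal{W}_{n_0}$ that contains $\langle x_1,\dotsc,x_p\rangle$ and meets $L_{n_0}$ in some point $\langle y_1,\dotsc,y_p\rangle$. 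Then for each $i$ we have $x_i\in U_i$ and $y_i\in U_i\cap\pi_i(L_{n_0})\subseteq U_i\cap K_{n_0}$, so $U_i$ witnesses $x_i\in St(K_{n_0},\mathcal{U}_{n_0})$, whence $F\subseteq St(K_{n_0},\mathcal{U}_{n_0})$. The one point requiring care is exactly this coordinatewise bookkeeping --- matching each coordinate's witnessing factor $U_i$ with a point of $K_{n_0}$ coming from the corresponding projection of $L_{n_0}$ --- since this is where the passage from the product cover of $X^p$ to the original cover of $X$ is actually made; everything else is routine. As in the proof of Theorem~\ref{TN28}, I would not dwell on the borderline case in which some $St(K_n,\mathcal{U}_n)$ happens to equal $X$, which only strengthens the covering condition.
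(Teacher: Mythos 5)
Your proposal is correct and follows essentially the same route as the paper: cover $X^k$ by products of members of $\mathcal{U}_m$ for $m\in N_k$, apply star-K-Mengerness of $X^k$, and replace each compact $L_n\subseteq X^k$ by the union of its coordinate projections. The coordinatewise verification of the $\omega$-cover condition that you spell out is exactly the step the paper leaves to the reader, and it is carried out correctly.
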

\begin{proof}
Let $(\mathcal{U}_n)$ be a sequence of open covers of $X$. Let $\{N_k : k\in\mathbb{N}\}$ be a partition of $\mathbb{N}$ into infinite subsets. For each $k\in\mathbb{N}$ and each $m\in N_k$, define $\mathcal{W}_m=\{U_1\times U_2\times\dotsb\times U_k : U_1,U_2,\ldots,U_k\in\mathcal{U}_m\}$. Now $(\mathcal{W}_m:{m\in N_k})$ is a sequence of open covers of $X^k$. Since $X^k$ is star-K-Menger, there exists a sequence $(K_m:{m\in N_k})$ of compact subsets of $X^k$ such that $\{St(K_m,\mathcal{W}_m) : m\in N_k\}$ is an open cover of $X^k$. For each $1\leq i\leq k$, let $p_i:X^k\to X$ be the $i$th projection mapping. For each $m\in N_k$, $C_m=\cup_{1\leq i\leq k}p_i(K_m)$ is compact as $p_i(K_m)$ is compact for each $1\leq i\leq k$ and each $m\in N_k$. Thus $K_m\subseteq C_m^k$ for each $m\in N_k$. It now follows that the $SS_{comp}^*(\mathcal{O},\Omega)$ property of $X$ is witnessed by the sequence $(C_n)$.
\end{proof}

\begin{Th}
\label{T5}
For a space $X$ the following assertions are equivalent.
\begin{enumerate}[label={\upshape(\arabic*)}, leftmargin=*]
  \item $X$ satisfies $SS_{comp}^*(\mathcal{O},\Omega)$.
  \item $X$ satisfies $SS_{comp}^*(\mathcal{O},\mathcal{O}^{wgp})$.
\end{enumerate}
\end{Th}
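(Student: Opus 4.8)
The plan is to prove the two implications separately, with $(1)\Rightarrow(2)$ being essentially immediate and $(2)\Rightarrow(1)$ carrying all of the content. For $(1)\Rightarrow(2)$ I would simply observe that every countable $\omega$-cover is weakly groupable: partition it into singleton blocks, and note that the $\omega$-cover condition says each finite $F\subseteq X$ is contained in a single member, hence in a single block. Thus $\Omega\subseteq\mathcal{O}^{wgp}$, and any sequence $(K_n)$ witnessing $SS_{comp}^*(\mathcal{O},\Omega)$ is at once a witness for $SS_{comp}^*(\mathcal{O},\mathcal{O}^{wgp})$.

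For $(2)\Rightarrow(1)$, let $(\mathcal{U}_n)$ be a sequence of open covers of $X$. The first step is a standard reduction to a decreasing sequence: put $\mathcal{W}_n=\mathcal{U}_1\wedge\mathcal{U}_2\wedge\dotsb\wedge\mathcal{U}_n$, so that each $\mathcal{W}_n$ is an open cover refining $\mathcal{U}_i$ for all $i\leq n$ and $\mathcal{W}_{n+1}$ refines $\mathcal{W}_n$. Because $\mathcal{W}_n$ refines $\mathcal{U}_n$, one has $St(K,\mathcal{W}_n)\subseteq St(K,\mathcal{U}_n)$ for every compact $K$; hence it suffices to produce compact sets $L_n$ for which $\{St(L_n,\mathcal{W}_n):n\in\mathbb{N}\}$ is an $\omega$-cover, since the same sets then work for $(\mathcal{U}_n)$ (a family of supersets of an $\omega$-cover, indexwise, is again an $\omega$-cover).

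Now apply the hypothesis $SS_{comp}^*(\mathcal{O},\mathcal{O}^{wgp})$ to $(\mathcal{W}_n)$, obtaining compact sets $K_n$ such that $\{St(K_n,\mathcal{W}_n):n\in\mathbb{N}\}$ is weakly groupable, say via finite pairwise disjoint blocks indexed by sets $S_j\subseteq\mathbb{N}$, so that each finite $F\subseteq X$ lies in $\bigcup_{n\in S_j}St(K_n,\mathcal{W}_n)$ for some $j$. The key (and only real) obstacle is that this places $F$ inside a finite union of stars taken with respect to \emph{different} covers, whereas an $\omega$-cover demands a single such star. This is exactly where the decreasing reduction pays off: set $c_j=\min S_j$ and $K^{(j)}=\bigcup_{n\in S_j}K_n$ (a finite union of compacta, hence compact). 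Since $n\geq c_j$ for every $n\in S_j$, the cover $\mathcal{W}_n$ refines $\mathcal{W}_{c_j}$, giving $St(K_n,\mathcal{W}_n)\subseteq St(K_n,\mathcal{W}_{c_j})$, and together with the additivity $St(A\cup B,\mathcal{W})=St(A,\mathcal{W})\cup St(B,\mathcal{W})$ this yields $\bigcup_{n\in S_j}St(K_n,\mathcal{W}_n)\subseteq St(K^{(j)},\mathcal{W}_{c_j})$. Consequently $\{St(K^{(j)},\mathcal{W}_{c_j}):j\in\mathbb{N}\}$ is an $\omega$-cover.

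Finally I would repackage this as a genuine $SS_{comp}^*$-selection indexed by $\mathbb{N}$. The indices $c_j=\min S_j$ are pairwise distinct (the $S_j$ are disjoint), so defining $L_{c_j}=K^{(j)}$ and letting $L_n$ be a single point for the remaining $n$ produces compact sets with $\{St(L_n,\mathcal{W}_n):n\in\mathbb{N}\}\supseteq\{St(K^{(j)},\mathcal{W}_{c_j}):j\in\mathbb{N}\}$, hence an $\omega$-cover, as required. The only point needing a word of care is the harmless degenerate case in which some star equals $X$, which I would dispose of by discarding such members, as they only assist in covering finite sets.
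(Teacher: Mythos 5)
Your proof is correct and follows essentially the same route as the paper's: reduce to the refining sequence $\mathcal{V}_n=\wedge_{i\le n}\mathcal{U}_i$, apply the weakly groupable hypothesis, and merge the compact sets over each block, anchoring the star at the block's minimal index where the refinement relation makes the stars comparable. The paper phrases the regrouping via an increasing sequence $n_1<n_2<\cdots$ and a case-defined $C_n$ (and omits the trivial direction $(1)\Rightarrow(2)$), but the underlying argument is identical to yours.
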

\begin{proof}
We only prove $(2)\Rightarrow(1)$. Let $(\mathcal{U}_n)$ be a sequence of open covers of $X$. For each $n$ let $\mathcal{V}_n=\wedge_{i\leq n}\mathcal{U}_i$. For each $n$ $\mathcal{V}_n$ is an open cover of $X$ which refines $\mathcal{U}_i$ for each $i\leq n$. Apply $(2)$ to the sequence $(\mathcal{V}_n)$ to obtain a sequence $(K_n)$ of compact subsets of $X$ such that $\{St(K_n,\mathcal{V}_n) : n\in\mathbb{N}\}$ is a weakly groupable cover of $X$. Subsequently there is a sequence $n_1<n_2<\cdots<n_k<\cdots$ of positive integers such that for every finite set $F\subseteq X$ we have $F\subseteq\cup_{n_k\leq i\leq n_{k+1}}St(K_i,\mathcal{V}_i)$ for some $k\in\mathbb{N}$. Define
\[
C_n=
\begin{cases}
 \cup_{i<n_1}K_i & \mbox{for } n<n_1  \\
  \cup_{n_k\leq i<n_{k+1}}K_i & \mbox{for } n_k\leq n<n_{k+1}.
\end{cases}
\]
An easy verification shows that the sequence $(C_n)$ of compact subsets fulfills the requirement.
\end{proof}

In combination with Theorem~\ref{T4} we obtain the following.
\begin{Cor}
\label{C1}
If every finite power of a space $X$ is star-K-Menger, then $X$ satisfies $SS_{comp}^*(\mathcal{O},\mathcal{O}^{wgp})$.
\end{Cor}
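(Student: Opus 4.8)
The plan is simply to chain the two immediately preceding results, since the hypothesis of this corollary is verbatim the hypothesis of Theorem~\ref{T4}. First I would apply Theorem~\ref{T4} to the assumption that every finite power of $X$ is star-K-Menger, obtaining that $X$ satisfies $SS_{comp}^*(\mathcal{O},\Omega)$. Then I would invoke Theorem~\ref{T5}, specifically the direction $(1)\Rightarrow(2)$, to upgrade the $\Omega$-target to the $\mathcal{O}^{wgp}$-target. This immediately gives $SS_{comp}^*(\mathcal{O},\mathcal{O}^{wgp})$ and finishes the proof.

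There is essentially no obstacle here: once Theorems~\ref{T4} and~\ref{T5} are in hand, the corollary is a purely formal consequence requiring no fresh construction. The only conceptual content is recognizing that the selection witnesses produced for the $\Omega$-target can be reorganized into a weakly groupable cover, but this is precisely the refinement-and-blocking argument already carried out in the proof of Theorem~\ref{T5} (passing to $\mathcal{V}_n=\wedge_{i\le n}\mathcal{U}_i$, extracting the blocking integers $n_1<n_2<\cdots$, and amalgamating the compact witnesses over each block into the sets $C_n$).

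For completeness I note the direct route one could take if one preferred not to cite the named theorems: mimic the proof of Theorem~\ref{T4} by fixing a partition $\{N_k:k\in\mathbb{N}\}$ of $\mathbb{N}$ into infinite pieces, forming the $k$-th power covers $\mathcal{W}_m$ on each block, applying star-K-Menger of $X^k$ to get compact witnesses in $X^k$, and projecting these down to compact $C_m\subseteq X$ via the coordinate maps $p_i$. One would then verify that $\{St(C_n,\mathcal{U}_n):n\in\mathbb{N}\}$ is an $\omega$-cover and apply the blocking trick to make it weakly groupable. Since this merely retraces the two cited proofs, the two-line combination is the cleaner presentation, and that is what I would record.
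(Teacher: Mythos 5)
Your proof is correct and is exactly the paper's own argument: the corollary is presented there as the immediate combination of Theorems~\ref{T4} and~\ref{T5}. (One small remark: the refinement-and-blocking construction you describe is the paper's proof of the direction $(2)\Rightarrow(1)$ of Theorem~\ref{T5}, whereas the direction $(1)\Rightarrow(2)$ that you actually invoke is the easy one --- a countable $\omega$-cover is weakly groupable by taking singleton blocks.)
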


The proof of the following result is similar to Theorem~\ref{T5} with necessary modifications.
\begin{Th}
\label{TN32}
For a space $X$ the following assertions are equivalent.
\begin{enumerate}[wide=0pt,label={\upshape(\arabic*)},leftmargin=*]
  \item $X$ is star-K-Hurewicz.
  \item $X$ satisfies $SS_{comp}^*(\mathcal{O},\mathcal{O}^{gp})$.
\end{enumerate}
\end{Th}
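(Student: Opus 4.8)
The plan is to establish the two implications separately, with the real content lying in $(2)\Rightarrow(1)$, in close parallel with Theorem~\ref{T5}. For $(1)\Rightarrow(2)$ I would simply verify that the same compact sets work: given a sequence $(\mathcal{U}_n)$ of open covers, the star-K-Hurewicz property supplies compact sets $(K_n)$ for which $\{St(K_n,\mathcal{U}_n):n\in\mathbb{N}\}$ is a $\gamma$-cover. Since this is a countably infinite open cover in which every point belongs to all but finitely many members, partitioning it into singletons (finite, pairwise disjoint subfamilies) exhibits it as a groupable cover, so it lies in $\mathcal{O}^{gp}$; this is exactly $SS_{comp}^*(\mathcal{O},\mathcal{O}^{gp})$.

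For $(2)\Rightarrow(1)$ I would imitate the proof of Theorem~\ref{T5}. Given $(\mathcal{U}_n)$, set $\mathcal{V}_n=\wedge_{i\le n}\mathcal{U}_i$, an open cover refining $\mathcal{U}_j$ for every $j\le n$. Applying $(2)$ to $(\mathcal{V}_n)$ yields compact sets $(K_n)$ with $\{St(K_n,\mathcal{V}_n):n\in\mathbb{N}\}$ groupable, so there are $n_1<n_2<\cdots$ such that each $x\in X$ belongs to $\cup_{n_k\le i<n_{k+1}}St(K_i,\mathcal{V}_i)$ for all but finitely many $k$. Put $D_k=\cup_{n_k\le i<n_{k+1}}K_i$, which is compact. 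Because $n_k\le i$ for every $i$ in the block, $\mathcal{V}_i$ refines $\mathcal{U}_{n_k}$, and hence $St(K_i,\mathcal{V}_i)\subseteq St(K_i,\mathcal{U}_{n_k})\subseteq St(D_k,\mathcal{U}_{n_k})$; consequently $\cup_{n_k\le i<n_{k+1}}St(K_i,\mathcal{V}_i)\subseteq St(D_k,\mathcal{U}_{n_k})$. Thus every $x$ lies in $St(D_k,\mathcal{U}_{n_k})$ for all but finitely many $k$, i.e. $\{St(D_k,\mathcal{U}_{n_k}):k\in\mathbb{N}\}$ is a $\gamma$-cover of $X$.

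The single genuine deviation from Theorem~\ref{T5} is how this $\gamma$-cover is realized as an output of $SS_{comp}^*(\mathcal{O},\Gamma)$ against the original sequence $(\mathcal{U}_n)$. Reusing the block-constant compact set $D_k$ at every index of $[n_k,n_{k+1})$, as one does for an $\omega$-cover, fails here: the refinement only gives $St(K_i,\mathcal{V}_i)\subseteq St(\cdot,\mathcal{U}_n)$ for $n\le i$, so a point whose block-witness has a small index may miss $St(D_k,\mathcal{U}_n)$ for the larger $n$ in the block, and a $\gamma$-cover cannot tolerate cofinally many such omissions. Instead I would set $C_{n_k}=D_k$ and $C_n=\emptyset$ for $n\notin\{n_k:k\in\mathbb{N}\}$ (the empty set is compact and $St(\emptyset,\mathcal{U}_n)=\emptyset$). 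As a set of open sets, $\{St(C_n,\mathcal{U}_n):n\in\mathbb{N}\}=\{St(D_k,\mathcal{U}_{n_k}):k\in\mathbb{N}\}\cup\{\emptyset\}$; the first part is already a $\gamma$-cover and adjoining the lone member $\emptyset$ removes at most one set from each point's cofinite tail, so the whole family is a $\gamma$-cover witnessing that $X$ is star-K-Hurewicz.

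The main obstacle is precisely this last passage from groupability to a $\gamma$-cover \emph{indexed by the original covers}: the block-union $D_k$ collapses each group into one star via the common refinement $\wedge_{i\le n}\mathcal{U}_i$, while the observation that the required output is a \emph{set} of open sets — so that the vacuous stars coalesce into a single inconsequential member — is exactly what upgrades the $\omega$-type argument of Theorem~\ref{T5} to the $\gamma$-type conclusion. A routine check then disposes of the degenerate cases (some $\mathcal{U}_n$ admitting a finite subcover, or only finitely many distinct stars occurring, in which case some star already equals $X$).
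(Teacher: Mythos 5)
Your proof is correct and follows exactly the route the paper indicates for Theorem~\ref{TN32}, namely imitating Theorem~\ref{T5} (refine by $\mathcal{V}_n=\wedge_{i\le n}\mathcal{U}_i$, use the grouping $n_1<n_2<\cdots$, and take block unions of the compact sets), and you have correctly identified and repaired the one point where the $\omega$-type argument does not transfer to the $\gamma$-type conclusion. Your padding device $C_n=\emptyset$ off the indices $n_k$ is legitimate ($\emptyset$ is compact and contributes only a single vacuous member to the resulting family, so the $\gamma$-cover property survives); an equivalent and perhaps tidier ``necessary modification'' is to assign to every $n\in[n_k,n_{k+1})$ the union $\cup_{n_{k+1}\le i<n_{k+2}}K_i$ of the \emph{next} block, since $\mathcal{V}_i$ refines $\mathcal{U}_n$ for all $i\ge n_{k+1}>n$, which yields the $\gamma$-cover without introducing empty stars.
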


\begin{Th}[{\!\cite[Theorem 3.1]{sHR}}]
\label{TN35}
If every finite power of a space $X$ is strongly star-Menger, then $X$ satisfies $SS_{fin}^*(\mathcal{O},\Omega)$.
\end{Th}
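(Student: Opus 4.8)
The plan is to follow the template of Theorem~\ref{T4}, replacing the compact sets there by finite sets and checking the $\omega$-cover condition through finite tuples. First I would fix a sequence $(\mathcal{U}_n)$ of open covers of $X$ together with a partition $\{N_k : k\in\mathbb{N}\}$ of $\mathbb{N}$ into infinitely many infinite pieces. For each $k\in\mathbb{N}$ and each $m\in N_k$, I would build the product cover $\mathcal{W}_m=\{U_1\times U_2\times\dotsb\times U_k : U_1,U_2,\ldots,U_k\in\mathcal{U}_m\}$ of $X^k$, so that $(\mathcal{W}_m : m\in N_k)$ is a sequence of open covers of $X^k$.

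Since $X^k$ is strongly star-Menger, i.e.\ satisfies $SS_{fin}^*(\mathcal{O},\mathcal{O})$, applying this to $(\mathcal{W}_m : m\in N_k)$ yields finite sets $F_m\subseteq X^k$ ($m\in N_k$) with $\{St(F_m,\mathcal{W}_m) : m\in N_k\}$ an open cover of $X^k$. As in Theorem~\ref{T4}, letting $p_i:X^k\to X$ denote the $i$th projection and setting $C_m=\bigcup_{1\leq i\leq k}p_i(F_m)$ produces a finite subset of $X$ with $F_m\subseteq C_m^{\,k}$. Ranging $m$ over all of $\mathbb{N}$, the sequence $(C_n)$ is my candidate witness for $SS_{fin}^*(\mathcal{O},\Omega)$.

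It remains to verify that $\{St(C_n,\mathcal{U}_n) : n\in\mathbb{N}\}$ is an $\omega$-cover. Given a finite set $F=\{x_1,\ldots,x_p\}\subseteq X$, I would look at the point $\langle x_1,\ldots,x_p\rangle\in X^p$ and use that $\{St(F_m,\mathcal{W}_m) : m\in N_p\}$ covers $X^p$ to find $m_0\in N_p$ with $\langle x_1,\ldots,x_p\rangle\in St(F_{m_0},\mathcal{W}_{m_0})$. Unwinding the product structure, there is some $U_1\times\dotsb\times U_p\in\mathcal{W}_{m_0}$ containing this tuple and meeting $F_{m_0}$, say in a point whose $i$th coordinate $y_i$ lies in $U_i$; since $y_i\in p_i(F_{m_0})\subseteq C_{m_0}$, each $U_i$ meets $C_{m_0}$, whence $x_i\in U_i\subseteq St(C_{m_0},\mathcal{U}_{m_0})$. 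Thus $F\subseteq St(C_{m_0},\mathcal{U}_{m_0})$, as required.

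The routine parts are the partition bookkeeping and the projection argument, both already present in Theorem~\ref{T4}. The only genuinely new point to get right is the final $\omega$-cover verification: one must ensure that the \emph{finite} witnesses transport correctly from the star taken in $X^p$ with respect to the product cover $\mathcal{W}_{m_0}$ back to the star taken in $X$ with respect to $\mathcal{U}_{m_0}$. This is exactly where choosing $C_{m_0}$ to collect all coordinate-projections of $F_{m_0}$ pays off, so I expect no real obstacle beyond careful bookkeeping.
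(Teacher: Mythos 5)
Your proposal is correct: the reduction to the product covers $\mathcal{W}_m$ of $X^k$, the passage from the finite sets $F_m\subseteq X^k$ to $C_m=\bigcup_i p_i(F_m)$, and the coordinatewise verification that $F\subseteq St(C_{m_0},\mathcal{U}_{m_0})$ all go through exactly as you describe. This is the same argument the paper itself uses for the compact-set analogue (Theorem~\ref{T4}) and for Theorem~\ref{TN28}, and it matches the proof of the cited source, so there is nothing further to add.
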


\begin{Th}[{\!\cite[Theorem 3.2]{sHR}}]
\label{TN36}
For a space $X$ the following assertions are equivalent.
\begin{enumerate}[wide=0pt,label={\upshape(\arabic*)},leftmargin=*]
  \item $X$ satisfies $SS_{fin}^*(\mathcal{O},\Omega)$.
  \item $X$ satisfies $SS_{fin}^*(\mathcal{O},\mathcal{O}^{wgp})$.
\end{enumerate}
\end{Th}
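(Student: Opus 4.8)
The plan is to prove the two implications separately, imitating the proof of Theorem~\ref{T5} with finite sets replacing compact sets. The implication $(1)\Rightarrow(2)$ is routine: given a sequence $(\mathcal{U}_n)$ of open covers, apply $SS_{fin}^*(\mathcal{O},\Omega)$ to obtain finite sets $(F_n)$ for which $\mathcal{W}=\{St(F_n,\mathcal{U}_n):n\in\mathbb{N}\}$ is an $\omega$-cover. Since $\mathcal{W}$ is \emph{countable}, I would simply observe that a countable $\omega$-cover is automatically weakly groupable: partitioning $\mathcal{W}$ into singleton blocks, the $\omega$-cover property states that each finite $F\subseteq X$ is contained in a single member $St(F_n,\mathcal{U}_n)$, which is precisely the union of one block. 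Hence the same sequence $(F_n)$ already witnesses $SS_{fin}^*(\mathcal{O},\mathcal{O}^{wgp})$.

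For the substantive direction $(2)\Rightarrow(1)$ I would follow Theorem~\ref{T5} step by step. Starting from $(\mathcal{U}_n)$, set $\mathcal{V}_n=\wedge_{i\leq n}\mathcal{U}_i$, so that $\mathcal{V}_n$ is an open cover refining $\mathcal{U}_i$ for every $i\leq n$. Applying $(2)$ to $(\mathcal{V}_n)$ produces finite sets $(F_n)$ such that $\{St(F_n,\mathcal{V}_n):n\in\mathbb{N}\}$ is weakly groupable; let $n_1<n_2<\cdots$ be the associated block boundaries, so that every finite $F\subseteq X$ satisfies $F\subseteq\cup_{n_k\leq i<n_{k+1}}St(F_i,\mathcal{V}_i)$ for some $k$. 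I would then group the chosen finite sets by setting $G_n=\cup_{i<n_1}F_i$ for $n<n_1$ and $G_n=\cup_{n_k\leq i<n_{k+1}}F_i$ for $n_k\leq n<n_{k+1}$. Here the only modification from Theorem~\ref{T5} is the trivial observation that each $G_n$, being a finite union of finite sets, is again finite, which is exactly what is needed so that $(G_n)$ is an admissible selection for $SS_{fin}^*$.

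The heart of the argument, and the step I expect to demand the most care, is verifying that $\{St(G_n,\mathcal{U}_n):n\in\mathbb{N}\}$ is an $\omega$-cover. Given a finite $F$, fix $k$ with $F\subseteq\cup_{n_k\leq i<n_{k+1}}St(F_i,\mathcal{V}_i)$; I claim that the single star $St(G_{n_k},\mathcal{U}_{n_k})$ already contains $F$. The crucial device is to evaluate the block at its \emph{left} endpoint $n=n_k$: for every index $i$ in the block one has $i\geq n_k$, so $\mathcal{V}_i$ refines $\mathcal{U}_{n_k}$, whence $St(F_i,\mathcal{V}_i)\subseteq St(F_i,\mathcal{U}_{n_k})\subseteq St(G_{n_k},\mathcal{U}_{n_k})$, the last inclusion using $F_i\subseteq G_{n_k}$. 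Taking the union over $n_k\leq i<n_{k+1}$ yields $F\subseteq St(G_{n_k},\mathcal{U}_{n_k})$, as desired. This is exactly where the construction is delicate: the refinement inclusion $St(F_i,\mathcal{V}_i)\subseteq St(F_i,\mathcal{U}_n)$ requires $n\leq i$, so reading the block off at any interior or right index would break the argument, and it is the grouping together with the choice of the smallest index $n_k$ that makes the single-star containment go through.
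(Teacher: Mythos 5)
Your proof is correct and follows essentially the same route the paper takes for the analogous Theorem~\ref{T5} (the paper itself only cites Theorem~\ref{TN36} from \cite{sHR} without proof): refine by $\mathcal{V}_n=\wedge_{i\leq n}\mathcal{U}_i$, group the selected finite sets along the block boundaries of the weakly groupable cover, and evaluate each block at its left endpoint $n_k$ so that $\mathcal{V}_i$ refines $\mathcal{U}_{n_k}$ for every $i$ in the block. Your explicit verification of the $\omega$-cover property correctly supplies what the paper's template proof dismisses as ``an easy verification.''
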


\begin{Cor}
\label{CN4}
If every finite power of a space $X$ is strongly star-Menger, then $X$ satisfies $SS_{fin}^*(\mathcal{O},\mathcal{O}^{wgp})$.
\end{Cor}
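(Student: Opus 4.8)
The plan is to derive this as an immediate composition of the two preceding results, exactly mirroring the way Corollary~\ref{CN3} follows from Theorems~\ref{TN33} and~\ref{TN34}, and Corollary~\ref{C1} from Theorems~\ref{T4} and~\ref{T5}. All the genuine content here sits inside the cited theorems; the corollary itself is a one-line chaining and requires no new argument about stars, compactness, or groupability.

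Concretely, I would first invoke Theorem~\ref{TN35}: since every finite power of $X$ is strongly star-Menger, $X$ satisfies $SS_{fin}^*(\mathcal{O},\Omega)$. I would then apply the equivalence $(1)\Leftrightarrow(2)$ of Theorem~\ref{TN36}, which asserts that $X$ satisfies $SS_{fin}^*(\mathcal{O},\Omega)$ precisely when $X$ satisfies $SS_{fin}^*(\mathcal{O},\mathcal{O}^{wgp})$. Composing these two facts yields that $X$ satisfies $SS_{fin}^*(\mathcal{O},\mathcal{O}^{wgp})$, which is the conclusion.

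There is essentially no obstacle to overcome; the substance has already been discharged by Theorems~\ref{TN35} and~\ref{TN36}. The only point worth a moment's care is verifying that the selection-principle classes line up verbatim between the two inputs: both are phrased with the same strongly-star operator $SS_{fin}^*$ and the same source family $\mathcal{O}$, so the $\Omega$-cover output delivered by Theorem~\ref{TN35} is exactly the hypothesis consumed by Theorem~\ref{TN36}. Since these match, the chaining is legitimate and the proof reduces to a single sentence citing both theorems.
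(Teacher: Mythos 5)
Your proof is correct and matches the paper's intended derivation exactly: the corollary is stated without proof precisely because it is the immediate composition of Theorem~\ref{TN35} with the equivalence in Theorem~\ref{TN36}, in the same pattern as Corollaries~\ref{CN3} and~\ref{C1}. Nothing further is needed.
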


\begin{Th}[{\!\cite[Theorem 5.2]{sHR}}]
\label{TN37}
For a space $X$ the following assertions are equivalent.
\begin{enumerate}[wide=0pt,label={\upshape(\arabic*)},leftmargin=*]
  \item $X$ is strongly star-Hurewicz.
  \item $X$ satisfies $SS_{fin}^*(\mathcal{O},\mathcal{O}^{gp})$.
\end{enumerate}
\end{Th}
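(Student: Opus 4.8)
The plan is to prove the two implications separately, with $(1)\Rightarrow(2)$ being purely formal and $(2)\Rightarrow(1)$ carrying all of the content through a refinement of the grouping device used in Theorem~\ref{T5}. For $(1)\Rightarrow(2)$ I would invoke the inclusion $\Gamma\subseteq\mathcal{O}^{gp}$: every $\gamma$-cover is groupable, since partitioning its (infinitely many) distinct members into singleton blocks works — each point misses only finitely many members of a $\gamma$-cover, hence lies in all but finitely many blocks. Thus any sequence $(K_n)$ of finite sets witnessing strong star-Hurewiczness for $(\mathcal{U}_n)$, which renders $\{St(K_n,\mathcal{U}_n):n\in\mathbb{N}\}$ a $\gamma$-cover, automatically renders it groupable, and the very same $(K_n)$ witnesses $SS_{fin}^*(\mathcal{O},\mathcal{O}^{gp})$.

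For $(2)\Rightarrow(1)$ I would begin as in Theorem~\ref{T5}. Given open covers $(\mathcal{U}_n)$, put $\mathcal{V}_n=\wedge_{i\le n}\mathcal{U}_i$; this is an open cover refining $\mathcal{U}_m$ for every $m\le n$, and the purpose of this \emph{forward} refinement is the star inequality $St(K,\mathcal{V}_i)\subseteq St(K,\mathcal{U}_m)$ whenever $m\le i$. Applying $(2)$ to $(\mathcal{V}_n)$ yields finite sets $K_n$ with $\{St(K_n,\mathcal{V}_n):n\in\mathbb{N}\}$ groupable; after coarsening I take the blocks to be consecutive intervals $I_k=[n_k,n_{k+1})$, so that the grouped unions $G_k=\bigcup_{i\in I_k}St(K_i,\mathcal{V}_i)$ form a $\gamma$-cover (each point lies in all but finitely many $G_k$). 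Writing $C_k=\bigcup_{i\in I_k}K_i$, which is finite, the star inequality applied with $m\le n_k\le i$ gives $G_k\subseteq St(C_k,\mathcal{U}_m)$ for all $m\le n_k$.

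The main obstacle is that $SS_{fin}^*(\mathcal{O},\Gamma)$ demands a finite set at \emph{each} index $n$, paired with $\mathcal{U}_n$, whereas the grouping supplies useful data only at the block-starts $n_k$. Copying Theorem~\ref{T5} verbatim — placing at every $n\in I_k$ the finite set $C_k$ of its own block — \emph{fails} here: a fixed point may be caught in its block only at some low index $i_0\in I_k$, and since $\mathcal{V}_{i_0}$ does not refine $\mathcal{U}_n$ for $n>i_0$, every index of the block above $i_0$ is missed; these missed indices accumulate across the blocks into an infinite set, destroying the $\gamma$-cover condition. (For the $\omega$-cover target of Theorem~\ref{T5} this is harmless, because only one member need contain each finite set.) My fix is to assign to each $n$ a block lying \emph{entirely at or beyond} $n$: set $j(n)=\min\{k:n_k\ge n\}$ and $K'_n=C_{j(n)}$. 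Since every $i\in I_{j(n)}$ satisfies $i\ge n_{j(n)}\ge n$, the forward star inequality (with $m=n$) gives $G_{j(n)}\subseteq St(C_{j(n)},\mathcal{U}_n)=St(K'_n,\mathcal{U}_n)$. As $j$ is non-decreasing with $j(n)\to\infty$, for a fixed $x$ I choose $n$ large enough that $j(n)$ exceeds the finite threshold past which $x\in G_k$; then $x\in G_{j(n)}\subseteq St(K'_n,\mathcal{U}_n)$, so $x$ lies in all but finitely many members. Hence $(K'_n)$ witnesses $SS_{fin}^*(\mathcal{O},\Gamma)$, i.e.\ strong star-Hurewiczness. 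The one remaining routine point is that the resulting family is infinite, as required of a $\gamma$-cover, which follows since $\{G_k\}$ is infinite and $j(n)\to\infty$.
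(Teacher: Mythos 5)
Your proof is correct. The paper does not actually prove this theorem (it is quoted from \cite{sHR}), but your argument follows exactly the block--refinement template of the paper's proof of Theorem~\ref{T5}, and your forward shift $j(n)=\min\{k:n_k\ge n\}$ is precisely the ``necessary modification'' needed for the groupable/$\gamma$-type conclusion: you correctly diagnose that the verbatim assignment of the block containing $n$ (which suffices for the $\omega$-cover target in Theorem~\ref{T5}) would leak infinitely many indices here, and your fix, together with the routine $(1)\Rightarrow(2)$ via singleton blocks, closes the argument.
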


In line of Theorem~\ref{TN28}, we obtain the following.
\begin{Th}
\label{TN29}
If every finite power of a space $X$ is strongly star-Rothberger, then $X$ satisfies $SS_1^*(\Omega,\Omega)$.
\end{Th}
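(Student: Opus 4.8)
The plan is to mirror the proof of Theorem~\ref{TN28}, replacing the selection of \emph{open sets} from the covers by the selection of \emph{points}, and inserting one extra step to pass from a point of a finite power $X^k$ back to a point of $X$.

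First I would fix a sequence $(\mathcal{U}_n)$ of $\omega$-covers of $X$ together with a partition $\{N_k : k\in\mathbb{N}\}$ of $\mathbb{N}$ into infinitely many pairwise disjoint infinite sets. For each $k$ and each $n\in N_k$, I form the ``diagonal'' cover $\mathcal{W}_n=\{U^k : U\in\mathcal{U}_n\}$ of $X^k$; this is genuinely a cover because $\mathcal{U}_n$ being an $\omega$-cover guarantees that every finite set $\{z_1,\dots,z_k\}$, and hence the tuple $\langle z_1,\dots,z_k\rangle$, lies inside some $U^k$. Applying the strong star-Rothberger property of $X^k$ to the sequence $(\mathcal{W}_n : n\in N_k)$ then yields points $y_n\in X^k$ (for $n\in N_k$) such that $\{St(y_n,\mathcal{W}_n) : n\in N_k\}$ covers $X^k$.

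The crucial step is the transfer from $X^k$ back to $X$. Here I would simply set $x_n=p_1(y_n)$, a fixed coordinate of $y_n$. The reason for using diagonal covers is that, for any $U\in\mathcal{U}_n$, the membership $y_n\in U^k$ forces \emph{every} coordinate of $y_n$ into $U$; in particular $x_n\in U$ whenever $U^k$ contributes to $St(y_n,\mathcal{W}_n)$. This is exactly what allows a single point $x_n\in X$ to do the work that the whole tuple $y_n$ did in $X^k$.

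Finally I would verify that $\{St(x_n,\mathcal{U}_n) : n\in\mathbb{N}\}$ is an $\omega$-cover of $X$. Given a finite set $F=\{z_1,\dots,z_p\}\subseteq X$, the tuple $\langle z_1,\dots,z_p\rangle\in X^p$ lies in $St(y_{n_0},\mathcal{W}_{n_0})$ for some $n_0\in N_p$, so there is $U\in\mathcal{U}_{n_0}$ with $\langle z_1,\dots,z_p\rangle\in U^p$ and $y_{n_0}\in U^p$. The first containment gives $F\subseteq U$ and the second gives $x_{n_0}=p_1(y_{n_0})\in U$, whence $F\subseteq U\subseteq St(x_{n_0},\mathcal{U}_{n_0})$. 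I expect the main (and essentially the only genuinely new) obstacle relative to Theorem~\ref{TN28} to be precisely this coordinate-extraction device: strong star-Rothberger returns a \emph{point} of the power rather than a member of the cover, so the auxiliary covers must be arranged so that a single coordinate of that point already witnesses the required star containment downstairs. The diagonal covers $\{U^k\}$ are tailored for this, whereas the projection-and-union trick used for compact sets in Theorem~\ref{T4} would instead produce finite sets and hence only the strongly star-Menger conclusion.
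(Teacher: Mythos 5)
Your proposal is correct and is precisely the argument the paper intends: Theorem~\ref{TN29} is stated with the remark ``in line of Theorem~\ref{TN28}'', and your adaptation -- applying the strongly star-Rothberger property of $X^k$ to the diagonal covers $\{U^k : U\in\mathcal{U}_n\}$ and then replacing the selected point $y_n\in X^k$ by a single coordinate $p_1(y_n)$, which works because $y_n\in U^k$ forces every coordinate into $U$ -- is exactly the required modification. The verification that $\{St(x_n,\mathcal{U}_n) : n\in\mathbb{N}\}$ is an $\omega$-cover is carried out correctly.
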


\begin{Cor}
If every finite power of a space $X$ is strongly star-Rothberger, then $X$ satisfies $SS_1^*(\Omega,\mathcal{O}^{wgp})$.
\end{Cor}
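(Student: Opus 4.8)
The plan is to mirror the proof of Theorem~\ref{TN28}, replacing the selection of a single open set by the selection of a single point, and to use the diagonal open cover $\{U^k : U\in\mathcal{U}_n\}$ of $X^k$ so that a point selected in $X^k$ can be converted into the single point of $X$ required by $SS_1^*$.

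First I would fix a sequence $(\mathcal{U}_n)$ of $\omega$-covers of $X$ together with a partition $\{N_k : k\in\mathbb{N}\}$ of $\mathbb{N}$ into infinitely many infinite subsets. For each $k\in\mathbb{N}$ and each $n\in N_k$ I set $\mathcal{W}_n=\{U^k : U\in\mathcal{U}_n\}$, where $U^k$ denotes the $k$-fold product $U\times\dotsb\times U$. Since $\mathcal{U}_n$ is an $\omega$-cover, every finite set $\{b_1,\dotsc,b_k\}\subseteq X$ lies in some $U\in\mathcal{U}_n$, whence $\langle b_1,\dotsc,b_k\rangle\in U^k$; thus each $\mathcal{W}_n$ is an open cover of $X^k$. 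For each fixed $k$ I would then apply the strongly star-Rothberger property (that is, $SS_1^*(\mathcal{O},\mathcal{O})$) of $X^k$ to the sequence $(\mathcal{W}_n : n\in N_k)$, obtaining points $y_n\in X^k$ such that $\{St(y_n,\mathcal{W}_n) : n\in N_k\}$ covers $X^k$. Finally I would declare $x_n$ to be the first coordinate of $y_n$ as the point witnessing the selection at stage $n$.

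The main obstacle is precisely the passage from a selected point of $X^k$ to a single point of $X$: a point $y_n\in X^k$ records $k$ coordinates, whereas $SS_1^*$ demands a single point of $X$. This is resolved by the special shape of the diagonal cover. Indeed, writing $y_n=\langle z_1,\dotsc,z_k\rangle$, one checks that $\langle a_1,\dotsc,a_k\rangle\in St(y_n,\mathcal{W}_n)$ precisely when there is a single $U\in\mathcal{U}_n$ with $\{z_1,\dotsc,z_k,a_1,\dotsc,a_k\}\subseteq U$. Hence membership in the star forces every target coordinate and every coordinate of $y_n$ into one common member of $\mathcal{U}_n$, so the chosen coordinate $x_n=z_1$ already lies in that $U$ together with all the $a_i$.

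It then remains to verify that $\{St(x_n,\mathcal{U}_n) : n\in\mathbb{N}\}$ is an $\omega$-cover of $X$. Given a finite set $F=\{a_1,\dotsc,a_q\}\subseteq X$, I would regard $\langle a_1,\dotsc,a_q\rangle$ as a point of $X^q$, find $n_0\in N_q$ with $\langle a_1,\dotsc,a_q\rangle\in St(y_{n_0},\mathcal{W}_{n_0})$, and extract from the identity above a single $U\in\mathcal{U}_{n_0}$ containing $x_{n_0}$ and all of $a_1,\dotsc,a_q$; this yields $F\subseteq St(x_{n_0},\mathcal{U}_{n_0})$. I expect this last verification to be routine once the diagonal-cover identity is in hand, so the only genuinely delicate point is the coordinate-extraction step that manufactures the single point demanded by $SS_1^*$ out of the point selected in the power.
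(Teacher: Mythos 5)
Your argument is correct and follows essentially the paper's own route: you reprove Theorem~\ref{TN29} ($SS_1^*(\Omega,\Omega)$) by the point-for-open-set analogue of the proof of Theorem~\ref{TN28}, and your coordinate-extraction step is sound, since $\langle a_1,\dotsc,a_k\rangle\in St(y_n,\mathcal{W}_n)$ forces all coordinates of $y_n$ and all the $a_i$ into a single $U\in\mathcal{U}_n$, so the first coordinate of $y_n$ serves as the required point. The only thing left implicit is the one-line step the paper also suppresses in passing from Theorem~\ref{TN29} to this corollary: the countable $\omega$-cover $\{St(x_n,\mathcal{U}_n) : n\in\mathbb{N}\}$ you produce is automatically weakly groupable (partition it into singletons), which is what upgrades $SS_1^*(\Omega,\Omega)$ to the stated $SS_1^*(\Omega,\mathcal{O}^{wgp})$.
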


\begin{table}[h!]
\centering
  \begin{tabular}{|l|l|l|c|}
  \hline
  $(\forall n)$ $X^n\models$ &\multicolumn{2}{c|}{$X\models$} & Source \\
  \hline
  star-Menger &  $U_{fin}^*(\mathcal{O},\Omega)$ & $=U_{fin}^*(\mathcal{O},\mathcal{O}^{wgp})$ & \cite{sHR}\\
   \hline
  star-K-Menger & $SS_{comp}^*(\mathcal{O},\Omega)$ &  $=SS_{comp}^*(\mathcal{O},\mathcal{O}^{wgp})$ & \\
   \hline
  strongly star-Menger & $SS_{fin}^*(\mathcal{O},\Omega)$ & $=SS_{fin}^*(\mathcal{O},\mathcal{O}^{wgp})$ & \cite{sHR}\\
   \hline
  star-Rothberger & $S_1^*(\Omega,\Omega)$ & $\Rightarrow S_1^*(\Omega,\mathcal{O}^{wgp})$ & \\
   \hline
  strongly star-Rothberger & $SS_1^*(\Omega,\Omega)$ & $\Rightarrow SS_1^*(\Omega,\mathcal{O}^{wgp})$ &\\
   \hline
\end{tabular}
\vskip0.1cm
\caption{Property in finite powers}
\label{tab1}
\end{table}

\begin{table}[h!]
\begin{tabular}{|l|l|c|}
\hline
  Property & Equivalent to & Source\\
  \hline
  star-Hurewicz &  $U_{fin}^*(\mathcal{O},\mathcal{O}^{gp})$ &\cite{sHR}\\
  \hline
  star-K-Hurewicz & $SS_{comp}^*(\mathcal{O},\mathcal{O}^{gp})$&\\
  \hline
  strongly star-Hurewicz & $SS_{fin}^*(\mathcal{O},\mathcal{O}^{gp})$ &\cite{sHR}\\
  \hline
\end{tabular}
\vskip0.1cm
\caption{Classification using groupable covers}
\label{tab2}
\end{table}

\subsection{Mappings, products and the Alexandroff duplicate}
Each of the star variants described in Figure~\ref{dig1} is preserved under clopen subsets, countable unions and continuous mappings (see \cite{rsM-II,RSH,sHR,SR,sKM,sKH,rssM,RSSH,LjSM,survey}). In particular we mention the following preservation under open perfect mappings.

\begin{Th}[{\!\cite[Theorem 2.10]{rsM-II}}]
\label{TN46}
If $f$ is an open perfect mapping from $X$ onto a star-Menger space $Y,$ then $X$ is also star-Menger.
\end{Th}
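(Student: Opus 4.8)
The plan is to push a given sequence of open covers of $X$ down to the star-Menger space $Y$, make the selection there, and lift it back to $X$ using the compactness of the point-inverses. Throughout I will use that an open perfect map $f$ is continuous, open, closed, surjective, and has compact fibres.

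First I would fix a sequence $(\mathcal{U}_n)$ of open covers of $X$. For each $n$ and each $y\in Y$, the fibre $f^{-1}(y)$ is compact, so I can pick a finite $\mathcal{U}_{n,y}\subseteq\mathcal{U}_n$ covering it; after discarding members that miss $f^{-1}(y)$ I may assume $y\in f(U)$ for every $U\in\mathcal{U}_{n,y}$. Writing $O_{n,y}=\bigcup\mathcal{U}_{n,y}$, the key construction is the set
\[
W_{n,y}=\Bigl(\bigcap_{U\in\mathcal{U}_{n,y}}f(U)\Bigr)\setminus f\bigl(X\setminus O_{n,y}\bigr),
\]
which is open because $f$ is open (a finite intersection of images) and closed (the subtracted set is closed). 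It contains $y$, since $y\in f(U)$ for each $U\in\mathcal{U}_{n,y}$ while $f^{-1}(y)\subseteq O_{n,y}$ gives $y\notin f(X\setminus O_{n,y})$; and, being a tube over $O_{n,y}$, it satisfies $f^{-1}(W_{n,y})\subseteq O_{n,y}$. Thus $\mathcal{W}_n=\{W_{n,y}:y\in Y\}$ is an open cover of $Y$.

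Next I would apply the star-Menger property of $Y$ to the sequence $(\mathcal{W}_n)$, obtaining finite sets $F_n\subseteq Y$ such that $\bigcup_n\{St(W_{n,y},\mathcal{W}_n):y\in F_n\}$ covers $Y$. I set $\mathcal{V}_n=\bigcup_{y\in F_n}\mathcal{U}_{n,y}$, a finite subfamily of $\mathcal{U}_n$, and claim that the $\mathcal{V}_n$ witness $S_{fin}^*(\mathcal{O},\mathcal{O})$ for $X$. Given $x\in X$ with $\tilde{y}=f(x)$, I choose $n$, $y_0\in F_n$ and $z\in Y$ with $\tilde{y}\in W_{n,z}$ and $W_{n,z}\cap W_{n,y_0}\neq\emptyset$. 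From $\tilde{y}\in W_{n,z}$ I get $x\in f^{-1}(W_{n,z})\subseteq O_{n,z}$, hence $x\in U^*$ for some $U^*\in\mathcal{U}_{n,z}$; and by the very definition of $W_{n,z}$ one has $W_{n,z}\subseteq f(U^*)$.

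The heart of the argument, and the step I expect to be the main obstacle, is showing that $U^*$ meets $O_{n,y_0}$, i.e. that a star computed downstairs genuinely pulls back to a star upstairs. The difficulty is the familiar one: $f^{-1}$ of a star need not be a star, because a point may sit anywhere in its (possibly disconnected) fibre, so the naive image cover $\{f(U):U\in\mathcal{U}_n\}$ fails. This is precisely what the image-intersection built into $W_{n,y}$ is designed to defeat. Choosing $w\in W_{n,z}\cap W_{n,y_0}\subseteq f(U^*)\cap W_{n,y_0}$, the membership $w\in f(U^*)$ yields a point $x^*\in U^*$ with $f(x^*)=w$, while $w\in W_{n,y_0}$ forces $x^*\in f^{-1}(W_{n,y_0})\subseteq O_{n,y_0}$; hence $U^*\cap O_{n,y_0}\neq\emptyset$. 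Therefore $U^*$ meets some $V_0\in\mathcal{U}_{n,y_0}\subseteq\mathcal{V}_n$, and since $x\in U^*\in\mathcal{U}_n$ I conclude $x\in St(V_0,\mathcal{U}_n)$, so the selected stars cover $X$. The two remaining points, that $\mathcal{W}_n$ really is a cover and that the fibre-minimality convention keeps $y\in\bigcap_{U\in\mathcal{U}_{n,y}}f(U)$, are immediate from surjectivity and the discarding step, so I would treat them as routine.
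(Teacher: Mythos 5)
Your proof is correct: the tube set $W_{n,y}=\bigl(\bigcap_{U\in\mathcal{U}_{n,y}}f(U)\bigr)\setminus f(X\setminus O_{n,y})$ is exactly the device needed to make stars pull back along an open perfect map, and your verification that $U^*$ meets $O_{n,y_0}$ via a common point of $W_{n,z}\cap W_{n,y_0}$ closes the one genuinely delicate step. The paper itself states this theorem only as a citation to Song's \emph{Remarks on star-Menger spaces II} without reproducing a proof, and your argument is essentially the standard one from that source, so there is nothing to correct.
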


\begin{Th}[{\!\cite[Theorem 2.8]{RSH}}]
\label{TN47}
If $f$ is an open perfect mapping from $X$ onto a star-Hurewicz space $Y,$ then $X$ is also star-Hurewicz.
\end{Th}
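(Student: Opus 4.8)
The plan is to adapt the argument underlying the star-Menger analogue (Theorem~\ref{TN46}) to the setting of $\gamma$-covers. The key point is that the inclusion relating stars in $X$ to stars in $Y$ will be \emph{pointwise}, and hence it automatically preserves the ``all but finitely many'' covering condition that distinguishes $\Gamma$ from $\mathcal{O}$. Thus once the inclusion is established for every $n$, the passage from star-Menger to star-Hurewicz requires no new idea.

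First I would fix a sequence $(\mathcal{U}_n)$ of open covers of $X$ and, for each $n$ and each $y\in Y$, use compactness of the fiber $f^{-1}(y)$ to select a finite subfamily $\mathcal{U}_n^y\subseteq\mathcal{U}_n$ covering $f^{-1}(y)$, discarding any member disjoint from the fiber. Since $f$ is closed, $Y\setminus f(X\setminus\cup\mathcal{U}_n^y)$ is an open neighbourhood of $y$ whose preimage is contained in $\cup\mathcal{U}_n^y$; since $f$ is open, each $f(U)$ with $U\in\mathcal{U}_n^y$ is open and contains $y$. Intersecting these, I obtain an open set $O_n^y\ni y$ with the two properties (a) $f^{-1}(O_n^y)\subseteq\cup\mathcal{U}_n^y$ and (b) $O_n^y\subseteq f(U)$ for every $U\in\mathcal{U}_n^y$. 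Then $\mathcal{V}_n=\{O_n^y:y\in Y\}$ is an open cover of $Y$, and I would apply the star-Hurewicz property of $Y$ to $(\mathcal{V}_n)$ to get finite $\mathcal{G}_n\subseteq\mathcal{V}_n$, say $\mathcal{G}_n=\{O_n^{y_1},\dots,O_n^{y_{k_n}}\}$, such that $\{St(\cup\mathcal{G}_n,\mathcal{V}_n):n\in\mathbb{N}\}$ is a $\gamma$-cover of $Y$ (or $St(\cup\mathcal{G}_n,\mathcal{V}_n)=Y$ for some $n$). I set $\mathcal{W}_n=\bigcup_{i\le k_n}\mathcal{U}_n^{y_i}$, a finite subfamily of $\mathcal{U}_n$.

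The decisive step, and the place where openness of $f$ is indispensable, is the inclusion
\[
f^{-1}\big(St(\cup\mathcal{G}_n,\mathcal{V}_n)\big)\subseteq St(\cup\mathcal{W}_n,\mathcal{U}_n).
\]
To prove it, take $x$ with $f(x)\in St(\cup\mathcal{G}_n,\mathcal{V}_n)$ and pick $O_n^{y_0}\ni f(x)$ meeting $\cup\mathcal{G}_n$, say at a point $z\in O_n^{y_0}\cap O_n^{y_i}$. By (a), $x\in\cup\mathcal{U}_n^{y_0}$, so $x\in U_0$ for some $U_0\in\mathcal{U}_n^{y_0}$. By (b), $z\in O_n^{y_0}\subseteq f(U_0)$, so there is $p\in U_0$ with $f(p)=z$; and since $z\in O_n^{y_i}$, property (a) for $y_i$ gives $p\in f^{-1}(O_n^{y_i})\subseteq\cup\mathcal{W}_n$. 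Hence $U_0$ meets $\cup\mathcal{W}_n$, and since $x\in U_0\in\mathcal{U}_n$ we conclude $x\in St(\cup\mathcal{W}_n,\mathcal{U}_n)$.

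Finally, because this inclusion holds for every $n$ and $f$ is onto, the covering behaviour of $\{St(\cup\mathcal{G}_n,\mathcal{V}_n)\}$ transfers verbatim to $X$: each $x\in X$ has $f(x)\in St(\cup\mathcal{G}_n,\mathcal{V}_n)$, and therefore $x\in St(\cup\mathcal{W}_n,\mathcal{U}_n)$, for all but finitely many $n$, while the degenerate alternative $St(\cup\mathcal{W}_n,\mathcal{U}_n)=X$ is inherited from $St(\cup\mathcal{G}_n,\mathcal{V}_n)=Y$ via $f^{-1}(Y)=X$. Thus $(\mathcal{W}_n)$ witnesses $U_{fin}^*(\mathcal{O},\Gamma)$ for $X$. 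I expect the main obstacle to be exactly the displayed inclusion: without openness one controls only the \emph{union} $\cup\mathcal{U}_n^{y_0}$ and cannot guarantee that the particular member $U_0$ containing $x$ meets $\cup\mathcal{W}_n$; property (b), which openness supplies, is what produces the required preimage point $p\in U_0$ of a point of $O_n^{y_0}\cap\cup\mathcal{G}_n$.
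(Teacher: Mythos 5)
Your proof is correct. The paper states this result only as a citation to \cite[Theorem 2.8]{RSH} without reproducing a proof, and your argument --- fiberwise finite subcovers $\mathcal{U}_n^y$, the saturated open set $Y\setminus f(X\setminus\cup\mathcal{U}_n^y)$ obtained from closedness, the shrinking inside $\bigcap_{U\in\mathcal{U}_n^y}f(U)$ obtained from openness, and the pointwise pullback inclusion $f^{-1}(St(\cup\mathcal{G}_n,\mathcal{V}_n))\subseteq St(\cup\mathcal{W}_n,\mathcal{U}_n)$ which transfers the $\gamma$-cover condition verbatim --- is exactly the standard argument used in that source and in the star-Menger analogue (Theorem~\ref{TN46}).
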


\begin{Th}[{\!\cite[Theorem 3.4]{sKM}}]
\label{TN48}
If $f$ is an open perfect mapping from $X$ onto a star-K-Menger space $Y,$ then $X$ is also star-K-Menger.
\end{Th}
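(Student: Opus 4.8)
The plan is to pull a star-K-Menger selection back through $f$. Start with an arbitrary sequence $(\mathcal{U}_n)$ of open covers of $X$. Since $f$ is perfect, every fibre $f^{-1}(y)$ is compact, so for each $n$ and each $y\in Y$ I would extract a finite subfamily $\mathcal{U}_n(y)\subseteq\mathcal{U}_n$ covering $f^{-1}(y)$, discarding members that miss the fibre so that each $U\in\mathcal{U}_n(y)$ satisfies $y\in f(U)$. Writing $W_n(y)=\cup\mathcal{U}_n(y)$, the closedness of $f$ (the tube lemma for closed maps: $Y\setminus f(X\setminus W_n(y))$ is an open set containing $y$ whose preimage lies in $W_n(y)$) yields an open set $O_n(y)\ni y$ with $f^{-1}(O_n(y))\subseteq W_n(y)$.

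The decisive step is to build an auxiliary cover of $Y$ that remembers enough of the fibrewise data to transport stars downstairs, and this is where openness of $f$ is essential. Each $f(U)$ with $U\in\mathcal{U}_n(y)$ is an open neighbourhood of $y$, so
\[
V_n(y)=O_n(y)\cap\bigcap_{U\in\mathcal{U}_n(y)}f(U)
\]
is again an open neighbourhood of $y$, and $\mathcal{V}_n=\{V_n(y):y\in Y\}$ is an open cover of $Y$. I would then apply the star-K-Menger property of $Y$ to the sequence $(\mathcal{V}_n)$ to obtain compact sets $L_n\subseteq Y$ such that $\{St(L_n,\mathcal{V}_n):n\in\mathbb{N}\}$ covers $Y$, and set $K_n=f^{-1}(L_n)$, which is compact because the preimage of a compact set under a perfect map is compact.

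It remains to verify that $(K_n)$ witnesses star-K-Mengerness of $X$, that is, that $\{St(K_n,\mathcal{U}_n):n\in\mathbb{N}\}$ covers $X$. Given $x\in X$ put $y=f(x)$ and choose $n$ with $y\in St(L_n,\mathcal{V}_n)$, say $y\in V_n(y')$ with $V_n(y')\cap L_n\neq\emptyset$. The containment $V_n(y')\subseteq O_n(y')$ gives $x\in f^{-1}(O_n(y'))\subseteq\cup\mathcal{U}_n(y')$, so $x\in U_0$ for some $U_0\in\mathcal{U}_n(y')$. The main obstacle is precisely to show that this particular $U_0$ meets $K_n$; the naive preimage argument fails here, and it is the factor $\bigcap_{U}f(U)$ in the definition of $V_n(y')$ that rescues it. Indeed $V_n(y')\subseteq f(U_0)$, and $V_n(y')$ contains some $z\in L_n$, so there is $w\in U_0$ with $f(w)=z$, whence $w\in U_0\cap f^{-1}(L_n)=U_0\cap K_n$. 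Thus $x\in U_0$, $U_0\in\mathcal{U}_n$ and $U_0\cap K_n\neq\emptyset$, so $x\in St(K_n,\mathcal{U}_n)$, completing the argument.
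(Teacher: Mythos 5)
Your argument is correct: the tube $O_n(y)$ from closedness, the refinement $V_n(y)=O_n(y)\cap\bigcap_{U\in\mathcal{U}_n(y)}f(U)$ from openness, and the pullback $K_n=f^{-1}(L_n)$ (compact since perfect preimages of compacta are compact) fit together exactly as needed, and the final verification is sound. Note that the paper itself states this theorem without proof, citing Song's original article; your proof is essentially the standard argument given there, so there is nothing to compare beyond saying you have reconstructed it faithfully.
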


\begin{Th}[{\!\cite[Theorem 2.15]{sKH}}]
\label{TN49}
If $f$ is an open perfect mapping from $X$ onto a star-K-Hurewicz space $Y,$ then $X$ is also star-K-Hurewicz.
\end{Th}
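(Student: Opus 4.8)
The plan is to transport the given sequence of open covers of $X$ down to $Y$ via $f$, invoke the star-K-Hurewicz property $S_{comp}^*(\mathcal O,\Gamma)$ of $Y$, and then pull the resulting compact sets back to $X$. Fix a sequence $(\mathcal U_n)$ of open covers of $X$. For each $n$ and each $y\in Y$, the fiber $f^{-1}(y)$ is compact since $f$ is perfect, so I would select a finite subfamily $U_1^{n,y},\dots,U_{k}^{n,y}\in\mathcal U_n$ (with $k$ depending on $n,y$) covering $f^{-1}(y)$. Because $f$ is closed, the tube lemma supplies an open set $V_{n,y}\ni y$ with $f^{-1}(V_{n,y})\subseteq\bigcup_{i\le k}U_i^{n,y}$; because $f$ is open, each image $f(U_i^{n,y})$ is an open neighbourhood of $y$. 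I then set $W_{n,y}=V_{n,y}\cap\bigcap_{i\le k}f(U_i^{n,y})$, an open set containing $y$, and let $\mathcal W_n=\{W_{n,y}:y\in Y\}$, which is an open cover of $Y$.

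Applying the star-K-Hurewicz property of $Y$ to $(\mathcal W_n)$ yields compact sets $L_n\subseteq Y$ with $\{St(L_n,\mathcal W_n):n\in\mathbb N\}$ a $\gamma$-cover of $Y$. I would take $K_n=f^{-1}(L_n)$, which is compact since preimages of compacta under a perfect map are compact. The heart of the argument is the inclusion $f^{-1}(St(L_n,\mathcal W_n))\subseteq St(K_n,\mathcal U_n)$ for every $n$. To verify it, suppose $f(x)\in W_{n,y}$ for some $y$ with $W_{n,y}\cap L_n\neq\emptyset$. The tube property gives $x\in f^{-1}(W_{n,y})\subseteq\bigcup_{i\le k}U_i^{n,y}$, so $x\in U_j^{n,y}$ for some $j$. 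On the other hand $W_{n,y}\subseteq f(U_i^{n,y})$ for every $i$, so $W_{n,y}\cap L_n\neq\emptyset$ forces $f(U_i^{n,y})\cap L_n\neq\emptyset$, that is $U_i^{n,y}\cap f^{-1}(L_n)=U_i^{n,y}\cap K_n\neq\emptyset$, for every $i$; in particular $U_j^{n,y}\cap K_n\neq\emptyset$. Hence $x$ lies in a member $U_j^{n,y}$ of $\mathcal U_n$ that meets $K_n$, i.e. $x\in St(K_n,\mathcal U_n)$.

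Finally, since $\{St(L_n,\mathcal W_n):n\in\mathbb N\}$ is a $\gamma$-cover of $Y$, each $x\in X$ satisfies $f(x)\in St(L_n,\mathcal W_n)$ for all but finitely many $n$, whence by the inclusion above $x\in St(K_n,\mathcal U_n)$ for all but finitely many $n$. Thus $\{St(K_n,\mathcal U_n):n\in\mathbb N\}$ is a $\gamma$-cover of $X$, witnessing $S_{comp}^*(\mathcal O,\Gamma)$, so $X$ is star-K-Hurewicz.

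I expect the main obstacle to be securing the single star inclusion $f^{-1}(St(L_n,\mathcal W_n))\subseteq St(K_n,\mathcal U_n)$, since neither hypothesis alone suffices: perfectness, through the tube lemma, is what pins the point $x$ inside one concrete member $U_j^{n,y}$, while openness is precisely what guarantees that this member meets $K_n$ as soon as $W_{n,y}$ meets $L_n$. The delicate part of the write-up is choosing $W_{n,y}$ so that both constraints are encoded simultaneously; once this is arranged, the $\gamma$-cover transfer is routine and parallels the star-K-Menger case (Theorem~\ref{TN48}), the only change being that the covering condition is upgraded to the ``all but finitely many'' condition of a $\gamma$-cover.
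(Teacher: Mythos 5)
Your proof is correct and follows essentially the standard argument: the paper states this theorem without proof, citing Song's original, and that proof (like the star-K-Menger case, Theorem~\ref{TN48}) uses exactly your scheme of compact fibres, the tube lemma from closedness, openness of $f$ to form $W_{n,y}=V_{n,y}\cap\bigcap_{i\le k}f(U_i^{n,y})$, and pulling back the compact witnesses. The one point you should make explicit is that the finite subfamily $U_1^{n,y},\dotsc,U_k^{n,y}$ must be chosen so that every member actually meets the fibre $f^{-1}(y)$ (discard any that do not); otherwise $y$ need not belong to $\bigcap_{i\le k}f(U_i^{n,y})$, so $W_{n,y}$ need not contain $y$ and $\mathcal{W}_n$ need not cover $Y$.
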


The strongly star-Menger and strongly star-Hurewicz properties are not inverse invariant of open perfect mappings (see \cite[Remark 2.15]{rssM} and \cite[Remark 2.9]{RSSH}).
In view of these observations, it follows that the product of a star-Menger (respectively, star-Hurewicz, star-K-Menger, star-K-Hurewicz) space with a $\sigma$-compact space is again star-Menger (respectively, star-Hurewicz, star-K-Menger, star-K-Hurewicz).

Let $P$ be a property of a space. A space $X$ is called $P$-preserving if for every space $Y$ with property $P$, $X\times Y$ has the property $P$.
\begin{table}[h!]
\begin{tabular}{|l|c|c|}
\hline
  Property & Property-preserving class & Source\\
  \hline
  star-Menger & \multirow[c]{4}{*}{$\sigma$-compact} & \cite{rsM-II}\\
  \cline{1-1}\cline{3-3}
  star-K-Menger & & \cite{sKM}\\
  \cline{1-1}\cline{3-3}
  star-Hurewicz & & \cite{RSH}\\
  \cline{1-1}\cline{3-3}
  star-K-Hurewicz & & \cite{sKH}\\
  \hline
\end{tabular}
\vskip0.1cm
\caption{Property-preserving class: $\sigma$-compact}
\label{tab3}
\end{table}

By \cite[Remark 2.15]{rssM} (respectively, \cite[Remark 2.9]{RSSH}), the product of a strongly star-Menger (respectively, strongly star-Hurewicz) space and a compact space need not be strongly star-Menger (respectively, strongly star-Hurewicz). We now observe that the star-Rothberger and strongly star-Rothberger properties are not inverse invariant of open perfect mappings. Indeed, if possible suppose that the star-Rothberger and strongly star-Rothberger properties are inverse invariant of open perfect mappings. It follows that product of a star-Rothberger (respectively, strongly star-Rothberger) space with a $\sigma$-compact space is star-Rothberger (respectively, strongly star-Rothberger). Now assume that $X$ is any star-Rothberger (respectively, strongly star-Rothberger) space and $Y=\mathbb{R}$ is the set of reals. Thus $X\times Y$ is star-Rothberger (respectively, strongly star-Rothberger). Since the star-Rothberger and strongly star-Rothberger properties are preserved under continuous mappings, $Y$ is star-Rothberger (respectively, strongly star-Rothberger). Which is absurd because the set of reals is not star-Rothberger (hence not strongly star-Rothberger). Consequently the star-Rothberger and strongly star-Rothberger properties are not inverse invariant of open perfect mappings. This also shows that product of a star-Rothberger (respectively, strongly star-Rothberger) space with a compact space need not be star-Rothberger (respectively, strongly star-Rothberger), which contradicts the following result of Ko\v{c}inac \cite{LjSM} (given without proof).

\begin{Th}[{\!\cite[Theorem 2.13]{LjSM}}]
\label{TN38}
If $X$ is a star-Rothberger space and $Y$ is a compact space, then $X\times Y$ is a star-Rothberger space.
\end{Th}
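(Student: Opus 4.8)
The plan is to mimic the tube-lemma reduction that underlies the Menger and Hurewicz preservation theorems, and then to feed the resulting covers of the factor into the star-Rothberger property of $X$. Let $(\mathcal{W}_n)$ be a sequence of open covers of $X\times Y$. Fixing $n$, for each $x\in X$ the compact slice $\{x\}\times Y$ is covered by finitely many members of $\mathcal{W}_n$, so the tube lemma yields an open set $U^n_x\ni x$ in $X$ together with a finite subfamily $\mathcal{G}^n_x\subseteq\mathcal{W}_n$ satisfying $U^n_x\times Y\subseteq\cup\mathcal{G}^n_x$. Put $\mathcal{U}_n=\{U^n_x:x\in X\}$; this is an open cover of $X$, and $(\mathcal{U}_n)$ is thus a sequence of open covers of $X$.

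First I would apply $S_1^*(\mathcal{O},\mathcal{O})$ for $X$ to the sequence $(\mathcal{U}_n)$, obtaining for each $n$ a set $V_n=U^n_{x_n}\in\mathcal{U}_n$ such that $\{St(V_n,\mathcal{U}_n):n\in\mathbb{N}\}$ is an open cover of $X$. The target is to produce, for each $n$, a single $W_n\in\mathcal{W}_n$ whose star $St(W_n,\mathcal{W}_n)$ captures the tube $St(V_n,\mathcal{U}_n)\times Y$; then $\{St(W_n,\mathcal{W}_n):n\in\mathbb{N}\}$ would cover $X\times Y$ (since $\{St(V_n,\mathcal{U}_n)\times Y:n\in\mathbb{N}\}$ already does) and would witness $S_1^*(\mathcal{O},\mathcal{O})$ for the product. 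The natural candidate for $W_n$ is a member of $\mathcal{G}^n_{x_n}$ meeting a fixed slice over $V_n$.

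The step I expect to be the main obstacle is exactly this last selection, and it is the genuine heart of the argument. Sweeping the tube $V_n\times Y$ requires the whole finite family $\mathcal{G}^n_{x_n}$, yet the Rothberger-type selection commits us to a \emph{single} $W_n\in\mathcal{W}_n$ per stage, and compactness of $Y$ alone supplies no uniform bound on $|\mathcal{G}^n_{x_n}|$; a lone open set $W_n\subseteq X\times Y$ cannot be expected to meet every member of $\mathcal{W}_n$ whose union covers the tube over all of $St(V_n,\mathcal{U}_n)$, so $St(W_n,\mathcal{W}_n)$ need not contain that tube. The natural remedy is to spread the finitely many required choices across several stages, partitioning $\mathbb{N}=\bigsqcup_k N_k$ into infinite blocks and using the stages in one block to realise the members of the relevant families one at a time; the difficulty is that each $\mathcal{G}^n_{x_n}$ is confined to the single cover $\mathcal{W}_n$, so its sets cannot be distributed over distinct stages without reusing that cover. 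Reconciling this finite-but-unbounded, one-cover demand with the one-set-per-stage budget---absent any canonical stratification of $Y$ along which to spread the choices---is the delicate point on which the whole argument turns, in contrast to the Menger case, where the open perfect projection $X\times Y\to X$ together with Theorem~\ref{TN46} bypasses single-selection entirely.
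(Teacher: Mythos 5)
Your diagnosis of the obstruction is exactly right, but the situation is more drastic than a missing lemma: the statement cannot be proved because it is false, and the paper quotes Ko\v{c}inac's Theorem 2.13 (which was given without proof in \cite{LjSM}) precisely in order to refute it. Your proposal stalls at the genuine sticking point---a single selected $W_n\in\mathcal{W}_n$ per stage cannot star-capture a tube whose covering requires a finite-but-unbounded subfamily of the same cover $\mathcal{W}_n$---and that failure is not a technical gap to be engineered around. The paper's argument, placed immediately before the theorem statement, shows no repair can exist: the star-Rothberger property is preserved under continuous mappings and countable unions, so if $X\times Y$ were star-Rothberger for every star-Rothberger $X$ and compact $Y$, then for any star-Rothberger $X$ the product $X\times\mathbb{R}=\cup_{n\in\mathbb{N}}\,X\times[-n,n]$ would be star-Rothberger, whence its continuous image $\mathbb{R}$ under the projection would be star-Rothberger; but $\mathbb{R}$ is paracompact Hausdorff, where the star-Rothberger property coincides with the classical Rothberger property (\cite[Theorem 2.9]{LjSM}), and $\mathbb{R}$ is not Rothberger (indeed not even Menger). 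The same reasoning disposes of the strongly star-Rothberger version.

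Your closing contrast with the Menger case is also the right one to draw, and the paper draws it in the contrapositive direction: since the product claim fails, the (strongly) star-Rothberger properties cannot be inverse invariant under open perfect mappings, so the projection-based route of Theorem~\ref{TN46} that yields the Menger, Hurewicz, star-K-Menger and star-K-Hurewicz product theorems is provably unavailable here. In short, you correctly located the heart of the matter---the one-set-per-stage budget against unbounded finite tube families---but the correct conclusion from that tension is a refutation of the statement, not a search for a cleverer selection scheme; had you pushed your observation one step further (testing the claim against $Y$ compact with $X\times\mathbb{R}$ assembled by countable unions), you would have arrived at the paper's counterargument.
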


Under open closed finite-to-one continuous mappings the star-Menger, star-Hurewicz, star-K-Menger and star-K-Hurewicz properties are inverse invariant. Also we have the following.
\begin{Th}[{\!\cite[Theorem 2.13]{rssM}}]
\label{TN50}
If $f$ is an open closed finite-to-one continuous mapping from $X$ onto a strongly star-Menger space $Y,$ then $X$ is also strongly star-Menger.
\end{Th}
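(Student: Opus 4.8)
The strategy is to push a given sequence of open covers of $X$ down to $Y$ through $f$, invoke the strongly star-Menger property of $Y$, and then pull the finite witnessing sets back along $f$, using that $f$ is finite-to-one so that preimages of finite sets stay finite. The only real work is arranging the pushed-down cover of $Y$ so that the star computation survives the pullback.

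First I would fix a sequence $(\mathcal{U}_n)$ of open covers of $X$. For each $n$ and each $y\in Y$, the fibre $f^{-1}(y)$ is finite, so for every $x\in f^{-1}(y)$ I may choose $U_x\in\mathcal{U}_n$ with $x\in U_x$. I then set $V_y=\bigcup_{x\in f^{-1}(y)}U_x$ and $O_y=\bigcap_{x\in f^{-1}(y)}f(U_x)$. Since $f$ is open each $f(U_x)$ is open, so $O_y$ is an open neighbourhood of $y$ (a \emph{finite} intersection); since $f$ is closed, $G_y:=Y\setminus f(X\setminus V_y)$ is open, contains $y$, and satisfies $f^{-1}(z)\subseteq V_y$ for every $z\in G_y$. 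Putting $W_y=O_y\cap G_y$, the family $\mathcal{W}_n=\{W_y:y\in Y\}$ is an open cover of $Y$.

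Next I would apply the strongly star-Menger property of $Y$ to $(\mathcal{W}_n)$ to obtain finite sets $F_n\subseteq Y$ with $\{St(F_n,\mathcal{W}_n):n\in\mathbb{N}\}$ covering $Y$. Because $f$ is finite-to-one, $K_n:=f^{-1}(F_n)$ is finite for every $n$, and I claim $(K_n)$ witnesses $SS_{fin}^*(\mathcal{O},\mathcal{O})$ for $X$. To check this, take $x\in X$; then $f(x)\in St(F_n,\mathcal{W}_n)$ for some $n$, so there is a centre $y$ with $f(x)\in W_y$ and $W_y\cap F_n\neq\emptyset$. From $f(x)\in G_y$ I get $x\in f^{-1}(f(x))\subseteq V_y$, hence $x\in U_{x_0}$ for some $x_0\in f^{-1}(y)$. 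Picking $y'\in W_y\cap F_n$, the inclusion $W_y\subseteq O_y\subseteq f(U_{x_0})$ produces a point $x''\in U_{x_0}$ with $f(x'')=y'\in F_n$, so $x''\in K_n$. Thus the single member $U_{x_0}\in\mathcal{U}_n$ contains $x$ and meets $K_n$, giving $x\in St(K_n,\mathcal{U}_n)$.

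The hard part is exactly this last verification. The naive construction, using only the closed-map neighbourhood $G_y$, guarantees merely that $x\in V_y$ and that $y'$ has \emph{some} preimage in $V_y$; but these two points may lie in different members of $\mathcal{U}_n$, which is insufficient for a star taken with respect to $\mathcal{U}_n$. The device that repairs this is the extra factor $O_y=\bigcap_{x\in f^{-1}(y)}f(U_x)$: it forces the witness $y'\in F_n$ into the image of the \emph{particular} member $U_{x_0}$ already containing $x$, so that $x$ and its companion $x''\in K_n$ share that member. Note the finiteness of the fibres is used twice, to make $O_y$ a genuine open neighbourhood and to keep each $K_n$ finite, so the argument is intrinsically tied to the finite-to-one hypothesis.
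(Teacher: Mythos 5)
Your proof is correct: the paper itself states this theorem as a quoted result from \cite[Theorem 2.13]{rssM} without reproducing a proof, and your argument is essentially the standard one from that source (shrink each $W_y$ by both the closed-map neighbourhood $G_y$ and the open-map intersection $O_y=\bigcap_{x\in f^{-1}(y)}f(U_x)$, then pull back the finite witnesses). The key verification — that the factor $O_y$ forces the witness $y'\in F_n$ to have a preimage inside the \emph{same} member $U_{x_0}$ that contains $x$ — is exactly right, and both uses of the finite-to-one hypothesis are correctly identified.
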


\begin{Th}[{\!\cite[Theorem 2.6]{RSSH}}]
\label{TN51}
If $f$ is an open closed finite-to-one continuous mapping from $X$ onto a strongly star-Hurewicz space $Y,$ then $X$ is also strongly star-Hurewicz.
\end{Th}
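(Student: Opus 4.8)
The plan is to lift the strongly star-Hurewicz property from $Y$ to $X$ by manufacturing, out of each given open cover of $X$, a companion open cover of $Y$ whose stars pull back to stars in $X$. Throughout I use that $f$ is a continuous open closed surjection with finite fibres. So let $(\mathcal{U}_n)$ be a sequence of open covers of $X$, and for every $x\in X$ and every $n$ fix a member $U_x^n\in\mathcal{U}_n$ with $x\in U_x^n$.

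The first and principal step is the construction of auxiliary covers of $Y$. Fix $n$ and $y'\in Y$, and write the finite fibre as $f^{-1}(y')=\{z_1,\dots,z_m\}$. I would set
\[
V_{y'}^n=\Big(\bigcap_{j=1}^m f(U_{z_j}^n)\Big)\setminus f\Big(X\setminus\bigcup_{j=1}^m U_{z_j}^n\Big).
\]
Openness of $f$ makes each $f(U_{z_j}^n)$ open, finiteness of the fibre makes the intersection open, and closedness of $f$ makes $f(X\setminus\bigcup_j U_{z_j}^n)$ closed; hence $V_{y'}^n$ is open, and it contains $y'$ since $y'\in f(U_{z_j}^n)$ for every $j$ while $f^{-1}(y')\subseteq\bigcup_j U_{z_j}^n$. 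Thus $\mathcal{W}_n=\{V_{y'}^n:y'\in Y\}$ is an open cover of $Y$. The two properties I will need are: (a) $f^{-1}(V_{y'}^n)\subseteq\bigcup_j U_{z_j}^n$, coming from the subtracted closed set, and (b) $V_{y'}^n\subseteq f(U_{z_j}^n)$ for each $j$, coming from the intersection.

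Next I apply the hypothesis on $Y$. Since $Y$ satisfies $SS_{fin}^*(\mathcal{O},\Gamma)$, applying it to $(\mathcal{W}_n)$ yields finite sets $F_n\subseteq Y$ such that $\{St(F_n,\mathcal{W}_n):n\in\mathbb{N}\}$ is a $\gamma$-cover of $Y$; put $K_n=f^{-1}(F_n)$, which is finite because $F_n$ and the fibres are finite. The crux is the inclusion $f^{-1}(St(F_n,\mathcal{W}_n))\subseteq St(K_n,\mathcal{U}_n)$. Given $x$ with $f(x)=y\in St(F_n,\mathcal{W}_n)$, choose $y'$ with $y\in V_{y'}^n$ and a point $y''\in F_n\cap V_{y'}^n$. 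By (a), $x\in f^{-1}(V_{y'}^n)\subseteq\bigcup_j U_{z_j}^n$, so $x\in U_{z_{j_0}}^n$ for some $j_0$; by (b), $y''\in V_{y'}^n\subseteq f(U_{z_{j_0}}^n)$, so some $w\in U_{z_{j_0}}^n$ has $f(w)=y''\in F_n$, whence $w\in K_n$. Thus $U_{z_{j_0}}^n$ contains $x$ and meets $K_n$, giving $x\in St(K_n,\mathcal{U}_n)$. With this inclusion, the proof closes routinely: for each $x\in X$ the point $f(x)$ lies in all but finitely many $St(F_n,\mathcal{W}_n)$, hence $x$ lies in all but finitely many $St(K_n,\mathcal{U}_n)$, so $\{St(K_n,\mathcal{U}_n):n\in\mathbb{N}\}$ is a $\gamma$-cover of $X$ and $X$ is strongly star-Hurewicz.

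I expect the only genuine obstacle to be the star-matching in the crux step. The naive choice $V_{y'}^n=Y\setminus f(X\setminus\bigcup_j U_{z_j}^n)$ secures (a) but not (b), and then the particular sheet $U_{z_{j_0}}^n$ that happens to contain $x$ need not meet $K_n$; intersecting with $\bigcap_j f(U_{z_j}^n)$ is exactly what forces that single sheet to see a point of the fibre over $y''$. This is where openness is essential, where finiteness of the fibres keeps the intersection open (and $K_n$ finite), and where closedness preserves (a). The argument is the $\gamma$-cover analogue of the strongly star-Menger case (Theorem~\ref{TN50}): the covers $\mathcal{W}_n$ are built identically, and only the selection performed in $Y$ and the final ``all but finitely many'' bookkeeping differ.
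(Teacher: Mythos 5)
Your proof is correct; the paper itself only cites this result from \cite{RSSH} without reproducing a proof, and your argument is precisely the standard one used there (and in the strongly star-Menger analogue, Theorem~\ref{TN50}): build the covers $\mathcal{W}_n$ of $Y$ from the sets $\bigl(\bigcap_j f(U_{z_j}^n)\bigr)\setminus f\bigl(X\setminus\bigcup_j U_{z_j}^n\bigr)$, pull back the finite selectors along the finite fibres, and verify $f^{-1}(St(F_n,\mathcal{W}_n))\subseteq St(f^{-1}(F_n),\mathcal{U}_n)$. Your observation that the intersection $\bigcap_j f(U_{z_j}^n)$ is exactly what makes the single sheet containing $x$ meet $K_n$ is the right identification of where openness and finite-to-oneness are used.
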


\begin{Prob}\rm
\label{Q7}
Are the star-Rothberger and strongly star-Rothberger properties inverse invariant under open closed finite-to-one continuous mappings?
\end{Prob}

We now move on to Alexandroff duplicate $A(X)$. Let $P$ be any of the star variants of Figure~\ref{dig1}. If $X$ satisfies $P$, then $A(X)$ need not satisfy $P$. See \cite{rsM,rsM-II,RSKM,sKH,rssM} for such counterexamples for the star versions of the Menger and Hurewicz properties. For the star variants of the Rothberger property we obtain the following.

\begin{Ex}\rm
\label{EN1}
There exists a Tychonoff star-Rothberger (respectively, strongly star-Rothberger) space $X$ such that $A(X)$ is not star-Rothberger (respectively, strongly star-Rothberger).
\end{Ex}
\begin{proof}
Assume that $\omega_1<\cov(\mathcal{M})$. Let $X=\Psi(\mathcal{A})$ with $|\mathcal{A}|=\omega_1$. By Theorem~\ref{TN56}, $X$ is strongly star-Rothberger (hence star-Rothberger). But $A(X)$ is not star-Rothberger (hence not strongly star-Rothberger). Indeed, $\mathcal{A}\times\{1\}$ is a clopen discrete subset of $A(X)$ with cardinality $\omega_1$ and the star-Rothberger property is preserved under clopen subsets.
\end{proof}

It is natural to consider the following questions.

\begin{Prob}[{\!\cite[Remark 2.2]{rsM-II}}]\rm
\label{Q3}
Is there a space $X$ such that $A(X)$ is star-Menger, but $X$ is not star-Menger?
\end{Prob}

\begin{Prob}\rm
\label{Q2}
Is there a space $X$ such that $A(X)$ is star-Hurewicz (respectively, star-Rothberger), but $X$ is not star-Hurewicz (respectively, star-Rothberger)?
\end{Prob}

\begin{Prob}\rm
\label{Q5}
Is there a space $X$ such that $A(X)$ is star-K-Menger (respectively, star-K-Hurewicz), but $X$ is not star-K-Menger (respectively, star-K-Hurewicz)?
\end{Prob}

\begin{Prob}[{\!\cite[Remark 2.10]{rssM}}]\rm
\label{Q4}
Is there a space $X$ such that $A(X)$ is strongly star-Menger, but $X$ is not strongly star-Menger?
\end{Prob}

\begin{Prob}\rm
\label{Q6}
Is there a space $X$ such that $A(X)$ is strongly star-Hurewicz (respectively, strongly star-Rothberger), but $X$ is not strongly star-Hurewicz (respectively, strongly star-Rothberger)?
\end{Prob}

The following results (Theorem~\ref{T3} to Theorem~\ref{TN45}) show that the answers to the above problems are not affirmative.
\begin{Th}
\label{T3}
If $A(X)$ is star-Menger (respectively, star-Hurewicz, star-Rothberger), then $X$ is also star-Menger (respectively, star-Hurewicz, star-Rothberger).
\end{Th}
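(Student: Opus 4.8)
The plan is to realise $X$ as a continuous image of $A(X)$ and then appeal to the fact, recalled at the start of this subsection, that each of the star-Menger, star-Hurewicz and star-Rothberger properties is preserved under continuous mappings. To this end, I would consider the natural projection $\pi:A(X)\to X$ defined by $\pi(\langle x,i\rangle)=x$ for $i\in\{0,1\}$; it is evidently surjective.

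First I would check that $\pi$ is continuous. Given an open set $U\subseteq X$, we have $\pi^{-1}(U)=U\times\{0,1\}$. Every point of $U\times\{1\}$ is isolated in $A(X)$ and hence lies in the interior of $\pi^{-1}(U)$, while for each $x\in U$ the basic neighbourhood $(U\times\{0\})\cup((U\times\{1\})\setminus\{\langle x,1\rangle\})$ of $\langle x,0\rangle$ is contained in $U\times\{0,1\}$. Thus $\pi^{-1}(U)$ is open and $\pi$ is a continuous surjection.

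With continuity established, the conclusion is immediate: applying the preservation of the star-Menger (respectively, star-Hurewicz, star-Rothberger) property under continuous mappings to the hypothesis that $A(X)$ has the property yields that $X=\pi(A(X))$ has the same property. The only point requiring any care is the routine verification of continuity of $\pi$, and even this is straightforward once one inspects the basic neighbourhoods of points of $X\times\{0\}$; there is no genuine obstacle. The substance of the argument is the single observation that the natural projection exhibits $X$ as a continuous image of its Alexandroff duplicate, which reduces the claim to a preservation result already available. (Note that one cannot simply pass to the closed copy $X\times\{0\}$, since it is not open in $A(X)$, so the continuous-image route via $\pi$ is the appropriate device.)
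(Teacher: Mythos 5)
Your proof is correct and is essentially the paper's argument in disguise: the covers $\mathcal{W}_n=\{U\times\{0,1\} : U\in\mathcal{U}_n\}$ used in the paper are exactly $\pi^{-1}(\mathcal{U}_n)$ for your projection $\pi$, and the paper's verification is just the continuous-image preservation argument written out for this specific map. Packaging it via the already-cited fact that these star properties are preserved under continuous surjections is a perfectly valid shortcut.
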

\begin{proof}
We only present proof for the star-Rothberger case. Consider a sequence $(\mathcal{U}_n)$ of open covers of $X$. For each $n$ $\mathcal{W}_n=\{U\times\{0,1\} : U\in\mathcal{U}_n\}$ is an open cover of $A(X)$. Apply the star-Rothberger property of $A(X)$ to $(\mathcal{W}_n)$ to obtain a sequence $(V_n)$ such that for each $n$ $V_n\in\mathcal{W}_n$ and $\{St(V_n,\mathcal{W}_n) : n\in\mathbb{N}\}$ covers $A(X)$. For each $n$ choose a $U_n\in\mathcal{U}_n$ such that $V_n=U_n\times\{0,1\}$. The sequence $(U_n)$ witnesses for $(\mathcal{U}_n)$ that $X$ is star-Rothberger.
\end{proof}
Analogously we obtain next two results.
\begin{Th}
\label{TN44}
If $A(X)$ is star-K-Menger (respectively, star-K-Hurewicz), then $X$ is also star-K-Menger (respectively, star-K-Hurewicz).
\end{Th}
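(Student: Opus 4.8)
The plan is to imitate the proof of Theorem~\ref{T3}, lifting open covers of $X$ to open covers of $A(X)$, applying the hypothesis on $A(X)$, and then projecting the compact witnesses back to $X$ along the natural projection $\pi\colon A(X)\to X$, $\langle x,i\rangle\mapsto x$, which is continuous and therefore maps compact sets to compact sets.

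First I would fix a sequence $(\mathcal{U}_n)$ of open covers of $X$ and, exactly as in Theorem~\ref{T3}, put $\mathcal{W}_n=\{U\times\{0,1\} : U\in\mathcal{U}_n\}$, so that each $\mathcal{W}_n$ is an open cover of $A(X)$. For the star-K-Menger case I would apply $SS_{comp}^*(\mathcal{O},\mathcal{O})$ for $A(X)$ to the sequence $(\mathcal{W}_n)$ to obtain compact sets $L_n\subseteq A(X)$ with $\{St(L_n,\mathcal{W}_n) : n\in\mathbb{N}\}$ an open cover of $A(X)$, and then set $K_n=\pi(L_n)$, a compact subset of $X$.

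The crux is to verify that the sequence $(K_n)$ witnesses the star-K-Menger property for $(\mathcal{U}_n)$. Given $x\in X$, the point $\langle x,0\rangle$ lies in some $St(L_n,\mathcal{W}_n)$, so there is a $U\in\mathcal{U}_n$ with $x\in U$ and $(U\times\{0,1\})\cap L_n\neq\emptyset$; choosing $\langle y,i\rangle$ in this intersection yields $y\in U\cap\pi(L_n)=U\cap K_n$, whence $U\subseteq St(K_n,\mathcal{U}_n)$ and in particular $x\in St(K_n,\mathcal{U}_n)$. Thus $\{St(K_n,\mathcal{U}_n) : n\in\mathbb{N}\}$ covers $X$, as required.

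For the star-K-Hurewicz case I would run the identical construction but apply $SS_{comp}^*(\mathcal{O},\Gamma)$ for $A(X)$, so that $\{St(L_n,\mathcal{W}_n) : n\in\mathbb{N}\}$ is a $\gamma$-cover of $A(X)$. Then for each $x\in X$ the point $\langle x,0\rangle$ belongs to $St(L_n,\mathcal{W}_n)$ for all but finitely many $n$, and the same pointwise computation as above shows $x\in St(K_n,\mathcal{U}_n)$ for all but finitely many $n$; hence $\{St(K_n,\mathcal{U}_n) : n\in\mathbb{N}\}$ is a $\gamma$-cover of $X$. I anticipate no substantial obstacle here: the only points that really need checking are that $\pi$ preserves compactness (immediate from continuity) and that the star relation transfers under $\pi$, which is the short computation just indicated.
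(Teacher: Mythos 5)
Your proposal is correct and follows exactly the route the paper intends: the paper gives no separate proof of this theorem, stating only that it is obtained analogously to Theorem~\ref{T3}, and your lift of the covers to $\mathcal{W}_n=\{U\times\{0,1\}:U\in\mathcal{U}_n\}$ followed by projecting the compact witnesses along $\pi$ is precisely that analogue, with the compactness of $\pi(L_n)$ and the transfer of the star relation correctly justified.
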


\begin{Th}
\label{TN45}
If $A(X)$ is strongly star-Menger (respectively, strongly star-Hurewicz, strongly star-Rothberger), then $X$ is also strongly star-Menger (respectively, strongly star-Hurewicz, strongly star-Rothberger).
\end{Th}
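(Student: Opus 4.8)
The plan is to mirror the argument of Theorem~\ref{T3}, but since in the strongly star versions the selectors are arbitrary finite (respectively, one-point) subsets of $A(X)$ rather than members of the cover, I would replace the step ``choose $U_n\in\mathcal{U}_n$'' by pushing the witnessing sets down to $X$ along the first-coordinate projection $\pi:A(X)\to X$. I would write out the strongly star-Rothberger case in full, the other two being entirely analogous. Let $(\mathcal{U}_n)$ be a sequence of open covers of $X$. For each $n$ lift $\mathcal{U}_n$ to the open cover $\mathcal{W}_n=\{U\times\{0,1\} : U\in\mathcal{U}_n\}$ of $A(X)$ (each $U\times\{0,1\}$ is open, being a neighbourhood of each of its points). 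Applying the strongly star-Rothberger property of $A(X)$ to $(\mathcal{W}_n)$ yields a sequence $(\langle x_n,i_n\rangle)$ of points of $A(X)$ such that $\{St(\langle x_n,i_n\rangle,\mathcal{W}_n) : n\in\mathbb{N}\}$ covers $A(X)$.

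The decisive observation is a star identity: since membership in a member $U\times\{0,1\}$ of $\mathcal{W}_n$ is independent of the second coordinate, for any $\langle x,i\rangle\in A(X)$,
\[
St(\langle x,i\rangle,\mathcal{W}_n)=\Big(\bigcup\{U\in\mathcal{U}_n : x\in U\}\Big)\times\{0,1\}=St(x,\mathcal{U}_n)\times\{0,1\}.
\]
Putting $x_n=\pi(\langle x_n,i_n\rangle)$, this gives $St(\langle x_n,i_n\rangle,\mathcal{W}_n)=St(x_n,\mathcal{U}_n)\times\{0,1\}$ for every $n$, and hence $X\times\{0,1\}=A(X)=\bigcup_n St(\langle x_n,i_n\rangle,\mathcal{W}_n)=\big(\bigcup_n St(x_n,\mathcal{U}_n)\big)\times\{0,1\}$. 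Therefore $\bigcup_n St(x_n,\mathcal{U}_n)=X$, so $(x_n)$ witnesses $SS_1^*(\mathcal{O},\mathcal{O})$ for $X$.

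For the strongly star-Menger case one replaces the points by finite sets $K_n\subseteq A(X)$, sets $F_n=\pi[K_n]$ (still finite), and uses $St(K_n,\mathcal{W}_n)=\bigcup_{\langle x,i\rangle\in K_n}\big(St(x,\mathcal{U}_n)\times\{0,1\}\big)=St(F_n,\mathcal{U}_n)\times\{0,1\}$ to transfer the covering property to $X$ exactly as above. For the strongly star-Hurewicz case one observes in addition that $\langle x,i\rangle\in St(K_n,\mathcal{W}_n)$ precisely when $x\in St(F_n,\mathcal{U}_n)$, so a point of $X$ lies in all but finitely many of the $St(F_n,\mathcal{U}_n)$ exactly when each of $\langle x,0\rangle,\langle x,1\rangle$ lies in all but finitely many of the $St(K_n,\mathcal{W}_n)$; moreover the two families have the same number of distinct members, so infiniteness is preserved and the $\gamma$-cover property transfers verbatim. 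I expect no genuine obstacle: the one point needing care is the star identity, which holds immediately because the lifted covers $\mathcal{W}_n$ are built so that membership ignores the duplicating coordinate, and the projection $\pi$ preserves finiteness (respectively, the singleton nature) of the selectors, so the required witnesses for $X$ are produced directly.
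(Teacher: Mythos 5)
Your proposal is correct and is essentially the argument the paper intends: Theorem~\ref{TN45} is stated as following ``analogously'' from the proof of Theorem~\ref{T3}, which uses exactly the same lifted covers $\mathcal{W}_n=\{U\times\{0,1\}:U\in\mathcal{U}_n\}$, and the only adaptation needed for the strongly star versions is the one you make, namely projecting the selected points or finite sets to $X$ and using the identity $St(K,\mathcal{W}_n)=St(\pi[K],\mathcal{U}_n)\times\{0,1\}$. Your handling of the $\gamma$-cover condition in the Hurewicz case is also sound.
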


\subsection{Alster covers}
%All considered covers in this section are assumed to be closed under finite unions.
We recall the notions of the following covers from \cite{Alster}.
\begin{enumerate}[leftmargin=*]
\item[$\mathcal{G}$:] The family of all covers $\mathcal{U}$ of the space $X$ for which each element of $\mathcal{U}$ is a $G_\delta$ set.

\item[$\mathcal{G}_K$:] The family of all Alster covers of $X$. A cover $\mathcal{U}\in\mathcal{G}$ is said to be an Alster cover if $X$ is not in $\mathcal{U}$ and for each compact set $C\subseteq X$ there is a $U\in\mathcal{U}$ such that $C\subseteq U$.

\item[$\mathcal{G}_\Gamma$:] The family of all covers $\mathcal{U}\in\mathcal{G}$ which are infinite and each infinite subset of $\mathcal{U}$ is a cover of $X$.
\end{enumerate}

For a family $\mathcal{U}$ of open covers of $X$ we use the symbol $\mathcal{U}_\cup$ to denote the collection of all such members of $\mathcal{U}$ which are closed under finite unions.
We start with the following lemmas which will be used subsequently.
\begin{Lemma}
\label{LN9}
For a space $X$ the following assertions are equivalent.
\begin{enumerate}[wide=0pt,label={\upshape(\arabic*)},leftmargin=*]
  \item $X$ satisfies $S_{fin}^*(\mathcal{O}_\cup,\mathcal{O})$.
  \item $X$ satisfies $U_{fin}^*(\mathcal{O}_\cup,\Omega)$.
  \item $X$ satisfies $S_1^*(\mathcal{O}_\cup,\mathcal{O})$.
  \item $X$ satisfies $S_1^*(\mathcal{O}_\cup,\Omega)$.
\end{enumerate}
\end{Lemma}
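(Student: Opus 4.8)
The plan is to route all four principles through two elementary mechanisms: a \emph{union--merge} that handles the selection quantifier ($S_1^*$ versus $S_{fin}^*$ versus $U_{fin}^*$), and a \emph{star--absorption} that handles the target class ($\mathcal{O}$ versus $\Omega$). First I would record the identity $St(\cup\mathcal{V},\mathcal{U})=\bigcup_{V\in\mathcal{V}}St(V,\mathcal{U})$ together with the fact that $\cup\mathcal{V}\in\mathcal{U}$ whenever $\mathcal{U}\in\mathcal{O}_\cup$ and $\mathcal{V}\subseteq\mathcal{U}$ is finite. Using these, $(1)\Rightarrow(3)$ goes as follows: given finite $\mathcal{V}_n\subseteq\mathcal{U}_n$ witnessing $S_{fin}^*(\mathcal{O}_\cup,\mathcal{O})$, set $V_n=\cup\mathcal{V}_n\in\mathcal{U}_n$, so that $St(V_n,\mathcal{U}_n)=\bigcup_{V\in\mathcal{V}_n}St(V,\mathcal{U}_n)$ and hence $\{St(V_n,\mathcal{U}_n):n\in\mathbb{N}\}$ still covers $X$; the same merge gives $(2)\Rightarrow(4)$ verbatim. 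The reverse inclusions $(3)\Rightarrow(1)$ and $(4)\Rightarrow(2)$ are the trivial $S_1^*\Rightarrow S_{fin}^*$ and $S_1^*\Rightarrow U_{fin}^*$, and $(4)\Rightarrow(3)$ is immediate from $\Omega\subseteq\mathcal{O}$. So after this bookkeeping the only thing left is to upgrade the target from $\mathcal{O}$ to $\Omega$, that is, to prove $(3)\Rightarrow(4)$ (equivalently $(1)\Rightarrow(2)$).

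The heart of the argument is the following absorption property of stars against finite--union--closed covers: if $\mathcal{U}\in\mathcal{O}_\cup$ and $V\in\mathcal{U}$ is nonempty, then every finite $F\subseteq X$ satisfies $F\subseteq St(V,\mathcal{U})$. Indeed, fixing $x_0\in V$ and using that a cover closed under finite unions is an $\omega$-cover, there is $U\in\mathcal{U}$ with $F\cup\{x_0\}\subseteq U$; since $x_0\in U\cap V$ we get $U\subseteq St(V,\mathcal{U})$, whence $F\subseteq St(V,\mathcal{U})$. The consequence I would exploit is that for \emph{any} selection $V_n\in\mathcal{U}_n$ whose stars $\{St(V_n,\mathcal{U}_n):n\in\mathbb{N}\}$ merely cover $X$, that same family is automatically an $\omega$-cover: given finite $F$, choose any $x\in F$ and any $n$ with $x\in St(V_n,\mathcal{U}_n)$, and absorption delivers $F\subseteq St(V_n,\mathcal{U}_n)$. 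Feeding the plain cover produced by $(3)$ into this observation yields $(4)$, closing the loop.

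The main obstacle is precisely this $\mathcal{O}\to\Omega$ upgrade, and the point worth stressing is that it is driven \emph{entirely} by closure under finite unions of the source covers; unlike Theorem~\ref{TN28}, no passage to finite powers of $X$ is needed, which is exactly what restricting the source to $\mathcal{O}_\cup$ buys. The one place demanding care is the degenerate case in which the absorbing member $U$ may be taken to be $X$ itself, so that the stars collapse to $X$: there one must apply the ambient definition of an $\omega$-cover consistently to such trivial families and invoke the escape clause of $U_{fin}^*$ where appropriate. Modulo this bookkeeping, the four equivalences follow from the union--merge and star--absorption steps above, with the absorption property being the genuinely load-bearing ingredient.
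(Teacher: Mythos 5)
Your proposal is correct and follows essentially the same route as the paper: the union--merge step (replacing a finite selection $\mathcal{V}_n$ by the single member $\cup\mathcal{V}_n\in\mathcal{U}_n$) and the finite--union--absorption step (covering $F$ together with a point of the selected set by finitely many members and merging them into one) are exactly the computations the paper performs inline in its proofs of $(1)\Rightarrow(2)$, $(1)\Rightarrow(3)$ and $(3)\Rightarrow(4)$. Your only departure is organizational --- isolating the absorption as a standalone observation about nonempty members of covers in $\mathcal{O}_\cup$ --- and your explicit caveat about stars degenerating to $X$ is a point the paper silently glosses over as well.
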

\begin{proof}
$(1)\Rightarrow (2)$. Let $(\mathcal{U}_n)$ be a sequence of members of $\mathcal{O}_\cup$. Since $X$ satisfies $S_{fin}^*(\mathcal{O}_\cup,\mathcal{O})$, there is a sequence $(\mathcal{V}_n)$ such that for each $n$ $\mathcal{V}_n$ is a finite subset of $\mathcal{U}_n$ and $\{St(\cup\mathcal{V}_n,\mathcal{U}_n) : n\in\mathbb{N}\}$ covers $X$. Let $F$ be a finite subset of $X$. For each $x\in F$ choose a $n_x\in\mathbb{N}$ such that $x\in St(\cup\mathcal{V}_{n_x},\mathcal{U}_{n_x})$. Let $a\in F$. We can find a $U_a\in\mathcal{U}_{n_a}$ with $a\in U_a$ and $U_a\cap(\cup\mathcal{V}_{n_a})\neq\emptyset$. For each $x\in F\setminus\{a\}$ choose a $U_x\in\mathcal{U}_{n_a}$ containing $x$. Clearly $U=U_a\cup(\cup_{x\in F\setminus\{a\}}U_x)\in\mathcal{U}_{n_a}$ and $U\cap(\cup\mathcal{V}_{n_a})\neq\emptyset$, and accordingly $F\subseteq St(\cup\mathcal{V}_{n_a},\mathcal{U}_{n_a})$. Thus $\{St(\cup\mathcal{V}_n,\mathcal{U}_n) : n\in\mathbb{N}\}$ is an $\omega$-cover and $X$ satisfies $U_{fin}^*(\mathcal{O}_\cup,\Omega)$.

$(1)\Rightarrow (3)$. Let $(\mathcal{U}_n)$ be a sequence of members of $\mathcal{O}_\cup$. Since $X$ satisfies $S_{fin}^*(\mathcal{O}_\cup,\mathcal{O})$, there exists a sequence $(\mathcal{V}_n)$ such that for each $n$ $\mathcal{V}_n$ is a finite subset of $\mathcal{U}_n$ and $\{St(\cup\mathcal{V}_n,\mathcal{U}_n) : n\in\mathbb{N}\}$ covers $X$. For each $n$ let $V_n=\cup\mathcal{V}_n$. Since for each $n$ $\cup\mathcal{V}_n\in\mathcal{U}_n$, the sequence $(V_n)$ witnesses for $(\mathcal{U}_n)$ that $X$ satisfies $S_1^*(\mathcal{O}_\cup,\mathcal{O})$.

$(3)\Rightarrow (4)$. Let $(\mathcal{U}_n)$ be a sequence of members of $\mathcal{O}_\cup$. Since $X$ satisfies $S_1^*(\mathcal{O}_\cup,\mathcal{O})$, there exists a sequence $(U_n)$ such that for each $n$ $U_n\in\mathcal{U}_n$ and $\{St(U_n,\mathcal{U}_n) : n\in\mathbb{N}\}$ covers $X$. Let $F$ be a finite subset of $X$. For each $x\in F$ choose a $n_x\in\mathbb{N}$ such that $x\in St(U_{n_x},\mathcal{U}_{n_x})$. Let $a\in F$. We get a $U_a\in\mathcal{U}_{n_a}$ with $a\in U_a$ and $U_a\cap U_{n_a}\neq\emptyset$. For each $x\in F\setminus\{a\}$ we pick a $U_x\in\mathcal{U}_{n_a}$ containing $x$. Clearly $U=U_a\cup(\cup_{x\in F\setminus\{a\}}U_x)\in\mathcal{U}_{n_a}$ with $U\cap U_{n_a}\neq\emptyset$ and thus $F\subseteq St(U_{n_a},\mathcal{U}_{n_a})$. This implies that $\{St(U_n,\mathcal{U}_n) : n\in\mathbb{N}\}$ is an $\omega$-cover of $X$. Hence $X$ satisfies $S_1^*(\mathcal{O}_\cup,\Omega)$.
\end{proof}

\begin{Lemma}
\label{LN1}
For a space $X$ the following assertions are equivalent.
\begin{enumerate}[wide=0pt,label={\upshape(\arabic*)},leftmargin=*]
  \item $X$ satisfies $SS_{comp}^*(\mathcal{O}_\cup,\mathcal{O})$.
  \item $X$ satisfies $SS_{comp}^*(\mathcal{O}_\cup,\Omega)$.
\end{enumerate}
\end{Lemma}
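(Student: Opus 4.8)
The plan is to note first that $(2)\Rightarrow(1)$ is immediate, since $\Omega\subseteq\mathcal{O}$ and so any sequence $(K_n)$ of compact sets witnessing $SS_{comp}^*(\mathcal{O}_\cup,\Omega)$ for a given $(\mathcal{U}_n)$ already witnesses $SS_{comp}^*(\mathcal{O}_\cup,\mathcal{O})$. All the content therefore lies in $(1)\Rightarrow(2)$, and here I would follow exactly the pattern of the implication $(3)\Rightarrow(4)$ in Lemma~\ref{LN9}, replacing the single selected sets $U_n$ there by the compact sets $K_n$.

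Concretely, I would take an arbitrary sequence $(\mathcal{U}_n)$ of members of $\mathcal{O}_\cup$ and apply $(1)$ to obtain a sequence $(K_n)$ of compact subsets of $X$ for which $\{St(K_n,\mathcal{U}_n):n\in\mathbb{N}\}$ is an open cover of $X$. The remaining task is purely to upgrade this open cover to an $\omega$-cover, and this is where the hypothesis that each $\mathcal{U}_n$ is closed under finite unions enters.

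So, given a finite set $F\subseteq X$, I would assign to each $x\in F$ an index $n_x$ with $x\in St(K_{n_x},\mathcal{U}_{n_x})$, fix one $a\in F$, and record its defining witness $U_a\in\mathcal{U}_{n_a}$ with $a\in U_a$ and $U_a\cap K_{n_a}\neq\emptyset$. For the remaining points $x\in F\setminus\{a\}$ I would simply pick some $U_x\in\mathcal{U}_{n_a}$ containing $x$, which is possible since $\mathcal{U}_{n_a}$ covers $X$. Closure of $\mathcal{U}_{n_a}$ under finite unions then yields $U=U_a\cup(\cup_{x\in F\setminus\{a\}}U_x)\in\mathcal{U}_{n_a}$, and since $U\supseteq U_a$ we still have $U\cap K_{n_a}\neq\emptyset$, whence $F\subseteq U\subseteq St(K_{n_a},\mathcal{U}_{n_a})$. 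This shows $\{St(K_n,\mathcal{U}_n):n\in\mathbb{N}\}$ is an $\omega$-cover of $X$, and hence $X$ satisfies $SS_{comp}^*(\mathcal{O}_\cup,\Omega)$.

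I do not anticipate any genuine obstacle; the argument is a transcription of the finite-union trick already used in Lemma~\ref{LN9}. The only point worth flagging is that the stars are now taken with respect to compact sets rather than single points, but this is harmless: the merging step never touches $K_{n_a}$ and only requires $U\cap K_{n_a}\neq\emptyset$, which is inherited from $U_a\cap K_{n_a}\neq\emptyset$. Thus, once closure under finite unions is available, the stronger-looking $\Omega$-version costs nothing beyond the $\mathcal{O}$-version.
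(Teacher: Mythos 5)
Your proof is correct and follows exactly the route the paper intends: the paper states Lemma~\ref{LN1} without proof, leaving it as the obvious analogue of the implications $(3)\Rightarrow(4)$ of Lemma~\ref{LN9} and $(1)\Rightarrow(2)$ of Lemma~\ref{LN10}, and your argument is precisely that finite-union trick with the selected points/finite sets replaced by compact sets. The only cosmetic remark is that, like the paper itself in those lemmas, you ignore the clause in the definition of an $\omega$-cover that $X$ itself should not be a member of the cover; this is the standard convention here and not a genuine gap.
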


\begin{Lemma}
\label{LN10}
For a space $X$ the following assertions are equivalent.
\begin{enumerate}[wide=0pt,label={\upshape(\arabic*)},leftmargin=*]
  \item $X$ satisfies $SS_{fin}^*(\mathcal{O}_\cup,\mathcal{O})$.
  \item $X$ satisfies $SS_{fin}^*(\mathcal{O}_\cup,\Omega)$.
  \item $X$ satisfies $SS_1^*(\mathcal{O}_\cup,\mathcal{O})$.
  \item $X$ satisfies $SS_1^*(\mathcal{O}_\cup,\Omega)$.
\end{enumerate}
\end{Lemma}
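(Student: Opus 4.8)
The plan is to follow the architecture of Lemma~\ref{LN9}: I would establish the three implications $(1)\Rightarrow(2)$, $(1)\Rightarrow(3)$ and $(3)\Rightarrow(4)$, and then observe that the remaining implications are free. Indeed, since an $\omega$-cover is in particular an open cover, $\Omega\subseteq\mathcal{O}$ yields $(2)\Rightarrow(1)$ and $(4)\Rightarrow(3)$; and since a singleton is a finite set (with the same star), $SS_1^*$ always entails $SS_{fin}^*$, giving $(3)\Rightarrow(1)$ and $(4)\Rightarrow(2)$. Thus from $(1)$ I reach $(2),(3),(4)$, while each of $(2),(3),(4)$ returns to $(1)$, closing the loop.

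The two ``$\omega$-upgrade'' steps $(1)\Rightarrow(2)$ and $(3)\Rightarrow(4)$ I would carry out verbatim as the corresponding steps of Lemma~\ref{LN9}. Given a selection whose stars merely cover $X$, I fix a finite $F\subseteq X$, choose for a distinguished point $a\in F$ an index $n_a$ and a member $U_a\in\mathcal{U}_{n_a}$ with $a\in U_a$ that meets the selected object (a finite set $K_{n_a}$ in the case of $(1)\Rightarrow(2)$, or a point $x_{n_a}$ in the case of $(3)\Rightarrow(4)$), pick members $U_x\in\mathcal{U}_{n_a}$ through the remaining points $x\in F\setminus\{a\}$, and pass to $U=U_a\cup\bigcup_{x}U_x\in\mathcal{U}_{n_a}$ by closure under finite unions; then $U$ still meets the selected object, so $F\subseteq U\subseteq St(K_{n_a},\mathcal{U}_{n_a})$ (respectively $St(x_{n_a},\mathcal{U}_{n_a})$), showing the stars form an $\omega$-cover.

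The crux, and the only genuine departure from Lemma~\ref{LN9}, is $(1)\Rightarrow(3)$, where a finite selector must be replaced by a one-point selector. Here the selected objects are finite subsets of the \emph{space} $X$, not finite subfamilies of the cover, so the trivial collapse $\cup\mathcal{V}_n\in\mathcal{U}_n$ from Lemma~\ref{LN9} is unavailable. I would instead isolate the following observation, which is exactly where membership in $\mathcal{O}_\cup$ does the essential work: if $\mathcal{U}\in\mathcal{O}_\cup$, $K\subseteq X$ is finite and nonempty, and $z\in K$, then $St(K,\mathcal{U})=St(z,\mathcal{U})$. The inclusion $St(z,\mathcal{U})\subseteq St(K,\mathcal{U})$ is clear; for the reverse, given any $U\in\mathcal{U}$ with $U\cap K\neq\emptyset$, choose $U_0\in\mathcal{U}$ with $z\in U_0$ and replace $U$ by $U\cup U_0\in\mathcal{U}$, which contains $z$ and therefore lies in $St(z,\mathcal{U})$, whence $U\subseteq St(z,\mathcal{U})$. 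With this in hand, $(1)\Rightarrow(3)$ is immediate: apply $(1)$ to $(\mathcal{U}_n)$ to get finite sets $K_n$ with $\{St(K_n,\mathcal{U}_n):n\in\mathbb{N}\}$ covering $X$, choose any $z_n\in K_n$ (any point of $X$ at indices where $K_n=\emptyset$, which contribute nothing to the covering), and conclude that $\{St(z_n,\mathcal{U}_n):n\in\mathbb{N}\}=\{St(K_n,\mathcal{U}_n):n\in\mathbb{N}\}$ still covers $X$; the same substitution promotes $(2)$ to $(4)$, though it is not needed once the loop above is closed.

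I expect the main obstacle to be precisely the formulation of this star-collapse observation. It is tempting, but false in general, that the star of a finite set reduces to the star of one of its points; the entire lemma hinges on recognizing that closure of the covers under finite unions is exactly the hypothesis that validates $St(K,\mathcal{U})=St(z,\mathcal{U})$, thereby making the strongly star \emph{finite} and strongly star \emph{one-point} principles coincide over $\mathcal{O}_\cup$.
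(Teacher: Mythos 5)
Your proposal is correct and follows essentially the same route as the paper: the paper likewise proves only $(1)\Rightarrow(2)$, $(1)\Rightarrow(3)$ and $(3)\Rightarrow(4)$ (leaving the trivial converses implicit), and its $(1)\Rightarrow(3)$ step is exactly your star-collapse argument inlined --- given $x\in St(F_{n_0},\mathcal{U}_{n_0})$ it merges a member through $x$ meeting $F_{n_0}$ with a member through the chosen point $x_{n_0}$ into a single element of $\mathcal{U}_{n_0}$ via closure under finite unions. Your packaging of that move as the standalone identity $St(K,\mathcal{U})=St(z,\mathcal{U})$ for $\mathcal{U}\in\mathcal{O}_\cup$ is a tidy but inessential reorganization of the same idea.
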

\begin{proof}
$(1)\Rightarrow (2)$. Let $(\mathcal{U}_n)$ be a sequence of members of $\mathcal{O}_\cup$. Since $X$ satisfies $SS_{fin}^*(\mathcal{O}_\cup,\mathcal{O})$, there exists a sequence $(F_n)$ of finite subsets of $X$ such that $\{St(F_n,\mathcal{U}_n) : n\in\mathbb{N}\}$ covers $X$. Let $F$ be a finite subset of $X$. For each $x\in F$ choose a $n_x\in\mathbb{N}$ such that $x\in St(F_{n_x},\mathcal{U}_{n_x})$. Let $a\in F$. We get a $U_a\in\mathcal{U}_{n_a}$ with $a\in U_a$ and $U_a\cap F_{n_a}\neq\emptyset$. For each $x\in F\setminus\{a\}$ we pick a $U_x\in\mathcal{U}_{n_a}$ containing $x$. Clearly $U=U_a\cup(\cup_{x\in F\setminus\{a\}}U_x)\in\mathcal{U}_{n_a}$ with $U\cap F_{n_a}\neq\emptyset$. Thus $F\subseteq St(F_{n_a},\mathcal{U}_{n_a})$ and $\{St(F_n,\mathcal{U}_n) : n\in\mathbb{N}\}$ is an $\omega$-cover of $X$. Consequently $X$ satisfies $SS_{fin}^*(\mathcal{O}_\cup,\Omega)$.

$(1)\Rightarrow (3)$. Let $(\mathcal{U}_n)$ be a sequence of members of $\mathcal{O}_\cup$. Apply the property $SS_{fin}^*(\mathcal{O}_\cup,\mathcal{O})$ to $(\mathcal{U}_n)$ to obtain a sequence $(F_n)$ of finite subsets of $X$ such that $\{St(F_n,\mathcal{U}_n) : n\in\mathbb{N}\}$ covers $X$. For each $n$ choose a $x_n\in F_n$. We claim that the sequence $(x_n)$ guarantees for $(\mathcal{U}_n)$ that $X$ satisfies $SS_1^*(\mathcal{O}_\cup,\mathcal{O})$. Let $x\in X$. Pick a $n_0\in\mathbb{N}$ such that $x\in St(F_{n_0},\mathcal{U}_{n_0})$. Then we get a $V\in\mathcal{U}_{n_0}$ such that $x\in V$ and $V\cap F_{n_0}\neq\emptyset$. Also choose a $W\in\mathcal{U}_{n_0}$ such that $x_{n_0}\in W$. Thus we obtain a $U=V\cup W\in\mathcal{U}_{n_0}$ with $x, x_{n_0}\in U$. Subsequently $x\in St(x_{n_0},\mathcal{U}_{n_0})$ and $\{St(x_n,\mathcal{U}_n) : n\in\mathbb{N}\}$ covers $X$. Hence $X$ satisfies $SS_1^*(\mathcal{O}_\cup,\mathcal{O})$.
%
%$(2)\Rightarrow (3)$. Let $(\mathcal{U}_n)$ be a sequence of open covers of $X$. Apply $SS_{fin}^*(\mathcal{O},\Omega)$ to $(\mathcal{U}_n)$ to obtain a sequence $(F_n)$ of finite subsets of $X$ such that $\{St(F_n,\mathcal{U}_n) : n\in\mathbb{N}\}$ is an $\omega$-cover of $X$. For each $n$ choose a $x_n\in F_n$. We claim that the sequence $(x_n)$ witnesses for $(\mathcal{U}_n)$ that $X$ is strongly star-Rothberger. Let $x\in X$. Pick a $n_0\in\mathbb{N}$ such that $x\in St(F_{n_0},\mathcal{U}_{n_0})$ and then we get a $V\in\mathcal{U}_{n_0}$ such that $x\in V$ with $V\cap F_{n_0}\neq\emptyset$. Also choose a $W\in\mathcal{U}_{n_0}$ such that $x_{n_0}\in W$. Thus obtain a $U=V\cup W\in\mathcal{U}_{n_0}$ with $x, x_{n_0}\in U$. Subsequently $x\in St(x_{n_0},\mathcal{U}_{n_0})$ and so $\{St(x_n,\mathcal{U}_n) : n\in\mathbb{N}\}$ covers $X$. Hence $X$ is strongly star-Rothberger.

$(3)\Rightarrow (4)$. Let $(\mathcal{U}_n)$ be a sequence of members of $\mathcal{O}_\cup$. Since $X$ satisfies $SS_1^*(\mathcal{O}_\cup,\mathcal{O})$, there exists a sequence $(x_n)$ of elements of $X$ such that $\{St(x_n,\mathcal{U}_n) : n\in\mathbb{N}\}$ covers $X$. Let $F$ be a finite subset of $X$. For each $x\in F$ choose a $n_x\in\mathbb{N}$ such that $x\in St(x_{n_x},\mathcal{U}_{n_x})$. Let $a\in F$. We get a $U_a\in\mathcal{U}_{n_a}$ with $x_{n_a}, a\in U_a$. For each $x\in F\setminus\{a\}$ we pick a $U_x\in\mathcal{U}_{n_a}$ containing $x$. Clearly $U=U_a\cup(\cup_{x\in F\setminus\{a\}}U_x)\in\mathcal{U}_{n_a}$ with $x_{n_a}\in U$ and thus $F\subseteq St(x_{n_a},\mathcal{U}_{n_a})$. It follows that $\{St(x_n,\mathcal{U}_n) : n\in\mathbb{N}\}$ is an $\omega$-cover of $X$. Hence $X$ satisfies $SS_1^*(\mathcal{O}_\cup,\Omega)$.
\end{proof}

By Theorem~\ref{TN34} and Lemma~\ref{LN9}, we obtain the following.
\begin{Th}
\label{TN39}
For a space $X$ the following assertions are equivalent.
\begin{enumerate}[wide=0pt,label={\upshape(\arabic*)},leftmargin=*]
  \item $X$ satisfies $S_{fin}^*(\mathcal{O}_\cup,\mathcal{O})$.
  \item $X$ satisfies $U_{fin}^*(\mathcal{O}_\cup,\Omega)$.
  \item $X$ satisfies $U_{fin}^*(\mathcal{O}_\cup,\mathcal{O}^{wgp})$.
  \item $X$ satisfies $S_1^*(\mathcal{O}_\cup,\mathcal{O})$.
  \item $X$ satisfies $S_1^*(\mathcal{O}_\cup,\Omega)$.
  \item $X$ satisfies $S_1^*(\mathcal{O}_\cup,\mathcal{O}^{wgp})$.
\end{enumerate}
\end{Th}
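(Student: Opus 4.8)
The plan is to read Theorem~\ref{TN39} as a repackaging of Lemma~\ref{LN9} together with the elementary interplay between $\omega$-covers and weakly groupable covers. Lemma~\ref{LN9} already supplies $(1)\Leftrightarrow(2)\Leftrightarrow(4)\Leftrightarrow(5)$, so the only task is to attach the two weakly groupable statements $(3)$ and $(6)$ to this cluster. The argument will rest on three routine facts: (i) any countable $\omega$-cover is weakly groupable (partition it into singleton blocks, so that a finite set lying inside a single member lies inside the union of one block); (ii) a weakly groupable cover is in particular an open cover; and (iii) the star distributes over finite unions, i.e. $St(\cup\mathcal{V},\mathcal{U})=\cup\{St(V,\mathcal{U}):V\in\mathcal{V}\}$, whence $St(\cup\mathcal{V}_n,\mathcal{U}_n)=\cup\{St(V,\mathcal{U}_n):V\in\mathcal{V}_n\}$.

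For the Rothberger column I would prove $(5)\Rightarrow(6)\Rightarrow(4)$ directly. Starting from a sequence in $\mathcal{O}_\cup$, the family $\{St(V_n,\mathcal{U}_n):n\in\mathbb{N}\}$ witnessing $S_1^*(\mathcal{O}_\cup,\Omega)$ is an $\omega$-cover, hence weakly groupable by (i); this gives $(6)$. Conversely the family witnessing $S_1^*(\mathcal{O}_\cup,\mathcal{O}^{wgp})$ is by (ii) already an open cover, which is exactly the conclusion required for $S_1^*(\mathcal{O}_\cup,\mathcal{O})$; this gives $(4)$. Together with Lemma~\ref{LN9} this folds $(6)$ into the equivalence class.

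For the Menger/Hurewicz column I would prove $(2)\Rightarrow(3)\Rightarrow(1)$. Here $(2)\Rightarrow(3)$ is again fact (i), applied to the $\omega$-cover $\{St(\cup\mathcal{V}_n,\mathcal{U}_n):n\}$ produced by $U_{fin}^*(\mathcal{O}_\cup,\Omega)$ (the degenerate case $St(\cup\mathcal{V}_n,\mathcal{U}_n)=X$ being weakly groupable trivially). For $(3)\Rightarrow(1)$, a weakly groupable output family is by (ii) an open cover of $X$, and by (iii) the family $\cup_n\{St(V,\mathcal{U}_n):V\in\mathcal{V}_n\}$ has the same union, so it covers $X$ as $S_{fin}^*(\mathcal{O}_\cup,\mathcal{O})$ demands. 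Alternatively, $(2)\Leftrightarrow(3)$ is precisely the $\mathcal{O}_\cup$-analogue of Theorem~\ref{TN34}, whose proof transfers verbatim because its grouping argument uses nothing about $\mathcal{O}$ beyond its being a family of open covers. Either route, with Lemma~\ref{LN9}, closes the cycle.

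The only genuinely delicate point would be the passage from a weakly groupable output back to an $\omega$-cover---the ``hard'' half $(3)\Rightarrow(2)$ one meets when proving the $\mathcal{O}_\cup$-analogue of Theorem~\ref{TN34} head-on, since it requires re-selecting and recombining blocks. My plan sidesteps this entirely by routing $(3)\Rightarrow(1)\Rightarrow(2)$ through Lemma~\ref{LN9}, so that the whole proof reduces to the three bookkeeping facts above plus the already-established lemma.
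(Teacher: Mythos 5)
Your proposal is correct. The paper itself gives no argument beyond the single line ``By Theorem~\ref{TN34} and Lemma~\ref{LN9}, we obtain the following,'' so the intended route is: take the four-way equivalence $(1)\Leftrightarrow(2)\Leftrightarrow(4)\Leftrightarrow(5)$ from Lemma~\ref{LN9} and graft on the weakly groupable statements via the $\mathcal{O}_\cup$-analogue of Theorem~\ref{TN34}, whose nontrivial half (from $\mathcal{O}^{wgp}$ back to $\Omega$) is the block re-selection argument that the paper does spell out in the parallel Theorem~\ref{T5}. You reach the same conclusion but organize the cycle so that this nontrivial half is never needed: you only use that a countable $\omega$-cover is weakly groupable (for $(2)\Rightarrow(3)$ and $(5)\Rightarrow(6)$), that a weakly groupable cover is a cover, and that $St(\cup\mathcal{V},\mathcal{U})=\cup_{V\in\mathcal{V}}St(V,\mathcal{U})$ (for $(3)\Rightarrow(1)$ and $(6)\Rightarrow(4)$), letting Lemma~\ref{LN9} carry the real content. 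This is a sound and slightly more economical decomposition --- it makes the theorem a corollary of Lemma~\ref{LN9} plus bookkeeping, rather than of Lemma~\ref{LN9} plus a transferred Theorem~\ref{TN34} --- at the cost of obscuring the fact (which the paper's phrasing advertises) that the $\Omega$/$\mathcal{O}^{wgp}$ equivalence holds at the level of each individual row even without the detour through $S_{fin}^*(\mathcal{O}_\cup,\mathcal{O})$. Your handling of the degenerate clause $St(\cup\mathcal{V}_n,\mathcal{U}_n)=X$ in the $U_{fin}^*$ definitions is also the right care to take.
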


Also by Theorem~\ref{T5} and Lemma~\ref{LN1}, we obtain the following.
\begin{Th}
\label{TN52}
For a space $X$ the following assertions are equivalent.
\begin{enumerate}[wide=0pt,label={\upshape(\arabic*)},leftmargin=*]
  \item $X$ satisfies $SS_{comp}^*(\mathcal{O}_\cup,\mathcal{O})$.
  \item $X$ satisfies $SS_{comp}^*(\mathcal{O}_\cup,\Omega)$.
  \item $X$ satisfies $SS_{comp}^*(\mathcal{O}_\cup,\mathcal{O}^{wgp})$.
\end{enumerate}
\end{Th}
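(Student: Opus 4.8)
The plan is to close the cycle $(1)\Rightarrow(2)\Rightarrow(3)\Rightarrow(1)$, letting Lemma~\ref{LN1} carry the equivalence between the $\mathcal{O}$-- and $\Omega$--targets while treating the weakly groupable target by elementary inclusions. Since assertions $(1)$ and $(2)$ are verbatim the two items of Lemma~\ref{LN1}, the equivalence $(1)\Leftrightarrow(2)$ is already in hand, so the only genuinely new work concerns $(3)$.

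First I would dispose of $(3)\Rightarrow(1)$, which is immediate from $\mathcal{O}^{wgp}\subseteq\mathcal{O}$: any sequence $(K_n)$ of compact sets for which $\{St(K_n,\mathcal{U}_n):n\in\mathbb{N}\}$ is weakly groupable in particular makes this family an open cover of $X$, so the same sequence certifies $SS_{comp}^*(\mathcal{O}_\cup,\mathcal{O})$. Next, for $(2)\Rightarrow(3)$ I would argue exactly as in the easy half of Theorem~\ref{T5} (the direction $\Omega\to\mathcal{O}^{wgp}$ that is omitted there as routine): given a sequence $(\mathcal{U}_n)$ in $\mathcal{O}_\cup$, property $(2)$ yields compact sets $(K_n)$ such that $\mathcal{W}=\{St(K_n,\mathcal{U}_n):n\in\mathbb{N}\}$ is an $\omega$-cover. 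As $\mathcal{W}$ is indexed by $\mathbb{N}$ it is a countable $\omega$-cover, and every countable $\omega$-cover is weakly groupable: writing $\mathcal{W}$ as the union of the singleton subfamilies $\mathcal{W}_n=\{St(K_n,\mathcal{U}_n)\}$ (retaining distinct stars so the subfamilies are pairwise disjoint), the $\omega$-cover property supplies, for each finite $F\subseteq X$, a single star $St(K_m,\mathcal{U}_m)\supseteq F$, that is $F\subseteq\cup\mathcal{W}_m$. Hence the very same sequence $(K_n)$ witnesses $SS_{comp}^*(\mathcal{O}_\cup,\mathcal{O}^{wgp})$. Combined with $(1)\Leftrightarrow(2)$ and $(3)\Rightarrow(1)$ this closes the three-way equivalence.

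The point I expect to require the most care is the routing, not any single computation. One is tempted to imitate the refinement step of Theorem~\ref{T5}, replacing $(\mathcal{U}_n)$ by $\mathcal{V}_n=\wedge_{i\le n}\mathcal{U}_i$; but the wedge operation need not preserve closure under finite unions, so $\mathcal{V}_n$ may leave $\mathcal{O}_\cup$ and the hypotheses $(2)$, $(3)$ could no longer be applied to it. For this reason the nontrivial direction has to be pushed through Lemma~\ref{LN1} (via $(3)\Rightarrow(1)\Rightarrow(2)$) rather than reproved by a direct analogue of Theorem~\ref{T5}, and the only new implication, $(2)\Rightarrow(3)$, rests solely on the elementary fact that a countable $\omega$-cover is weakly groupable.

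A minor edge case worth recording is that $\mathcal{W}$ cannot be finite as a set: a finite $\omega$-cover of an infinite space would force some member to equal $X$, against the requirement that $X\notin\mathcal{U}$ for an $\omega$-cover (and when $X$ is finite the whole statement is trivial, $X$ being compact). Thus the singleton grouping genuinely produces infinitely many nonempty, pairwise disjoint blocks, so the verification of weak groupability goes through without degeneracy.
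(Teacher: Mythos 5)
Your proof is correct and amounts to the same derivation as the paper's, which disposes of this theorem in one line ``by Theorem~\ref{T5} and Lemma~\ref{LN1}'': Lemma~\ref{LN1} carries $(1)\Leftrightarrow(2)$, and the remaining two implications are exactly the elementary inclusions you spell out ($\mathcal{O}^{wgp}\subseteq\mathcal{O}$ and the fact that a countable $\omega$-cover, being necessarily infinite as a set, is weakly groupable via singleton blocks). Your observation that the wedge refinement $\mathcal{V}_n=\wedge_{i\le n}\mathcal{U}_i$ used in the hard direction of Theorem~\ref{T5} need not stay in $\mathcal{O}_\cup$ is a legitimate point of care that the paper's terse citation glosses over; routing the nontrivial direction through Lemma~\ref{LN1} rather than re-running that argument is the right reading of the intended proof.
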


Next result follows similarly from Theorem~\ref{TN36} and Lemma~\ref{LN10}.
\begin{Th}
\label{TN40}
For a space $X$ the following assertions are equivalent.
\begin{enumerate}[wide=0pt,label={\upshape(\arabic*)},leftmargin=*]
  \item $X$ satisfies $SS_{fin}^*(\mathcal{O}_\cup,\mathcal{O})$.
  \item $X$ satisfies $SS_{fin}^*(\mathcal{O}_\cup,\Omega)$.
  \item $X$ satisfies $SS_{fin}^*(\mathcal{O}_\cup,\mathcal{O}^{wgp})$.
  \item $X$ satisfies $SS_1^*(\mathcal{O}_\cup,\mathcal{O})$.
  \item $X$ satisfies $SS_1^*(\mathcal{O}_\cup,\Omega)$.
  \item $X$ satisfies $SS_1^*(\mathcal{O}_\cup,\mathcal{O}^{wgp})$.
\end{enumerate}
\end{Th}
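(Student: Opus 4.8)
The plan is to treat the six statements as a single cycle erected on top of Lemma~\ref{LN10}, using only the output-level passage between $\omega$-covers and weakly groupable covers that underlies Theorem~\ref{TN36}. Since Lemma~\ref{LN10} already supplies the equivalences $(1)\Leftrightarrow(2)\Leftrightarrow(4)\Leftrightarrow(5)$, the entire task reduces to folding the two weakly groupable variants $(3)$ and $(6)$ into this block.

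First I would record the cost-free implications. Every weakly groupable cover is in particular a cover of $X$: applying the defining condition to a singleton $F=\{x\}$ shows that each point lies in some $\cup\mathcal{U}_n$, hence in $\cup\mathcal{U}$. Therefore a selection hypothesis whose output lies in $\mathcal{O}^{wgp}$ trivially implies the one whose output lies in $\mathcal{O}$, giving $(3)\Rightarrow(1)$ and $(6)\Rightarrow(4)$. In the reverse direction, the output family $\{St(F_n,\mathcal{U}_n):n\in\mathbb{N}\}$ (respectively $\{St(x_n,\mathcal{U}_n):n\in\mathbb{N}\}$) produced by any of these principles is countable, and a countable $\omega$-cover is weakly groupable: enumerating its distinct members into consecutive singleton blocks $\mathcal{U}_n=\{W_n\}$, the $\omega$-cover property furnishes, for each finite $F$, a single member containing $F$, i.e.\ $F\subseteq\cup\mathcal{U}_k$ for some $k$. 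This observation is precisely the elementary half of Theorem~\ref{TN36}, and it manipulates only the selected output sequence, so it is indifferent to whether the input covers are drawn from $\mathcal{O}$ or from $\mathcal{O}_\cup$; it yields $(2)\Rightarrow(3)$ and $(5)\Rightarrow(6)$.

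With these in hand the diagram closes: $(3)\Rightarrow(1)\Leftrightarrow(2)\Rightarrow(3)$ pins $(3)$ to the block, and $(6)\Rightarrow(4)\Leftrightarrow(5)\Rightarrow(6)$ pins $(6)$, so all six hypotheses are equivalent. As a consistency check, the general implication $SS_1^*\Rightarrow SS_{fin}^*$ for a fixed pair of cover families — a one-point selection being a special finite selection, with $St(\{x\},\mathcal{U})=St(x,\mathcal{U})$ — supplies the free arrows $(4)\Rightarrow(1)$, $(5)\Rightarrow(2)$ and $(6)\Rightarrow(3)$, all in agreement with the equivalences above.

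The one point demanding care, which I would flag as the main (albeit mild) obstacle, is the legitimacy of invoking Theorem~\ref{TN36}, stated for the family $\mathcal{O}$, here with input family $\mathcal{O}_\cup$. The resolution is that the transition between an $\omega$-cover output and a weakly groupable output operates entirely on the chosen output sequence and never consults the structure of the input covers, so the argument of Theorem~\ref{TN36} transcribes verbatim with $\mathcal{O}_\cup$ in place of $\mathcal{O}$. In fact, as the sketch above shows, once Lemma~\ref{LN10} is available the genuinely nontrivial (refinement) half of Theorem~\ref{TN36} is never called upon: only the countability observation is used, and Lemma~\ref{LN10} itself does the real work of upgrading a bare cover to an $\omega$-cover.
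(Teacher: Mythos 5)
Your proof is correct and rests on the same two pillars the paper's one-line proof invokes --- Lemma~\ref{LN10} for the $\mathcal{O}/\Omega$ block and the $\Omega\leftrightarrow\mathcal{O}^{wgp}$ passage of Theorem~\ref{TN36} --- but your routing is slightly, and usefully, more economical. Taken at face value, ``follows similarly from Theorem~\ref{TN36} and Lemma~\ref{LN10}'' asks the reader to transcribe both directions of Theorem~\ref{TN36} with $\mathcal{O}_\cup$ in place of $\mathcal{O}$; the nontrivial direction there (compare the proof of Theorem~\ref{T5}) passes to the refinements $\mathcal{V}_n=\wedge_{i\leq n}\mathcal{U}_i$, and a refinement of this form built from covers closed under finite unions is not itself obviously closed under finite unions, so that transcription is not quite verbatim. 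Your cycle $(3)\Rightarrow(1)\Leftrightarrow(2)\Rightarrow(3)$ (and likewise for $(6)$ through $(4)\Leftrightarrow(5)$) sidesteps this entirely: the only content borrowed from Theorem~\ref{TN36} is the elementary observation that a countable $\omega$-cover is weakly groupable (enumerate its distinct members into singleton blocks), while the genuine upgrade from a bare cover to an $\omega$-cover is delegated to Lemma~\ref{LN10}, whose proof is exactly where closure under finite unions is exploited. Both routes prove the theorem; yours makes the role of $\mathcal{O}_\cup$ cleaner and renders the refinement argument unnecessary.
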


\begin{Th}
\label{TN2}
If a space $X$ has the property $S_1^*(\mathcal{G}_K,\mathcal{G})$, then $X$ satisfies $U_{fin}^*(\mathcal{O}_\cup,\Omega)$.
\end{Th}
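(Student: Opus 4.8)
The plan is to bypass the $U$-type selection entirely and instead establish the formally stronger-looking $S_1^*(\mathcal{O}_\cup,\mathcal{O})$; the desired $U_{fin}^*(\mathcal{O}_\cup,\Omega)$ then drops out for free, since Lemma~\ref{LN9} identifies these two statements. So the real task is: given a sequence $(\mathcal{U}_n)$ of open covers each closed under finite unions, produce $V_n\in\mathcal{U}_n$ such that $\{St(V_n,\mathcal{U}_n):n\in\mathbb{N}\}$ covers $X$.

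The one observation that makes the hypothesis applicable is that a member of $\mathcal{O}_\cup$ is already an Alster cover once it omits $X$ as an element. Indeed, every open set is trivially a $G_\delta$ set (it is the intersection of countably many copies of itself), so each $\mathcal{U}_n$ lies in $\mathcal{G}$; and if $C\subseteq X$ is compact, then finitely many members of $\mathcal{U}_n$ cover $C$, whence their union --- again a member of $\mathcal{U}_n$, by closure under finite unions --- is a single element of $\mathcal{U}_n$ containing $C$. Thus, provided $X\notin\mathcal{U}_n$, we have $\mathcal{U}_n\in\mathcal{G}_K$. The degenerate possibility $X\in\mathcal{U}_n$ for some $n$ is harmless and should be dispatched first: taking $V_n=X$ gives $St(V_n,\mathcal{U}_n)=X$, which by itself covers $X$ (equivalently, witnesses the second alternative in $U_{fin}^*$), so one may assume $X\notin\mathcal{U}_n$ for every $n$.

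With this reduction in place the argument is immediate: $(\mathcal{U}_n)$ is now a sequence of Alster covers, so applying the hypothesis $S_1^*(\mathcal{G}_K,\mathcal{G})$ yields a sequence $(V_n)$ with $V_n\in\mathcal{U}_n$ and $\{St(V_n,\mathcal{U}_n):n\in\mathbb{N}\}\in\mathcal{G}$. Since membership in $\mathcal{G}$ means precisely that this family is a cover of $X$ (its elements, being unions of open sets, are automatically open and in particular $G_\delta$), the sequence $(V_n)$ witnesses $S_1^*(\mathcal{O}_\cup,\mathcal{O})$, and Lemma~\ref{LN9} finishes the proof. I expect the only genuinely delicate point to be the identification $\mathcal{O}_\cup\cap\{\mathcal{U}:X\notin\mathcal{U}\}\subseteq\mathcal{G}_K$, together with the clean handling of the trivial case $X\in\mathcal{U}_n$; everything downstream is bookkeeping once the inputs are recognized as Alster covers.
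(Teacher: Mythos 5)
Your proof is correct, but it takes a genuinely different route from the paper's. You observe that each $\mathcal{U}_n\in\mathcal{O}_\cup$ with $X\notin\mathcal{U}_n$ is \emph{itself} an Alster cover (open sets are $G_\delta$, and closure under finite unions turns a finite subcover of a compact set into a single member containing it), so the hypothesis $S_1^*(\mathcal{G}_K,\mathcal{G})$ can be applied directly to the sequence $(\mathcal{U}_n)$, yielding $S_1^*(\mathcal{O}_\cup,\mathcal{O})$ at once; Lemma~\ref{LN9} then finishes, exactly as in the paper. The paper instead builds the single $G_\delta$-cover $\mathcal{U}=\{\bigcap_{n}U_n : U_n\in\mathcal{U}_n\}$, checks that it is an Alster cover, applies the hypothesis to the constant sequence $(\mathcal{U},\mathcal{U},\dotsc)$, and then must decode each selected $G_\delta$ set $V_n=\bigcap_k U^{(n)}_k$ back into an open set $W_n=U^{(n)}_n\in\mathcal{U}_n$ by a diagonal argument, using that stars with respect to the finer cover $\mathcal{U}$ are contained in stars with respect to $\mathcal{U}_{n}$. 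Your version is shorter and avoids the decoding step entirely; you also handle the degenerate case $X\in\mathcal{U}_n$ explicitly, which the paper's proof passes over in silence (its $\mathcal{U}$ could in principle contain $X$ as an element, which the definition of $\mathcal{G}_K$ forbids). The paper's heavier construction is not wasted, though: the same $\Phi(K)$-style intersection device reappears in the product theorems (e.g.\ Theorem~\ref{TN4}), where one genuinely needs a single $G_\delta$ set absorbing data from all the covers simultaneously, so the authors are previewing a technique they reuse. For the theorem as stated, your argument is the more economical one.
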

\begin{proof}
Let $(\mathcal{U}_n)$ be a sequence of members of $\mathcal{O}_\cup$. Observe that $\mathcal{U}=\{\cap_{n\in\mathbb{N}}U_n : U_n\in\mathcal{U}_n\}\in\mathcal{G}$. Also if $C$ is a compact subset of $X$, then for each $n$ we can choose a finite set $\mathcal{V}_n\subseteq\mathcal{U}_n$ such that $C\subseteq\cup\mathcal{V}_n$. Now $\mathcal{U}\in\mathcal{G}_K$ as $\cup\mathcal{V}_n\in\mathcal{U}_n$ for each $n$. Since $X$ satisfies $S_1^*(\mathcal{G}_K,\mathcal{G})$, there is a sequence $(V_n)$ of members of $\mathcal{U}$ such that $\{St(V_n,\mathcal{U}) : n\in\mathbb{N}\}$ covers $X$. For each $n$ let $V_n=\cap_{k\in\mathbb{N}}U_k^{(n)}$, where for each $k$ $U_k^{(n)}\in\mathcal{U}_k$. Define a sequence $(W_n)$ as $W_n=U_n^{(n)}\in\mathcal{U}_n$ for each $n$. We now show that the sequence $(W_n)$ witnesses for $(\mathcal{U}_n)$ that $X$ satisfies $S_1^*(\mathcal{O}_\cup,\mathcal{O})$. Let $x\in X$ and choose a $n_0\in\mathbb{N}$ such that $x\in St(V_{n_0},\mathcal{U})$, i.e. there exists a $U\in\mathcal{U}$ containing $x$ such that $U\cap V_{n_0}\neq\emptyset$. Since $V_{n_0}\subseteq W_{n_0}$ and $U\subseteq V$ for some $V\in\mathcal{U}_{n_0}$, $x\in St(W_{n_0},\mathcal{U}_{n_0})$. Thus $X$ satisfies $S_1^*(\mathcal{O}_\cup,\mathcal{O})$. By Lemma~\ref{LN9}, $X$ satisfies $U_{fin}^*(\mathcal{O}_\cup,\Omega)$.
\end{proof}

\begin{Cor}
\label{CN5}
If a space $X$ has the property $S_1^*(\mathcal{G}_K,\mathcal{G})$, then $X$ satisfies any one of the following properties.
\begin{enumerate}[wide=0pt,label={\upshape(\arabic*)},leftmargin=*]
  \item $S_{fin}^*(\mathcal{O}_\cup,\mathcal{O})$.
  \item $U_{fin}^*(\mathcal{O}_\cup,\mathcal{O}^{wgp})$.
  \item $S_1^*(\mathcal{O}_\cup,\mathcal{O})$.
  \item $S_1^*(\mathcal{O}_\cup,\Omega)$.
  \item $S_1^*(\mathcal{O}_\cup,\mathcal{O}^{wgp})$.
\end{enumerate}
\end{Cor}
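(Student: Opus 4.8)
The plan is to reduce the corollary entirely to the two immediately preceding results, Theorem~\ref{TN2} and Theorem~\ref{TN39}, without any fresh selection-principle argument. First I would note that each of the five properties listed in the corollary already occurs among the six equivalent conditions of Theorem~\ref{TN39}. Concretely, matching the labels: property (1) here, $S_{fin}^*(\mathcal{O}_\cup,\mathcal{O})$, is condition (1) of Theorem~\ref{TN39}; property (2), $U_{fin}^*(\mathcal{O}_\cup,\mathcal{O}^{wgp})$, is condition (3); property (3), $S_1^*(\mathcal{O}_\cup,\mathcal{O})$, is condition (4); property (4), $S_1^*(\mathcal{O}_\cup,\Omega)$, is condition (5); and property (5), $S_1^*(\mathcal{O}_\cup,\mathcal{O}^{wgp})$, is condition (6). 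The only member of the list in Theorem~\ref{TN39} not appearing in the corollary is its condition (2), namely $U_{fin}^*(\mathcal{O}_\cup,\Omega)$.

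Next I would invoke Theorem~\ref{TN2}. Under the hypothesis that $X$ has the property $S_1^*(\mathcal{G}_K,\mathcal{G})$, that theorem yields directly that $X$ satisfies $U_{fin}^*(\mathcal{O}_\cup,\Omega)$, which is precisely condition (2) of Theorem~\ref{TN39}. Since all six conditions of Theorem~\ref{TN39} are equivalent, once $X$ is known to satisfy one of them it satisfies all of them. In particular $X$ satisfies conditions (1), (3), (4), (5) and (6) of Theorem~\ref{TN39}, which are exactly the five properties (1)--(5) asserted in the corollary. This establishes the claim.

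I do not expect any genuine obstacle, since the argument is a direct concatenation of Theorem~\ref{TN2} with the equivalences of Theorem~\ref{TN39}; the only point requiring care is the bookkeeping of matching each listed property with the corresponding item of Theorem~\ref{TN39}, so that the single implication furnished by Theorem~\ref{TN2} is correctly propagated through the equivalence to all five conclusions.
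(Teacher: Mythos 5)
Your proposal is correct and is exactly the intended derivation: the paper states this as an immediate corollary of Theorem~\ref{TN2} (which gives $U_{fin}^*(\mathcal{O}_\cup,\Omega)$) combined with the six-way equivalence of Theorem~\ref{TN39}, and your matching of the five listed properties to items (1), (3), (4), (5), (6) of that theorem is accurate.
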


\begin{Th}
\label{TN3}
If a space $X$ has the property $SS_1^*(\mathcal{G}_K,\mathcal{G})$, then $X$ satisfies $SS_{fin}^*(\mathcal{O}_\cup,\Omega)$.
\end{Th}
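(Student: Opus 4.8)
The plan is to imitate the proof of Theorem~\ref{TN2}, replacing the single-set selection on the derived cover by a point selection, and then to hand the outcome to Lemma~\ref{LN10}. Given a sequence $(\mathcal{U}_n)$ of members of $\mathcal{O}_\cup$, I would first manufacture one cover $\mathcal{U}\in\mathcal{G}_K$ out of the whole sequence, apply the hypothesis $SS_1^*(\mathcal{G}_K,\mathcal{G})$ to the constant sequence $(\mathcal{U},\mathcal{U},\dotsc)$, and finally transfer the resulting point selection back to the individual $\mathcal{U}_n$. Since Lemma~\ref{LN10} already tells us that $SS_1^*(\mathcal{O}_\cup,\mathcal{O})$ entails $SS_{fin}^*(\mathcal{O}_\cup,\Omega)$, it suffices to establish the former, so I will aim directly for a point-valued selection.

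For the construction, set $\mathcal{U}=\{\cap_{n\in\mathbb{N}}U_n : U_n\in\mathcal{U}_n\}$, exactly as in Theorem~\ref{TN2}. Each member of $\mathcal{U}$ is a countable intersection of open sets, hence a $G_\delta$, so $\mathcal{U}\in\mathcal{G}$. To see $\mathcal{U}\in\mathcal{G}_K$, take a compact $C\subseteq X$: for each $n$ the open cover $\mathcal{U}_n$ admits a finite subfamily $\mathcal{V}_n$ with $C\subseteq\cup\mathcal{V}_n$, and closure of $\mathcal{U}_n$ under finite unions yields $\cup\mathcal{V}_n\in\mathcal{U}_n$; therefore $C\subseteq\cap_{n}(\cup\mathcal{V}_n)\in\mathcal{U}$. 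This is precisely the place where the hypothesis $\mathcal{U}_n\in\mathcal{O}_\cup$ is indispensable.

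Applying $SS_1^*(\mathcal{G}_K,\mathcal{G})$ to $(\mathcal{U},\mathcal{U},\dotsc)$ produces points $x_n\in X$ such that $\{St(x_n,\mathcal{U}) : n\in\mathbb{N}\}$ is a (in particular, covering) member of $\mathcal{G}$. The decisive observation is that every $U\in\mathcal{U}$, being of the form $\cap_{k}U_k$ with $U_k\in\mathcal{U}_k$, is contained in its $n$-th coordinate factor $U_n\in\mathcal{U}_n$; consequently $St(x_n,\mathcal{U})\subseteq St(x_n,\mathcal{U}_n)$ for every $n$. Indeed, if $z\in St(x_n,\mathcal{U})$ then some $U\in\mathcal{U}$ contains both $x_n$ and $z$, and the inclusion $U\subseteq V$ for the corresponding $V\in\mathcal{U}_n$ places $x_n$ and $z$ in a common member of $\mathcal{U}_n$. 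Hence $\{St(x_n,\mathcal{U}_n) : n\in\mathbb{N}\}$ covers $X$ as well, so the sequence $(x_n)$ witnesses $SS_1^*(\mathcal{O}_\cup,\mathcal{O})$, and Lemma~\ref{LN10} finishes the proof.

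The step I expect to demand the most care is the verification that the single derived cover $\mathcal{U}$ is genuinely an Alster cover together with the refinement inclusion $St(x_n,\mathcal{U})\subseteq St(x_n,\mathcal{U}_n)$; here closure under finite unions does the real work, while everything afterwards is bookkeeping delegated to Lemma~\ref{LN10}. A minor point to keep honest is the clause $X\notin\mathcal{U}$ in the definition of $\mathcal{G}_K$, which should be noted as harmlessly arrangeable rather than silently assumed.
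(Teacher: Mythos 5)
Your proof is correct and follows exactly the route the paper intends: Theorem~\ref{TN3} is stated there without proof as the point-selection analogue of Theorem~\ref{TN2}, and your argument is precisely that adaptation (the same derived cover $\mathcal{U}=\{\cap_{n}U_n : U_n\in\mathcal{U}_n\}\in\mathcal{G}_K$, with the inclusion $St(x_n,\mathcal{U})\subseteq St(x_n,\mathcal{U}_n)$ replacing the diagonal choice $W_n=U_n^{(n)}$, and Lemma~\ref{LN10} in place of Lemma~\ref{LN9}). Your closing remark about the clause $X\notin\mathcal{U}$ in the definition of an Alster cover is a technicality the paper itself glosses over, and you dispose of it correctly.
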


\begin{Cor}
\label{CN6}
If a space $X$ has the property $SS_1^*(\mathcal{G}_K,\mathcal{G})$, then $X$ satisfies any one of the following properties.
\begin{enumerate}[wide=0pt,label={\upshape(\arabic*)},leftmargin=*]
  \item $SS_{fin}^*(\mathcal{O}_\cup,\mathcal{O})$.
  \item $SS_{fin}^*(\mathcal{O}_\cup,\mathcal{O}^{wgp})$.
  \item $SS_1^*(\mathcal{O}_\cup,\mathcal{O})$.
  \item $SS_1^*(\mathcal{O}_\cup,\Omega)$.
  \item $SS_1^*(\mathcal{O}_\cup,\mathcal{O}^{wgp})$.
\end{enumerate}
\end{Cor}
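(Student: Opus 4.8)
The plan is to mirror the proof of Theorem~\ref{TN2}, replacing the single-set selection of $S_1^*$ by the point selection of $SS_1^*$, and to finish by invoking Lemma~\ref{LN10} in place of Lemma~\ref{LN9}. First I would take an arbitrary sequence $(\mathcal{U}_n)$ of members of $\mathcal{O}_\cup$ and build, exactly as in Theorem~\ref{TN2}, the diagonal family $\mathcal{U}=\{\cap_{n\in\mathbb{N}}U_n : U_n\in\mathcal{U}_n\}$. Each member of $\mathcal{U}$ is a countable intersection of open sets, hence a $G_\delta$ set, so $\mathcal{U}\in\mathcal{G}$. To see that $\mathcal{U}$ is an Alster cover, I would fix a compact set $C\subseteq X$ and, using that each $\mathcal{U}_n$ is closed under finite unions, select for each $n$ a finite $\mathcal{V}_n\subseteq\mathcal{U}_n$ with $C\subseteq\cup\mathcal{V}_n\in\mathcal{U}_n$; then $C\subseteq\cap_{n}(\cup\mathcal{V}_n)\in\mathcal{U}$, so $\mathcal{U}\in\mathcal{G}_K$.

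Next I would apply $SS_1^*(\mathcal{G}_K,\mathcal{G})$ to the constant sequence $(\mathcal{U},\mathcal{U},\dotsc)$ to obtain a sequence $(x_n)$ of points of $X$ with $\{St(x_n,\mathcal{U}) : n\in\mathbb{N}\}\in\mathcal{G}$; in particular this family covers $X$. The decisive observation is the containment $St(x_n,\mathcal{U})\subseteq St(x_n,\mathcal{U}_n)$ for every $n$. Indeed, if $U\in\mathcal{U}$ contains $x_n$, then writing $U=\cap_{k}W_k$ with $W_k\in\mathcal{U}_k$ gives $x_n\in U\subseteq W_n\in\mathcal{U}_n$, whence $U\subseteq St(x_n,\mathcal{U}_n)$; taking the union over all such $U$ yields the claim. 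Consequently $\{St(x_n,\mathcal{U}_n) : n\in\mathbb{N}\}$ covers $X$ as well, so the sequence $(x_n)$ witnesses $SS_1^*(\mathcal{O}_\cup,\mathcal{O})$. Finally, Lemma~\ref{LN10} upgrades this to $SS_{fin}^*(\mathcal{O}_\cup,\Omega)$.

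I expect the construction and verification to be routine; the main point to get right is the star containment $St(x_n,\mathcal{U})\subseteq St(x_n,\mathcal{U}_n)$, which is precisely where the diagonal structure of $\mathcal{U}$ and the role of the $n$-th coordinate enter. Unlike Theorem~\ref{TN2}, no projection of a selected set onto a coordinate open set is required here, since $SS_1^*$ already returns points and the target property $SS_1^*(\mathcal{O}_\cup,\mathcal{O})$ again asks only for points; keeping the same points $x_n$ suffices, which makes this strongly star version slightly cleaner than its star counterpart.
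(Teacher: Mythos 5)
Your argument is correct and follows essentially the same route as the paper: the paper obtains this corollary from Theorem~\ref{TN3} (whose proof it leaves implicit as the $SS_1^*$ analogue of Theorem~\ref{TN2}) combined with the equivalences of Theorem~\ref{TN40}, and your adaptation of the diagonal construction --- with the key containment $St(x_n,\mathcal{U})\subseteq St(x_n,\mathcal{U}_n)$ coming from the fact that every member of $\mathcal{U}$ containing $x_n$ lies inside a member of $\mathcal{U}_n$ containing $x_n$ --- is exactly that missing analogue, carried out correctly. One small point: Lemma~\ref{LN10} alone does not yield items (2) and (5); for the $\mathcal{O}^{wgp}$ versions you should cite Theorem~\ref{TN40} (equivalently, Theorem~\ref{TN36}), or observe that the countable $\omega$-cover you produce is automatically weakly groupable.
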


\begin{Th}
\label{TN4}
If $X$ satisfies $S_1^*(\mathcal{G}_K,\mathcal{G}_\Gamma)$ and $Y$ satisfies $S_{fin}^*(\mathcal{O}_\cup,\mathcal{O})$, then $X\times Y$ satisfies $S_{fin}^*(\mathcal{O}_\cup,\mathcal{O})$.
\end{Th}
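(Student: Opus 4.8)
The plan is to reduce the conclusion to a rectangular selection, record two elementary facts that separate the two coordinates, and then play the Alster-type selection on $X$ against the Menger-type selection on $Y$, coupling them through a partition of $\mathbb{N}$.

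First I would fix a sequence $(\mathcal{W}_n)$ of members of $\mathcal{O}_\cup$ covering $X\times Y$. Since each $\mathcal{W}_n$ is closed under finite unions, it suffices to produce a single $W_n\in\mathcal{W}_n$ for each $n$ with $\{St(W_n,\mathcal{W}_n):n\in\mathbb{N}\}$ covering $X\times Y$; moreover, because enlarging the selected set enlarges its star, it is enough to find for each $n$ an open rectangle $U_n\times V_n$ contained in some member of $\mathcal{W}_n$ so that the rectangles' stars cover, and then to replace each rectangle by a member of $\mathcal{W}_n$ containing it. Two facts drive the separation of variables. First, for a fixed open $V\subseteq Y$, the family $\{U\subseteq X:U\times V\subseteq W\text{ for some }W\in\mathcal{W}_n\}$ is closed under finite unions (if $U_1\times V\subseteq W_1$ and $U_2\times V\subseteq W_2$ then $(U_1\cup U_2)\times V\subseteq W_1\cup W_2\in\mathcal{W}_n$). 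Second, for a compact $C\subseteq X$ the family $\mathcal{B}_n^C=\{V\subseteq Y:C\times V\subseteq W\text{ for some }W\in\mathcal{W}_n\}$ is a member of $\mathcal{O}_\cup$ on $Y$ which covers $Y$: given $y\in Y$, compactness of $C\times\{y\}$ and closure of $\mathcal{W}_n$ under finite unions yield a single $W\in\mathcal{W}_n$ with $C\times\{y\}\subseteq W$, and the tube lemma produces an open $V\ni y$ with $C\times V\subseteq W$.

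Next I would use the hypotheses. For each compact $C$, each $y\in Y$ and each $n$, the tube lemma also supplies an open $U_n(C,y)\supseteq C$ with $U_n(C,y)\times V_n(C,y)\subseteq W_n(C,y)$ for some $W_n(C,y)\in\mathcal{W}_n$; intersecting over $n$ manufactures the $G_\delta$ set $G(C,y)=\bigcap_n U_n(C,y)\supseteq C$, exactly as in the diagonal-intersection device of Theorem~\ref{TN2}. The family $\mathcal{U}=\{G(C,y):C\subseteq X\text{ compact},\,y\in Y\}$ then lies in $\mathcal{G}_K$, since every compact set sits inside one of its members and each member is $G_\delta$. Applying $S_1^*(\mathcal{G}_K,\mathcal{G})$ of $X$ to the constant sequence $(\mathcal{U})_n$ produces selected members $D_m=G(C_m,y_m)\in\mathcal{U}$ whose stars $\{St(D_m,\mathcal{U}):m\in\mathbb{N}\}$ cover $X$. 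I would then fix a partition $\mathbb{N}=\bigsqcup_m N_m$ into infinitely many infinite sets and, for each $m$, feed the sequence $(\mathcal{B}_n^{C_m})_{n\in N_m}$ to the property $S_{fin}^*(\mathcal{O}_\cup,\mathcal{O})$ of $Y$ (equivalently $S_1^*(\mathcal{O}_\cup,\mathcal{O})$ by Lemma~\ref{LN9}); this returns, for $n\in N_m$, a set $V_n^*\in\mathcal{B}_n^{C_m}$ with $C_m\times V_n^*$ inside some $W_n\in\mathcal{W}_n$ and with $\{St(V_n^*,\mathcal{B}_n^{C_m}):n\in N_m\}$ covering $Y$. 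The output of the theorem is the sequence $(W_n)$.

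The verification that $\{St(W_n,\mathcal{W}_n)\}$ covers $X\times Y$ is where the work concentrates, and I expect it to be the main obstacle. Given $(x,y)$, the $Y$-selection over the block $N_m$ readily produces $n\in N_m$ and $W'\in\mathcal{W}_n$ with $y$ in the $Y$-section of $W'$ and $W'\cap W_n\neq\emptyset$, so that $W'\subseteq St(W_n,\mathcal{W}_n)$; the delicate point is to guarantee simultaneously that $x$ lies in the $X$-section of $W'$. This is precisely where the Alster hypothesis must be exploited beyond its weak consequence $S_1^*(\mathcal{O}_\cup,\Omega)$, which both factors share but which is not productive: one must arrange the bookkeeping so that the compact $C_m$ governing the chosen $Y$-block is the one whose $G_\delta$-neighbourhood actually captures $x$. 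Concretely, the matching of the star-covering of $X$ coming from $S_1^*(\mathcal{G}_K,\mathcal{G})$ with the compacts $C_m$ used on the $Y$-side has to be set up so that membership of $x$ in $St(D_m,\mathcal{U})$ places $x$ inside an open tube $U_n(\cdot)$ associated with the same block; achieving this may require choosing the members of $\mathcal{U}$ and the partition more carefully, for instance so that the relevant stars are absorbed into single members, so that the defining property of $\mathcal{G}_K$ (every compact inside one member) transfers the $X$-location of $x$ onto the compact handled in the $Y$-selection. Once this alignment is secured, the rectangle argument closes and $X\times Y$ satisfies $S_{fin}^*(\mathcal{O}_\cup,\mathcal{O})$.
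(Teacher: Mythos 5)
Your setup (the tube lemma, the $G_\delta$ sets $\Phi(C)$ obtained by intersecting tubes over all $n$, and the observation that $\mathcal{B}_n^C=\{V\subseteq Y: C\times V\subseteq W$ for some $W\in\mathcal{W}_n\}$ lies in $\mathcal{O}_\cup$) matches the paper's construction, but the proof does not close, and you say so yourself: the ``delicate point'' you flag at the end is a genuine gap, and the two devices the paper uses to close it are both absent from your argument.

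First, you invoke only $S_1^*(\mathcal{G}_K,\mathcal{G})$, whereas the hypothesis is $S_1^*(\mathcal{G}_K,\mathcal{G}_\Gamma)$, and the $\mathcal{G}_\Gamma$ target is used essentially. The paper applies the selection to the constant $\mathcal{G}_K$-cover $\mathcal{W}_n=\{\Phi(K): K\subseteq X\text{ compact}\}$, obtaining one compact $K_n$ per index $n$ with $\{St(\Phi(K_n),\mathcal{W}_n): n\in\mathbb{N}\}\in\mathcal{G}_\Gamma$; since every infinite subfamily of a $\mathcal{G}_\Gamma$-cover is again a cover, each $x$ lies in $St(\Phi(K_n),\mathcal{W}_n)$ for all but finitely many $n$. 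On the $Y$-side, Lemma~\ref{LN9} upgrades the selection from $\mathcal{O}_n=\mathcal{B}_n^{K_n}$ to an $\omega$-cover, so each $y$ lies in $St(\cup\mathcal{H}_n,\mathcal{O}_n)$ for infinitely many $n$. A cofinite set meets an infinite set, which produces the common index $n_0$. Your block partition $\mathbb{N}=\cup_m N_m$ with one compact per block replaces this ``all but finitely many versus infinitely often'' coupling, and with only a $\mathcal{G}$-target there is no mechanism to reproduce it.

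Second --- and this is the step you explicitly leave open --- you try to force $x$ into the $X$-section of the member $W'$ witnessing that $y$ is starred on the $Y$-side. The paper never does this. Instead it uses the closure of $\mathcal{U}_{n_0}$ under finite unions one final time: $x\in St(\Phi(K_{n_0}),\mathcal{W}_{n_0})$ yields a compact $K$ with $x\in\Phi(K)$, and by the defining property of $\Phi$ there is $U_1\in\mathcal{U}_{n_0}$ with $\Phi(K)\times\{y\}\subseteq U_1$, hence $\langle x,y\rangle\in U_1$. Taking $U=U(H)\cup U_1\in\mathcal{U}_{n_0}$, where $U(H)\in\mathcal{V}_{n_0}$ is a selected member, gives a member of $\mathcal{U}_{n_0}$ containing $\langle x,y\rangle$ and meeting $\cup\mathcal{V}_{n_0}$, so $\langle x,y\rangle\in St(\cup\mathcal{V}_{n_0},\mathcal{U}_{n_0})$. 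No alignment of sections is needed; the two coordinates are reconciled by a union formed inside $\mathcal{U}_{n_0}$. Without this trick and without the $\mathcal{G}_\Gamma$ target, your argument stops exactly where you say it does.
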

\begin{proof}
Let $(\mathcal{U}_n)$ be a sequence of members of $\mathcal{O}_\cup$ for $X\times Y$. Now choose two sequences respectively $(\mathcal{A}_n)$ and $(\mathcal{B}_n)$ of open covers of $X$ and $Y$ such that if $A\in\mathcal{A}_n$ (respectively, $B\in\mathcal{B}_n$), there is a $B\in\mathcal{B}_n$ (respectively, a $A\in\mathcal{A}_n$) and a $U\in\mathcal{U}_n$ such that $A\times B\subseteq U$, and if $U\in\mathcal{U}_n$, then there is a $A\in\mathcal{A}_n$ and a $B\in\mathcal{B}_n$ such that $A\times B\subseteq U$.

Let $K$ be a compact subset of $X$. For each $y\in Y$ and each $n$ there is a $U\in\mathcal{U}_n$ such that $K\times\{y\}\subseteq U$. Next choose an open set $V$ in $X$ containing $K$ such that $K\times\{y\}\subseteq V\times\{y\}\subseteq U$. For each $n$ there is a finite subset $\{A_i^{(n)} : 1\leq i\leq k_n\}$ of $\mathcal{A}_n$ such that $K\subseteq\cup_{1\leq i\leq k_n}A_i^{(n)}=V_n$ (say). Clearly $K\subseteq V_n\cap V$ for all $n$. Choose $\Phi(K)=\cap_{n\in\mathbb{N}}(V_n\cap V)$. Thus for each compact subset $K$ of $X$ we obtain a $G_\delta$ subset $\Phi(K)$ of $X$ such that $K\subseteq\Phi(K)$. Also observe that for each compact subset $K$ of $X$,  each $y\in Y$ and each $n$ there is a $U\in\mathcal{U}_n$ such that $\Phi(K)\times\{y\}\subseteq U$.

For each $n$ $\mathcal{W}_n=\{\Phi(K) : K\text{ is a compact subset of }X\}$ is a member of $\mathcal{G}_K$. Since $X$ satisfies $S_1^*(\mathcal{G}_K,\mathcal{G}_\Gamma)$, there exists a sequence $(K_n)$ of compact subsets of $X$ such that $\{St(\Phi(K_n),\mathcal{W}_n) : n\in\mathbb{N}\}\in\mathcal{G}_\Gamma$ for $X$. Observe that for each $n$ $\mathcal{O}_n=\{O\subseteq Y : O\text{ is open in }Y\text{ and }\Phi(K_n)\times O\subseteq U\text{ for some }U\in\mathcal{U}_n\}\in\mathcal{O}_\cup$ for $Y$. Since $Y$ satisfies $S_{fin}^*(\mathcal{O}_\cup,\mathcal{O})$, by Lemma~\ref{LN9}, there is a sequence $(\mathcal{H}_n)$ such that for each $n$ $\mathcal{H}_n$ is a finite subset of $\mathcal{O}_n$ and $\{St(\cup\mathcal{H}_n,\mathcal{O}_n) : n\in\mathbb{N}\}$ is an $\omega$-cover in $Y$. For each $n$ and each $O\in\mathcal{H}_n$ we choose a $U(O)\in\mathcal{U}_n$ such that $\Phi(K_n)\times O\subseteq U(O)$. Define $\mathcal{V}_n=\{U(O) : O\in\mathcal{H}_n\}$.

The proof will be complete if we show that the sequence $(\mathcal{V}_n)$ witnesses for $(\mathcal{U}_n)$ that $X\times Y$ satisfies $S_{fin}^*(\mathcal{O}_\cup,\mathcal{O})$. Let $\langle x,y\rangle\in X\times Y$. Now $x\in St(\Phi(K_n),\mathcal{W}_n)$ for all but finitely many $n$ as $\{St(\Phi(K_n),\mathcal{W}_n) : n\in\mathbb{N}\}\in\mathcal{G}_\Gamma$. Also since $\{St(\cup\mathcal{H}_n,\mathcal{O}_n) : n\in\mathbb{N}\}$ is an $\omega$-cover, $y\in St(\cup\mathcal{H}_n,\mathcal{O}_n)$ for infinitely many $n$. Thus there is a $n_0\in\mathbb{N}$ such that $x\in St(\Phi(K_{n_0}),\mathcal{W}_{n_0})$ and $y\in St(\cup\mathcal{H}_{n_0},\mathcal{O}_{n_0})$. Consequently there are a compact set $K\subseteq X$ and a set $O\in\mathcal{O}_{n_0}$ containing $y$ such that $x\in\Phi(K)$, $\Phi(K)\cap\Phi(K_{n_0})\neq\emptyset$ and $O\cap H\neq\emptyset$ for some $H\in\mathcal{H}_{n_0}$. Choose a $U(H)\in\mathcal{V}_{n_0}$ such that $\Phi(K_{n_0})\times H\subseteq U(H)$. Also there is a set $U_1\in\mathcal{U}_{n_0}$ such that $\Phi(K)\times\{y\}\subseteq U_1$. We thus obtain a set $U=U(H)\cup U_1\in\mathcal{U}_{n_0}$ such that $\langle x,y\rangle\in U$ and $U\cap(\cup\mathcal{V}_{n_0})\neq\emptyset$. This shows that $\langle x,y\rangle\in St(\cup\mathcal{V}_{n_0},\mathcal{U}_{n_0})$ and hence $\{St(\cup\mathcal{V}_n,\mathcal{U}_n) : n\in\mathbb{N}\}
$ covers $X\times Y$.
\end{proof}

\begin{Cor}
\label{CN7}
If $X$ satisfies $S_1^*(\mathcal{G}_K,\mathcal{G}_\Gamma)$ and $Y$ satisfies $S_1^*(\mathcal{O}_\cup,\mathcal{O})$, then $X\times Y$ satisfies $S_1^*(\mathcal{O}_\cup,\mathcal{O})$.
\end{Cor}

\begin{Th}
\label{TN24}
If $X$ satisfies $S_1^*(\mathcal{G}_K,\mathcal{G}_\Gamma)$ and $Y$ satisfies $U_{fin}^*(\mathcal{O}_\cup,\Gamma)$, then $X\times Y$ satisfies $U_{fin}^*(\mathcal{O}_\cup,\Gamma)$.
\end{Th}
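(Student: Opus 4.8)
The plan is to mirror the proof of Theorem~\ref{TN4}, reusing its entire construction and modifying only the selection applied to the $Y$-factor together with the final bookkeeping, so that a $\gamma$-cover (rather than a mere cover) of $X\times Y$ is produced. First I would fix a sequence $(\mathcal{U}_n)$ of members of $\mathcal{O}_\cup$ for $X\times Y$ and reproduce the auxiliary objects of Theorem~\ref{TN4}: the product-compatible covers $(\mathcal{A}_n)$, $(\mathcal{B}_n)$ of $X$ and $Y$; for each compact $K\subseteq X$ the $G_\delta$ set $\Phi(K)\supseteq K$ with the property that for every $y\in Y$ and every $n$ one has $\Phi(K)\times\{y\}\subseteq U$ for some $U\in\mathcal{U}_n$; the Alster covers $\mathcal{W}_n=\{\Phi(K):K\text{ compact}\}\in\mathcal{G}_K$; and, after the $X$-selection, the covers $\mathcal{O}_n=\{O\subseteq Y : O\text{ open},\ \Phi(K_n)\times O\subseteq U\text{ for some }U\in\mathcal{U}_n\}\in\mathcal{O}_\cup$ for $Y$.

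The one genuinely new ingredient on the $X$-side is that $S_1^*(\mathcal{G}_K,\mathcal{G}_\Gamma)$ now yields compact sets $(K_n)$ with $\{St(\Phi(K_n),\mathcal{W}_n):n\in\mathbb{N}\}\in\mathcal{G}_\Gamma$. I would first record the elementary observation that a member of $\mathcal{G}_\Gamma$ is in fact a $\gamma$-cover: if some $x$ lay in only finitely many members, the remaining infinitely many would form an infinite subfamily that is a cover yet misses $x$, a contradiction. Hence each $x$ lies in $St(\Phi(K_n),\mathcal{W}_n)$ for all but finitely many $n$. On the $Y$-side I would apply the hypothesis $U_{fin}^*(\mathcal{O}_\cup,\Gamma)$ \emph{directly} to $(\mathcal{O}_n)$ (unlike Theorem~\ref{TN4}, no appeal to Lemma~\ref{LN9} is needed) to get finite sets $\mathcal{H}_n\subseteq\mathcal{O}_n$ with $\{St(\cup\mathcal{H}_n,\mathcal{O}_n):n\in\mathbb{N}\}\in\Gamma$, or else some $St(\cup\mathcal{H}_{n_0},\mathcal{O}_{n_0})=Y$. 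As before, for each $O\in\mathcal{H}_n$ I fix $U(O)\in\mathcal{U}_n$ with $\Phi(K_n)\times O\subseteq U(O)$ and set $\mathcal{V}_n=\{U(O):O\in\mathcal{H}_n\}$.

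In the principal case, where $\{St(\cup\mathcal{H}_n,\mathcal{O}_n):n\}$ is a $\gamma$-cover of $Y$, I would show $(\mathcal{V}_n)$ witnesses $U_{fin}^*(\mathcal{O}_\cup,\Gamma)$ for $X\times Y$. Fixing $\langle x,y\rangle$, choose $N$ so large that for all $n\geq N$ both $x\in St(\Phi(K_n),\mathcal{W}_n)$ and $y\in St(\cup\mathcal{H}_n,\mathcal{O}_n)$, which is possible since each holds for all but finitely many $n$. For such $n$ the local computation of Theorem~\ref{TN4} applies verbatim: the $X$-star gives a compact $K$ with $x\in\Phi(K)$ and hence $U_1\in\mathcal{U}_n$ with $\Phi(K)\times\{y\}\subseteq U_1$, the $Y$-star gives $H\in\mathcal{H}_n$ with its $U(H)\in\mathcal{V}_n$, and $U=U_1\cup U(H)\in\mathcal{U}_n$ contains $\langle x,y\rangle$ while meeting $\cup\mathcal{V}_n$, so $\langle x,y\rangle\in St(\cup\mathcal{V}_n,\mathcal{U}_n)$. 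Thus $\langle x,y\rangle$ lies in $St(\cup\mathcal{V}_n,\mathcal{U}_n)$ for all but finitely many $n$, yielding a $\gamma$-cover. The escape case, $St(\cup\mathcal{H}_{n_0},\mathcal{O}_{n_0})=Y$, is the only place I expect real subtlety, since the matching-index argument fails because the $X$-star at the single index $n_0$ need not cover $x$. The fix is to use closure under finite unions: for arbitrary $\langle x,y\rangle$, from $y\in St(\cup\mathcal{H}_{n_0},\mathcal{O}_{n_0})$ I obtain $U(H)\in\mathcal{V}_{n_0}$ meeting $\cup\mathcal{V}_{n_0}$, and choosing any $U_2\in\mathcal{U}_{n_0}$ with $\langle x,y\rangle\in U_2$ gives $U_2\cup U(H)\in\mathcal{U}_{n_0}$, whence $\langle x,y\rangle\in St(\cup\mathcal{V}_{n_0},\mathcal{U}_{n_0})$; therefore $St(\cup\mathcal{V}_{n_0},\mathcal{U}_{n_0})=X\times Y$ and the escape clause of $U_{fin}^*(\mathcal{O}_\cup,\Gamma)$ for $X\times Y$ is satisfied. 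The bulk of the labor is thus inherited from Theorem~\ref{TN4}, and the only new content is the $\gamma$-cover accounting and this escape-clause verification.
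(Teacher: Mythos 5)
Your proof is correct and follows exactly the route the paper intends: Theorem~\ref{TN24} is stated without proof as the $\Gamma$-analogue of Theorem~\ref{TN4}, and your adaptation (applying $U_{fin}^*(\mathcal{O}_\cup,\Gamma)$ directly on the $Y$-factor, replacing the ``infinitely many $n$'' bookkeeping by ``all but finitely many $n$'', and checking the escape clause via closure under finite unions) is precisely the required modification. The only quibble is presentational: the $\mathcal{G}_\Gamma$-to-$\gamma$-cover observation on the $X$-side is not new here, since the proof of Theorem~\ref{TN4} already uses it in the same way.
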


\begin{Th}
\label{TN1}
If $X$ satisfies $SS_1^*(\mathcal{G}_K,\mathcal{G}_\Gamma)$ and $Y$ satisfies $SS_{comp}^*(\mathcal{O}_\cup,\mathcal{O})$, then $X\times Y$ satisfies $SS_{comp}^*(\mathcal{O}_\cup,\mathcal{O})$.
\end{Th}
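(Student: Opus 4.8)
The plan is to adapt the structure of the proof of Theorem~\ref{TN4} to the ``strongly star'' setting, replacing compact witnessing sets in the $Y$-coordinate by single points, and using the stronger hypothesis $SS_1^*(\mathcal{G}_K,\mathcal{G}_\Gamma)$ on $X$ to control the $X$-coordinate with single compact sets per stage. Given a sequence $(\mathcal{U}_n)$ of members of $\mathcal{O}_\cup$ for $X\times Y$, I would first manufacture, exactly as in Theorem~\ref{TN4}, associated sequences $(\mathcal{A}_n)$ and $(\mathcal{B}_n)$ of open covers of $X$ and $Y$ compatible with the product structure, so that rectangular boxes $A\times B$ refine the $\mathcal{U}_n$ and every $U\in\mathcal{U}_n$ contains such a box. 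For each compact $K\subseteq X$ I would build the same $G_\delta$ set $\Phi(K)\supseteq K$ with the key property that for each $y\in Y$ and each $n$ there is a $U\in\mathcal{U}_n$ with $\Phi(K)\times\{y\}\subseteq U$; the covers $\mathcal{W}_n=\{\Phi(K): K\text{ compact in }X\}$ again lie in $\mathcal{G}_K$.

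Next I would apply $SS_1^*(\mathcal{G}_K,\mathcal{G}_\Gamma)$ to the sequence $(\mathcal{W}_n)$ to obtain a sequence $(K_n)$ of compact subsets of $X$ with $\{St(K_n,\mathcal{W}_n): n\in\mathbb{N}\}\in\mathcal{G}_\Gamma$; here the star is taken of the compact set $K_n$ itself rather than of $\Phi(K_n)$, which is the ``strongly star'' feature. For the $Y$-coordinate, for each $n$ I would form $\mathcal{O}_n=\{O\subseteq Y: O\text{ open},\ \Phi(K_n)\times O\subseteq U\text{ for some }U\in\mathcal{U}_n\}$, observe $\mathcal{O}_n\in\mathcal{O}_\cup$ for $Y$, and apply the hypothesis $SS_{comp}^*(\mathcal{O}_\cup,\mathcal{O})$ on $Y$ (via Lemma~\ref{LN1}, so that we may in fact extract an $\omega$-cover) to get compact sets $L_n\subseteq Y$ with $\{St(L_n,\mathcal{O}_n): n\in\mathbb{N}\}$ an $\omega$-cover of $Y$. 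The compact witness in $X\times Y$ at stage $n$ would then be $C_n=K_n\times L_n$, which is compact.

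The verification that $(C_n)$ witnesses $SS_{comp}^*(\mathcal{O}_\cup,\mathcal{O})$ for $(\mathcal{U}_n)$ runs parallel to Theorem~\ref{TN4}: given $\langle x,y\rangle$, the $\mathcal{G}_\Gamma$ condition forces $x\in St(K_n,\mathcal{W}_n)$ for all but finitely many $n$, while the $\omega$-cover condition gives $y\in St(L_n,\mathcal{O}_n)$ for infinitely many $n$, so some common $n_0$ exists. At $n_0$ one recovers a compact $K\subseteq X$ with $x\in\Phi(K)$ and $\Phi(K)\cap K_{n_0}\neq\emptyset$ (so that the star of $K_{n_0}$, not merely of $\Phi(K_{n_0})$, catches $x$), and an $O\in\mathcal{O}_{n_0}$ containing $y$ meeting $L_{n_0}$. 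Taking the box $U(O)\supseteq\Phi(K_{n_0})\times O$ together with a $U_1\supseteq\Phi(K)\times\{y\}$ and forming $U=U(O)\cup U_1\in\mathcal{U}_{n_0}$, one checks $\langle x,y\rangle\in U$ and $U\cap C_{n_0}\neq\emptyset$, whence $\langle x,y\rangle\in St(C_{n_0},\mathcal{U}_{n_0})$.

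The main obstacle I anticipate is bookkeeping the two distinct stars so that everything is witnessed by the \emph{single compact set} $K_{n_0}$ in $X$ and $L_{n_0}$ in $Y$ rather than by the larger $G_\delta$ sets. Concretely, one must arrange that $\Phi(K)\cap K_{n_0}\neq\emptyset$ (not just $\Phi(K)\cap\Phi(K_{n_0})\neq\emptyset$) and that $U=U(O)\cup U_1$ actually meets $K_{n_0}\times L_{n_0}$; this requires that the chosen box $U(O)$ contains a point of $K_{n_0}\times L_{n_0}$, which follows because $\Phi(K_{n_0})\supseteq K_{n_0}$ and $O$ meets $L_{n_0}$, giving a point of $K_{n_0}\times L_{n_0}$ inside $\Phi(K_{n_0})\times O\subseteq U(O)$. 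Once this intersection is secured, the rest is the routine product-star argument already carried out in Theorem~\ref{TN4}.
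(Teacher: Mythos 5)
Your proposal follows essentially the same route as the paper's proof of Theorem~\ref{TN1}: the same $\Phi(K)$ construction imported from Theorem~\ref{TN4}, the same covers $\mathcal{W}_n\in\mathcal{G}_K$ on $X$ and $\mathcal{O}_n\in\mathcal{O}_\cup$ on $Y$, an appeal to Lemma~\ref{LN1} to upgrade the $Y$-side selection to an $\omega$-cover, product-shaped compact witnesses, and the union-of-two-cover-elements verification. One correction of notation with a small ripple effect: $SS_1^*(\mathcal{G}_K,\mathcal{G}_\Gamma)$ is $S_{\mathcal{K}}^*$ for $\mathcal{K}$ the \emph{one-point} subsets, so it yields a sequence of points $x_n$ rather than compact sets $K_n$; the paper accordingly takes $F_n=\{x_n\}\times C_n$ and defines $\mathcal{O}_n$ by the condition $\{x_n\}\times O\subseteq U$, for which the covering property of $\mathcal{O}_n$ is immediate from the box refinement, whereas your version $\Phi(K_n)\times O\subseteq U$ asks for a tube over the $G_\delta$ set $\Phi(K_n)$ when the construction only directly secures the slice property $\Phi(K_n)\times\{y\}\subseteq U$. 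Since singletons are compact, your argument specializes verbatim to the paper's once $K_n=\{x_n\}$, so this is a notational repair rather than a structural gap.
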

\begin{proof}
Let $(\mathcal{U}_n)$ be a sequence of members of $\mathcal{O}_\cup$ for $X\times Y$. Proceeding similarly as in the proof of Theorem~\ref{TN4}, for each compact set $K\subseteq X$ we obtain a $G_\delta$ set $\Phi(K)\subseteq X$ such that $K\subseteq\Phi(K)$ and also for each $y\in Y$ and each $n$ $\Phi(K)\times\{y\}\subseteq U$ for some $U\in\mathcal{U}_n$.

For each $n$ $\mathcal{W}_n=\{\Phi(K) : K\text{ is a compact subset of }X\}$ is a member of $\mathcal{G}_K$. Apply the property $SS_1^*(\mathcal{G}_K,\mathcal{G}_\Gamma)$ to $(\mathcal{W}_n)$ to obtain a sequence $(x_n)$ of elements of $X$ such that $\{St(x_n,\mathcal{W}_n) : n\in\mathbb{N}\}\in\mathcal{G}_\Gamma$ for $X$. Clearly for each $n$ $\mathcal{O}_n=\{O\subseteq Y : O\text{ is open in }Y\text{ and }\{x_n\}\times O\subseteq U\text{ for some }U\in\mathcal{U}_n\}\in\mathcal{O}_\cup$ for $Y$. Since $Y$ satisfies $SS_{comp}^*(\mathcal{O}_\cup,\mathcal{O})$, there exists a sequence $(C_n)$ of compact subsets of $Y$ such that $\{St(C_n,\mathcal{O}_n) : n\in\mathbb{N}\}$ is an $\omega$-cover in $Y$ (see Lemma~\ref{LN1}). For each $n$ let $F_n=\{x_n\}\times C_n$. It now remains to show that the sequence $(F_n)$ witnesses for $(\mathcal{U}_n)$ that $X\times Y$ satisfies $SS_{comp}^*(\mathcal{O}_\cup,\mathcal{O})$. Let $\langle x,y\rangle\in X\times Y$. Now $x\in St(x_n,\mathcal{W}_n)$ for all but finitely many $n$ because $\{St(x_n,\mathcal{W}_n) : n\in\mathbb{N}\}\in\mathcal{G}_\Gamma$. Also since $\{St(C_n,\mathcal{O}_n) : n\in\mathbb{N}\}$ is an $\omega$-cover, $y\in St(C_n,\mathcal{O}_n)$ for infinitely many $n$. Now choose a $n_0\in\mathbb{N}$ such that $x\in St(x_{n_0},\mathcal{W}_{n_0})$ and $y\in St(C_{n_0},\mathcal{O}_{n_0})$. We can find a compact set $K\subseteq X$, a set $O\in\mathcal{O}_{n_0}$ containing $y$ and a set $U_1\in\mathcal{U}_{n_0}$ such that $x, x_{n_0}\in\Phi(K)$, $O\cap C_{n_0}\neq\emptyset$ and $\{x_{n_0}\}\times O\subseteq U_1$. Clearly $U_1\cap F_{n_0}\neq\emptyset$. Also there is a set $U_2\in\mathcal{U}_{n_0}$ such that $\Phi(K)\times\{y\}\subseteq U_2$. The set $U=U_1\cup U_2\in\mathcal{U}_{n_0}$ has the property that $\langle x,y\rangle\in U$ and $U\cap F_{n_0}\neq\emptyset$. Observe that $\langle x,y\rangle\in St(F_{n_0},\mathcal{U}_{n_0})$ and accordingly $\{St(F_n,\mathcal{U}_n) : n\in\mathbb{N}\}
$ covers $X\times Y$.
\end{proof}

\begin{Th}
\label{TN25}
If $X$ satisfies $SS_1^*(\mathcal{G}_K,\mathcal{G}_\Gamma)$ and $Y$ satisfies $SS_{comp}^*(\mathcal{O}_\cup,\Gamma)$, then $X\times Y$ satisfies $SS_{comp}^*(\mathcal{O}_\cup,\Gamma)$.
\end{Th}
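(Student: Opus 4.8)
The plan is to follow the proof of Theorem~\ref{TN1} essentially verbatim, the sole structural change being that the target class $\mathcal{O}$ for $Y$ (and hence for $X\times Y$) is replaced by $\Gamma$; this upgrade propagates to the product precisely because the $\gamma$-cover condition on both coordinates yields membership for \emph{cofinitely} many indices rather than for merely one. Concretely, I would start with a sequence $(\mathcal{U}_n)$ of members of $\mathcal{O}_\cup$ for $X\times Y$ and, exactly as in the proof of Theorem~\ref{TN4}, attach to each compact set $K\subseteq X$ a $G_\delta$ set $\Phi(K)\subseteq X$ with $K\subseteq\Phi(K)$ such that for every $y\in Y$ and every $n$ there is a $U\in\mathcal{U}_n$ with $\Phi(K)\times\{y\}\subseteq U$. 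For each $n$ the family $\mathcal{W}_n=\{\Phi(K) : K\text{ is a compact subset of }X\}$ then lies in $\mathcal{G}_K$, and applying $SS_1^*(\mathcal{G}_K,\mathcal{G}_\Gamma)$ to $(\mathcal{W}_n)$ produces a sequence $(x_n)$ of points of $X$ with $\{St(x_n,\mathcal{W}_n) : n\in\mathbb{N}\}\in\mathcal{G}_\Gamma$.

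Next I would transfer the problem to $Y$. For each $n$ set $\mathcal{O}_n=\{O\subseteq Y : O\text{ is open in }Y\text{ and }\{x_n\}\times O\subseteq U\text{ for some }U\in\mathcal{U}_n\}$, which belongs to $\mathcal{O}_\cup$ for $Y$. Since $Y$ satisfies $SS_{comp}^*(\mathcal{O}_\cup,\Gamma)$, applying this property to $(\mathcal{O}_n)$ yields a sequence $(C_n)$ of compact subsets of $Y$ such that $\{St(C_n,\mathcal{O}_n) : n\in\mathbb{N}\}$ is a $\gamma$-cover of $Y$. I then put $F_n=\{x_n\}\times C_n$, a compact subset of $X\times Y$, and claim that $(F_n)$ witnesses $SS_{comp}^*(\mathcal{O}_\cup,\Gamma)$ for $(\mathcal{U}_n)$.

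The verification is where the $\Gamma$-target pays off. Fix $\langle x,y\rangle\in X\times Y$. Because $\{St(x_n,\mathcal{W}_n) : n\in\mathbb{N}\}\in\mathcal{G}_\Gamma$, the point $x$ lies in $St(x_n,\mathcal{W}_n)$ for all but finitely many $n$; because $\{St(C_n,\mathcal{O}_n) : n\in\mathbb{N}\}$ is a $\gamma$-cover of $Y$, the point $y$ lies in $St(C_n,\mathcal{O}_n)$ for all but finitely many $n$. Hence both conditions hold simultaneously for all but finitely many $n$. For any such $n$ one argues exactly as in Theorem~\ref{TN1}: from $x\in St(x_n,\mathcal{W}_n)$ pick a compact $K$ with $x,x_n\in\Phi(K)$, giving a $U_2\in\mathcal{U}_n$ with $\Phi(K)\times\{y\}\subseteq U_2$ and thus $\langle x,y\rangle\in U_2$; from $y\in St(C_n,\mathcal{O}_n)$ pick an $O\in\mathcal{O}_n$ with $y\in O$ and $O\cap C_n\neq\emptyset$, giving a $U_1\in\mathcal{U}_n$ with $\{x_n\}\times O\subseteq U_1$ and hence $U_1\cap F_n\neq\emptyset$. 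Since $\mathcal{U}_n\in\mathcal{O}_\cup$, the set $U=U_1\cup U_2\in\mathcal{U}_n$ contains $\langle x,y\rangle$ and meets $F_n$, so $\langle x,y\rangle\in St(F_n,\mathcal{U}_n)$. As this holds for all but finitely many $n$, the family $\{St(F_n,\mathcal{U}_n) : n\in\mathbb{N}\}$ is a $\gamma$-cover of $X\times Y$, completing the argument.

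I do not anticipate a genuine obstacle, since the construction is identical to that of Theorem~\ref{TN1}; the one point demanding care is the bookkeeping of index sets — replacing ``infinitely many $n$'' (an $\omega$-cover on $Y$ meeting a cofinite set coming from $X$) by ``cofinitely many $n$ on both coordinates'', whose intersection is again cofinite, is exactly what converts an ordinary cover of the product into a $\gamma$-cover. One should also record that $\{St(F_n,\mathcal{U}_n) : n\in\mathbb{N}\}$ is infinite, as required of a $\gamma$-cover, which is inherited from the infiniteness built into $\mathcal{G}_\Gamma$.
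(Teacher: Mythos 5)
Your proposal is correct and is exactly the argument the paper intends: Theorem~\ref{TN25} is stated without proof as the $\Gamma$-target variant of Theorem~\ref{TN1}, and your adaptation — intersecting the two cofinite index sets coming from $\mathcal{G}_\Gamma$ on $X$ and the $\gamma$-cover on $Y$, then forming $U=U_1\cup U_2\in\mathcal{U}_n$ via closure under finite unions — is precisely the required modification. The remark about recording infiniteness of $\{St(F_n,\mathcal{U}_n) : n\in\mathbb{N}\}$ is a reasonable precaution consistent with the paper's conventions.
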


\begin{Th}
\label{TN5}
If $X$ satisfies $SS_1^*(\mathcal{G}_K,\mathcal{G}_\Gamma)$ and $Y$ satisfies $SS_{fin}^*(\mathcal{O}_\cup,\mathcal{O})$, then $X\times Y$ satisfies $SS_{fin}^*(\mathcal{O}_\cup,\mathcal{O})$.
\end{Th}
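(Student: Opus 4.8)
The plan is to adapt the product argument from Theorem~\ref{TN1} almost verbatim, replacing the compact-selection hypothesis on $Y$ with the finite-selection hypothesis and carrying a single point rather than a compact set in the $X$-coordinate. First I would take a sequence $(\mathcal{U}_n)$ of members of $\mathcal{O}_\cup$ for $X\times Y$, and run exactly the preliminary construction from the proof of Theorem~\ref{TN1}: for each compact $K\subseteq X$ produce a $G_\delta$ set $\Phi(K)\subseteq X$ with $K\subseteq\Phi(K)$ such that for each $y\in Y$ and each $n$ there is a $U\in\mathcal{U}_n$ with $\Phi(K)\times\{y\}\subseteq U$. Then $\mathcal{W}_n=\{\Phi(K) : K\text{ compact in }X\}$ is a member of $\mathcal{G}_K$ for every $n$.

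Next I would apply the hypothesis $SS_1^*(\mathcal{G}_K,\mathcal{G}_\Gamma)$ of $X$ to the sequence $(\mathcal{W}_n)$ to obtain points $(x_n)$ in $X$ with $\{St(x_n,\mathcal{W}_n) : n\in\mathbb{N}\}\in\mathcal{G}_\Gamma$. As in Theorem~\ref{TN1}, for each $n$ set $\mathcal{O}_n=\{O\subseteq Y : O\text{ open in }Y\text{ and }\{x_n\}\times O\subseteq U\text{ for some }U\in\mathcal{U}_n\}$, which is a member of $\mathcal{O}_\cup$ for $Y$. The one genuine change is here: instead of invoking $SS_{comp}^*(\mathcal{O}_\cup,\mathcal{O})$, I would apply the hypothesis $SS_{fin}^*(\mathcal{O}_\cup,\mathcal{O})$ of $Y$ to the sequence $(\mathcal{O}_n)$ to obtain finite sets $(D_n)$ in $Y$ with $\{St(D_n,\mathcal{O}_n) : n\in\mathbb{N}\}$ covering $Y$; by Lemma~\ref{LN10} this may be taken to be an $\omega$-cover of $Y$. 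I would then define the finite sets $F_n=\{x_n\}\times D_n\subseteq X\times Y$, which are the candidate witnesses for $SS_{fin}^*(\mathcal{O}_\cup,\mathcal{O})$ on the product.

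The verification step then mirrors Theorem~\ref{TN1} line for line. Given $\langle x,y\rangle\in X\times Y$, the $\mathcal{G}_\Gamma$ condition gives $x\in St(x_n,\mathcal{W}_n)$ for all but finitely many $n$, while the $\omega$-cover $\{St(D_n,\mathcal{O}_n)\}$ gives $y\in St(D_n,\mathcal{O}_n)$ for infinitely many $n$, so some common index $n_0$ exists. From $x\in St(x_{n_0},\mathcal{W}_{n_0})$ I extract a compact $K\subseteq X$ with $x,x_{n_0}\in\Phi(K)$, and from $y\in St(D_{n_0},\mathcal{O}_{n_0})$ an $O\in\mathcal{O}_{n_0}$ containing $y$ with $O\cap D_{n_0}\neq\emptyset$ and $\{x_{n_0}\}\times O\subseteq U_1$ for some $U_1\in\mathcal{U}_{n_0}$; then $U_1\cap F_{n_0}\neq\emptyset$. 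Taking $U_2\in\mathcal{U}_{n_0}$ with $\Phi(K)\times\{y\}\subseteq U_2$ and using that $\mathcal{U}_{n_0}$ is closed under finite unions, the set $U=U_1\cup U_2\in\mathcal{U}_{n_0}$ satisfies $\langle x,y\rangle\in U$ and $U\cap F_{n_0}\neq\emptyset$, whence $\langle x,y\rangle\in St(F_{n_0},\mathcal{U}_{n_0})$.

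I do not anticipate a serious obstacle: the structure is identical to Theorem~\ref{TN1}, and the only substantive point is that replacing a compact selection $C_n$ in $Y$ by a finite selection $D_n$ preserves the crucial intersection property $U_1\cap F_{n_0}\neq\emptyset$, since that property only used $O\cap D_{n_0}\neq\emptyset$ together with $\{x_{n_0}\}\times O\subseteq U_1$ and finiteness is enough to keep $F_n=\{x_n\}\times D_n$ finite, as the target selection principle $SS_{fin}^*$ requires. The mild care needed is simply to invoke Lemma~\ref{LN10} (rather than Lemma~\ref{LN1}) to upgrade the cover produced in $Y$ to an $\omega$-cover, which is what licenses the ``infinitely many $n$'' conclusion used to locate the common index $n_0$.
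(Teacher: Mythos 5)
Your proposal is correct and is exactly the argument the paper intends: Theorem~\ref{TN5} is stated without proof as an evident analogue of Theorem~\ref{TN1}, and your adaptation (replacing the compact selection $C_n$ in $Y$ by a finite selection $D_n$, invoking Lemma~\ref{LN10} in place of Lemma~\ref{LN1} to upgrade to an $\omega$-cover, and setting $F_n=\{x_n\}\times D_n$) is precisely the intended modification. The verification step carries over line for line, as you note.
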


\begin{Cor}
\label{CN8}
If $X$ satisfies $SS_1^*(\mathcal{G}_K,\mathcal{G}_\Gamma)$ and $Y$ satisfies $SS_1^*(\mathcal{O}_\cup,\mathcal{O})$, then $X\times Y$ satisfies $SS_1^*(\mathcal{O}_\cup,\mathcal{O})$.
\end{Cor}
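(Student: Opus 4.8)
The plan is to derive this as a formal consequence of Theorem~\ref{TN5} together with the equivalences recorded in Lemma~\ref{LN10}, exactly mirroring how Corollary~\ref{CN7} is obtained from Theorem~\ref{TN4} and Lemma~\ref{LN9}. The crucial observation is that Lemma~\ref{LN10} asserts that, for any space, the properties $SS_{fin}^*(\mathcal{O}_\cup,\mathcal{O})$ and $SS_1^*(\mathcal{O}_\cup,\mathcal{O})$ are equivalent (they are items $(1)$ and $(3)$ of that list). Hence the single-selection and finite-selection versions of these strongly star properties over $\mathcal{O}_\cup$ may be interchanged freely, which reduces the corollary to the finite-selection statement already established.

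Concretely, I would first invoke the implication $(3)\Rightarrow(1)$ of Lemma~\ref{LN10} applied to $Y$: since $Y$ satisfies $SS_1^*(\mathcal{O}_\cup,\mathcal{O})$, it also satisfies $SS_{fin}^*(\mathcal{O}_\cup,\mathcal{O})$. Next, with $X$ satisfying $SS_1^*(\mathcal{G}_K,\mathcal{G}_\Gamma)$ and $Y$ now known to satisfy $SS_{fin}^*(\mathcal{O}_\cup,\mathcal{O})$, Theorem~\ref{TN5} applies directly and yields that $X\times Y$ satisfies $SS_{fin}^*(\mathcal{O}_\cup,\mathcal{O})$. Finally, I would apply the reverse implication $(1)\Rightarrow(3)$ of Lemma~\ref{LN10}, this time to the product space $X\times Y$, to upgrade the finite-selection conclusion to the single-selection one, giving $SS_1^*(\mathcal{O}_\cup,\mathcal{O})$ for $X\times Y$, as required.

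There is essentially no technical obstacle here: the entire argument is a three-step chain of already-proved results, with all the genuine combinatorial work on stars, compact sets, and the $G_\delta$-hull construction $\Phi(K)$ having been carried out in the proof of Theorem~\ref{TN5} (and, in the product setting, Theorem~\ref{TN1}). The only point requiring a moment's care is to note that Lemma~\ref{LN10} is a statement about an arbitrary space, so it is legitimate to apply it both to the factor $Y$ and to the product $X\times Y$; once this is observed, the proof is immediate and can be stated in a single sentence.
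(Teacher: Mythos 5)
Your derivation is correct and is exactly the route the paper intends: Corollary~\ref{CN8} is stated as an immediate consequence of Theorem~\ref{TN5} via the equivalence of $SS_{fin}^*(\mathcal{O}_\cup,\mathcal{O})$ and $SS_1^*(\mathcal{O}_\cup,\mathcal{O})$ from Lemma~\ref{LN10}, applied once to $Y$ (in the trivial direction, since singletons are finite) and once to $X\times Y$ (in the direction $(1)\Rightarrow(3)$ actually proved in that lemma). This mirrors precisely how Corollary~\ref{CN7} follows from Theorem~\ref{TN4} and Lemma~\ref{LN9}.
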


\begin{Th}
\label{TN26}
If $X$ satisfies $SS_1^*(\mathcal{G}_K,\mathcal{G}_\Gamma)$ and $Y$ satisfies $SS_{fin}^*(\mathcal{O}_\cup,\Gamma)$, then $X\times Y$ satisfies $SS_{fin}^*(\mathcal{O}_\cup,\Gamma)$.
\end{Th}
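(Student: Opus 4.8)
The plan is to imitate the proof of Theorem~\ref{TN1}, replacing the compact selector on the $Y$-side by a finite one and, crucially, upgrading both outputs from mere covers to $\gamma$-type covers. I would start with a sequence $(\mathcal{U}_n)$ of members of $\mathcal{O}_\cup$ for $X\times Y$. Exactly as in the proofs of Theorem~\ref{TN4} and Theorem~\ref{TN1}, for each compact $K\subseteq X$ I would manufacture a $G_\delta$ set $\Phi(K)\subseteq X$ with $K\subseteq\Phi(K)$ and with the property that for every $y\in Y$ and every $n$ there is a $U\in\mathcal{U}_n$ with $\Phi(K)\times\{y\}\subseteq U$; this construction uses only that $X$ is the first factor, that each $\mathcal{U}_n$ is closed under finite unions, and the compactness of $K$, so it transcribes verbatim. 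Then for each $n$ the family $\mathcal{W}_n=\{\Phi(K):K\subseteq X\text{ compact}\}$ is an Alster cover, i.e. a member of $\mathcal{G}_K$.

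Next I would apply the hypothesis $SS_1^*(\mathcal{G}_K,\mathcal{G}_\Gamma)$ of $X$ to $(\mathcal{W}_n)$ to obtain points $x_n\in X$ with $\{St(x_n,\mathcal{W}_n):n\in\mathbb{N}\}\in\mathcal{G}_\Gamma$. For each $n$ I put $\mathcal{O}_n=\{O\subseteq Y:O\text{ open and }\{x_n\}\times O\subseteq U\text{ for some }U\in\mathcal{U}_n\}$; since each $\mathcal{U}_n\in\mathcal{O}_\cup$, the family $\mathcal{O}_n$ is an open cover of $Y$ closed under finite unions, so $\mathcal{O}_n\in\mathcal{O}_\cup$. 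I then invoke the hypothesis $SS_{fin}^*(\mathcal{O}_\cup,\Gamma)$ of $Y$ applied to $(\mathcal{O}_n)$ to get finite sets $E_n\subseteq Y$ with $\{St(E_n,\mathcal{O}_n):n\in\mathbb{N}\}\in\Gamma$, and I set $F_n=\{x_n\}\times E_n$, a finite subset of $X\times Y$. The claim is that $(F_n)$ witnesses $SS_{fin}^*(\mathcal{O}_\cup,\Gamma)$ for $X\times Y$.

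For the verification, fix $\langle x,y\rangle\in X\times Y$. Because $\{St(x_n,\mathcal{W}_n):n\in\mathbb{N}\}\in\mathcal{G}_\Gamma$, the members not containing $x$ cannot be infinite (they would form an infinite subfamily that fails to cover $x$), so $x\in St(x_n,\mathcal{W}_n)$ for all but finitely many $n$; similarly, since $\{St(E_n,\mathcal{O}_n):n\in\mathbb{N}\}\in\Gamma$, the point $y$ lies in $St(E_n,\mathcal{O}_n)$ for all but finitely many $n$. For any index $n$ at which both occur I would reproduce the local computation from Theorem~\ref{TN1}: the first condition yields a compact $K$ with $x,x_n\in\Phi(K)$ and a $U_2\in\mathcal{U}_n$ with $\Phi(K)\times\{y\}\subseteq U_2$, so $\langle x,y\rangle\in U_2$; the second yields an $O\in\mathcal{O}_n$ with $y\in O$ and $O\cap E_n\neq\emptyset$ and a $U_1\in\mathcal{U}_n$ with $\{x_n\}\times O\subseteq U_1$, so $U_1\cap F_n\neq\emptyset$. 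Then $U=U_1\cup U_2\in\mathcal{U}_n$ satisfies $\langle x,y\rangle\in U$ and $U\cap F_n\neq\emptyset$, whence $\langle x,y\rangle\in St(F_n,\mathcal{U}_n)$.

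The only genuine difference from Theorem~\ref{TN1} lies precisely here: in that theorem the $Y$-side produced merely an $\omega$-cover, so one could only conclude that each point sat in infinitely many stars and the product family was a plain open cover. In the present situation both the $X$-output (in $\mathcal{G}_\Gamma$) and the $Y$-output (in $\Gamma$) are $\gamma$-type, so each coordinate is covered cofinitely often; since the intersection of two cofinite index sets is again cofinite, $\langle x,y\rangle\in St(F_n,\mathcal{U}_n)$ for all but finitely many $n$, i.e. $\{St(F_n,\mathcal{U}_n):n\in\mathbb{N}\}$ is a $\gamma$-cover of $X\times Y$. I do not anticipate a real obstacle; the only point requiring care is confirming that the two exceptional index sets are finite (so their union, the set of bad indices, is finite) and that the per-index membership argument of Theorem~\ref{TN1} carries over unchanged, which it does.
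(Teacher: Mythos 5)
Your proof is correct and follows exactly the route the paper intends: Theorem~\ref{TN26} is stated without proof as the $\Gamma$-variant of Theorem~\ref{TN1}, and your argument is precisely that proof with the compact selector on the $Y$-side replaced by a finite one and both outputs upgraded to $\gamma$-type covers, the cofinite-intersection observation at the end being the only new ingredient. The per-index computation and the use of closure under finite unions match the paper's argument for Theorem~\ref{TN1} verbatim.
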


\begin{table}[H]
\begin{tabular}{|l|c|c|}
\hline
  Property & Property-preserving class\\
  \hline
  $S_{fin}^*(\mathcal{O}_\cup,\mathcal{O})$ & \multirow[c]{3}{*}{$S_1^*(\mathcal{G}_K,\mathcal{G}_\Gamma)$}\\
  \cline{1-1}
  $U_{fin}^*(\mathcal{O}_\cup,\Gamma)$ & \\
  \cline{1-1}
  $S_1^*(\mathcal{O}_\cup,\mathcal{O})$ & \\
  \hline
  $SS_{comp}^*(\mathcal{O}_\cup,\mathcal{O})$ & \multirow[c]{5}{*}{$SS_1^*(\mathcal{G}_K,\mathcal{G}_\Gamma)$}\\
  \cline{1-1}
  $SS_{comp}^*(\mathcal{O}_\cup,\Gamma)$ & \\
  \cline{1-1}
  $SS_{fin}^*(\mathcal{O}_\cup,\mathcal{O})$ & \\
  \cline{1-1}
  $SS_{fin}^*(\mathcal{O}_\cup,\Gamma)$ & \\
  \cline{1-1}
  $SS_1^*(\mathcal{O}_\cup,\mathcal{O})$ & \\
  \hline
\end{tabular}
\vskip0.1cm
\caption{Property-preserving class: using Alster covers}
\label{tab4}
\end{table}

\section{Results concerning cardinalities}
\subsection{Critical cardinalities $\mathfrak{d}$, $\mathfrak{b}$ and $\cov(\mathcal{M})$}

\begin{Th}[{\!\cite[Proposition 1.7]{SVM}}]
\label{TN7}
Every star-Lindel\"{o}f (respectively, strongly star-Lindel\"{o}f) space of cardinality less than $\mathfrak{d}$ is star-Menger (respectively, strongly star-Menger).
\end{Th}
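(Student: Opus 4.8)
The plan is to treat the strongly star case in detail and then indicate the (nearly identical) modification for the star case. So suppose $X$ is strongly star-Lindel\"{o}f with $|X|<\mathfrak{d}$, and let $(\mathcal{U}_n)$ be a sequence of open covers of $X$; I must produce finite sets $F_n\subseteq X$ with $\{St(F_n,\mathcal{U}_n):n\in\mathbb{N}\}$ covering $X$, which is exactly the $SS_{fin}^*(\mathcal{O},\mathcal{O})$ requirement. First, for each $n$ I would apply strong star-Lindel\"{o}fness to $\mathcal{U}_n$ to obtain a countable set $K_n=\{x_{n,k}:k\in\mathbb{N}\}\subseteq X$ with $St(K_n,\mathcal{U}_n)=X$, and fix this enumeration once and for all.

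Next comes the combinatorial heart of the argument. For each point $x\in X$ I define $f_x\in\mathbb{N}^\mathbb{N}$ by letting $f_x(n)$ be the least $k$ such that $x\in St(\{x_{n,0},\dots,x_{n,k}\},\mathcal{U}_n)$; this is well defined since $St(K_n,\mathcal{U}_n)=X$ forces some $U\in\mathcal{U}_n$ to contain $x$ and meet $K_n$, hence to meet some finite initial segment of the enumeration. The family $\{f_x:x\in X\}$ has cardinality at most $|X|<\mathfrak{d}$, hence cannot be dominating, so there is a $g\in\mathbb{N}^\mathbb{N}$ with $g\not\leq^* f_x$ for every $x$; equivalently, for each $x$ one has $f_x(n)\leq g(n)$ for infinitely many $n$. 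Setting $F_n=\{x_{n,0},\dots,x_{n,g(n)}\}$, a finite subset of $X$, the inclusion $St(\{x_{n,0},\dots,x_{n,f_x(n)}\},\mathcal{U}_n)\subseteq St(F_n,\mathcal{U}_n)$ shows that each $x$ lies in $St(F_n,\mathcal{U}_n)$ for infinitely many $n$, so $\{St(F_n,\mathcal{U}_n):n\in\mathbb{N}\}$ is an open cover and witnesses strong star-Mengerness.

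For the star-Menger case I would replace the countable sets $K_n$ by countable subfamilies $\mathcal{V}_n=\{V_{n,k}:k\in\mathbb{N}\}\subseteq\mathcal{U}_n$ with $St(\cup\mathcal{V}_n,\mathcal{U}_n)=X$ (star-Lindel\"{o}fness), set $f_x(n)$ to be the least $k$ with $x\in St(\cup_{j\leq k}V_{n,j},\mathcal{U}_n)$, and take $\mathcal{W}_n=\{V_{n,0},\dots,V_{n,g(n)}\}$ for the same $g$ obtained above. The key identity $St(A\cup B,\mathcal{P})=St(A,\mathcal{P})\cup St(B,\mathcal{P})$ gives $St(\cup\mathcal{W}_n,\mathcal{U}_n)=\bigcup\{St(V,\mathcal{U}_n):V\in\mathcal{W}_n\}$, so the cover produced by the identical domination argument is precisely the one demanded by $S_{fin}^*(\mathcal{O},\mathcal{O})$.

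The routine parts are the well-definedness of $f_x$ and the observation that truncating the enumerations at index $g(n)$ preserves the relevant star relation. I expect the only genuinely delicate point to be getting the \emph{direction} of the domination correct: one needs the family $\{f_x\}$ to fail to be dominating, so that a single $g$ escapes all of them infinitely often, rather than to be bounded. This is exactly the place where the hypothesis $|X|<\mathfrak{d}$ is used (and why $\mathfrak{d}$, not $\mathfrak{b}$, is the relevant cardinal).
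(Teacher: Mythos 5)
Your argument is correct: the result is quoted in the paper from \cite{SVM} without proof, and your domination argument (encoding each point by the function $f_x$ recording how far into the fixed countable witness one must go, then using that a family of size less than $\mathfrak{d}$ is non-dominating to find a single $g$ that works infinitely often for every $x$) is exactly the standard proof, in the same style as the paper's own proof of the analogous $\cov(\mathcal{M})$ statement (Theorem~\ref{TN10}). The only cosmetic slip is the phrase ``for the same $g$ obtained above'' in the star-Lindel\"{o}f case: the functions $f_x$ there are defined relative to the families $\mathcal{V}_n$ rather than the sets $K_n$, so one should rerun the non-domination step for this new family of size less than $\mathfrak{d}$; this changes nothing of substance.
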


\begin{Th}[{\!\cite[Corollary 3.10]{SSSP}}]
\label{TN54}
Every star-Lindel\"{o}f (respectively, strongly star-Lindel\"{o}f) space of cardinality less than $\mathfrak{b}$ is star-Hurewicz (respectively, strongly star-Hurewicz).
\end{Th}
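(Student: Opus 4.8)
The plan is to treat both assertions by one scheme, paralleling the Menger-with-$\mathfrak{d}$ statement of Theorem~\ref{TN7} but replacing the \emph{dominating} argument by a \emph{bounding} one so as to upgrade ``cover'' to ``$\gamma$-cover''. The two cases differ only in whether the countable witnesses furnished by the covering property are points of $X$ (strongly star version) or members of the covers (star version); I describe the strongly star-Lindel\"of case in detail. Fix a sequence $(\mathcal{U}_n)$ of open covers of $X$. By strong star-Lindel\"ofness, for each $n$ choose a countable set $K_n=\{x_{n,k}:k\in\mathbb{N}\}\subseteq X$ with $St(K_n,\mathcal{U}_n)=X$. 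The crux is to encode, for each $x\in X$, how far into the enumeration of $K_n$ one must go to catch $x$ in a star: put
\[
f_x(n)=\min\{m : x\in St(\{x_{n,1},\dots,x_{n,m}\},\mathcal{U}_n)\}.
\]
This minimum exists, since $x\in St(K_n,\mathcal{U}_n)=X$ yields some $U\in\mathcal{U}_n$ with $x\in U$ and $U\cap K_n\neq\emptyset$, so $U$ meets some $x_{n,k}$, bounding the minimum; thus $f_x\in\mathbb{N}^{\mathbb{N}}$ for every $x$.

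Here the cardinality hypothesis does the work. Since $|\{f_x:x\in X\}|\leq|X|<\mathfrak{b}$, the family $\{f_x:x\in X\}$ is bounded, so fix $g\in\mathbb{N}^{\mathbb{N}}$ with $f_x\leq^* g$ for all $x\in X$. Truncating each witness at level $g(n)$, set $F_n=\{x_{n,1},\dots,x_{n,g(n)}\}$, a finite subset of $X$. For a fixed $x$, eventual dominance $f_x\leq^* g$ gives $f_x(n)\leq g(n)$, whence $\{x_{n,1},\dots,x_{n,f_x(n)}\}\subseteq F_n$ and therefore $x\in St(F_n,\mathcal{U}_n)$, for all but finitely many $n$. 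As this holds for every $x$, the sequence $\{St(F_n,\mathcal{U}_n):n\in\mathbb{N}\}$ is a $\gamma$-cover of $X$, exactly the witness demanded by $SS_{fin}^*(\mathcal{O},\Gamma)$.

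For star-Lindel\"of $\Rightarrow$ star-Hurewicz I would rerun the argument verbatim after replacing the point set $K_n$ by a countable subfamily $\mathcal{V}_n=\{V_{n,k}:k\in\mathbb{N}\}\subseteq\mathcal{U}_n$ with $St(\cup\mathcal{V}_n,\mathcal{U}_n)=X$, setting $f_x(n)=\min\{m:x\in St(\cup_{k\leq m}V_{n,k},\mathcal{U}_n)\}$ and $\mathcal{W}_n=\{V_{n,1},\dots,V_{n,g(n)}\}$; then $\{St(\cup\mathcal{W}_n,\mathcal{U}_n):n\in\mathbb{N}\}$ is a $\gamma$-cover, giving $U_{fin}^*(\mathcal{O},\Gamma)$ (the alternative clause $St(\cup\mathcal{W}_n,\mathcal{U}_n)=X$ can only help). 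I expect no real obstacle: the only points meriting care are that each $f_x$ is finite-valued (guaranteed by the star-covering equality) and that $\leq^*$-domination is precisely the statement ``all but finitely many,'' i.e.\ the $\gamma$-cover condition. The sole structural role of $\mathfrak{b}$ is to convert a family of size $<\mathfrak{b}$ into a single eventual bound; had we instead known only $|X|<\mathfrak{d}$, the family would merely fail to be dominating, yielding a $g$ with $f_x(n)<g(n)$ for infinitely many $n$ and hence an ordinary cover, which recovers Theorem~\ref{TN7}.
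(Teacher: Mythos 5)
Your argument is correct: the paper does not prove Theorem~\ref{TN54} itself (it is quoted from \cite[Corollary 3.10]{SSSP}), but your proof is exactly the standard one and runs on the same scheme the paper uses for its $\cov(\mathcal{M})$ analogue, Theorem~\ref{TN10} --- enumerate the countable witnesses, encode each point $x$ by the function $f_x$ recording how deep into the enumeration one must go, and replace the ``guessing'' step by boundedness of a family of size $<\mathfrak{b}$ to upgrade the resulting cover to a $\gamma$-cover. Both halves (points versus members of the cover) are handled correctly, and the only cosmetic caveat is the usual one that the set $\{St(F_n,\mathcal{U}_n):n\in\mathbb{N}\}$ should be read as a sequence so that the ``infinite'' clause in the definition of $\gamma$-cover is not an issue.
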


\begin{Th}
\label{TN10}
Every star-Lindel\"{o}f (respectively, strongly star-Lindel\"{o}f) space of cardinality less than $\cov(\mathcal{M})$ is star-Rothberger (respectively, strongly star-Rothberger).
\end{Th}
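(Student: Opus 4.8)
The plan is to mimic the proofs of Theorems~\ref{TN7} and~\ref{TN54}, replacing the domination/unboundedness arguments (tied to $\mathfrak{d}$ and $\mathfrak{b}$) by the guessing characterization of $\cov(\mathcal{M})$ discussed earlier: any family $F\subseteq\mathbb{N}^\mathbb{N}$ with $|F|<\cov(\mathcal{M})$ can be guessed by some $g\in\mathbb{N}^\mathbb{N}$, i.e.\ $\{n : f(n)=g(n)\}$ is infinite for every $f\in F$. I will give the argument for the strongly star case, namely $SS_1^*(\mathcal{O},\mathcal{O})$; the star-Rothberger case is identical after a cosmetic change.

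First I would fix a sequence $(\mathcal{U}_n)$ of open covers of $X$. Using the strongly star-Lindel\"{o}f property of $X$, for each $n$ I obtain a countable set $K_n=\{x_{n,k} : k\in\mathbb{N}\}\subseteq X$ with $St(K_n,\mathcal{U}_n)=X$. The crucial point is that this lets me encode, for every point of $X$, a natural number at each stage $n$: for each $x\in X$ define $h_x\in\mathbb{N}^\mathbb{N}$ by letting $h_x(n)$ be the least $k$ with $x\in St(x_{n,k},\mathcal{U}_n)$. Such a $k$ exists for every $n$ precisely because $St(K_n,\mathcal{U}_n)=X$, so $h_x$ is well defined (total).

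Next I would set $F=\{h_x : x\in X\}$. Since $|F|\leq|X|<\cov(\mathcal{M})$, the guessing principle yields a single $g\in\mathbb{N}^\mathbb{N}$ such that for every $x\in X$ the set $\{n : h_x(n)=g(n)\}$ is infinite (in fact I only need it to be nonempty). I then make the selection $x_n:=x_{n,g(n)}$ for each $n$. To verify that $(x_n)$ witnesses $SS_1^*(\mathcal{O},\mathcal{O})$, take any $x\in X$ and choose $n$ with $h_x(n)=g(n)$; then $x\in St(x_{n,h_x(n)},\mathcal{U}_n)=St(x_n,\mathcal{U}_n)$, so $\{St(x_n,\mathcal{U}_n) : n\in\mathbb{N}\}$ covers $X$ and $X$ is strongly star-Rothberger.

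For the star-Rothberger case I would instead invoke the star-Lindel\"{o}f property to get, for each $n$, a countable $\mathcal{V}_n=\{V_{n,k} : k\in\mathbb{N}\}\subseteq\mathcal{U}_n$ with $St(\cup\mathcal{V}_n,\mathcal{U}_n)=X$, and define $h_x(n)$ as the least $k$ with $x\in St(V_{n,k},\mathcal{U}_n)$; the same guessing step and the selection $V_n:=V_{n,g(n)}$ finish the proof. I expect the only delicate point to be confirming that each $h_x$ is total---that is, that the covering-by-star equations force a valid index $k$ at every stage $n$---after which the $\cov(\mathcal{M})$ guessing principle does all the combinatorial work, a single coincidence $h_x(n)=g(n)$ sufficing to place $x$ inside the selected star.
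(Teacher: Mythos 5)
Your proof is correct and follows essentially the same route as the paper's: extract countable witnesses from the (strongly) star-Lindel\"{o}f property, encode each point by a function in $\mathbb{N}^\mathbb{N}$, apply the $\cov(\mathcal{M})$ guessing principle, and use a single coincidence $h_x(n)=g(n)$ to place $x$ in the selected star. The only differences are cosmetic (you take the least index and phrase membership as $x\in St(V_{n,k},\mathcal{U}_n)$ rather than $St(x,\mathcal{U}_n)\cap V_{n,k}\neq\emptyset$, which are equivalent, and you detail the strongly star case where the paper details the star case).
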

\begin{proof}
Let $X$ be a star-Lindel\"{o}f space of cardinality less than $\cov(\mathcal{M})$. Choose a sequence $(\mathcal{U}_n)$ of open covers of $X$. For each $n$ we can find a countable subset $\mathcal{V}_n$ of $\mathcal{U}_n$ such that $X=St(\cup\mathcal{V}_n,\mathcal{U}_n)$. Define $\mathcal{V}_n=\{U_m^{(n)} : m\in\mathbb{N}\}$ for each $n$. For each $x\in X$ choose a function $f_x\in\mathbb{N}^\mathbb{N}$ such that $St(x,\mathcal{U}_n)\cap U_{f_x(n)}^{(n)}\neq\emptyset$ for all $n$. Since $\{f_x : x\in X\}$ is of cardinality less than $\cov(\mathcal{M})$, there is a $g\in\mathbb{N}^\mathbb{N}$ such that $\{f_x : x\in X\}$ can be guessed by $g$ i.e. $\{n\in\mathbb{N} : f_x(n)=g(n)\}$ is an infinite set for all $x\in X$. For each $n$ define $U_n=U_{g(n)}^{(n)}$. We claim that the sequence $(U_n)$ witnesses for $(\mathcal{U}_n)$ that $X$ is star-Rothberger. Let $x\in X$. Choose a $n_0\in\mathbb{N}$ such that $f_x(n_0)=g(n_0)$. From the construction of $f_x$ we obtain a $U\in\mathcal{U}_{n_0}$ such that $x\in U$ and $U\cap U_{f_x(n_0)}^{(n_0)}\neq\emptyset$. It follows that $U\cap U_{n_0}\neq\emptyset$ and hence $x\in St(U_{n_0},\mathcal{U}_{n_0})$. Thus $\{St(U_n,\mathcal{U}_n) : n\in\mathbb{N}\}$ covers $X$ and $X$ is star-Rothberger. Proof for the strongly star-Lindel\"{o}f case can be  similarly obtained.
\end{proof}
For investigations similar to Theorem~\ref{TN10}, see \cite{rosafil}.

\begin{table}[h!]
\centering
  \begin{tabular}{|l|c|l|c|}
  \hline
  Space with property & with cardinality $<$ & Equivalent to & Source \\
  \hline
  star-Lindel\"{o}f & \multirow[c]{2}{*}{$\mathfrak{d}$} & star-Menger & \multirow[c]{2}{*}{\cite{SVM}}\\
   \cline{1-1}\cline{3-3}
  strongly star-Lindel\"{o}f & & strongly star-Menger & \\
   \hline
  star-Lindel\"{o}f &  \multirow[c]{2}{*}{$\mathfrak{b}$} & star-Hurewicz & \multirow[c]{2}{*}{\cite{SSSP}}\\
  \cline{1-1}\cline{3-3}
  strongly star-Lindel\"{o}f & & strongly star-Hurewicz & \\
   \hline
  star-Lindel\"{o}f &  \multirow[c]{2}{*}{$\cov(\mathcal{M})$} & star-Rothberger &\\
   \cline{1-1}\cline{3-4}
  strongly star-Lindel\"{o}f & & strongly star-Rothberger &\\
   \hline
\end{tabular}
\vskip0.1cm
\caption{Property under cardinality bounds}
%\label{}
\end{table}

\begin{Th}[{\!\cite[Theorem 3.5]{SSSP}}]
\label{TN8}
Let $X$ be a regular space of the form $Y\cup Z$ with $Y\cap Z=\emptyset$, where $Y$ is a closed discrete set and $Z$ is a $\sigma$-compact subset of $X$. If $X$ is strongly star-Lindel\"{o}f, then $|Y|<\mathfrak{d}$ if and only if $X$ is strongly star-Menger.
\end{Th}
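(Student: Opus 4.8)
The plan is to treat the statement as a biconditional conditioned on strong star-Lindel\"{o}f-ness and prove the two implications separately. The forward direction $|Y|<\mathfrak d\Rightarrow X$ strongly star-Menger should come out as a localization of the technique behind Theorem~\ref{TN7} (which cannot be applied verbatim, since $|X|$ may well exceed $\mathfrak d$): the $\sigma$-compact part $Z$ is disposed of by compactness, while the ``bad'' part $Y$ is handled by a non-dominating/guessing argument. The reverse direction, which I would prove contrapositively as $|Y|\ge\mathfrak d\Rightarrow X$ is not strongly star-Menger, is where the real work lies, and I expect the delicate point there to be that the chosen finite sets are allowed to sit anywhere in $X$.

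For $|Y|<\mathfrak d\Rightarrow$ strongly star-Menger, fix a sequence $(\mathcal U_n)$ of open covers of $X$ and split $\mathbb N=N_0\sqcup N_1$ into two infinite pieces, using $N_0$ to cover $Z$ and $N_1$ to cover $Y$. Writing $Z=\bigcup_k C_k$ with each $C_k$ compact and partitioning $N_0=\bigcup_k N_0^k$, for $n\in N_0^k$ I would read off from a finite subcover of $C_k$ a finite $F_n\subseteq X$ with $C_k\subseteq St(F_n,\mathcal U_n)$; this covers $Z$. For $N_1$, strong star-Lindel\"{o}f-ness yields countable sets $K_n=\{x_i^{(n)}:i\in\mathbb N\}$ with $St(K_n,\mathcal U_n)=X$; setting $f_y(n)$ to be the least $i$ with $x_i^{(n)}\in St(y,\mathcal U_n)$, the family $\{f_y:y\in Y\}$ has size $\le|Y|<\mathfrak d$, hence is not dominating, so there is $g$ with $f_y(n)\le g(n)$ for infinitely many $n\in N_1$, for every $y\in Y$. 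Taking $F_n=\{x_0^{(n)},\dots,x_{g(n)}^{(n)}\}$ for $n\in N_1$ then places each $y$ in $St(F_n,\mathcal U_n)$ for infinitely many $n$, so $Y$ is covered, and the two families of stars together cover $X=Y\cup Z$. I anticipate no serious obstacle in this half.

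For the contrapositive of the reverse direction, suppose $|Y|\ge\mathfrak d$. I would exploit two standard facts: that $\mathfrak d$ is also the least size of a family $E\subseteq\mathbb N^{\mathbb N}$ \emph{everywhere} dominating (obtained from an ordinary dominating family by closing under the countably many finite modifications of initial segments), and that $\mathfrak d=\mathfrak d\cdot\aleph_1$ since $\mathfrak d\ge\mathfrak b\ge\aleph_1$. Fix such an $E=\{e_\xi:\xi<\mathfrak d\}$ and an injective labeling $\{y_{\xi,\eta}:\xi<\mathfrak d,\ \eta<\omega_1\}$ of a subset $Y'\subseteq Y$ of size $\mathfrak d$, and put $d_{y_{\xi,\eta}}:=e_\xi$; the role of the extra $\omega_1$ copies is to secure uncountably many witnesses for each threshold. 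With $Z=\bigcup_m C_m$ ($C_m$ compact and increasing), for each $y\in Y'$ and each $m$ I would use Hausdorffness (coming from regularity) to separate $y$ from the compact set $C_{d_y(m)}$ and from $Y\setminus\{y\}$, obtaining an open $B_m(y)\ni y$ with $B_m(y)\cap C_{d_y(m)}=\emptyset$ and $B_m(y)\cap Y=\{y\}$. Then $\mathcal U_m:=\{B_m(y):y\in Y'\}\cup\{X\setminus Y'\}$ is an open cover of $X$ (note $Y'$ is closed, being a subset of the closed discrete set $Y$). The key computation is that for $y\in Y'$ the only member of $\mathcal U_m$ containing $y$ is $B_m(y)$, so $y\in St(F_m,\mathcal U_m)$ iff $B_m(y)\cap F_m\neq\emptyset$; consequently, if $y\notin F_m$ and $F_m\cap Z\subseteq C_{d_y(m)}$, then $y\notin St(F_m,\mathcal U_m)$. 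Now given any sequence $(F_m)$ of finite subsets of $X$, let $h(m)$ be the least $j$ with $F_m\cap Z\subseteq C_j$; everywhere-domination gives $\xi$ with $e_\xi\ge h$ pointwise, and since $\bigcup_m F_m$ is countable it cannot exhaust the uncountably many points $\{y_{\xi,\eta}:\eta<\omega_1\}$, so I may pick $y^*=y_{\xi,\eta}\notin\bigcup_m F_m$. For this $y^*$ we get $F_m\cap Z\subseteq C_{h(m)}\subseteq C_{d_{y^*}(m)}$ for every $m$, hence $y^*\notin St(F_m,\mathcal U_m)$ for all $m$, so $\{St(F_m,\mathcal U_m):m\in\mathbb N\}$ misses $y^*$. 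As $(F_m)$ was arbitrary, $X$ fails $SS_{fin}^*(\mathcal O,\mathcal O)$.

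The hard part will be exactly the freedom in choosing the $F_m$, which forces the two combinatorial refinements above. A point of $Y$ lying in some $F_m$ is starred for free, and an ordinary dominating family only yields eventual domination, leaving finitely many ``small'' indices $m$ at which $y^*$ could still be covered; passing to an everywhere-dominating family removes that escape, while inflating each function to $\omega_1$ copies is what guarantees a witness outside the countable set $\bigcup_m F_m$. Arranging both simultaneously, within a single fixed sequence $(\mathcal U_m)$ that must defeat all choices of $(F_m)$ at once, is the crux; regularity itself enters only lightly, through the point/compact-set separation used to manufacture the neighborhoods $B_m(y)$.
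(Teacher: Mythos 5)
The paper does not prove this statement at all: it is quoted verbatim from \cite[Theorem 3.5]{SSSP} as background, so there is no internal proof to compare against. Judged on its own, your argument is correct. The forward direction is the standard localization of Theorem~\ref{TN7}: compactness disposes of $Z$ along one block of indices, and on the other block the family $\{f_y : y\in Y\}$ of size $<\mathfrak d$ fails to be dominating, which is exactly what is needed to choose the finite sets $F_n=\{x_0^{(n)},\dots,x_{g(n)}^{(n)}\}$. The reverse direction is also sound: the covers $\mathcal U_m=\{B_m(y):y\in Y'\}\cup\{X\setminus Y'\}$ do have the property that $B_m(y)$ is the unique member containing $y\in Y'$, so $y\in St(F_m,\mathcal U_m)$ forces $F_m$ to meet $B_m(y)$, and your two refinements (replacing a dominating family by an everywhere-dominating one of the same size $\mathfrak d$, and fattening each $e_\xi$ into $\omega_1$ witnesses so that the countable set $\bigcup_m F_m$ cannot absorb them all) correctly close the two escape routes; this is essentially the same combinatorial core as the Bonanzinga--Matveev proof that $\Psi(\mathcal A)$ with $|\mathcal A|\ge\mathfrak d$ is not strongly star-Menger (Theorem~\ref{TN9}). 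Two cosmetic remarks: regularity is not really what you use --- since $C_{d_y(m)}\cup(Y\setminus\{y\})$ is closed and misses $y$, its complement already serves as $B_m(y)$; all you need is that compact sets are closed, i.e.\ the Hausdorff/$T_1$ conventions implicit in ``regular'' here. And in the forward direction the functions $f_y$ should be read as elements of $\mathbb N^{N_1}$, which is harmless since $N_1$ is infinite.
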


\begin{Th}[{\!\cite[Proposition 2]{SCPP}}]
\label{TN9}
The space $\Psi(\mathcal{A})$ is strongly star-Menger if and only if $|\mathcal{A}|<\mathfrak{d}$.
\end{Th}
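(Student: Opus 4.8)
The plan is to derive this biconditional directly from Theorem~\ref{TN8}, after checking that $\Psi(\mathcal{A})$ meets its hypotheses with $Y=\mathcal{A}$ and $Z=\mathbb{N}$. First I would record three structural facts. (i) $\Psi(\mathcal{A})$ is Tychonoff, hence regular. (ii) The decomposition $\Psi(\mathcal{A})=\mathcal{A}\cup\mathbb{N}$ is of the required type: the points of $\mathbb{N}$ are isolated, so as a subspace $\mathbb{N}$ is countable discrete and $\mathbb{N}=\bigcup_{n}\{n\}$ exhibits it as $\sigma$-compact; and $\mathcal{A}$ is closed discrete, since each basic neighbourhood $\{A\}\cup(A\setminus F)$ of a point $A\in\mathcal{A}$ meets $\mathcal{A}$ only in $A$, while $\mathbb{N}=\Psi(\mathcal{A})\setminus\mathcal{A}$ is open. (iii) $\Psi(\mathcal{A})$ is strongly star-Lindel\"{o}f: for any open cover $\mathcal{U}$ take the countable set $K=\mathbb{N}$; every $A\in\mathcal{A}$ lies in some $U\in\mathcal{U}$, and $U$ contains a basic neighbourhood of $A$, which meets the infinite set $A\subseteq\mathbb{N}$, so $A\in St(\mathbb{N},\mathcal{U})$; thus $St(\mathbb{N},\mathcal{U})=\Psi(\mathcal{A})$.

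With (i)--(iii) in hand, Theorem~\ref{TN8} applies verbatim with $Y=\mathcal{A}$ and $Z=\mathbb{N}$, and yields exactly that $\Psi(\mathcal{A})$ is strongly star-Menger if and only if $|\mathcal{A}|=|Y|<\mathfrak{d}$, which is the assertion. Here I would note that $|\Psi(\mathcal{A})|=\max(|\mathcal{A}|,\aleph_0)$, and since $\mathfrak{d}\geq\aleph_1$ the condition $|\mathcal{A}|<\mathfrak{d}$ is unaffected by the countable part $\mathbb{N}$.

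I expect essentially no obstacle once this reduction is set up; the only points needing care are the verifications that $\mathcal{A}$ is genuinely closed and discrete and that $\mathbb{N}$ is $\sigma$-compact, both routine from the definition of the $\Psi$-space. For a self-contained argument not invoking Theorem~\ref{TN8}, the easy implication ($|\mathcal{A}|<\mathfrak{d}\Rightarrow$ strongly star-Menger) would instead follow from Theorem~\ref{TN7}, since (iii) shows $\Psi(\mathcal{A})$ is strongly star-Lindel\"{o}f and $|\Psi(\mathcal{A})|<\mathfrak{d}$. The genuinely hard direction is the converse: assuming $|\mathcal{A}|\geq\mathfrak{d}$ one must produce a sequence of open covers defeating every choice of finite sets. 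The natural strategy there is to index a subfamily $\{A_\alpha:\alpha<\mathfrak{d}\}\subseteq\mathcal{A}$ and design covers whose stars catch $A_\alpha$ at stage $n$ only when the chosen finite set reaches deep enough into $A_\alpha$, so that covering all of $\mathcal{A}$ by these stars would force a family of finite sets to simultaneously out-grow the enumerating functions of a dominating family --- an impossibility. This domination obstruction is the crux, and it is precisely what Theorem~\ref{TN8} packages, which is why deriving Theorem~\ref{TN9} from it is the economical route.
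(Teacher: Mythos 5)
Your reduction is correct, and all three verifications hold: $\Psi(\mathcal{A})$ is Tychonoff hence regular; $\mathcal{A}$ is closed and discrete (each basic neighbourhood $\{A\}\cup(A\setminus F)$ isolates $A$ within $\mathcal{A}$, and $\mathbb{N}$ is open); $\mathbb{N}$ is a countable set of isolated points, hence $\sigma$-compact; and $St(\mathbb{N},\mathcal{U})=\Psi(\mathcal{A})$ for every open cover $\mathcal{U}$ because every basic neighbourhood of a point $A\in\mathcal{A}$ meets the infinite set $A\subseteq\mathbb{N}$. So Theorem~\ref{TN8} applies with $Y=\mathcal{A}$, $Z=\mathbb{N}$ and yields the biconditional exactly. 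Note, however, that the paper does not prove Theorem~\ref{TN9} at all: it is imported verbatim as \cite[Proposition 2]{SCPP}, just as Theorem~\ref{TN8} is imported from \cite{SSSP}. Your argument is therefore not a comparison against an in-paper proof but a (valid) derivation of one cited result from another, more general one; historically the general theorem postdates the $\Psi$-space proposition, so this is the ``special case of the general theorem'' route rather than the original one. Your closing sketch of a self-contained argument is also sound: the forward direction is immediate from Theorem~\ref{TN7} together with your fact (iii), and the converse is indeed where the work lies --- one associates to each $A\in\mathcal{A}$ the function $n\mapsto$ (least bound making the chosen finite sets miss $A$'s tail) and shows that a successful selection would produce a dominating family of size $|\mathcal{A}|$ indexed by the finite selections, contradicting $|\mathcal{A}|\geq\mathfrak{d}$ only after the combinatorics is set up carefully; that part remains a sketch in your proposal, which is why leaning on Theorem~\ref{TN8} is the right economy here.
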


\begin{Th}[{\!\cite[Theorem 3.12]{SSSP}}]
\label{TN53}
Let $X$ be a regular space of the form $Y\cup Z$ with $Y\cap Z=\emptyset$, where $Y$ is a closed discrete set and $Z$ is a $\sigma$-compact subset of $X$. If $X$ is strongly star-Lindel\"{o}f, then $|Y|<\mathfrak{b}$ if and only if $X$ is strongly star-Hurewicz.
\end{Th}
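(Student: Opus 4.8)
The plan is to establish the two implications separately, reading strongly star-Hurewicz as the selection principle $SS_{fin}^*(\mathcal{O},\Gamma)$: for every sequence $(\mathcal{U}_n)$ of open covers there are finite sets $F_n\subseteq X$ with $\{St(F_n,\mathcal{U}_n):n\in\mathbb{N}\}$ a $\gamma$-cover. The guiding heuristic is that, just as the Menger analogue (Theorem~\ref{TN8}) is governed by $\mathfrak{d}$ through the dominating/non-dominating dichotomy, the Hurewicz version should be governed by $\mathfrak{b}$ through the bounded/unbounded dichotomy, precisely because a $\gamma$-cover demands that \emph{almost all} stars capture each point. Throughout I write $Z=\bigcup_m Z_m$ with $Z_m$ compact and $Z_m\subseteq Z_{m+1}$, and use repeatedly that $X\setminus Y=Z$, since $Y$ is closed with $Y\cap Z=\emptyset$.

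\textbf{The direction $|Y|<\mathfrak{b}\Rightarrow$ strongly star-Hurewicz.} Given $(\mathcal{U}_n)$, strong star-Lindel\"ofness yields for each $n$ a countable $C_n=\{c_k^{(n)}:k\in\mathbb{N}\}$ with $St(C_n,\mathcal{U}_n)=X$. For $y\in Y$ put $f_y(n)$ equal to the least $k$ with $y\in St(\{c_0^{(n)},\dots,c_k^{(n)}\},\mathcal{U}_n)$. Since $|\{f_y:y\in Y\}|\le|Y|<\mathfrak{b}$, this family is bounded, say $f_y\le^* g$ for all $y$. Setting $F_n^Y=\{c_0^{(n)},\dots,c_{g(n)}^{(n)}\}$, each $y$ lies in $St(F_n^Y,\mathcal{U}_n)$ for all but finitely many $n$. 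For the $\sigma$-compact part, for each $n$ extract a finite subcover of the compact set $Z_n$ and a finite $F_n^Z$ with $Z_n\subseteq St(F_n^Z,\mathcal{U}_n)$; then any $z\in Z_m$ satisfies $z\in Z_m\subseteq Z_n\subseteq St(F_n^Z,\mathcal{U}_n)$ for every $n\ge m$. Thus $F_n=F_n^Y\cup F_n^Z$ works, and $\{St(F_n,\mathcal{U}_n):n\in\mathbb{N}\}$ is a $\gamma$-cover of $Y\cup Z=X$.

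\textbf{The direction strongly star-Hurewicz $\Rightarrow|Y|<\mathfrak{b}$ (contrapositive).} Assume $|Y|\ge\mathfrak{b}$. Choose $Y_0\subseteq Y$ with $|Y_0|=\mathfrak{b}$ together with an injective assignment $y\mapsto f_y$ whose range is an unbounded family of strictly increasing functions. Fix open $V_y$ with $V_y\cap Y=\{y\}$, and for $y\in Y_0$, $n\in\mathbb{N}$ set $G_{y,n}=V_y\setminus Z_{f_y(n)}$; this is open because $Z_{f_y(n)}$ is compact, hence closed by regularity, and $y\in G_{y,n}$ since $y\notin Z$. Let $\mathcal{U}_n=\{Z\}\cup\{G_{y,n}:y\in Y_0\}\cup\{V_y:y\in Y\setminus Y_0\}$, an open cover of $X$. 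The crucial observation is that for $y\in Y_0$ the \emph{only} member of $\mathcal{U}_n$ containing $y$ is $G_{y,n}$, so $y\in St(F_n,\mathcal{U}_n)$ iff $G_{y,n}\cap F_n\neq\emptyset$. Since $\bigcup_n F_n$ is countable while $|Y_0|=\mathfrak{b}$ is uncountable, $Y_1=Y_0\setminus\bigcup_n F_n$ still has size $\mathfrak{b}$; for $y\in Y_1$ any witness $p\in G_{y,n}\cap F_n$ satisfies $p\neq y$, hence $p\in V_y\setminus Y\subseteq Z$, while $p\notin Z_{f_y(n)}$. Letting $g(n)$ be a level with $F_n\cap Z\subseteq Z_{g(n)}$, this forces $f_y(n)<g(n)$. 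Consequently, if $\{St(F_n,\mathcal{U}_n):n\in\mathbb{N}\}$ were a $\gamma$-cover, then $f_y\le^* g$ for every $y\in Y_1$, so $\{f_y:y\in Y_1\}$ would be bounded; since $(\mathcal{U}_n)$ and $(F_n)$ were arbitrary, $X$ would fail to be strongly star-Hurewicz.

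The main obstacle is the very last step: guaranteeing that $\{f_y:y\in Y_1\}$ remains unbounded after deleting the countable set $\bigcup_n F_n$. I would settle this by recalling that every countable subset of $\mathbb{N}^\mathbb{N}$ is bounded: if $\{f_y:y\in Y_1\}$ were bounded by $g$ and the countable set $\{f_y:y\in Y_0\setminus Y_1\}$ were bounded by $g'$, then the whole family $\{f_y:y\in Y_0\}$ would be bounded by the pointwise maximum of $g$ and $g'$, contradicting its unboundedness. The second delicate point is the design of $\mathcal{U}_n$ so that capturing $y\in Y_0$ by a star forces $F_n$ to reach past the compact level $Z_{f_y(n)}$; the sets $G_{y,n}=V_y\setminus Z_{f_y(n)}$ accomplish exactly this by excising more and more of the $\sigma$-compact part as $f_y(n)$ grows, and this is where regularity (to make $Z_{f_y(n)}$ closed) and the identity $X\setminus Y=Z$ are essential.
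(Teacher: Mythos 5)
The paper states this theorem without proof, citing \cite[Theorem 3.12]{SSSP}, so there is no in-paper argument to compare against; your proof is correct and follows the standard route of that source. Both directions check out: the forward implication correctly bounds the family of ``capture times'' $f_y$ using $|Y|<\mathfrak{b}$ and absorbs $Z$ via compactness of the increasing pieces $Z_m$, and the converse correctly exploits that $G_{y,n}=V_y\setminus Z_{f_y(n)}$ is the unique member of $\mathcal{U}_n$ containing $y\in Y_0$ (so a successful $(F_n)$ would $\leq^*$-dominate an unbounded family), with the two delicate points you flag --- closedness of $Z_{f_y(n)}$ from regularity (in the $T_1$ sense of \cite{Engelking}) and unboundedness surviving the removal of the countable set $\bigcup_n F_n$ --- both handled properly.
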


\begin{Th}[{\!\cite[Proposition 3]{SCPP}}]
\label{TN55}
The space $\Psi(\mathcal{A})$ is strongly star-Hurewicz if and only if $|\mathcal{A}|<\mathfrak{b}$.
\end{Th}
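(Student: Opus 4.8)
The quickest route is to recognize that $\Psi(\mathcal{A})$ falls under the scope of Theorem~\ref{TN53}. I would write $\Psi(\mathcal{A})=Y\cup Z$ with $Y=\mathcal{A}$ and $Z=\mathbb{N}$, observe that this is a disjoint decomposition in which $Y$ is closed and discrete (its complement $\mathbb{N}$ is open, and each basic neighbourhood $\{A\}\cup(A\setminus F)$ of a point $A\in\mathcal{A}$ meets $\mathcal{A}$ only in $A$) and $Z=\mathbb{N}$ is $\sigma$-compact (a countable discrete space). Since $\Psi(\mathcal{A})$ is Tychonoff, hence regular, it remains only to check that it is strongly star-Lindel\"of: because every basic neighbourhood of a point of $\mathcal{A}$ contains a cofinite subset of that $A\subseteq\mathbb{N}$, every nonempty open set meets $\mathbb{N}$, so $St(\mathbb{N},\mathcal{U})=X$ for every open cover $\mathcal{U}$, and the countable set $\mathbb{N}$ witnesses strong star-Lindel\"ofness. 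Applying Theorem~\ref{TN53} with this $Y$ and $Z$ then yields $|\mathcal{A}|=|Y|<\mathfrak{b}$ if and only if $\Psi(\mathcal{A})$ is strongly star-Hurewicz. (The implication $|\mathcal{A}|<\mathfrak{b}\Rightarrow$ strongly star-Hurewicz also drops out of Theorem~\ref{TN54}, since then $|\Psi(\mathcal{A})|<\mathfrak{b}$.)

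For a self-contained argument I would instead pass through the combinatorial reformulation tailored to $\Psi$-spaces, in the spirit of Theorems~\ref{TN57}--\ref{TN59} but adapted to the strong star setting. Given an assignment $A\mapsto f_A\in\mathbb{N}^{\mathbb{N}}$, form for each $n$ the open cover $\mathcal{U}_n$ consisting of all singletons $\{m\}$, $m\in\mathbb{N}$, together with the sets $\{A\}\cup(A\setminus f_A(n))$, $A\in\mathcal{A}$. The first step is a reduction: in computing $St(F_n,\mathcal{U}_n)$ a point of $\mathcal{A}$ lying in $F_n$ can only star onto itself and its own trace in $\mathbb{N}$, so I may replace such a point by a single natural from its trace and assume each witnessing finite set $F_n$ is contained in $\mathbb{N}$; then $A\in St(F_n,\mathcal{U}_n)$ holds precisely when $F_n\cap(A\setminus f_A(n))\neq\emptyset$. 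Thus strong star-Hurewiczness of $\Psi(\mathcal{A})$ becomes equivalent to: for every assignment $A\mapsto f_A$ there are finite sets $F_n\subseteq\mathbb{N}$ with $F_n\cap(A\setminus f_A(n))\neq\emptyset$ for all but finitely many $n$, for each $A\in\mathcal{A}$.

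The equivalence of this statement with $|\mathcal{A}|<\mathfrak{b}$ is the combinatorial heart. For the easy direction, given $|\mathcal{A}|<\mathfrak{b}$ and an assignment $A\mapsto f_A$, I set $g_A(n)=\min(A\setminus f_A(n))$; the family $\{g_A:A\in\mathcal{A}\}$ has size less than $\mathfrak{b}$, hence is bounded by some increasing $h$, and taking $F_n=\{0,1,\dots,h(n)\}$ works, since eventually $g_A(n)\le h(n)$ places the witness $g_A(n)\in A\setminus f_A(n)$ inside $F_n$ (the isolated points being captured automatically as $h(n)\to\infty$). The reverse direction is the step I expect to be the main obstacle, although it is manageable: assuming $|\mathcal{A}|\ge\mathfrak{b}$, I fix an unbounded family $\{h_\xi:\xi<\mathfrak{b}\}$ and distinct $\{A_\xi:\xi<\mathfrak{b}\}\subseteq\mathcal{A}$, and choose $f_{A_\xi}(n)$ large enough that every element of $A_\xi$ below $h_\xi(n)$ is $\le f_{A_\xi}(n)$, forcing $\min(A_\xi\setminus f_{A_\xi}(n))\ge h_\xi(n)$ (with $f_A$ arbitrary for the remaining $A$). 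If some $(F_n)$ witnessed the property, then $h(n)=\max F_n$ would satisfy $h_\xi\le^* h$ for every $\xi$, contradicting unboundedness. The delicate point here is precisely that one need \emph{not} control the enumeration functions of the chosen $A_\xi$: the unbounded family is supplied externally and the $f_{A_\xi}$ are tuned to it, which is exactly what lets $\mathfrak{b}$ enter cleanly.
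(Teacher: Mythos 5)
The paper does not prove this statement at all: Theorem~\ref{TN55} is quoted verbatim from Bonanzinga--Matveev \cite[Proposition 3]{SCPP}, so there is no in-paper argument to compare against. Judged on its own, your proposal is correct, and in fact you give two valid proofs. The first is essentially immediate from results already quoted in the paper: the decomposition $\Psi(\mathcal{A})=\mathcal{A}\cup\mathbb{N}$ with $\mathcal{A}$ closed discrete and $\mathbb{N}$ a dense $\sigma$-compact open subspace does satisfy the hypotheses of Theorem~\ref{TN53} (density of $\mathbb{N}$ gives $St(\mathbb{N},\mathcal{U})=X$ for every open cover, hence strong star-Lindel\"ofness), and the conclusion is exactly the statement. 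The second, self-contained argument is the combinatorial one in the spirit of Theorems~\ref{TN57}--\ref{TN59}: the reduction to finite sets $F_n\subseteq\mathbb{N}$ is justified correctly (a point $A\in F_n$ stars only into $\{A\}\cup(A\setminus f_A(n))$, so replacing it by an element of its trace can only enlarge the star), the bounding function $h$ for $\{g_A:A\in\mathcal{A}\}$ with $g_A(n)=\min(A\setminus f_A(n))$ handles $|\mathcal{A}|<\mathfrak{b}$, and the converse correctly tunes $f_{A_\xi}$ to an externally chosen unbounded family so that $\max F_n$ would dominate it. Both routes are sound; the first buys brevity at the cost of invoking \cite{SSSP}, the second matches the original argument of \cite{SCPP} in spirit and is what you would want if the result had to be proved from scratch.
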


\begin{Def}
\label{DN2}
A subspace $Y$ of $X$ is said to be strongly star-Rothberger in $X$ if for every sequence $(\mathcal{U}_n)$ of open covers of $X$ there exists a sequence $(x_n)$ of elements of $X$ such that $\{St(x_n,\mathcal{U}_n) : n\in\mathbb{N}\}$ covers $Y$.
\end{Def}

It is immediate that every strongly star-Rothberger subspace of $X$ is strongly star-Rothberger in $X$, but the converse is not true (see Lemma~\ref{LN11}).
\begin{Lemma}
\label{LN11}
Assume $\omega_1<\cov(\mathcal{M})$. If $\Psi(\mathcal{A})$ is the Isbell-Mr\'{o}wka space with $|\mathcal{A}|=\omega_1$, then $\mathcal{A}$ is strongly star-Rothberger in $\Psi(\mathcal{A})$ but not strongly star-Rothberger.
\end{Lemma}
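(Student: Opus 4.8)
The plan is to verify the two assertions separately. The positive part—that $\mathcal{A}$ is strongly star-Rothberger \emph{in} $\Psi(\mathcal{A})$ in the sense of Definition~\ref{DN2}—is where the hypothesis $\omega_1<\cov(\mathcal{M})$ enters, and I would prove it directly via a guessing argument rather than deduce it from the stronger fact (Theorem~\ref{TN56}) that $\Psi(\mathcal{A})$ itself is strongly star-Rothberger, since here only $\mathcal{A}$ needs to be covered and the isolated points of $\mathbb{N}$ may be ignored. The negative part—that $\mathcal{A}$ with its subspace topology fails to be strongly star-Rothberger—requires no set-theoretic assumption and follows from discreteness.

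For the positive part, fix a sequence $(\mathcal{U}_n)$ of open covers of $\Psi(\mathcal{A})$. For each $A\in\mathcal{A}$ and each $n$, choose $U_{A,n}\in\mathcal{U}_n$ with $A\in U_{A,n}$; being open, $U_{A,n}$ contains a basic neighbourhood $\{A\}\cup(A\setminus m_{A,n})$ of $A$ for some $m_{A,n}\in\mathbb{N}$, where $A\setminus m_{A,n}=\{k\in A : k\geq m_{A,n}\}$. Since $A$ is infinite, $f_A(n):=\min\{k\in A : k\geq m_{A,n}\}$ is well defined and lies in $A\setminus m_{A,n}\subseteq U_{A,n}\cap\mathbb{N}$. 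This yields a family $\{f_A : A\in\mathcal{A}\}\subseteq\mathbb{N}^\mathbb{N}$ of cardinality at most $\omega_1$. Because $\omega_1<\cov(\mathcal{M})$, the guessing reformulation of $\cov(\mathcal{M})$ recorded in the preliminaries supplies a $g\in\mathbb{N}^\mathbb{N}$ that guesses this family, i.e.\ $\{n : g(n)=f_A(n)\}$ is infinite for every $A$. Set $x_n=g(n)\in\mathbb{N}\subseteq\Psi(\mathcal{A})$. Given $A\in\mathcal{A}$, pick any $n$ with $g(n)=f_A(n)$; then $x_n=f_A(n)$ and $A$ both belong to $U_{A,n}\in\mathcal{U}_n$, so $A\in St(x_n,\mathcal{U}_n)$. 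Hence $\{St(x_n,\mathcal{U}_n) : n\in\mathbb{N}\}$ covers $\mathcal{A}$, proving the claim.

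For the negative part, note that every basic neighbourhood $\{A\}\cup(A\setminus F)$ of a point $A$ meets $\mathcal{A}$ only in $\{A\}$, so $\mathcal{A}$ is a discrete subspace of cardinality $\omega_1$. Taking $\mathcal{U}_n=\{\{A\} : A\in\mathcal{A}\}$ for every $n$, any sequence $(A_n)$ of points of $\mathcal{A}$ gives $St(A_n,\mathcal{U}_n)=\{A_n\}$, so $\bigcup_n St(A_n,\mathcal{U}_n)$ is countable and cannot exhaust the uncountable set $\mathcal{A}$. Thus $\mathcal{A}$ is not strongly star-Rothberger.

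The one genuinely delicate step—the point I would flag as the crux—is the translation of the covering requirement into a single-coordinate guessing condition: matching $g(n)=f_A(n)$ at \emph{one} index $n$ must already force $A\in St(x_n,\mathcal{U}_n)$, and this works precisely because $f_A(n)$ was chosen \emph{inside} the witnessing set $U_{A,n}$ that contains $A$. The remaining bookkeeping (well-definedness of $f_A$, validity of $x_n$ as a point of $\Psi(\mathcal{A})$, and the cardinality bound $\omega_1<\cov(\mathcal{M})$) is routine.
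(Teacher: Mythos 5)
Your proof is correct and follows essentially the same route as the paper: both arguments encode, for each $A\in\mathcal{A}$, a natural number $f_A(n)$ lying inside a member of $\mathcal{U}_n$ containing $A$, invoke $\omega_1<\cov(\mathcal{M})$ to guess the family $\{f_A\}$ by a single $g$, and take $x_n=g(n)$; the only cosmetic difference is that the paper first normalizes the covers to consist of basic sets, whereas you extract basic neighbourhoods inside arbitrary cover elements. Your explicit singleton-cover argument for the negative part is just an unpacking of the paper's one-line appeal to discreteness.
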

\begin{proof}
Since $\mathcal{A}$ is a discrete subspace of $\Psi(\mathcal{A})$ with $|\mathcal{A}|=\omega_1$, $\mathcal{A}$ is not strongly star-Rothberger.

Next we show that $\mathcal{A}$ is strongly star-Rothberger in $\Psi(\mathcal{A})$. Choose a sequence $(\mathcal{U}_n)$ of open covers of $\Psi(\mathcal{A})$. Without loss of generality assume that for each $n$ elements of $\mathcal{U}_n$ are basic sets: $$\mathcal{U}_n=\{U_n(A) : A\in\mathcal{A}\}\cup\{\{n\} : n\in\mathbb{N}\setminus\cup_{A\in\mathcal{A}} U_n(A)\}.$$ We can further assume that to each $A\in\mathcal{A}$ only one neighbourhood $U_n(A)\in\mathcal{U}_n$ is assigned. For each $A\in\mathcal{A}$ define a function $f_A\in\mathbb{N}^\mathbb{N}$ by $f_A(n)=\min\{m\in\mathbb{N} : m\in U_n(A)\}$. Since the collection $\{f_A : A\in\mathcal{A}\}$ is of cardinality less than $\cov(\mathcal{M})$, there exists a $g\in\mathbb{N}^\mathbb{N}$ such that the set $\{n\in\mathbb{N} : f_A(n)=g(n)\}$ is infinite for each $A\in\mathcal{A}$. Let $(x_n)$ be a sequence of elements of $\Psi(\mathcal{A})$ given by $x_n=g(n)$. We claim that the sequence $(x_n)$ witnesses for $(\mathcal{U}_n)$ that $\mathcal{A}$ is strongly star-Rothberger in $\Psi(\mathcal{A})$. Choose a $A\in\mathcal{A}$ and a $n_A\in\mathbb{N}$ such that $f_A(n_A)=g(n_A)$. From the construction of $f_A$ it follows that $x_{n_A}\in U_{n_A}(A)$ as $f_A(n_A)\in U_{n_A}(A)$ and $f_A(n_A)=g(n_A)$. Consequently $A\in St(x_{n_A},\mathcal{U}_{n_A})$ since $U_{n_A}\in\mathcal{U}_{n_A}$ with $A\in U_{n_A}$. Thus $\{St(x_n,\mathcal{U}_n) : n\in\mathbb{N}\}$ covers $\mathcal{A}$ and hence $\mathcal{A}$ is strongly star-Rothberger in $\Psi(\mathcal{A})$.
\end{proof}

\begin{Lemma}
\label{LN12}
If $X=\cup_{k\in\mathbb{N}}X_k$ with each $X_k$ is strongly star-Rothberger in $X$, then $X$ is strongly star-Rothberger.
\end{Lemma}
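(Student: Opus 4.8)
The plan is to use the standard partition-of-$\mathbb{N}$ technique that underlies countable-union theorems for Rothberger-type selection principles, exploiting that Definition~\ref{DN2} permits the selected points to come from all of $X$ rather than from the individual pieces $X_k$.

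First I would fix a partition $\{N_k : k\in\mathbb{N}\}$ of $\mathbb{N}$ into pairwise disjoint infinite subsets. Given an arbitrary sequence $(\mathcal{U}_n)$ of open covers of $X$, I would, for each fixed $k$, regard $(\mathcal{U}_n : n\in N_k)$ as a sequence of open covers of $X$ and apply the hypothesis that $X_k$ is strongly star-Rothberger \emph{in} $X$. This yields, for each $k$, a choice of points $(x_n : n\in N_k)$ of $X$ such that $\{St(x_n,\mathcal{U}_n) : n\in N_k\}$ covers $X_k$. Because the sets $N_k$ are pairwise disjoint and exhaust $\mathbb{N}$, these partial choices splice together unambiguously into a single sequence $(x_n)_{n\in\mathbb{N}}$: each $n$ lies in exactly one $N_k$, and $x_n$ is the point associated to that block.

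Next I would verify that $\{St(x_n,\mathcal{U}_n) : n\in\mathbb{N}\}$ covers $X$. Let $x\in X$. Since $X=\bigcup_{k\in\mathbb{N}}X_k$, there is a $k$ with $x\in X_k$, and by the choice made on the block $N_k$ there is some $n\in N_k$ with $x\in St(x_n,\mathcal{U}_n)$. As this star belongs to the full family indexed by $\mathbb{N}$, the point $x$ is covered. Hence $(x_n)$ witnesses the selection hypothesis $SS_1^*(\mathcal{O},\mathcal{O})$ for $(\mathcal{U}_n)$, and $X$ is strongly star-Rothberger.

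I expect no genuine obstacle here; the argument is essentially bookkeeping. The only point deserving care is that the spliced points $x_n$ need not lie in $X_k$ for the corresponding $k$, but this causes no difficulty precisely because Definition~\ref{DN2} draws the selected points from $X$. This is exactly why the lemma is stated with the relativized notion ``strongly star-Rothberger in $X$'' rather than with the absolute property, and recognizing that distinction is the conceptual crux of the proof.
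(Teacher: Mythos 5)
Your argument is correct and is exactly the standard partition-of-$\mathbb{N}$ argument one would supply here; the paper itself states Lemma~\ref{LN12} without proof, and this is the intended argument (it is the same device used in the paper's proof of Theorem~\ref{TN28}). Your closing remark correctly identifies the crux: the relativized notion of Definition~\ref{DN2} lets the selected points range over all of $X$, which is what makes the spliced sequence a legitimate witness for $SS_1^*(\mathcal{O},\mathcal{O})$ on the whole union.
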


We give an alternative proof of the next result.
\begin{Th}[{\!\cite[Proposition 4]{SCPP}}]
\label{TN56}
If $|\mathcal{A}|<\cov(\mathcal{M})$, then $\Psi(\mathcal{A})$ is strongly star-Rothberger.
\end{Th}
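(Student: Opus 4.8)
The plan is to deduce the result from the decomposition $\Psi(\mathcal{A})=\mathcal{A}\cup\mathbb{N}$ together with Lemmas~\ref{LN11} and~\ref{LN12}. The guiding idea is that \emph{both} pieces of this decomposition are strongly star-Rothberger \emph{in} $\Psi(\mathcal{A})$ in the sense of Definition~\ref{DN2}, and that the gluing Lemma~\ref{LN12} then upgrades this local information to the strongly star-Rothberger property of the whole space.

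First I would observe that $\mathbb{N}$ is strongly star-Rothberger in $\Psi(\mathcal{A})$; this is immediate from countability. Enumerating $\mathbb{N}=\{m_n : n\in\mathbb{N}\}$, given any sequence $(\mathcal{U}_n)$ of open covers of $\Psi(\mathcal{A})$ one simply takes $x_n=m_n$, and since $m_n\in St(x_n,\mathcal{U}_n)$ for each $n$, the family $\{St(x_n,\mathcal{U}_n):n\in\mathbb{N}\}$ covers $\mathbb{N}$. (More generally, every countable subset of a space is strongly star-Rothberger in it.)

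Next, and this is the substantive step, I would show that $\mathcal{A}$ is strongly star-Rothberger in $\Psi(\mathcal{A})$. This is precisely the positive half of Lemma~\ref{LN11}, whose proof uses the hypothesis on $\mathcal{A}$ only through the guessing inequality $|\{f_A:A\in\mathcal{A}\}|\le|\mathcal{A}|<\cov(\mathcal{M})$; since we assume $|\mathcal{A}|<\cov(\mathcal{M})$ here, that argument transfers verbatim from the case $|\mathcal{A}|=\omega_1$ to arbitrary $|\mathcal{A}|<\cov(\mathcal{M})$. Concretely, after reducing to basic covers $\mathcal{U}_n=\{U_n(A):A\in\mathcal{A}\}\cup\{\{k\}:k\in\mathbb{N}\setminus\cup_{A}U_n(A)\}$ with one neighbourhood assigned to each $A$, one sets $f_A(n)=\min\{k:k\in U_n(A)\}$, guesses the family $\{f_A:A\in\mathcal{A}\}$ by a single $g\in\mathbb{N}^\mathbb{N}$, and takes $x_n=g(n)$; for each $A$ one then picks $n_A$ with $f_A(n_A)=g(n_A)$, whence $x_{n_A}\in U_{n_A}(A)$ and $A\in St(x_{n_A},\mathcal{U}_{n_A})$, so $\{St(x_n,\mathcal{U}_n):n\in\mathbb{N}\}$ covers $\mathcal{A}$.

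Finally I would combine the two observations. Writing $\Psi(\mathcal{A})=\mathcal{A}\cup\mathbb{N}$ as a (countable) union of subspaces each of which is strongly star-Rothberger in $\Psi(\mathcal{A})$, Lemma~\ref{LN12} yields that $\Psi(\mathcal{A})$ is strongly star-Rothberger. The only real work lies in the guessing argument of the middle step, which is inherited from Lemma~\ref{LN11}; the remaining parts are bookkeeping, so I expect no serious obstacle beyond observing that the cardinality hypothesis of Lemma~\ref{LN11} is used solely to enable the guess and hence remains available under the weaker assumption $|\mathcal{A}|<\cov(\mathcal{M})$.
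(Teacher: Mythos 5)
Your proposal is correct and follows essentially the same route as the paper: decompose $\Psi(\mathcal{A})=\mathcal{A}\cup\mathbb{N}$, note that $\mathbb{N}$ is strongly star-Rothberger in $\Psi(\mathcal{A})$ by countability, establish that $\mathcal{A}$ is strongly star-Rothberger in $\Psi(\mathcal{A})$ via the guessing argument of Lemma~\ref{LN11}, and glue with Lemma~\ref{LN12}. Your explicit remark that the hypothesis of Lemma~\ref{LN11} enters only through $|\{f_A:A\in\mathcal{A}\}|<\cov(\mathcal{M})$, so the argument transfers from $|\mathcal{A}|=\omega_1$ to arbitrary $|\mathcal{A}|<\cov(\mathcal{M})$, is a point the paper leaves implicit and is worth making.
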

\begin{proof}
By Lemma~\ref{LN11}, $\mathcal{A}$ is strongly star-Rothberger in $\Psi(A)$. Also $\mathbb{N}$ is strongly star-Rothberger in $\Psi(\mathcal{A})$ since $\mathbb{N}$ is strongly star-Rothberger. Thus $\Psi(\mathcal{A})$ is strongly star-Rothberger by Lemma~\ref{LN12}.
\end{proof}

\begin{Th}[{\!\cite[Proposition 2.3]{rosa22}}]
\label{TN18}
If a Lindel\"{o}f space $X$ is union of less than $\mathfrak{d}$ star-Hurewicz spaces, then $X$ is star-Menger.
\end{Th}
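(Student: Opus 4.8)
The plan is to combine the selections coming from the star-Hurewicz property of the pieces through the standard non-dominating argument for $\mathfrak{d}$, using Lindel\"{o}fness only to pass to countable covers. Write $X=\bigcup_{\alpha<\kappa}X_\alpha$ with $\kappa<\mathfrak{d}$ and each $X_\alpha$ star-Hurewicz, and let $(\mathcal{U}_n)$ be a sequence of open covers of $X$ against which we must verify $S_{fin}^*(\mathcal{O},\mathcal{O})$. First I would replace each $\mathcal{U}_n$ by a countable subcover (possible since $X$ is Lindel\"{o}f) and enumerate it as $\mathcal{U}_n=\{U_n^m : m\in\mathbb{N}\}$; since passing to a subcover only shrinks stars, a star-Menger selection against the countable subcovers is automatically one against the original $\mathcal{U}_n$. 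For each $n$ and $j$ set $\mathcal{W}_n^j=\{U_n^0,\dots,U_n^j\}$, a finite subfamily of $\mathcal{U}_n$, so that $j\mapsto\cup\mathcal{W}_n^j$ is increasing with union $X$.

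Next, for each $\alpha<\kappa$ the family $\mathcal{U}_n$ is an open cover of the subspace $X_\alpha$, and I would apply the star-Hurewicz property of $X_\alpha$ to $(\mathcal{U}_n)$ to obtain finite sets $\mathcal{V}_n^\alpha\subseteq\mathcal{U}_n$ such that $\{St(\cup\mathcal{V}_n^\alpha,\mathcal{U}_n) : n\in\mathbb{N}\}$ is a $\gamma$-cover of $X_\alpha$; here one records that the star taken inside $X_\alpha$ is contained in the star taken in $X$, so that each $x\in X_\alpha$ lies in $St(\cup\mathcal{V}_n^\alpha,\mathcal{U}_n)$ (computed in $X$) for all but finitely many $n$. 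Encoding the size of these selections as $f_\alpha\in\mathbb{N}^\mathbb{N}$ by $f_\alpha(n)=\max\{m : U_n^m\in\mathcal{V}_n^\alpha\}$, we get $\cup\mathcal{V}_n^\alpha\subseteq\cup\mathcal{W}_n^{f_\alpha(n)}$ and hence $St(\cup\mathcal{V}_n^\alpha,\mathcal{U}_n)\subseteq St(\cup\mathcal{W}_n^{f_\alpha(n)},\mathcal{U}_n)$. The crux is now that $\{f_\alpha : \alpha<\kappa\}$ has cardinality $<\mathfrak{d}$, so it is not dominating: there is a $g\in\mathbb{N}^\mathbb{N}$ with $\{n : g(n)\geq f_\alpha(n)\}$ infinite for every $\alpha<\kappa$. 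I would then take the selection $\mathcal{V}_n=\mathcal{W}_n^{g(n)}$.

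To finish, fix $x\in X$, say $x\in X_\alpha$. The set $A=\{n : x\in St(\cup\mathcal{V}_n^\alpha,\mathcal{U}_n)\}$ is cofinite (the $\gamma$-cover property), while $B=\{n : g(n)\geq f_\alpha(n)\}$ is infinite, so $A\cap B\neq\emptyset$; for any $n\in A\cap B$ we have $\cup\mathcal{V}_n^\alpha\subseteq\cup\mathcal{W}_n^{f_\alpha(n)}\subseteq\cup\mathcal{V}_n$, whence $x\in St(\cup\mathcal{V}_n,\mathcal{U}_n)$. Since $St(\cup\mathcal{V}_n,\mathcal{U}_n)=\bigcup\{St(V,\mathcal{U}_n) : V\in\mathcal{V}_n\}$, the family $\bigcup_{n\in\mathbb{N}}\{St(V,\mathcal{U}_n) : V\in\mathcal{V}_n\}$ covers $X$, establishing $S_{fin}^*(\mathcal{O},\mathcal{O})$.

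I expect the main obstacle to be the rigorous extraction of a genuine $\gamma$-cover of stars for each $X_\alpha$: the property $U_{fin}^*(\mathcal{O},\Gamma)$ also permits the degenerate alternative $St(\cup\mathcal{V}_n^\alpha,\mathcal{U}_n)=X_\alpha$ at a single stage, which does not by itself make the covering set $A$ cofinite. I would address this by re-applying the star-Hurewicz property to the tails $(\mathcal{U}_n)_{n>N}$: either some tail yields an honest $\gamma$-cover (so $A$ is cofinite and the argument above runs verbatim), or one obtains cofinally many stages each of whose finite star equals $X_\alpha$, producing a single infinite stage-set $G_\alpha$ on which \emph{every} point of $X_\alpha$ is covered, so that $A\supseteq G_\alpha$. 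The delicate point, which must be reconciled with the single non-dominating choice of $g$, is to ensure that $g$ beats $f_\alpha$ somewhere on $G_\alpha$ (and not merely at stages where $f_\alpha$ is trivially small); getting the combinatorics of this reduction to mesh uniformly over all $\alpha<\kappa$ is where the real care lies.
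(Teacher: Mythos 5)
The paper itself offers no proof of Theorem~\ref{TN18} --- it is imported verbatim from \cite[Proposition 2.3]{rosa22} --- so there is no internal argument to compare against. Your strategy is the expected one for results of this type: pass to countable subcovers via Lindel\"{o}fness, encode the finite star-Hurewicz selections of each piece $X_\alpha$ as a function $f_\alpha\in\mathbb{N}^\mathbb{N}$, and use that a family of size $<\mathfrak{d}$ is not dominating to produce a single selection $\mathcal{V}_n=\mathcal{W}_n^{g(n)}$. All of this, including the remark that stars computed in $X_\alpha$ sit inside stars computed in $X$ and the conversion from a $U_{fin}^*$-type selection to an $S_{fin}^*$-type one via $St(\cup\mathcal{V},\mathcal{U})=\cup_{V\in\mathcal{V}}St(V,\mathcal{U})$, is correct.

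The one genuine gap is the one you flag yourself and leave open: the degenerate alternative in $U_{fin}^*(\mathcal{O},\Gamma)$. Your tail reduction correctly leaves, for a ``bad'' $\alpha$, an infinite set $G_\alpha$ of stages $k$ together with indices $j_k^\alpha$ such that $X_\alpha\subseteq St(\cup\mathcal{W}_k^{j_k^\alpha},\mathcal{U}_k)$, and nothing at stages outside $G_\alpha$; and you correctly observe that a non-dominated $g$ need not beat $f_\alpha$ at any stage of $G_\alpha$. This is a real missing step, but it closes in three lines, and you should include them. First, replace $g$ by $n\mapsto\max_{j\leq n}g(j)$; this only increases $g$, so the sets $\{n : g(n)\geq f_\alpha(n)\}$ remain infinite, and now $g$ is nondecreasing. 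Second, for a degenerate $\alpha$ define $f_\alpha(n)=j_{k(n)}^\alpha$ where $k(n)=\min\bigl(G_\alpha\cap[n,\infty)\bigr)$. If $g(n)\geq f_\alpha(n)$ at some $n$, put $k=k(n)\geq n$; monotonicity gives $g(k)\geq g(n)\geq j_k^\alpha$, hence $X_\alpha\subseteq St(\cup\mathcal{W}_k^{j_k^\alpha},\mathcal{U}_k)\subseteq St(\cup\mathcal{W}_k^{g(k)},\mathcal{U}_k)=St(\cup\mathcal{V}_k,\mathcal{U}_k)$, so every point of $X_\alpha$ is caught at stage $k$. This also disposes of your worry about $g$ winning ``only where $f_\alpha$ is trivially small'': the redefinition pushes the value needed at the next element of $G_\alpha$ back onto all earlier indices, so any win for $g$ is automatically converted into a win on $G_\alpha$. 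With this insertion the proof is complete; the $\gamma$-cover case is unaffected by the monotonization.
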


\begin{Th}[{\!\cite[Proposition 2.4]{rosa22}}]
\label{TN21}
If a Lindel\"{o}f space $X$ is union of less than $\mathfrak{b}$ star-Hurewicz spaces, then $X$ is star-Hurewicz.
\end{Th}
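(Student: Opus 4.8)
The plan is to combine the star-Hurewicz witnesses of the individual pieces $X_\alpha$ into a single witness for $X$, using the Lindel\"of property to pass to countable covers and the cardinal $\mathfrak{b}$ to synchronise the finite selections made on the different pieces. Write $X=\bigcup_{\alpha<\kappa}X_\alpha$ with $\kappa<\mathfrak{b}$, each $X_\alpha$ star-Hurewicz, and let $(\mathcal{U}_n)$ be a sequence of open covers of $X$; the goal is to produce finite $\mathcal{V}_n\subseteq\mathcal{U}_n$ with $\{St(\cup\mathcal{V}_n,\mathcal{U}_n):n\in\mathbb{N}\}$ a $\gamma$-cover of $X$.

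First I would reduce to countable covers. Since $X$ is Lindel\"of, each $\mathcal{U}_n$ admits a countable subcover, enumerated as $\mathcal{U}_n'=\{U(n,m):m\in\mathbb{N}\}\subseteq\mathcal{U}_n$. Because $St(A,\mathcal{U}_n')\subseteq St(A,\mathcal{U}_n)$ for every $A\subseteq X$, any selection witnessing $U_{fin}^*(\mathcal{O},\Gamma)$ relative to $(\mathcal{U}_n')$ also witnesses it relative to $(\mathcal{U}_n)$; hence it suffices to work throughout with the countable covers $\mathcal{U}_n'$.

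Next, for each $\alpha<\kappa$ I would apply the star-Hurewicz property of $X_\alpha$ to the restricted covers $\{U\cap X_\alpha:U\in\mathcal{U}_n'\}$ and lift the relative selections back to $\mathcal{U}_n'$; a routine check (a relative star is contained in the ambient star of a corresponding finite subfamily of $\mathcal{U}_n'$) yields finite sets $\mathcal{V}_n^\alpha\subseteq\mathcal{U}_n'$ such that every point of $X_\alpha$ lies in $St(\cup\mathcal{V}_n^\alpha,\mathcal{U}_n')$ for all but finitely many $n$. Encoding the chosen indices by $f_\alpha(n)=\max\{m:U(n,m)\in\mathcal{V}_n^\alpha\}$ produces $f_\alpha\in\mathbb{N}^\mathbb{N}$ with $\mathcal{V}_n^\alpha\subseteq\{U(n,m):m\le f_\alpha(n)\}$. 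The family $\{f_\alpha:\alpha<\kappa\}$ has cardinality less than $\mathfrak{b}$ and is therefore bounded: there is a $g\in\mathbb{N}^\mathbb{N}$ with $f_\alpha\le^* g$ for every $\alpha<\kappa$.

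Finally I would set $\mathcal{V}_n=\{U(n,m):m\le g(n)\}$, a finite subset of $\mathcal{U}_n'$, and verify that $\{St(\cup\mathcal{V}_n,\mathcal{U}_n'):n\in\mathbb{N}\}$ is a $\gamma$-cover of $X$. Given $x\in X$, fix $\alpha$ with $x\in X_\alpha$; for all but finitely many $n$ one has simultaneously $x\in St(\cup\mathcal{V}_n^\alpha,\mathcal{U}_n')$ and $f_\alpha(n)\le g(n)$, and the latter forces $\cup\mathcal{V}_n^\alpha\subseteq\cup\mathcal{V}_n$, whence $St(\cup\mathcal{V}_n^\alpha,\mathcal{U}_n')\subseteq St(\cup\mathcal{V}_n,\mathcal{U}_n')$; thus $x\in St(\cup\mathcal{V}_n,\mathcal{U}_n')$ for all but finitely many $n$, as required. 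The main obstacle is exactly this synchronisation: each piece only guarantees eventual membership in its stars, with both the tail and the finite selection depending on $\alpha$, and the hypothesis $\kappa<\mathfrak{b}$ is precisely what allows a single $g$ to dominate all the selection functions at once, so that the uniform choice $\mathcal{V}_n$ absorbs every $\mathcal{V}_n^\alpha$ cofinitely. A minor point to dispatch is the degenerate case in which the resulting family of stars is finite, which is covered by the alternative clause $St(\cup\mathcal{V}_n,\mathcal{U}_n')=X$ in the definition of $U_{fin}^*(\mathcal{O},\Gamma)$.
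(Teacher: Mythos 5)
Your argument is correct and is the standard one: reduce to countable covers via Lindel\"{o}fness, lift each piece's star-Hurewicz witnesses to finite subfamilies of the ambient countable covers, encode them as functions $f_\alpha\in\mathbb{N}^\mathbb{N}$, and use $\kappa<\mathfrak{b}$ to dominate them all by a single $g$ whose initial segments define the final selection. The paper states this result only as a citation of \cite[Proposition 2.4]{rosa22} and gives no proof of its own, so there is nothing to compare against beyond noting that your proof is the expected argument; the one point you gloss over (harmlessly, since it is resolvable by passing to tails or subsequences) is that when invoking $U_{fin}^*(\mathcal{O},\Gamma)$ on each piece $X_\alpha$ you must also dispatch the degenerate clause $St(\cup\mathcal{V}_n,\mathcal{U}_n)=X_\alpha$ at the level of the pieces, not only at the very end for $X$.
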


\begin{Th}
\label{TN15}
If a Lindel\"{o}f space $X$ is union of less than $\cov(\mathcal{M})$ $S_1^*(\mathcal{O},\Gamma)$ spaces, then $X$ is star-Rothberger.
\end{Th}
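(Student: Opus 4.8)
The plan is to combine the local $\gamma$-covering selections coming from the pieces $Y_\alpha$ with the guessing description of $\cov(\mathcal{M})$. Write $X=\bigcup_{\alpha<\kappa}Y_\alpha$ with $\kappa<\cov(\mathcal{M})$ and each $Y_\alpha$ satisfying $S_1^*(\mathcal{O},\Gamma)$, and fix a sequence $(\mathcal{U}_n)$ of open covers of $X$. First I would use Lindel\"{o}fness to replace each $\mathcal{U}_n$ by a countable subcover $\{U^{(n)}_m:m\in\mathbb{N}\}$; since passing to a subcover only shrinks stars, a collection of stars that covers $X$ with respect to the subcovers also covers $X$ with respect to the original covers, so this reduction is harmless. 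For each $\alpha$ the traces $\{U^{(n)}_m\cap Y_\alpha:m\in\mathbb{N}\}$ form open covers of $Y_\alpha$, and applying $S_1^*(\mathcal{O},\Gamma)$ inside $Y_\alpha$ yields for each $n$ an index $h_\alpha(n)\in\mathbb{N}$ such that, writing $V^\alpha_n=U^{(n)}_{h_\alpha(n)}$, the stars of $V^\alpha_n\cap Y_\alpha$ in $Y_\alpha$ form a $\gamma$-cover of $Y_\alpha$. A routine check that a subspace star is contained in the trace of the ambient star then shows that every $y\in Y_\alpha$ lies in $St(V^\alpha_n,\mathcal{U}_n)$ for all but finitely many $n$.

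This produces a family $\{h_\alpha:\alpha<\kappa\}\subseteq\mathbb{N}^{\mathbb{N}}$ of cardinality $<\cov(\mathcal{M})$. By the characterization of $\cov(\mathcal{M})$ in terms of guessing functions, there is a single $g\in\mathbb{N}^{\mathbb{N}}$ that guesses the whole family, i.e. $\{n\in\mathbb{N}:h_\alpha(n)=g(n)\}$ is infinite for every $\alpha<\kappa$. For each $n$ set $W_n=U^{(n)}_{g(n)}\in\mathcal{U}_n$; the sequence $(W_n)$ is my candidate witness that $X$ is star-Rothberger.

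It remains to verify that $\{St(W_n,\mathcal{U}_n):n\in\mathbb{N}\}$ covers $X$, and here lies the crux of the argument. Given $x\in X$, choose $\alpha$ with $x\in Y_\alpha$. The set $J=\{n:x\in St(V^\alpha_n,\mathcal{U}_n)\}$ is cofinite by the $\gamma$-cover property of the $\alpha$-th selection, while $I=\{n:h_\alpha(n)=g(n)\}$ is infinite by the choice of $g$; hence $I\cap J\neq\emptyset$. For any $n\in I\cap J$ we have $W_n=U^{(n)}_{g(n)}=U^{(n)}_{h_\alpha(n)}=V^\alpha_n$, so $x\in St(V^\alpha_n,\mathcal{U}_n)=St(W_n,\mathcal{U}_n)$, as required. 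The point I would emphasise is precisely why the stronger hypothesis $S_1^*(\mathcal{O},\Gamma)$, rather than the mere star-Rothberger property, is needed on the pieces: $\cov(\mathcal{M})$ only delivers \emph{infinitely-often} agreement between $h_\alpha$ and $g$, so to land a cover-point at an agreement index one must know that $x$ is covered at \emph{cofinitely many} indices; the fact that the intersection of a cofinite set with an infinite set is nonempty is what forces the hit. This mirrors the roles of $\mathfrak{d}$ (via domination and star-Menger) and $\mathfrak{b}$ (via unboundedness and star-Hurewicz) in Theorems~\ref{TN18} and~\ref{TN21}, with $\cov(\mathcal{M})$ and the $\gamma$-strengthened Rothberger selection playing the analogous part here.
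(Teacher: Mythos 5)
Your proof is correct and follows essentially the same route as the paper's: pass to countable subcovers via Lindel\"{o}fness, apply $S_1^*(\mathcal{O},\Gamma)$ on each piece to get selector functions $h_\alpha$, guess them by a single $g$ using $|\{h_\alpha\}|<\cov(\mathcal{M})$, and conclude by intersecting the cofinite set of good indices with the infinite set of agreement indices. You even make explicit two small points the paper leaves implicit (why the subcover reduction is harmless and why subspace stars sit inside ambient stars), so no changes are needed.
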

\begin{proof}
Let $X=\cup_{\alpha<\kappa}X_\alpha$ with $\kappa<\cov(\mathcal{M})$, where each $X_\alpha$ is a $S_1^*(\mathcal{O},\Gamma)$ space. Consider a sequence $(\mathcal{U}_n)$ of open covers of $X$. Since $X$ is Lindel\"{o}f, for each $n$ we can write $\mathcal{U}_n=\{U_m^{(n)} : m\in\mathbb{N}\}$. Applying $S_1^*(\mathcal{O},\Gamma)$ to $(\mathcal{U}_n)$, for each $\alpha<\kappa$ we obtain a sequence $(V_n^{(\alpha)})$ such that for each $n$ $V_n^{(\alpha)}\in\mathcal{U}_n$ and $\{St(V_n^{(\alpha)},\mathcal{U}_n) : n\in\mathbb{N}\}$ is a $\gamma$-cover of $X_\alpha$ by open sets in $X$. For each $\alpha<\kappa$ choose a function $f_\alpha\in\mathbb{N}^\mathbb{N}$ such that $V_n^{(\alpha)}=U_{f_\alpha(n)}^{(n)}$ for all $n$. Since the collection $\{f_\alpha : \alpha<\kappa\}$ has cardinality less than $\cov(\mathcal{M})$, there is a function $g\in\mathbb{N}^\mathbb{N}$ such that $\{n\in\mathbb{N} : f_\alpha(n)=g(n)\}$ is infinite for each $\alpha<\kappa$. For each $n$ define $U_n=U_{g(n)}^{(n)}$. We now show that the sequence $(U_n)$ witnesses for $(\mathcal{U}_n)$ that $X$ is star-Rothberger. Let $x\in X$ and choose a $\beta<\kappa$ such that $x\in X_\beta$. Now choose a $n_0\in\mathbb{N}$ such that $x\in St(V_n^{(\beta)},\mathcal{U}_n)$ for all $n\geq n_0$. Since the set  $\{n\in\mathbb{N} : f_\beta(n)=g(n)\}$ is infinite, there is a $k\in\mathbb{N}$ such that $k\geq n_0$ and $f_\beta(k)=g(k)$. It follows that $x\in St(V_k^{(\beta)},\mathcal{U}_k)$ and so $x\in St(U_{f_\beta(k)}^{(k)},\mathcal{U}_k)$ i.e. $x\in St(U_{g(k)}^{(k)},\mathcal{U}_k)$. Which in turn implies that $x\in St(U_k,\mathcal{U}_k)$ and consequently $\{St(U_n,\mathcal{U}_n) : n\in\mathbb{N}\}$ covers $X$.
\end{proof}

\begin{Th}[{\!\cite[Proposition 2.14]{rosa22}}]
\label{TN20}
If a star-Lindel\"{o}f space $X$ is union of less than $\mathfrak{d}$ Hurewicz spaces, then $X$ is star-Menger.
\end{Th}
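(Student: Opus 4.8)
The plan is to run the standard $\mathfrak{d}$-flavoured selection argument: star-Lindel\"{o}fness lets me replace each cover by a single increasing sequence of stars exhausting $X$, while the Hurewicz property of the pieces promotes ordinary covering to $\gamma$-covering, which is what makes the combinatorics close. Concretely, fix a sequence $(\mathcal{U}_n)$ of open covers of $X$. Since $X$ is star-Lindel\"{o}f, for each $n$ I choose a countable $\{V_m^{(n)} : m\in\mathbb{N}\}\subseteq\mathcal{U}_n$ with $St(\bigcup_m V_m^{(n)},\mathcal{U}_n)=X$. Setting $W_m^{(n)}=\bigcup_{i\le m}V_i^{(n)}$ and invoking the elementary identity $St(A\cup B,\mathcal{P})=St(A,\mathcal{P})\cup St(B,\mathcal{P})$, the sets $G_m^{(n)}:=St(W_m^{(n)},\mathcal{U}_n)$ increase with $m$ and satisfy $\bigcup_m G_m^{(n)}=X$. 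Hence, for each $\alpha<\kappa$ and each $n$, the family $\{G_m^{(n)}\cap X_\alpha : m\in\mathbb{N}\}$ is an increasing open cover of the Hurewicz space $X_\alpha$.

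Next I would apply $U_{fin}(\mathcal{O},\Gamma)$ to $X_\alpha$ along the sequence $(\{G_m^{(n)}\cap X_\alpha : m\})_n$. Because each of these covers is increasing, the union of any finite subfamily is a single member, so the selection is recorded by one index per stage; this produces a function $f_\alpha\in\mathbb{N}^\mathbb{N}$ such that $\{G_{f_\alpha(n)}^{(n)} : n\in\mathbb{N}\}$ is a $\gamma$-cover of $X_\alpha$, i.e. each $x\in X_\alpha$ lies in $G_{f_\alpha(n)}^{(n)}$ for all but finitely many $n$. Since $|\{f_\alpha : \alpha<\kappa\}|\le\kappa<\mathfrak{d}$, this family is not dominating, so I can fix $g\in\mathbb{N}^\mathbb{N}$ that no $f_\alpha$ eventually dominates; equivalently $B_\alpha:=\{n : f_\alpha(n)\le g(n)\}$ is infinite for every $\alpha$.

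Finally I would put $\mathcal{V}_n=\{V_i^{(n)} : i\le g(n)\}$, a finite subset of $\mathcal{U}_n$, and note that $\bigcup\{St(V,\mathcal{U}_n) : V\in\mathcal{V}_n\}=G_{g(n)}^{(n)}$ by the same star-of-union identity. To verify the selection, take $x\in X$ and choose $\beta$ with $x\in X_\beta$. The set of $n$ with $x\in G_{f_\beta(n)}^{(n)}$ is cofinite, so it meets the infinite set $B_\beta$; for such a common $n$, monotonicity together with $f_\beta(n)\le g(n)$ gives $x\in G_{f_\beta(n)}^{(n)}\subseteq G_{g(n)}^{(n)}$, whence $x\in St(V_i^{(n)},\mathcal{U}_n)$ for some $i\le g(n)$. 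Thus $\bigcup_n\{St(V,\mathcal{U}_n) : V\in\mathcal{V}_n\}$ covers $X$, witnessing $S_{fin}^*(\mathcal{O},\mathcal{O})$.

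The conceptual obstacle, and the reason the pieces must be Hurewicz rather than merely Menger, is precisely the index coordination in the last step: the dominating-number argument only secures the inequality $f_\beta(n)\le g(n)$ on an infinite set of stages, so the covering of $x$ must hold on a \emph{cofinite} set of stages in order to force a common stage. This cofiniteness is exactly the strength of the $\gamma$-cover delivered by $U_{fin}(\mathcal{O},\Gamma)$; a plain Menger selection, which pins $x$ to a single stage, would not suffice. The only remaining nuisance is the degenerate alternative permitted in $U_{fin}(\mathcal{O},\Gamma)$ (a finite subfamily of some single stage already covering $X_\alpha$), which I would dispatch by a short separate argument, so I do not expect it to affect the main line.
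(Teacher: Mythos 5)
Your argument is correct. The paper itself gives no proof of this statement (it is quoted from \cite[Proposition 2.14]{rosa22}), but your proof is exactly the template the paper uses for its own analogous results (Theorems~\ref{TN15}--\ref{TN17}): decompose $X$, extract one function $f_\alpha\in\mathbb{N}^\mathbb{N}$ per piece from the countable star-kernels supplied by star-Lindel\"{o}fness, use the cardinal hypothesis to find a single $g$ that defeats all the $f_\alpha$ on an infinite set of indices, and let the cofinite (here $\gamma$-cover, there guessing/$=^\infty$) behaviour on each piece turn that infinite set into an actual hit. The one point needing care --- recasting $U_{fin}(\mathcal{O},\Gamma)$ for increasing countable covers as ``there is $f_\alpha$ with each $x\in X_\alpha$ in $G^{(n)}_{f_\alpha(n)}$ for all but finitely many $n$,'' including the degenerate alternative in the definition --- is routine (split $\mathbb{N}$ according to whether some finite subfamily of the $n$-th cover already covers $X_\alpha$), and you correctly flag it as such.
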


\begin{Th}[{\!\cite[Proposition 2.15]{rosa22}}]
\label{TN23}
If a star-Lindel\"{o}f space $X$ is union of less than $\mathfrak{b}$ Hurewicz spaces, then $X$ is star-Hurewicz.
\end{Th}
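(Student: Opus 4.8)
The plan is to follow the scheme of Theorem~\ref{TN15} (and of its Menger counterpart Theorem~\ref{TN20}), replacing Lindel\"ofness by star-Lindel\"ofness and $\cov(\mathcal{M})$ by $\mathfrak{b}$, so that the eventual-dominance feature measured by $\mathfrak{b}$ matches the all-but-finitely-many character of $\gamma$-covers supplied by the Hurewicz property. Write $X=\cup_{\alpha<\kappa}X_\alpha$ with $\kappa<\mathfrak{b}$ and each $X_\alpha$ Hurewicz, and fix a sequence $(\mathcal{U}_n)$ of open covers of $X$.

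First I would use star-Lindel\"ofness to replace each $\mathcal{U}_n$ by a countable sub-family carrying the same star. For each $n$ choose a countable $\mathcal{W}_n=\{W_m^{(n)}:m\in\mathbb{N}\}\subseteq\mathcal{U}_n$ with $St(\cup\mathcal{W}_n,\mathcal{U}_n)=X$, and set $\mathcal{W}_n'=\{St(W_m^{(n)},\mathcal{U}_n):m\in\mathbb{N}\}$. A short computation (any $U\in\mathcal{U}_n$ meeting $\cup\mathcal{W}_n$ already meets some $W_m^{(n)}$) shows $\cup_m St(W_m^{(n)},\mathcal{U}_n)=St(\cup\mathcal{W}_n,\mathcal{U}_n)=X$, so each $\mathcal{W}_n'$ is a countable open cover of $X$, hence of every $X_\alpha$.

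Next I would feed $(\mathcal{W}_n')$ into the Hurewicz property $U_{fin}(\mathcal{O},\Gamma)$ of each summand. For fixed $\alpha$ this yields finite sets $I_n^{(\alpha)}\subseteq\mathbb{N}$ such that each $x\in X_\alpha$ lies in $\cup_{m\in I_n^{(\alpha)}}St(W_m^{(n)},\mathcal{U}_n)$ for all but finitely many $n$ (the degenerate alternative in the definition of the Hurewicz property, where a single finite union already covers $X_\alpha$, is handled by a routine padding, so I treat the genuine $\gamma$-cover case). Define $f_\alpha\in\mathbb{N}^\mathbb{N}$ by $f_\alpha(n)=\max I_n^{(\alpha)}$. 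Since $|\{f_\alpha:\alpha<\kappa\}|\le\kappa<\mathfrak{b}$, this family is bounded: there is $g\in\mathbb{N}^\mathbb{N}$ with $f_\alpha\leq^* g$ for every $\alpha<\kappa$.

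Finally I would set $\mathcal{V}_n=\{W_m^{(n)}:m\le g(n)\}$, a finite subset of $\mathcal{U}_n$, and check that $\{St(\cup\mathcal{V}_n,\mathcal{U}_n):n\in\mathbb{N}\}$ is a $\gamma$-cover of $X$. Given $x\in X$, pick $\beta$ with $x\in X_\beta$; then for all large $n$ one simultaneously has $f_\beta(n)\le g(n)$ (by $f_\beta\leq^* g$) and $x\in St(W_{m_0}^{(n)},\mathcal{U}_n)$ for some $m_0\in I_n^{(\beta)}$ (by the Hurewicz $\gamma$-cover of $X_\beta$). Since $m_0\le\max I_n^{(\beta)}=f_\beta(n)\le g(n)$, we get $W_{m_0}^{(n)}\in\mathcal{V}_n$, whence $St(W_{m_0}^{(n)},\mathcal{U}_n)\subseteq St(\cup\mathcal{V}_n,\mathcal{U}_n)$ and so $x\in St(\cup\mathcal{V}_n,\mathcal{U}_n)$. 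Thus $x$ lies in all but finitely many $St(\cup\mathcal{V}_n,\mathcal{U}_n)$, giving the required $\gamma$-cover. The main obstacle is conceptual rather than computational: one must pass to the countable star-covers $\mathcal{W}_n'$ so that star-Lindel\"ofness (which controls only stars, not the covers themselves) can be combined with the \emph{classical} Hurewicz selections on the pieces; after that, the two all-but-finitely-many conditions---one from the $\gamma$-covers of the $X_\alpha$ and one from $\leq^*$---reinforce rather than conflict, which is precisely why $\mathfrak{b}$ (and not $\mathfrak{d}$) is the correct cardinal here.
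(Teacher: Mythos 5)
Your argument is correct; the paper itself gives no proof of this statement (it is quoted from \cite{rosa22}), but your scheme --- pass to countable star-covers $\mathcal{W}_n'=\{St(W_m^{(n)},\mathcal{U}_n):m\in\mathbb{N}\}$ via star-Lindel\"ofness, apply the classical Hurewicz property of each piece to these covers, bound the resulting family of fewer than $\mathfrak{b}$ functions, and read off the finite selections from the bound --- is exactly the pattern of the proofs the paper does supply for the $\cov(\mathcal{M})$/Rothberger analogues (Theorems~\ref{TN15}--\ref{TN16}), with $\leq^*$-boundedness replacing guessing. The only point deserving a word more than ``routine padding'' is the degenerate alternative in $U_{fin}(\mathcal{O},\Gamma)$, but handling it is standard in this literature and does not affect the argument.
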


A space $X$ is said to be a star-$\epsilon$-space if for every open cover $\mathcal{U}$ of $X$ there exists a countable set $\mathcal{V}\subseteq\mathcal{U}$ such that $\{St(V,\mathcal{U}) : V\in\mathcal{V}\}$ is an $\omega$-cover of $X$.
\begin{Th}
\label{TN17}
If a star-$\epsilon$-space $X$ is union of less than $\cov(\mathcal{M})$ $S_1(\Omega,\Gamma)$ spaces, then $X$ is star-Rothberger.
\end{Th}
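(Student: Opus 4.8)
The plan is to mirror the proof of Theorem~\ref{TN15}, but with two substitutions: the Lindel\"of hypothesis is replaced by the star-$\epsilon$-property (used to manufacture countable $\omega$-covers made of stars), and the star principle $S_1^*(\mathcal{O},\Gamma)$ of the pieces is replaced by the \emph{classical} principle $S_1(\Omega,\Gamma)$ applied to those star $\omega$-covers. Write $X=\cup_{\alpha<\kappa}X_\alpha$ with $\kappa<\cov(\mathcal{M})$ and each $X_\alpha$ an $S_1(\Omega,\Gamma)$ space, and fix a sequence $(\mathcal{U}_n)$ of open covers of $X$; we must select $V_n\in\mathcal{U}_n$ so that $\{St(V_n,\mathcal{U}_n) : n\in\mathbb{N}\}$ covers $X$.

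First I would invoke the star-$\epsilon$-property: for each $n$ there is a countable $\mathcal{V}_n=\{V_m^{(n)} : m\in\mathbb{N}\}\subseteq\mathcal{U}_n$ such that $\mathcal{W}_n=\{St(V_m^{(n)},\mathcal{U}_n) : m\in\mathbb{N}\}$ is an $\omega$-cover of $X$. (If some member of some $\mathcal{W}_n$ already equals $X$, choose that $V_n$ and finish; so we may assume each $\mathcal{W}_n$ is a genuine $\omega$-cover.) Since the trace of an $\omega$-cover of $X$ on any subspace is again an $\omega$-cover, for each $\alpha<\kappa$ the sequence $(\mathcal{W}_n)$ restricts to a sequence of $\omega$-covers of $X_\alpha$ by open sets of $X$.

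Next I would apply $S_1(\Omega,\Gamma)$ in each $X_\alpha$ to $(\mathcal{W}_n)$, obtaining a function $h_\alpha\in\mathbb{N}^\mathbb{N}$ for which $\{St(V_{h_\alpha(n)}^{(n)},\mathcal{U}_n) : n\in\mathbb{N}\}$ is a $\gamma$-cover of $X_\alpha$; that is, each $x\in X_\alpha$ lies in $St(V_{h_\alpha(n)}^{(n)},\mathcal{U}_n)$ for all but finitely many $n$. As $|\{h_\alpha : \alpha<\kappa\}|\le\kappa<\cov(\mathcal{M})$, the family can be guessed by some $g\in\mathbb{N}^\mathbb{N}$, so $\{n\in\mathbb{N} : h_\alpha(n)=g(n)\}$ is infinite for every $\alpha$. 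I would then set $V_n=V_{g(n)}^{(n)}\in\mathcal{U}_n$.

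The verification combines the $\gamma$-cover property with the guessing. Given $x\in X$, pick $\beta<\kappa$ with $x\in X_\beta$ and an $n_0$ with $x\in St(V_{h_\beta(n)}^{(n)},\mathcal{U}_n)$ for all $n\ge n_0$; since $h_\beta$ agrees with $g$ infinitely often, there is some $k\ge n_0$ with $h_\beta(k)=g(k)$, whence $x\in St(V_{g(k)}^{(k)},\mathcal{U}_k)=St(V_k,\mathcal{U}_k)$. Thus $\{St(V_n,\mathcal{U}_n) : n\in\mathbb{N}\}$ covers $X$, witnessing $S_1^*(\mathcal{O},\mathcal{O})$. The crux of the argument, and the reason the hypothesis demands $S_1(\Omega,\Gamma)$ rather than the weaker $S_1(\Omega,\mathcal{O})$, lies in this final step: the guessing function supplied by $\cov(\mathcal{M})$ matches each $h_\alpha$ only \emph{infinitely often}, so one needs $x$ to sit in the selected stars \emph{eventually} in order to be sure of catching it at one of the matching coordinates. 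With a mere $\omega$-cover of each $X_\alpha$ this synchronization would fail; everything else is routine bookkeeping of the indices $m=h_\alpha(n)$ and $g(n)$.
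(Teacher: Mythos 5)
Your proposal is correct and follows essentially the same route as the paper's own proof: use the star-$\epsilon$-property to extract countable $\omega$-covers of stars, apply $S_1(\Omega,\Gamma)$ on each piece to get $\gamma$-covers coded by functions $h_\alpha$, guess the family by some $g$ since $\kappa<\cov(\mathcal{M})$, and exploit the interplay between ``eventually'' and ``infinitely often'' in the final verification. The concluding remark on why $S_1(\Omega,\Gamma)$ rather than $S_1(\Omega,\mathcal{O})$ is needed accurately identifies the crux, exactly as in the paper.
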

\begin{proof}
Let $X=\cup_{\alpha<\kappa}X_\alpha$ with $\kappa<\cov(\mathcal{M})$, where each $X_\alpha$ is a $S_1(\Omega,\Gamma)$ space. Consider a sequence $(\mathcal{U}_n)$ of open covers of $X$. Since $X$ is a star-$\epsilon$-space, for each $n$ there is a countable subset $\mathcal{V}_n=\{U_m^{(n)} : m\in\mathbb{N}\}$ of $\mathcal{U}_n$ such that $\{St(U_m^{(n)},\mathcal{U}_n) : m\in\mathbb{N}\}$ is an $\omega$-cover of $X$. For each $n$ let $\mathcal{W}_n=\{St(U_m^{(n)},\mathcal{U}_n) : m\in\mathbb{N}\}$. Applying the hypothesis to $(\mathcal{W}_n)$, for each $\alpha<\kappa$ we obtain a sequence $(V_n^{(\alpha)})$ such that for each $n$ $V_n^{(\alpha)}\in\mathcal{W}_n$ and $\{V_n^{(\alpha)} : n\in\mathbb{N}\}$ is a $\gamma$-cover of $X_\alpha$ by open sets in $X$. For each $\alpha<\kappa$ choose a function $f_\alpha\in\mathbb{N}^\mathbb{N}$ such that $V_n^{(\alpha)}=St(U_{f_\alpha(n)}^{(n)},\mathcal{U}_n)$ for all $n$. Since the collection $\{f_\alpha : \alpha<\kappa\}$ has cardinality less than $\cov(\mathcal{M})$, there exists a function $g\in\mathbb{N}^\mathbb{N}$ such that $\{n\in\mathbb{N} : f_\alpha(n)=g(n)\}$ is infinite for each $\alpha<\kappa$. Define $U_n=U_{g(n)}^{(n)}$ for each $n$. We now show that the sequence $(U_n)$ witnesses for $(\mathcal{U}_n)$ that $X$ is star-Rothberger. Now choose a $x\in X$ so that $x\in X_\beta$ for some $\beta<\kappa$. Next find a $n_0\in\mathbb{N}$ such that $x\in V_n^{(\beta)}$ for all $n\geq n_0$. Since the set  $\{n\in\mathbb{N} : f_\beta(n)=g(n)\}$ is infinite, choose a $k\in\mathbb{N}$ such that $k\geq n_0$ and $f_\beta(k)=g(k)$. It follows that $x\in V_k^{(\beta)}$ and so $x\in St(U_{f_\beta(k)}^{(k)},\mathcal{U}_k)$. Subsequently $x\in St(U_{g(k)}^{(k)},\mathcal{U}_k)$ i.e. $x\in St(U_k,\mathcal{U}_k)$. Thus $\{St(U_n,\mathcal{U}_n) : n\in\mathbb{N}\}$ is a cover of $X$. \end{proof}

\begin{Th}[{\!\cite[Proposition 2.11]{rosa22}}]
\label{TN19}
If a strongly star-Lindel\"{o}f space $X$ is union of less than $\mathfrak{d}$ Hurewicz spaces, then $X$ is strongly star-Menger.
\end{Th}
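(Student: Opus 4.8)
The plan is to mirror the strategy of Theorems~\ref{TN15} and~\ref{TN20}, but with the strongly star variants of the ingredients: I would extract the selectors from the strongly star-Lindel\"{o}f property and the Hurewicz property of the pieces, and then combine them by a \emph{domination} argument governed by $\mathfrak{d}$ (in place of the guessing arguments governed by $\cov(\mathcal{M})$).

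Write $X=\bigcup_{\alpha<\kappa}X_\alpha$ with $\kappa<\mathfrak{d}$ and each $X_\alpha$ Hurewicz, and fix a sequence $(\mathcal{U}_n)$ of open covers of $X$. First I would use the strong star-Lindel\"{o}f property: for each $n$ choose a countable set $C_n=\{c^{(n)}_m : m\in\mathbb{N}\}\subseteq X$ with $St(C_n,\mathcal{U}_n)=X$. For each $n$ and $m$ put $V^{(n)}_m=St(\{c^{(n)}_0,\dots,c^{(n)}_m\},\mathcal{U}_n)$. These sets are open, increase with $m$, and satisfy $\bigcup_{m\in\mathbb{N}}V^{(n)}_m=St(C_n,\mathcal{U}_n)=X$, so $\mathcal{W}_n=\{V^{(n)}_m : m\in\mathbb{N}\}$ is a monotone open cover of $X$, hence of every $X_\alpha$.

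Next, for each $\alpha<\kappa$ I would apply the Hurewicz property $U_{fin}(\mathcal{O},\Gamma)$ of $X_\alpha$ to the sequence $(\mathcal{W}_n)$. Because each $\mathcal{W}_n$ is increasing, the union of any finite subfamily equals a single term $V^{(n)}_j$; thus the selection yields a function $f_\alpha\in\mathbb{N}^\mathbb{N}$ for which $\{V^{(n)}_{f_\alpha(n)} : n\in\mathbb{N}\}$ is a $\gamma$-cover of $X_\alpha$, i.e. every $x\in X_\alpha$ lies in $V^{(n)}_{f_\alpha(n)}$ for all but finitely many $n$. Since $\{f_\alpha : \alpha<\kappa\}$ has cardinality $<\mathfrak{d}$, it is not dominating, so there is a $g\in\mathbb{N}^\mathbb{N}$ with $\lnot(g\leq^* f_\alpha)$ for every $\alpha$, that is, $g(n)>f_\alpha(n)$ for infinitely many $n$. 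Setting $F_n=\{c^{(n)}_0,\dots,c^{(n)}_{g(n)}\}$ (a finite subset of $X$, with $St(F_n,\mathcal{U}_n)=V^{(n)}_{g(n)}$), the verification is routine: given $x\in X$, pick $\beta$ with $x\in X_\beta$, take $n_0$ with $x\in V^{(n)}_{f_\beta(n)}$ for all $n\geq n_0$, and choose $k\geq n_0$ with $g(k)>f_\beta(k)$; then $x\in V^{(k)}_{f_\beta(k)}\subseteq V^{(k)}_{g(k)}=St(F_k,\mathcal{U}_k)$, so $(F_n)$ witnesses $SS_{fin}^*(\mathcal{O},\mathcal{O})$.

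I expect the only delicate points to lie in the Hurewicz step: one must exploit the monotonicity of $\mathcal{W}_n$ to collapse the finitely many selected sets into the single-index function $f_\alpha$, and one must separately dispose of the degenerate alternative in $U_{fin}(\mathcal{O},\Gamma)$ (some $\bigcup\mathcal{V}_n=X_\alpha$), which only makes covering $X$ easier and can be absorbed into the definition of $f_\alpha$. Matching the cofinite behaviour of the $\gamma$-cover against the infinitely-often inequality $g(n)>f_\beta(n)$ is exactly what dictates the correct orientation of the domination argument, and is the heart of why the bound $\mathfrak{d}$ is the natural one here.
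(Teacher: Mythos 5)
Your argument is correct. Note that the paper itself states this result only as a quotation of \cite[Proposition 2.11]{rosa22} and gives no proof, so there is nothing in-paper to compare against line by line; but your proof is exactly the natural $\mathfrak{d}$-analogue of the $\cov(\mathcal{M})$-arguments the authors do write out (Theorems~\ref{TN10}, \ref{TN15}--\ref{TN16}), with non-domination replacing guessing, and it is the standard route to this statement. The one point that genuinely needs care is the one you flag: the disjunctive clause in $U_{fin}(\mathcal{O},\Gamma)$. Your dismissal of it is justified, but the clean way to say it is that each $\mathcal{W}_n$ is monotone, so a finite subfamily of $\mathcal{W}_n$ covers $X_\alpha$ only if a single $V^{(n)}_j$ does; for the (possibly infinitely many) indices $n$ where this happens one sets $f_\alpha(n)$ equal to such a $j$, and on the remaining indices the degenerate alternative cannot occur, so the Hurewicz selection there yields the cofinite containment directly. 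With that, every $x\in X_\alpha$ lies in $V^{(n)}_{f_\alpha(n)}$ for all but finitely many $n$, and the rest of your verification (choosing $k\geq n_0$ with $g(k)>f_\beta(k)$ and using monotonicity to pass from $V^{(k)}_{f_\beta(k)}$ to $V^{(k)}_{g(k)}=St(F_k,\mathcal{U}_k)$) goes through as written.
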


\begin{Th}[{\!\cite[Proposition 2.12]{rosa22}}]
\label{TN22}
If a strongly star-Lindel\"{o}f space $X$ is union of less than $\mathfrak{b}$ Hurewicz spaces, then $X$ is strongly star-Hurewicz.
\end{Th}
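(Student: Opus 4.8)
The plan is to mimic Theorem~\ref{TN19} with the bounding number $\mathfrak{b}$ in place of $\mathfrak{d}$, the difference being that the Hurewicz property of the pieces produces $\gamma$-covers rather than mere open covers, and that amalgamation must preserve the ``all but finitely many'' condition. Write $X=\bigcup_{\alpha<\kappa}X_\alpha$ with $\kappa<\mathfrak{b}$, each $X_\alpha$ Hurewicz, and fix a sequence $(\mathcal{U}_n)$ of open covers of $X$. First I would use strong star-Lindel\"{o}fness to extract, for each $n$, a countable set $D_n=\{x_{n,m}:m\in\mathbb{N}\}\subseteq X$ with $St(D_n,\mathcal{U}_n)=X$, and then set $\mathcal{W}_n=\{St(x_{n,m},\mathcal{U}_n):m\in\mathbb{N}\}$. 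Each $\mathcal{W}_n$ is a \emph{countable} open cover of $X$, and crucially its members are indexed by the same set $\mathbb{N}$ for every $n$ and independently of $\alpha$; this common indexing is what will later let the data coming from all the pieces be compared inside a single copy of $\mathbb{N}^\mathbb{N}$.

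Next, for each fixed $\alpha<\kappa$ I would apply the Hurewicz property $U_{fin}(\mathcal{O},\Gamma)$ of $X_\alpha$ to the sequence of traces $\{W\cap X_\alpha : W\in\mathcal{W}_n\}$, $n\in\mathbb{N}$. This yields finite index sets $J_n^{(\alpha)}\subseteq\mathbb{N}$ (which we may assume nonempty) such that, writing $F_n^{(\alpha)}=\{x_{n,m}:m\in J_n^{(\alpha)}\}$ and noting $\bigcup_{m\in J_n^{(\alpha)}}St(x_{n,m},\mathcal{U}_n)=St(F_n^{(\alpha)},\mathcal{U}_n)$, the family $\{St(F_n^{(\alpha)},\mathcal{U}_n):n\in\mathbb{N}\}$ is a $\gamma$-cover of $X_\alpha$; that is, every $x\in X_\alpha$ lies in $St(F_n^{(\alpha)},\mathcal{U}_n)$ for all but finitely many $n$. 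I would then encode these finite choices by the function $f_\alpha\in\mathbb{N}^\mathbb{N}$ defined by $f_\alpha(n)=\max J_n^{(\alpha)}$.

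The decisive step is the amalgamation. Since $\{f_\alpha:\alpha<\kappa\}$ has cardinality $\kappa<\mathfrak{b}$, it is a bounded family, so there is a $g\in\mathbb{N}^\mathbb{N}$ with $f_\alpha\leq^* g$ for every $\alpha$. Define the finite sets $F_n=\{x_{n,m}:m\leq g(n)\}$; I claim $(F_n)$ witnesses strong star-Hurewiczness. Given $x\in X$, pick $\beta$ with $x\in X_\beta$. The $\gamma$-cover property of $X_\beta$ places $x$ in $St(F_n^{(\beta)},\mathcal{U}_n)$ for all $n$ beyond some $n_1$, while $f_\beta\leq^* g$ forces $J_n^{(\beta)}\subseteq\{0,\dots,g(n)\}$, hence $F_n^{(\beta)}\subseteq F_n$ and so $St(F_n^{(\beta)},\mathcal{U}_n)\subseteq St(F_n,\mathcal{U}_n)$, for all $n$ beyond some $n_2$. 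As the intersection of two cofinite sets of indices is cofinite, $x\in St(F_n,\mathcal{U}_n)$ for all but finitely many $n$, so $\{St(F_n,\mathcal{U}_n):n\in\mathbb{N}\}$ is a $\gamma$-cover of $X$. I expect the main obstacle to be precisely this merging: one must arrange the per-piece data in a single $\mathbb{N}^\mathbb{N}$, which is exactly why the countable cores $D_n$ are chosen once and for all before the pieces are treated, so that $\kappa<\mathfrak{b}$ can dominate all the $f_\alpha$ simultaneously. The only remaining technicality is the clause in the definition of $\gamma$-cover requiring the family to be infinite, which is routine and handled in the usual manner.
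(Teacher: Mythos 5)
Your argument is correct: extracting the countable cores $D_n$ once, encoding each piece's Hurewicz data as $f_\alpha\in\mathbb{N}^\mathbb{N}$, and using $\kappa<\mathfrak{b}$ to bound the family is exactly the standard amalgamation, and the verification that $F_n^{(\beta)}\subseteq F_n$ eventually yields a $\gamma$-cover is sound. The paper itself only cites this result from \cite{rosa22} without reproducing a proof, but your method matches the template the paper does use for the analogous Theorems~\ref{TN15}--\ref{TN17} (with $\cov(\mathcal{M})$ and guessing in place of $\mathfrak{b}$ and domination), so there is nothing further to add.
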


A space $X$ is said to be a strongly star-$\epsilon$-space if for every open cover $\mathcal{U}$ of $X$ there exists a countable set $A\subseteq X$ such that $\{St(x,\mathcal{U}) : x\in A\}$ is an $\omega$-cover of $X$.
\begin{Th}
\label{TN16}
If a strongly star-$\epsilon$-space $X$ is union of less than $\cov(\mathcal{M})$ $S_1(\Omega,\Gamma)$ spaces, then $X$ is strongly star-Rothberger.
\end{Th}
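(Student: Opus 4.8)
The plan is to mirror the proof of Theorem~\ref{TN17}, replacing open sets by points throughout. Strongly star-Rothberger ($SS_1^*(\mathcal{O},\mathcal{O})$) is the point-based analogue of star-Rothberger, and the strongly star-$\epsilon$-property supplies, for each open cover, a \emph{countable} family of stars $St(x,\mathcal{U})$ centered at points rather than at open sets forming an $\omega$-cover. So the combinatorial skeleton of Theorem~\ref{TN17} transcribes directly.

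First I would write $X=\cup_{\alpha<\kappa}X_\alpha$ with $\kappa<\cov(\mathcal{M})$, each $X_\alpha$ an $S_1(\Omega,\Gamma)$ space, and fix a sequence $(\mathcal{U}_n)$ of open covers of $X$. Using the strongly star-$\epsilon$-property, for each $n$ I would extract a countable set $A_n=\{x_m^{(n)} : m\in\mathbb{N}\}\subseteq X$ so that $\mathcal{W}_n=\{St(x_m^{(n)},\mathcal{U}_n) : m\in\mathbb{N}\}$ is an $\omega$-cover of $X$. Applying the $S_1(\Omega,\Gamma)$ hypothesis to the sequence $(\mathcal{W}_n)$ restricted to each summand yields, for every $\alpha<\kappa$, a selection $V_n^{(\alpha)}\in\mathcal{W}_n$ for which $\{V_n^{(\alpha)} : n\in\mathbb{N}\}$ is a $\gamma$-cover of $X_\alpha$ by open sets in $X$.

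Next I would encode these choices as functions: for each $\alpha$ pick $f_\alpha\in\mathbb{N}^\mathbb{N}$ with $V_n^{(\alpha)}=St(x_{f_\alpha(n)}^{(n)},\mathcal{U}_n)$ for all $n$. Since $|\{f_\alpha : \alpha<\kappa\}|\leq\kappa<\cov(\mathcal{M})$, the characterization of $\cov(\mathcal{M})$ recalled in Section~2 provides a single $g\in\mathbb{N}^\mathbb{N}$ guessing every $f_\alpha$, that is, $\{n\in\mathbb{N} : f_\alpha(n)=g(n)\}$ is infinite for each $\alpha<\kappa$. I would then set $x_n=x_{g(n)}^{(n)}$.

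Finally, to verify that $(x_n)$ witnesses the strongly star-Rothberger property, I would take an arbitrary $x\in X$, fix $\beta<\kappa$ with $x\in X_\beta$, use the $\gamma$-cover property to obtain $n_0$ with $x\in V_n^{(\beta)}$ for all $n\geq n_0$, and choose $k\geq n_0$ with $f_\beta(k)=g(k)$; then $x\in V_k^{(\beta)}=St(x_{g(k)}^{(k)},\mathcal{U}_k)=St(x_k,\mathcal{U}_k)$, so $\{St(x_n,\mathcal{U}_n) : n\in\mathbb{N}\}$ covers $X$. Because this is a faithful point-based analogue of Theorem~\ref{TN17}, I do not anticipate a genuine obstacle; the only step deserving minor care is confirming that the $\omega$-cover of stars furnished by the strongly star-$\epsilon$-property is a legitimate open cover of $X$, hence an admissible input for $S_1(\Omega,\Gamma)$ applied within each $X_\alpha$.
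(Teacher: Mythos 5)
Your proposal is correct and follows essentially the same route as the paper's own proof: extract the countable $\omega$-cover of point-stars from the strongly star-$\epsilon$-property, apply $S_1(\Omega,\Gamma)$ on each summand, encode the selections as functions $f_\alpha$, guess them by a single $g$ using $\kappa<\cov(\mathcal{M})$, and set $x_n=x_{g(n)}^{(n)}$. The verification step is also identical, so there is nothing further to add.
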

\begin{proof}
Let $X=\cup_{\alpha<\kappa}X_\alpha$ with $\kappa<\cov(\mathcal{M})$, where each $X_\alpha$ is a $S_1(\Omega,\Gamma)$ space. Consider a sequence $(\mathcal{U}_n)$ of open covers of $X$. Since $X$ is a strongly star-$\epsilon$-space, for each $n$ there exists a countable subset $A_n=\{x_m^{(n)} : m\in\mathbb{N}\}$ of $X$ such that $\{St(x_m^{(n)},\mathcal{U}_n) : m\in\mathbb{N}\}$ is an $\omega$-cover of $X$. For each $n$ let $\mathcal{W}_n=\{St(x_m^{(n)},\mathcal{U}_n) : m\in\mathbb{N}\}$. Apply the hypothesis to obtain for each $\alpha<\kappa$ a sequence $(V_n^{(\alpha)})$ such that for each $n$ $V_n^{(\alpha)}\in\mathcal{W}_n$ and $\{V_n^{(\alpha)} : n\in\mathbb{N}\}$ is a $\gamma$-cover of $X_\alpha$ by open sets in $X$. For each $\alpha<\kappa$ we choose a function $f_\alpha\in\mathbb{N}^\mathbb{N}$ such that $V_n^{(\alpha)}=St(x_{f_\alpha(n)}^{(n)},\mathcal{U}_n)$ for all $n$. Since the collection $\{f_\alpha : \alpha<\kappa\}$ has cardinality less than $\cov(\mathcal{M})$, there exists a function $g\in\mathbb{N}^\mathbb{N}$ such that $\{n\in\mathbb{N} : f_\alpha(n)=g(n)\}$ is infinite for each $\alpha<\kappa$. For each $n$ define $x_n=x_{g(n)}^{(n)}$. We show that the sequence $(x_n)$ witnesses for $(\mathcal{U}_n)$ that $X$ is strongly star-Rothberger. Choose a $x\in X$ such that $x\in X_\beta$ for some $\beta<\kappa$. Now there is a $n_0\in\mathbb{N}$ such that $x\in V_n^{(\beta)}$ for all $n\geq n_0$. Since the set  $\{n\in\mathbb{N} : f_\beta(n)=g(n)\}$ is infinite, there is a $k\in\mathbb{N}$ such that $k\geq n_0$ and $f_\beta(k)=g(k)$. It follows that $x\in V_k^{(\beta)}$ and hence $x\in St(x_{f_\beta(k)}^{(k)},\mathcal{U}_k)$. Thus $x\in St(x_{g(k)}^{(k)},\mathcal{U}_k)$ i.e. $x\in St(x_k,\mathcal{U}_k)$. Clearly $\{St(x_n,\mathcal{U}_n) : n\in\mathbb{N}\}$ covers $X$.
\end{proof}

\begin{Th}[{\!\cite[Proposition 2.7]{rosa22}}]
\label{TN12}
If a Lindel\"{o}f space $X$ is union of less than $\mathfrak{b}$ star-Menger spaces, then $X$ is star-Menger.
\end{Th}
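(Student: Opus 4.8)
The plan is to follow the template of the proof of Theorem~\ref{TN15}, replacing $\cov(\mathcal{M})$ by $\mathfrak{b}$ and the guessing step by a bounding step. The one genuine complication is that a star-Menger subspace only produces an ordinary open cover of itself (not a $\gamma$-cover, as an $S_1^*(\mathcal{O},\Gamma)$ space would), so the ``infinitely often'' behaviour that made the $\cov(\mathcal{M})$ arguments go through is not handed to us for free; I would manufacture it by a preliminary partition of the index set.

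First I would write $X=\bigcup_{\alpha<\kappa}X_\alpha$ with $\kappa<\mathfrak{b}$, each $X_\alpha$ star-Menger, and fix a sequence $(\mathcal{U}_n)$ of open covers of $X$. Since $X$ is Lindel\"{o}f, we may assume each $\mathcal{U}_n=\{U_m^{(n)}:m\in\mathbb{N}\}$ is countable. The key preparatory move is to fix a partition $\mathbb{N}=\bigsqcup_{j\in\mathbb{N}}N_j$ into infinitely many infinite sets. For each $\alpha<\kappa$ and each $j$, I apply the star-Menger property of $X_\alpha$ to the subsequence $(\mathcal{U}_n:n\in N_j)$ to obtain finite sets $\mathcal{V}_n^{(\alpha)}\subseteq\mathcal{U}_n$ (for $n\in N_j$) such that $\{St(V,\mathcal{U}_n):V\in\mathcal{V}_n^{(\alpha)},\,n\in N_j\}$ covers $X_\alpha$. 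As every $n$ lies in exactly one $N_j$, this defines a finite $\mathcal{V}_n^{(\alpha)}\subseteq\mathcal{U}_n$ for all $n$, and now each $X_\alpha$ is covered \emph{inside every block} $N_j$. I then define $f_\alpha\in\mathbb{N}^\mathbb{N}$ by $f_\alpha(n)=\max\{m:U_m^{(n)}\in\mathcal{V}_n^{(\alpha)}\}$.

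Next, since $|\{f_\alpha:\alpha<\kappa\}|\le\kappa<\mathfrak{b}$, this family is bounded, so there is a $g\in\mathbb{N}^\mathbb{N}$ with $f_\alpha\le^*g$ for every $\alpha<\kappa$. For each $n$ put $\mathcal{V}_n=\{U_m^{(n)}:m\le g(n)\}$, a finite subset of $\mathcal{U}_n$. I would then verify that $(\mathcal{V}_n)$ witnesses for $(\mathcal{U}_n)$ that $X$ is star-Menger. Given $x\in X$, choose $\beta$ with $x\in X_\beta$ and let $n_\beta$ be such that $f_\beta(n)\le g(n)$ for all $n\ge n_\beta$. Because $x$ is covered within each of the infinitely many blocks, there are infinitely many $n$ for which $x\in St(U_m^{(n)},\mathcal{U}_n)$ for some $U_m^{(n)}\in\mathcal{V}_n^{(\beta)}$; I pick one such $n\ge n_\beta$. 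Then $m\le f_\beta(n)\le g(n)$, so $U_m^{(n)}\in\mathcal{V}_n$, whence $x\in\bigcup\{St(W,\mathcal{U}_n):W\in\mathcal{V}_n\}$, and so $\{St(W,\mathcal{U}_n):W\in\mathcal{V}_n,\,n\in\mathbb{N}\}$ covers $X$.

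The main obstacle, and the only point where this differs from the $\cov(\mathcal{M})$ proofs, is precisely that bounding yields $f_\alpha(n)\le g(n)$ only for all but finitely many $n$: a point of $X_\beta$ covered by $\mathcal{V}_n^{(\beta)}$ solely at indices below the threshold $n_\beta$ would otherwise slip through. The partition into infinitely many infinite blocks is engineered exactly to defeat this, since forcing $X_\alpha$ to be covered within every block guarantees infinitely many covering stages, at least one of which must exceed the finite threshold $n_\beta$. I expect the remaining checks (finiteness of each $\mathcal{V}_n$ and the elementary star containments) to be routine, so the whole difficulty is concentrated in arranging this infinitely-often coverage before invoking the bounding number $\mathfrak{b}$.
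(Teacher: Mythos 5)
Your argument is correct. Note first that the paper does not actually prove this statement: Theorem~\ref{TN12} is quoted from \cite{rosa22} without proof, so there is no in-paper argument to compare against; the closest internal analogues are the proofs of Theorems~\ref{TN15}, \ref{TN17} and \ref{TN16}, which is exactly the template you follow. Your two modifications of that template are both sound. Replacing the guessing step by a bounding step is the right use of $\kappa<\mathfrak{b}$: you get $f_\alpha\le^* g$, i.e.\ a cofinite set of good indices, and a cofinite set meets every infinite set. And you correctly identified the one place where the template genuinely breaks --- a star-Menger piece $X_\alpha$ only yields \emph{some} index $n$ at which a given point is starred, whereas the $S_1^*(\mathcal{O},\Gamma)$ hypothesis in Theorem~\ref{TN15} hands you cofinitely many --- and your fix (partitioning $\mathbb{N}$ into infinitely many infinite blocks and running the star-Menger selection separately on each block, so that every point of $X_\alpha$ is starred at least once per block and hence at infinitely many indices) is the standard and correct device. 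The remaining details (passing to countable subcovers via Lindel\"{o}fness, the monotonicity $St(V,\mathcal{U}_n')\subseteq St(V,\mathcal{U}_n)$, defining $f_\alpha(n)$ as a maximum over the finitely many chosen indices) are routine and handled adequately. I see no gap.
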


\begin{Th}[{\!\cite[Proposition 2.16]{rosa22}}]
\label{TN14}
If a star-Lindel\"{o}f space $X$ is union of less than $\mathfrak{b}$ Menger spaces, then $X$ is star-Menger.
\end{Th}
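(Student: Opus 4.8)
The plan is to verify the defining selection hypothesis $S_{fin}^*(\mathcal{O},\mathcal{O})$ of star-Menger directly. Write $X=\bigcup_{\alpha<\kappa}X_\alpha$ with $\kappa<\mathfrak{b}$ and each $X_\alpha$ Menger, and fix a sequence $(\mathcal{U}_n)$ of open covers of $X$. First I would exploit star-Lindel\"{o}fness to replace each $\mathcal{U}_n$ by a countable skeleton: for each $n$ choose a countable $\{U_m^{(n)}:m\in\mathbb{N}\}\subseteq\mathcal{U}_n$ with $St(\bigcup_m U_m^{(n)},\mathcal{U}_n)=X$. Since $St(\bigcup_m U_m^{(n)},\mathcal{U}_n)=\bigcup_m St(U_m^{(n)},\mathcal{U}_n)$, the sets $S_m^{(n)}:=\bigcup_{i\le m}St(U_i^{(n)},\mathcal{U}_n)$ form an increasing countable open cover $\mathcal{W}_n=\{S_m^{(n)}:m\in\mathbb{N}\}$ of $X$; crucially, choosing the index $m$ amounts to choosing the finite set $\{U_1^{(n)},\dots,U_m^{(n)}\}\subseteq\mathcal{U}_n$ and unioning the corresponding stars.

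Next I would bring in the Menger property of the pieces together with the bounding number. Partition $\mathbb{N}$ into infinitely many infinite sets $\{N_k:k\in\mathbb{N}\}$. For each $\alpha<\kappa$ and each $k$, apply $S_{fin}(\mathcal{O},\mathcal{O})$ of $X_\alpha$ to the subsequence $(\mathcal{W}_n:n\in N_k)$; because each $\mathcal{W}_n$ is increasing, the selection at each coordinate is recorded by a single index, yielding a function $f_\alpha\in\mathbb{N}^\mathbb{N}$ with $X_\alpha\subseteq\bigcup_{n\in N_k}S_{f_\alpha(n)}^{(n)}$ for every $k$. Since $|\{f_\alpha:\alpha<\kappa\}|\le\kappa<\mathfrak{b}$, this family is bounded, so there is a $g\in\mathbb{N}^\mathbb{N}$ with $f_\alpha\le^* g$ for all $\alpha<\kappa$. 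For each $n$ I then set $\mathcal{V}_n=\{U_1^{(n)},\dots,U_{g(n)}^{(n)}\}$, a finite subset of $\mathcal{U}_n$, whose associated union of stars is exactly $S_{g(n)}^{(n)}$.

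Finally I would verify that $(\mathcal{V}_n)$ witnesses $S_{fin}^*(\mathcal{O},\mathcal{O})$. Given $x\in X$, pick $\beta$ with $x\in X_\beta$ and $n_1$ with $f_\beta(n)\le g(n)$ for all $n\ge n_1$. Since only finitely many $N_k$ meet the finite set $\{0,1,\dots,n_1-1\}$, choose $k$ with $N_k\subseteq[n_1,\infty)$; as $X_\beta\subseteq\bigcup_{n\in N_k}S_{f_\beta(n)}^{(n)}$, there is $n_0\in N_k$ with $x\in S_{f_\beta(n_0)}^{(n_0)}$. Because $n_0\ge n_1$ gives $f_\beta(n_0)\le g(n_0)$ and the $S_m^{(n_0)}$ increase in $m$, we obtain $x\in S_{g(n_0)}^{(n_0)}=\bigcup\{St(V,\mathcal{U}_{n_0}):V\in\mathcal{V}_{n_0}\}$, so $\bigcup_n\{St(V,\mathcal{U}_n):V\in\mathcal{V}_n\}$ covers $X$.

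The main obstacle is exactly this last matching: the Menger property only guarantees that each point of $X_\beta$ is caught at \emph{some} coordinate, whereas the bound $f_\beta\le^* g$ is only effective beyond a finite threshold $n_1$ that depends on $\beta$. Splitting $\mathbb{N}$ into infinitely many infinite blocks is what reconciles the two, since it lets me relocate the covering coordinate $n_0$ into $[n_1,\infty)$; without this device a point covered only at small coordinates could escape the selection. Making the skeleton covers $\mathcal{W}_n$ increasing is the second essential bookkeeping point, as it converts the domination $f_\beta\le^* g$ into an honest inclusion of stars.
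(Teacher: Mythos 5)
The paper does not prove this statement; it is imported verbatim from \cite[Proposition 2.16]{rosa22}, so there is no internal proof to compare against. Your argument is correct and is the standard one for results of this type: the star-Lindel\"{o}f reduction to increasing countable covers $S^{(n)}_m$ of star-unions, the encoding of the Menger selections of each piece $X_\alpha$ by functions $f_\alpha$, the use of $\kappa<\mathfrak{b}$ to bound the family by a single $g$, and the final selection $\mathcal{V}_n=\{U^{(n)}_1,\dots,U^{(n)}_{g(n)}\}$ all check out. You are also right that the partition of $\mathbb{N}$ into infinitely many infinite blocks is the essential step and not mere bookkeeping: since the pieces are only Menger (not Hurewicz), each point of $X_\beta$ is caught at only one coordinate per block, and the blocking is what lets you push that coordinate past the finite threshold where $f_\beta\le g$ becomes effective. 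Your proof is structurally parallel to the $\cov(\mathcal{M})$ analogues that the paper does prove (Theorems~\ref{TN15} and~\ref{TN17}), with eventual domination replacing infinite guessing.
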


\begin{Th}[{\!\cite[Proposition 2.13]{rosa22}}]
\label{TN13}
If a strongly star-Lindel\"{o}f space $X$ is union of less than $\mathfrak{b}$ Menger spaces, then $X$ is strongly star-Menger.
\end{Th}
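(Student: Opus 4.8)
The plan is to fuse the strongly star-Lindel\"of property of $X$ with the Menger property of the pieces through a bounding argument controlled by $\mathfrak{b}$. Write $X=\bigcup_{\alpha<\kappa}X_\alpha$ with $\kappa<\mathfrak{b}$ and each $X_\alpha$ Menger, and fix a sequence $(\mathcal{U}_n)$ of open covers of $X$. First I would exploit strong star-Lindel\"ofness: for each $n$ there is a countable set $A_n=\{x_m^{(n)} : m\in\mathbb{N}\}\subseteq X$ with $St(A_n,\mathcal{U}_n)=X$, so that $\mathcal{W}_n=\{St(x_m^{(n)},\mathcal{U}_n) : m\in\mathbb{N}\}$ is a countable open cover of $X$, hence its traces form an open cover of every $X_\alpha$.

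Next, fixing $\alpha$, I would apply the Menger property $S_{fin}(\mathcal{O},\mathcal{O})$ of $X_\alpha$ to $(\mathcal{W}_n)$, but in a strengthened form. The point is that I do not merely want the chosen stars to cover $X_\alpha$ once; I want each point of $X_\alpha$ to lie in a chosen star for \emph{infinitely many} $n$ (that is, I want a selection landing in $\Lambda$). To secure this I partition $\mathbb{N}=\bigsqcup_{j}I_j$ into infinitely many infinite sets and apply $S_{fin}(\mathcal{O},\mathcal{O})$ on $X_\alpha$ to $(\mathcal{W}_n)_{n\in I_j}$ separately for each $j$; patching the outcomes yields, for every $n$, a finite set $S_n^{(\alpha)}\subseteq\mathbb{N}$ such that each point of $X_\alpha$ belongs to $St(x_m^{(n)},\mathcal{U}_n)$ for some $m\in S_n^{(\alpha)}$ for infinitely many $n$. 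Encoding the chosen indices at level $n$ by $f_\alpha(n)=\max S_n^{(\alpha)}$ produces a function $f_\alpha\in\mathbb{N}^\mathbb{N}$.

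Now I would invoke $\kappa<\mathfrak{b}$: the family $\{f_\alpha : \alpha<\kappa\}$ has size below $\mathfrak{b}$, hence is bounded, so there is $g\in\mathbb{N}^\mathbb{N}$ with $f_\alpha\leq^* g$ for every $\alpha$. Setting $F_n=\{x_m^{(n)} : m\leq g(n)\}$, a finite subset of $X$, I claim $(F_n)$ witnesses $SS_{fin}^*(\mathcal{O},\mathcal{O})$. Indeed, given $x\in X$ choose $\beta$ with $x\in X_\beta$; by the strengthened selection $x$ lies in $St(x_m^{(n)},\mathcal{U}_n)$ with $m\in S_n^{(\beta)}$ for infinitely many $n$, whereas $f_\beta(n)\leq g(n)$ for all but finitely many $n$. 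Picking an $n$ in the overlap gives $m\leq f_\beta(n)\leq g(n)$, so $x_m^{(n)}\in F_n$ and $x\in St(F_n,\mathcal{U}_n)$; thus $\{St(F_n,\mathcal{U}_n) : n\in\mathbb{N}\}$ covers $X$.

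The main obstacle is precisely the mismatch between what Menger delivers and what $\mathfrak{b}$-bounding can use: a bare application of $S_{fin}(\mathcal{O},\mathcal{O})$ covers each point of $X_\beta$ only once, possibly at an index $n$ where $f_\beta(n)>g(n)$, and then the point slips out of $St(F_n,\mathcal{U}_n)$. Upgrading the selection to a large cover via the partition-into-infinite-pieces device is exactly the repair, since for a fixed $\beta$ only finitely many indices are ``bad'' while infinitely many good indices survive at which the point is caught. The residual points — that each $St(x,\mathcal{U}_n)$ is open, and that patching the per-$I_j$ selections genuinely produces coverage at infinitely many levels — are routine verifications.
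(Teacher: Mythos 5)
Your argument is correct. Note that the paper itself gives no proof of Theorem~\ref{TN13}; it is imported verbatim from \cite{rosa22}, so there is no in-paper argument to compare against line by line. What you have written is the natural proof and it runs parallel to the proofs the paper does supply for the $\cov(\mathcal{M})$ analogues (Theorems~\ref{TN15}--\ref{TN16}): there the selections on each piece are encoded as functions $f_\alpha\in\mathbb{N}^\mathbb{N}$ and the cardinal hypothesis is used to \emph{guess} the family, which forces the pieces to satisfy $S_1(\Omega,\Gamma)$ so that each point is caught at a prescribed infinite set of levels; in your $\mathfrak{b}$ version guessing is replaced by $\leq^*$-bounding, and since boundedness only discards finitely many levels it suffices that each point be caught at \emph{infinitely many} levels, which is exactly what your partition-into-infinite-pieces upgrade of $S_{fin}(\mathcal{O},\mathcal{O})$ to a large-cover selection delivers. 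You correctly isolated the one genuine obstacle (a single catching index may land where $f_\beta(n)>g(n)$) and the repair is sound; the only cosmetic loose end is that $\max S_n^{(\alpha)}$ should be given a default value when the Menger selection at level $n$ happens to be empty, which is harmless.
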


\subsection{The extent}
Recall that for a space $X$, \[e(X)=\sup\{|Y| : Y\;\text{is a closed and discrete subspace of}\; X\}\] is said to be the extent of $X$.
\begin{Lemma}[{\!\cite[Lemma 2.6]{rssM}}]
\label{LN8}
The extent of a $T_1$ space $X$ and its Alexandroff duplicate $A(X)$ are equal. %$e(X)=e(A(X))$.
\end{Lemma}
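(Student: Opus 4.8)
The plan is to establish the two inequalities $e(A(X))\geq e(X)$ and $e(A(X))\leq e(X)$ separately. Throughout I write $X_0=X\times\{0\}$ and $X_1=X\times\{1\}$ for the two canonical pieces of $A(X)$ and $\pi:A(X)\to X$ for the projection onto the first coordinate. Reading off the basic neighbourhoods, the structural facts I would record first are that every point of $X_1$ is isolated (so $X_1$ is open and $X_0$ is closed) and that the subspace topology on $X_0$ coincides with the topology of $X$ transported by $x\mapsto\langle x,0\rangle$; thus $X_0$ is a closed copy of $X$ inside $A(X)$.

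For $e(A(X))\geq e(X)$ I would merely transport closed discrete sets: if $Y\subseteq X$ is closed and discrete, then $Y\times\{0\}$ is closed and discrete in the closed subspace $X_0$, hence in $A(X)$, and has the same cardinality as $Y$. Taking the supremum over all such $Y$ gives the inequality.

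The substantial direction is $e(A(X))\leq e(X)$. Here I would fix an arbitrary closed discrete $D\subseteq A(X)$ and split it as $D=D_0\cup D_1$ with $D_0=D\cap X_0$ and $D_1=D\cap X_1$. Since $D$ is closed and discrete it has no limit point in $A(X)$; the same is then true of the subset $D_1$. The set $D_0$ is closed and discrete in $X_0\cong X$, so $|D_0|\leq e(X)$ immediately. To bound $|D_1|$ the key claim is that $E:=\pi(D_1)\subseteq X$ is closed and discrete in $X$; since $\pi|_{X_1}$ is injective this yields $|D_1|=|E|\leq e(X)$.

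The main obstacle is verifying that $E$ has no limit point in $X$, and this is where the neighbourhood structure of $A(X)$ does the work. Suppose some $x\in X$ were a limit point of $E$, so that every neighbourhood $U$ of $x$ contains a point $y\in E$ with $y\neq x$; then $\langle y,1\rangle\in D_1$ with $y\in U$ and $\langle y,1\rangle\neq\langle x,1\rangle$. Because a basic neighbourhood of $\langle x,0\rangle$ has the form $(U\times\{0\})\cup((U\times\{1\})\setminus\{\langle x,1\rangle\})$, it must contain $\langle y,1\rangle$; hence every basic neighbourhood of $\langle x,0\rangle$ meets $D_1$, making $\langle x,0\rangle$ a limit point of $D_1\subseteq D$ and contradicting that $D$ has none. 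Thus $E$ is closed and discrete in $X$. Finally I would combine the two bounds as $|D|\leq|D_0|+|D_1|\leq e(X)$ and take the supremum over $D$. The $T_1$ hypothesis enters precisely at this last arithmetic step: in an infinite $T_1$ space every finite subset is closed and discrete, so $e(X)\geq\aleph_0$ and hence $e(X)+e(X)=e(X)$, which is what makes the two infinite bounds collapse to $e(X)$.
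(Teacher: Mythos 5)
Your proof is correct; the paper itself imports this lemma from \cite[Lemma 2.6]{rssM} without reproducing a proof, and your argument (splitting a closed discrete $D\subseteq A(X)$ into $D\cap(X\times\{0\})$ and $D\cap(X\times\{1\})$, using the basic neighbourhoods of $\langle x,0\rangle$ to show $\pi(D\cap(X\times\{1\}))$ is closed and discrete in $X$) is the standard one. Your remark that the $T_1$ hypothesis is consumed only in the final cardinal arithmetic, via $e(X)\geq\aleph_0$, is a correct and welcome precision.
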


\begin{Th}[{\!\cite[Theorem 2.5]{rsM-II}}]
\label{TN43}
Every $T_1$ star-Menger space $X$ with countable extent has star-Menger Alexandroff duplicate $A(X)$.
\end{Th}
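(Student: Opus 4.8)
The plan is to treat the closed copy $X\times\{0\}$ and the discrete set $X\times\{1\}$ in two stages: use the star-Menger property of $X$ along one infinite set of indices to cover $X\times\{0\}$ (and, for free, almost all of $X\times\{1\}$), and reserve the remaining indices to mop up a countable leftover in $X\times\{1\}$. First I would reduce to the case where each given cover consists of basic open sets of $A(X)$: passing to the refinement by basic sets only shrinks stars, since $St(B,\mathcal{W}_n')\subseteq St(W,\mathcal{W}_n)$ whenever $B\subseteq W\in\mathcal{W}_n$, so a solution for the basic refinements lifts to one for the original covers. Thus let $(\mathcal{W}_n)$ be covers of $A(X)$ by basic sets, each member being either a singleton $\{\langle x,1\rangle\}$ or a neighbourhood $B(x,U)=(U\times\{0\})\cup((U\times\{1\})\setminus\{\langle x,1\rangle\})$ of $\langle x,0\rangle$.

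Split $\mathbb{N}=M\cup L$ into two infinite sets. For $n\in M$ define the open cover $\mathcal{U}_n=\{U\subseteq X : B(x,U)\in\mathcal{W}_n\text{ for some }x\}$ of $X$; this is a cover because every $\langle x,0\rangle$ sits in a basic neighbourhood of type $B(\cdot,U)$. Since $X$ is star-Menger, I would choose finite $\mathcal{G}_n\subseteq\mathcal{U}_n$ so that $\bigcup_{n\in M}\{St(U,\mathcal{U}_n):U\in\mathcal{G}_n\}$ covers $X$, and for each $U\in\mathcal{G}_n$ fix a centre $x_U$ with $B(x_U,U)\in\mathcal{W}_n$, putting $\mathcal{V}_n=\{B(x_U,U):U\in\mathcal{G}_n\}$. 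A short computation transferring a witness $U'\ni z$ with $U'\cap U\neq\emptyset$ from $X$ up to $A(X)$ (using that $U'\cap U\neq\emptyset$ forces $B(y,U')\cap B(x_U,U)\neq\emptyset$ at the $0$-level) shows that $\bigcup_{n\in M}\{St(W,\mathcal{W}_n):W\in\mathcal{V}_n\}$ already covers all of $X\times\{0\}$.

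The heart of the argument, and the step I expect to be the main obstacle, is to control the points of $X\times\{1\}$ that escape this first stage. Let $S$ be the set of uncovered points; since $X\times\{0\}$ is covered, $S\subseteq X\times\{1\}$, so $S$ consists of isolated points and is automatically discrete. I would show $S$ is moreover \emph{closed} in $A(X)$: given $w\in X$, the point $\langle w,0\rangle$ lies in some $St(B(x_U,U),\mathcal{W}_n)$, so there is $W^\ast\in\mathcal{W}_n$ with $\langle w,0\rangle\in W^\ast$ and $W^\ast\cap B(x_U,U)\neq\emptyset$; as $W^\ast$ is a basic neighbourhood $B(w,U_w)$ of $\langle w,0\rangle$, every $\langle z,1\rangle$ with $z\in U_w\setminus\{w\}$ lies in $W^\ast$ and hence in the same star. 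Thus the neighbourhood $B(w,U_w)$ of $\langle w,0\rangle$ misses $S$, so no point of $X\times\{0\}$ is a limit point of $S$; being contained in the isolated set $X\times\{1\}$, $S$ is closed discrete in $A(X)$. By Lemma~\ref{LN8} together with the countable extent of $X$ we have $e(A(X))=e(X)\le\omega$, whence $S$ is countable.

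Finally I would use the reserved indices. Enumerate $S=\{\langle z_k,1\rangle:k\in\mathbb{N}\}$ and $L=\{l_k:k\in\mathbb{N}\}$, pick for each $k$ a member $W_k\in\mathcal{W}_{l_k}$ containing the isolated point $\langle z_k,1\rangle$, and set $\mathcal{V}_{l_k}=\{W_k\}$ (taking $\mathcal{V}_n=\emptyset$ for any unused $n\in L$ when $S$ is finite). Then $\langle z_k,1\rangle\in St(W_k,\mathcal{W}_{l_k})$, so the second stage covers $S$. Each $\mathcal{V}_n$ is a finite subset of $\mathcal{W}_n$, and $\bigcup_{n\in\mathbb{N}}\{St(W,\mathcal{W}_n):W\in\mathcal{V}_n\}$ covers $A(X)$, establishing that $A(X)$ is star-Menger. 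The only genuinely delicate point is the closed--discreteness of $S$ (where the $T_1$ hypothesis enters through Lemma~\ref{LN8}); the rest is bookkeeping with stars.
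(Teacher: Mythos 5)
Your proof is correct and follows essentially the same route as the paper's proof of the star-Rothberger analogue of this statement: split the index set into two infinite pieces, use the star selection property of $X$ along the first piece to cover $X\times\{0\}$, observe that the uncovered remainder is a closed discrete subset of $A(X)$ and hence countable by Lemma~\ref{LN8} and the countable extent hypothesis, and absorb it with the reserved indices. The only slip is cosmetic: the basic set $W^{\ast}\in\mathcal{W}_n$ containing $\langle w,0\rangle$ has the form $B(y,U_w)$ with possibly $y\neq w$, so one should pass to the smaller basic neighbourhood $B(w,U_w)\subseteq W^{\ast}$ (or discard the single possible stray point $\langle y,1\rangle$, using $T_1$) before concluding that $\langle w,0\rangle$ has a neighbourhood missing $S$.
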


\begin{Th}
\label{T1}
Every $T_1$ star-Rothberger space $X$ with countable extent has star-Rothberger Alexandroff duplicate $A(X)$.
\end{Th}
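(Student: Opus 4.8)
The plan is to transfer the star-Rothberger property of $X$ to the closed copy $X\times\{0\}$ of $A(X)$, and then to argue that whatever part of $A(X)$ remains uncovered is forced to be a countable discrete set of isolated points, which can be absorbed one point at a time on a second infinite block of indices. The decisive use of the hypothesis is that countable extent (via Lemma~\ref{LN8}) makes this leftover set countable, after which the single-selection nature of the Rothberger principle is no obstruction.

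Concretely, I would fix a sequence $(\mathcal{W}_n)$ of open covers of $A(X)$ and split $\mathbb{N}=\mathbb{N}_1\cup\mathbb{N}_2$ into two infinite sets. For each $n\in\mathbb{N}_1$ and each $x\in X$, choose $W_n(x)\in\mathcal{W}_n$ with $\langle x,0\rangle\in W_n(x)$ and a basic neighbourhood $B(x,U_n(x))=(U_n(x)\times\{0\})\cup((U_n(x)\times\{1\})\setminus\{\langle x,1\rangle\})$ contained in $W_n(x)$. Then $\mathcal{U}_n=\{U_n(x):x\in X\}$ is an open cover of $X$, so applying $S_1^*(\mathcal{O},\mathcal{O})$ of $X$ to $(\mathcal{U}_n)_{n\in\mathbb{N}_1}$ yields for each such $n$ a set $U_n=U_n(x_n)\in\mathcal{U}_n$ with $\{St(U_n,\mathcal{U}_n):n\in\mathbb{N}_1\}$ covering $X$; I then set $V_n=W_n(x_n)\in\mathcal{W}_n$.

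The crux is that $\{St(V_n,\mathcal{W}_n):n\in\mathbb{N}_1\}$ already covers all of $X\times\{0\}$. Indeed, given $\langle z,0\rangle$, pick $n\in\mathbb{N}_1$ and $w\in X$ with $z\in U_n(w)$ and $U_n(w)\cap U_n(x_n)\neq\emptyset$; a common point $t$ of $U_n(w)$ and $U_n(x_n)$ gives $\langle t,0\rangle\in B(w,U_n(w))\cap B(x_n,U_n(x_n))\subseteq W_n(w)\cap V_n$, so $\langle z,0\rangle\in W_n(w)\subseteq St(V_n,\mathcal{W}_n)$. Hence the uncovered set $R=A(X)\setminus\bigcup_{n\in\mathbb{N}_1}St(V_n,\mathcal{W}_n)$ lies entirely inside $X\times\{1\}$. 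Since each star is open, $R$ is closed in $A(X)$, and since every point of $X\times\{1\}$ is isolated, $R$ is discrete; thus $R$ is a closed discrete subspace of $A(X)$. By Lemma~\ref{LN8} and the countable extent of $X$ we have $e(A(X))=e(X)=\omega$, so $R$ is countable.

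Finally I would enumerate $R=\{\langle z_k,1\rangle:k\in\mathbb{N}\}$ (padding arbitrarily if $R$ is finite), write $\mathbb{N}_2=\{m_1<m_2<\cdots\}$, and for each $k$ choose $V_{m_k}\in\mathcal{W}_{m_k}$ with $\langle z_k,1\rangle\in V_{m_k}$, making arbitrary choices on any unused indices. Because these points are isolated, $\langle z_k,1\rangle\in V_{m_k}\subseteq St(V_{m_k},\mathcal{W}_{m_k})$, so the resulting sequence $(V_n)$ witnesses that $\{St(V_n,\mathcal{W}_n):n\in\mathbb{N}\}$ covers $A(X)$. The one point requiring care is the verification that covering $X\times\{0\}$ leaves a \emph{closed} remainder; once that is recognized, countable extent does all the remaining work by forcing $R$ to be countable, and the leftover isolated points are handled trivially even with a single open set selected at each stage.
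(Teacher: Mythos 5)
Your proof is correct and follows essentially the same route as the paper's: cover the closed copy $X\times\{0\}$ using the star-Rothberger property of $X$ on one infinite block of indices, observe that the remainder is a closed discrete (hence, by Lemma~\ref{LN8} and countable extent, countable) set of isolated points in $X\times\{1\}$, and absorb it pointwise on the second block. Your explicit verification via the common point $t$ is exactly the step the paper compresses into ``clearly''.
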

\begin{proof}
Let $(\mathcal{U}_n)$ be a sequence of open covers of $A(X)$. For each $n$ and each $x\in X$ consider the open set $U_x^{(n)}=(V_x^{(n)}\times\{0,1\})\setminus\{\langle x,1\rangle\}$ containing $\langle x,0\rangle$ in $A(X)$, where $V_x^{(n)}$ is an open set in $X$ containing $x$ and $U_x^{(n)}\subseteq U$ for some $U\in\mathcal{U}_n$. Choose a sequence $(\mathcal{W}_n)$ of open covers of $X$, where $\mathcal{W}_n=\{V_x^{(n)} : x\in X\}$. Next choose two infinite disjoint subsets $N_1$ and $N_2$ of $\mathbb{N}$ such that $\mathbb{N}=N_1\cup N_2$. Since $X$ is star-Rothberger, there is a sequence $(V_n : n\in N_1)$ such that for each $n\in N_1$, $V_n\in\mathcal{W}_n$ and $\{St(V_n,\mathcal{W}_n) : n\in N_1\}$ covers $X$. For each $n\in N_1$ choose a $U_n\in\mathcal{U}_n$ such that $V_n\times\{0\}\subseteq U_n$. Clearly $X\times\{0\}\subseteq\cup_{n\in N_1}St(U_n,\mathcal{U}_n)$. Observe that $A(X)\setminus\cup_{n\in N_1}St(U_n,\mathcal{U}_n)$ is a closed discrete subset of $A(X)$. By Lemma~\ref{LN8}, $A(X)\setminus\cup_{n\in N_1}St(U_n,\mathcal{U}_n)$ is countable and we enumerate it bijectively as $\{y_n : n\in N_2\}$. For each $n\in N_2$ choose a $U_n\in\mathcal{U}_n$ such that $y_n\in U_n$. Thus the sequence $(U_n)$ witnesses for $(\mathcal{U}_n)$ that $A(X)$ is star-Rothberger.
\end{proof}

\begin{Th}[{\!\cite[Theorem 2.12]{RSKM}}]
\label{TN41}
Every $T_1$ star-K-Menger space $X$ with countable extent has star-K-Menger Alexandroff duplicate $A(X)$.
\end{Th}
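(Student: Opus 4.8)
The plan is to mirror the argument for the star-Rothberger case (Theorem~\ref{T1}), replacing the single open set selected at each stage by a compact set, while keeping the same bookkeeping that splits $\mathbb{N}$ into two infinite pieces to treat the copy $X\times\{0\}$ and the isolated layer $X\times\{1\}$ separately. First I would fix a sequence $(\mathcal{U}_n)$ of open covers of $A(X)$ and, for each $n$ and each $x\in X$, record a basic neighbourhood $U_x^{(n)}=(V_x^{(n)}\times\{0,1\})\setminus\{\langle x,1\rangle\}$ of $\langle x,0\rangle$ contained in some member of $\mathcal{U}_n$, where $V_x^{(n)}$ is an open set in $X$ containing $x$. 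This yields open covers $\mathcal{W}_n=\{V_x^{(n)} : x\in X\}$ of $X$. Writing $\mathbb{N}=N_1\cup N_2$ as a disjoint union of two infinite sets, I would apply the star-K-Menger property of $X$ to the subsequence $(\mathcal{W}_n : n\in N_1)$ to obtain compact sets $K_n\subseteq X$, $n\in N_1$, with $\{St(K_n,\mathcal{W}_n) : n\in N_1\}$ covering $X$.

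The next step is to lift each $K_n$ to a compact subset of $A(X)$. Since every point of $X\times\{1\}$ is isolated, $X\times\{0\}$ is closed in $A(X)$ and the map $x\mapsto\langle x,0\rangle$ embeds $X$ onto this closed copy; hence $C_n=K_n\times\{0\}$ is compact in $A(X)$ for each $n\in N_1$. The crucial verification is the star inclusion $St(K_n,\mathcal{W}_n)\times\{0\}\subseteq St(C_n,\mathcal{U}_n)$: if $x\in St(K_n,\mathcal{W}_n)$, witnessed by some $V_y^{(n)}\ni x$ with $V_y^{(n)}\cap K_n\neq\emptyset$, then the corresponding $U\in\mathcal{U}_n$ with $U_y^{(n)}\subseteq U$ contains $\langle x,0\rangle$ and meets $C_n$, because any $z\in V_y^{(n)}\cap K_n$ gives $\langle z,0\rangle\in V_y^{(n)}\times\{0\}\subseteq U_y^{(n)}\subseteq U$ with $\langle z,0\rangle\in C_n$. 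Consequently $X\times\{0\}\subseteq\bigcup_{n\in N_1}St(C_n,\mathcal{U}_n)$.

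Finally, the leftover set $D=A(X)\setminus\bigcup_{n\in N_1}St(C_n,\mathcal{U}_n)$ is closed, being the complement of an open set, and is contained in the isolated layer $X\times\{1\}$, hence discrete; by Lemma~\ref{LN8} the extent of $A(X)$ equals that of $X$, so $D$ is countable and can be enumerated bijectively as $\{y_n : n\in N_2\}$. Setting $C_n=\{y_n\}$, a compact singleton, for each $n\in N_2$ completes the sequence $(C_n)$ of compact subsets of $A(X)$, and $\{St(C_n,\mathcal{U}_n) : n\in\mathbb{N}\}$ covers $A(X)$. I expect the main obstacle to be precisely the compactness-lifting step together with the inclusion $St(K_n,\mathcal{W}_n)\times\{0\}\subseteq St(C_n,\mathcal{U}_n)$; once one observes that $X$ sits in $A(X)$ as the closed copy $X\times\{0\}$, so that compact sets and their stars transfer correctly, the remaining bookkeeping runs parallel to the star-Rothberger proof.
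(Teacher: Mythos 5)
Your proof is correct and follows exactly the template that the paper uses for the star-Rothberger analogue of this result (the statement itself is only cited here from Song's paper): refine by basic neighbourhoods, split $\mathbb{N}$ into two infinite pieces, cover $X\times\{0\}$ by lifting the compact selectors $K_n$ to $K_n\times\{0\}$, and absorb the closed discrete remainder inside $X\times\{1\}$ using $e(A(X))=e(X)\leq\omega$. The compactness-lifting step and the star inclusion $St(K_n,\mathcal{W}_n)\times\{0\}\subseteq St(K_n\times\{0\},\mathcal{U}_n)$ are verified correctly, so nothing further is needed.
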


\begin{Th}[{\!\cite[Theorem 2.7]{rssM}}]
\label{TN42}
Every $T_1$ strongly star-Menger space $X$ with countable extent has strongly star-Menger Alexandroff duplicate $A(X)$.
\end{Th}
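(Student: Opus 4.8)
The plan is to mimic the argument of Theorem~\ref{T1}, replacing the single open-set selections of the star-Rothberger property by the finite-set selections afforded by the strongly star-Menger property $SS_{fin}^*(\mathcal{O},\mathcal{O})$. First I would fix a sequence $(\mathcal{U}_n)$ of open covers of $A(X)$. For each $n$ and each $x\in X$ I would choose, exactly as in Theorem~\ref{T1}, an open set $V_x^{(n)}$ in $X$ with $x\in V_x^{(n)}$ so that the basic neighbourhood $U_x^{(n)}=(V_x^{(n)}\times\{0,1\})\setminus\{\langle x,1\rangle\}$ of $\langle x,0\rangle$ is contained in some member of $\mathcal{U}_n$; this produces open covers $\mathcal{W}_n=\{V_x^{(n)}:x\in X\}$ of $X$.

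Next I would split $\mathbb{N}$ into two disjoint infinite subsets $N_1\cup N_2$ and apply the strongly star-Menger property of $X$ to the subsequence $(\mathcal{W}_n:n\in N_1)$, obtaining finite sets $F_n\subseteq X$ (for $n\in N_1$) such that $\{St(F_n,\mathcal{W}_n):n\in N_1\}$ covers $X$. Setting $G_n=F_n\times\{0\}$ (a finite subset of $A(X)$), I would verify the transfer claim $X\times\{0\}\subseteq\bigcup_{n\in N_1}St(G_n,\mathcal{U}_n)$: given $\langle z,0\rangle$, pick $n\in N_1$ and $V_x^{(n)}\in\mathcal{W}_n$ with $z\in V_x^{(n)}$ and $V_x^{(n)}\cap F_n\neq\emptyset$; the member $U\in\mathcal{U}_n$ containing $U_x^{(n)}$ then contains both $\langle z,0\rangle$ and a point of $G_n$, so $\langle z,0\rangle\in St(G_n,\mathcal{U}_n)$.

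It remains to cover the leftover $L=A(X)\setminus\bigcup_{n\in N_1}St(G_n,\mathcal{U}_n)$. By the transfer claim $L$ is disjoint from $X\times\{0\}$, hence $L\subseteq X\times\{1\}$ consists of isolated points and, being the complement of a union of open stars, is closed; so $L$ is a closed discrete subspace of $A(X)$. Here the countable-extent hypothesis enters in an essential way: by Lemma~\ref{LN8} one has $e(A(X))=e(X)\le\aleph_0$, whence $L$ is countable. Enumerating $L=\{y_n:n\in N_2\}$ and putting $G_n=\{y_n\}$ for $n\in N_2$, each $y_n$ lies in some member of $\mathcal{U}_n$ and thus in $St(G_n,\mathcal{U}_n)$. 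The sequence $(G_n)_{n\in\mathbb{N}}$ of finite subsets of $A(X)$ then witnesses $SS_{fin}^*(\mathcal{O},\mathcal{O})$ for $A(X)$.

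The neighbourhood bookkeeping and the transfer claim are routine; the one step that genuinely requires the hypothesis is the assertion that $L$ is closed discrete and therefore countable, which is precisely where Lemma~\ref{LN8} is invoked. I expect no real obstacle beyond checking the transfer claim carefully, since the finite point-set selection of the strongly star-Menger property behaves structurally just like the open-set selection of Theorem~\ref{T1} once the chosen finite witness set is crossed with $\{0\}$ to move it into $A(X)$.
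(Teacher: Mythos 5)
Your proof is correct and follows essentially the same route the paper uses: the paper states this theorem as a citation to Song, but its own proof of the directly analogous Theorem~\ref{T1} (the star-Rothberger case) is exactly the $N_1\cup N_2$ splitting, the transfer of a selection for the covers $\mathcal{W}_n$ of $X$ to stars in $A(X)$ covering $X\times\{0\}$, and the use of Lemma~\ref{LN8} to make the closed discrete leftover in $X\times\{1\}$ countable, which is precisely what you do with finite sets $F_n\times\{0\}$ in place of single open sets. The only cosmetic point is that the leftover set $L$ may be finite or empty, in which case the bijective enumeration over $N_2$ should be replaced by a trivial choice of the remaining $G_n$; this does not affect the argument.
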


\begin{Th}
\label{T1}
Every $T_1$ strongly star-Rothberger space $X$ with countable extent has strongly star-Rothberger Alexandroff duplicate $A(X)$.
\end{Th}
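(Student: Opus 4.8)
The plan is to mirror the proof of the star-Rothberger case (the preceding theorem) and of Theorem~\ref{TN42}, replacing the selection of open sets from each cover by the selection of single points, as demanded by $SS_1^*(\mathcal{O},\mathcal{O})$. Given a sequence $(\mathcal{U}_n)$ of open covers of $A(X)$, I would first manufacture a companion sequence $(\mathcal{W}_n)$ of open covers of $X$: for each $n$ and each $x\in X$ pick a basic neighbourhood $U_x^{(n)}=(V_x^{(n)}\times\{0,1\})\setminus\{\langle x,1\rangle\}$ of $\langle x,0\rangle$ lying inside some member of $\mathcal{U}_n$, and set $\mathcal{W}_n=\{V_x^{(n)} : x\in X\}$. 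Then partition $\mathbb{N}$ into two infinite pieces $N_1$ and $N_2$; the indices in $N_1$ will cover the ``bottom'' copy $X\times\{0\}$, and those in $N_2$ will mop up whatever survives on the isolated ``top'' copy $X\times\{1\}$.

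The heart of the argument is the handling of $N_1$. Applying the strongly star-Rothberger property of $X$ to $(\mathcal{W}_n : n\in N_1)$ produces points $x_n\in X$ (for $n\in N_1$) with $\{St(x_n,\mathcal{W}_n) : n\in N_1\}$ covering $X$. I would then take $\langle x_n,0\rangle$ as the selected point at stage $n$ and verify the containment $St(x_n,\mathcal{W}_n)\times\{0\}\subseteq St(\langle x_n,0\rangle,\mathcal{U}_n)$: if $y\in St(x_n,\mathcal{W}_n)$, then some $V_{x'}^{(n)}\in\mathcal{W}_n$ contains both $x_n$ and $y$, whence $\langle x_n,0\rangle$ and $\langle y,0\rangle$ both lie in $U_{x'}^{(n)}\subseteq U$ for a common $U\in\mathcal{U}_n$, so $\langle y,0\rangle\in St(\langle x_n,0\rangle,\mathcal{U}_n)$. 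Consequently $X\times\{0\}\subseteq\cup_{n\in N_1}St(\langle x_n,0\rangle,\mathcal{U}_n)$.

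It remains to cover the residual set $R=A(X)\setminus\cup_{n\in N_1}St(\langle x_n,0\rangle,\mathcal{U}_n)$. Since each star is open, $R$ is closed in $A(X)$; and since $R$ is disjoint from $X\times\{0\}$ it lies entirely in $X\times\{1\}$, whose points are isolated, so $R$ is closed and discrete. Here Lemma~\ref{LN8} is decisive: the extent of $A(X)$ equals that of $X$, which is countable, forcing $R$ to be countable. Enumerating $R$ bijectively as $\{y_n : n\in N_2\}$ and, for each $n\in N_2$, choosing the selected point to be $y_n$ itself gives $y_n\in St(y_n,\mathcal{U}_n)$ automatically. Combining both regimes, the resulting point sequence witnesses $SS_1^*(\mathcal{O},\mathcal{O})$ for $A(X)$. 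The main obstacle is the $N_1$-step containment above: one must be careful that the \emph{point} $\langle x_n,0\rangle$ (rather than an open set) genuinely captures the whole of $St(x_n,\mathcal{W}_n)\times\{0\}$, and this hinges on both $x_n$ and the target point sitting inside a single $V_{x'}^{(n)}$; everything else is the familiar closed-discrete-remainder bookkeeping controlled by the countable extent.
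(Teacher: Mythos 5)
Your proof is correct and follows exactly the argument the paper intends: it is the point-selection adaptation of the paper's proof of the star-Rothberger case (splitting $\mathbb{N}$ into $N_1$ for the bottom copy and $N_2$ for the closed discrete remainder, controlled via Lemma~\ref{LN8}), and the key containment $St(x_n,\mathcal{W}_n)\times\{0\}\subseteq St(\langle x_n,0\rangle,\mathcal{U}_n)$ is verified properly. The only cosmetic point is the trivial case where the residual set is finite or empty, which the paper's own argument also glosses over.
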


We do not know whether similar result holds for the star variants of the Hurewicz property.
\begin{Prob}\rm
\label{QN1}
Let $X$ be a $T_1$ star-Hurewicz (respectively, star-K-Hurewicz, strongly star-Hurewicz) space with countable extent. Is the Alexandroff duplicate $A(X)$ of $X$ star-Hurewicz (respectively, star-K-Hurewicz, strongly star-Hurewicz)?
\end{Prob}

\begin{Th}
\label{TN27}
If the Alexandroff duplicate $A(X)$ of a $T_1$ space $X$ is star-Lindel\"{o}f, then $X$ has countable extent.
\end{Th}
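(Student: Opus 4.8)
The plan is to argue by contraposition: assuming $X$ fails to have countable extent, I would manufacture a single open cover of $A(X)$ witnessing that $A(X)$ is \emph{not} star-Lindel\"of. So suppose $X$ admits an uncountable closed discrete subspace $D=\{d_\alpha:\alpha<\omega_1\}$. The structural feature of the Alexandroff duplicate I intend to exploit is that each $\langle x,1\rangle$ is isolated and, crucially, is \emph{excluded} from every basic neighbourhood $(U\times\{0\})\cup((U\times\{1\})\setminus\{\langle x,1\rangle\})$ of $\langle x,0\rangle$. Hence a top point $\langle x,1\rangle$ can only be captured either by its own singleton or by a neighbourhood of some \emph{other} bottom point $\langle y,0\rangle$ with $x\in U$; the whole argument hinges on ruling out the latter possibility for the points coming from $D$.

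First I would use the closed discreteness of $D$ to select, for each $\alpha$, an open $W_\alpha\subseteq X$ with $W_\alpha\cap D=\{d_\alpha\}$ (this is discreteness), and for each $x\in X\setminus D$ an open $V_x\ni x$ with $V_x\cap D=\emptyset$ (this is closedness, so that $X\setminus D$ is open). Then I form the open cover
\[
\mathcal{U}=\{O_x:x\in X\}\cup\{\{\langle y,1\rangle\}:y\in X\},
\]
where $O_{d_\alpha}=(W_\alpha\times\{0\})\cup((W_\alpha\times\{1\})\setminus\{\langle d_\alpha,1\rangle\})$ and $O_x=(V_x\times\{0\})\cup((V_x\times\{1\})\setminus\{\langle x,1\rangle\})$ for $x\notin D$. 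This is visibly a cover of $A(X)$: every bottom point $\langle x,0\rangle$ lies in its own $O_x$, and every top point lies in its own singleton.

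The heart of the proof is the claim that for each $\alpha$ the \emph{only} member of $\mathcal{U}$ containing $\langle d_\alpha,1\rangle$ is the singleton $\{\langle d_\alpha,1\rangle\}$. Indeed $O_{d_\alpha}$ omits it by definition; $O_{d_\beta}$ with $\beta\neq\alpha$ misses it because $d_\alpha\notin W_\beta$ (as $W_\beta\cap D=\{d_\beta\}$); and $O_x$ with $x\notin D$ misses it because $d_\alpha\notin V_x$. Consequently, if some $\mathcal{V}\subseteq\mathcal{U}$ satisfies $St(\cup\mathcal{V},\mathcal{U})=A(X)$, then covering $\langle d_\alpha,1\rangle$ by the star forces $\langle d_\alpha,1\rangle\in\cup\mathcal{V}$, and since the singleton is its only container in $\mathcal{U}$ this in turn forces $\{\langle d_\alpha,1\rangle\}\in\mathcal{V}$. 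Thus $\mathcal{V}$ contains the $\omega_1$ distinct singletons $\{\langle d_\alpha,1\rangle\}$ and cannot be countable; no countable subfamily stars onto $A(X)$, so $A(X)$ is not star-Lindel\"of, completing the contrapositive.

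The hard part will be conceptual rather than computational: one must resist deducing the statement from $e(X)=e(A(X))$ (Lemma~\ref{LN8}) together with some ``star-Lindel\"of $\Rightarrow$ countable extent'' principle, because that principle is simply \emph{false} in general --- any separable space is star-Lindel\"of (a countable dense set selects a countable subfamily whose union meets every open set), yet $\Psi(\mathcal{A})$ for an uncountable almost disjoint family is separable with uncountable extent. The genuine work is therefore the careful choice of $W_\alpha$ and $V_x$ guaranteeing that no top point $\langle d_\alpha,1\rangle$ is captured by a neighbourhood of a different bottom point; this separation is precisely what closed discreteness of $D$ supplies and is where the hypothesis is used. (The $T_1$ assumption plays no essential role in this direction beyond its presence in the ambient framework.)
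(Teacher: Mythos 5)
Your proof is correct, and it reaches the same obstruction as the paper by a more hands-on route. The paper's proof is a three-line reduction: $D\times\{1\}$ is an uncountable \emph{clopen} discrete subspace of $A(X)$ (open since the top points are isolated; closed since the closed discrete set $D$ has no limit points in $X$), and star-Lindel\"ofness is preserved by clopen subspaces, so $A(X)$ cannot be star-Lindel\"of. Your cover $\mathcal{U}$ is in effect an inlined proof of that preservation step for this particular clopen set: the singletons $\{\langle d_\alpha,1\rangle\}$ are exactly the discrete cover of $D\times\{1\}$, and your choice of $W_\alpha$ and $V_x$ is what guarantees that no star emanating from the complement can reach these points. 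What your route buys is a self-contained argument with an explicit witnessing cover, independent of the clopen-preservation fact (which the paper cites without proof); what the paper's route buys is brevity. Your side remarks are also accurate: the paper's proof likewise does not pass through Lemma~\ref{LN8}, precisely because star-Lindel\"ofness does not bound the extent (separable spaces such as $\Psi(\mathcal{A})$ with $\mathcal{A}$ uncountable are star-Lindel\"of yet have uncountable extent), and $T_1$ plays no essential role in either argument. The only step worth making explicit in your write-up is the passage from ``$X$ does not have countable extent'' to a closed discrete $D$ of cardinality exactly $\omega_1$: take any uncountable closed discrete subspace and note that every subset of a closed discrete set is again closed and discrete.
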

\begin{proof}
Assume on the contrary that $e(X)\geq\omega_1$. Accordingly there exists a closed discrete subset $D$ of $X$ with $|D|\geq\omega_1$. Observe that $D\times\{1\}$ is a clopen subset of $A(X)$ and since each point of $D\times\{1\}$ is isolated, it is also discrete. Since the star-Lindel\"{o}fness is preserved under clopen subsets, $A(X)$ is not star-Lindel\"{o}f. Which is not possible.
\end{proof}

As a consequence of the preceding observation we obtain  \cite[Theorem 2.6]{rsM-II}, \cite[Theorem 2.13]{RSKM}, \cite[Theorem 2.8]{rssM}, \cite[Theorem 2.5]{RSH} and \cite[Theorem 2.4]{RSSH}.

\begin{Cor}
\label{T2}
Every $T_1$ space $X$ has countable extent, provided the Alexandroff duplicate $A(X)$ satisfies any one of the following properties.
\begin{multicols}{2}
\begin{enumerate}[wide=0pt,label={\upshape(\arabic*)},leftmargin=*]
  \item star-Menger {\rm\cite{rsM-II}}.
  \item star-Hurewicz {\rm\cite{RSH}}.
  \item star-Rothberger.
  \item star-K-Menger {\rm\cite{RSKM}}.
  \item star-K-Hurewicz.
  \item strongly star-Menger {\rm\cite{rssM}}.
  \item strongly star-Hurewicz {\rm\cite{RSSH}}.
  \item strongly star-Rothberger.
\end{enumerate}
\end{multicols}
\end{Cor}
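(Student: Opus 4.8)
The plan is to route every one of the eight hypotheses through a single intermediate property, namely star-Lindel\"{o}fness of $A(X)$, and then to quote Theorem~\ref{TN27}. Since $A(X)$ is $T_1$ whenever $X$ is $T_1$, once we know that $A(X)$ is star-Lindel\"{o}f, Theorem~\ref{TN27} yields at once that $X$ has countable extent, which is precisely the desired conclusion in each of the cases $(1)$--$(8)$.

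First I would read off Figure~\ref{dig1} that each of the listed principles implies the star-Menger property $S_{fin}^*(\mathcal{O},\mathcal{O})$. Indeed, star-Hurewicz $U_{fin}^*(\mathcal{O},\Gamma)$ and star-Rothberger $S_1^*(\mathcal{O},\mathcal{O})$ sit in the top row with arrows into $S_{fin}^*(\mathcal{O},\mathcal{O})$, while the star-K and strongly star variants occupy the lower rows, whence the vertical arrows of the diagram funnel upward into the very same node. Hence the assumption that $A(X)$ satisfies property $(i)$, for any $1\leq i\leq 8$, already forces $A(X)$ to be star-Menger.

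Then I would invoke the preliminary observation that every star-Menger space is star-Lindel\"{o}f, applied to the space $A(X)$; this shows that $A(X)$ is star-Lindel\"{o}f in all eight cases, and Theorem~\ref{TN27} finishes the proof. There is essentially no obstacle here, as the whole content is carried by Theorem~\ref{TN27} and the only point to check is the bookkeeping along Figure~\ref{dig1}. One might pause over the fact that the star-K and strongly star hypotheses are customarily paired with the a priori weaker star-L-Lindel\"{o}f and strongly star-Lindel\"{o}f conclusions rather than with plain star-Lindel\"{o}fness; but a countable (respectively, Lindel\"{o}f) witness set $K$ with $St(K,\mathcal{U})=X$ can always be covered by countably many members of $\mathcal{U}$, whose union then stars onto $X$, so these variants all imply star-Lindel\"{o}fness anyway, and in any case the diagram already routes every hypothesis through ordinary star-Menger, so this distinction never actually bites.
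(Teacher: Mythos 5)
Your proposal is correct and follows essentially the same route as the paper: the paper derives this corollary directly from Theorem~\ref{TN27} (``As a consequence of the preceding observation\ldots''), relying implicitly on the fact that each listed property of $A(X)$ implies star-Lindel\"{o}fness of $A(X)$. Your explicit routing through Figure~\ref{dig1} to star-Menger, and your check that the strongly star-Lindel\"{o}f and star-L-Lindel\"{o}f variants also yield plain star-Lindel\"{o}fness, just makes the paper's implicit bookkeeping visible.
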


Also see \cite{rsM,rsM-II,rssM} for detailed investigations on the extent of star-Menger (star-Hurewicz, strongly star-Menger, strongly star-Hurewicz, strongly star-Rothberger) spaces.
The following examples illustrate the behaviour of the extent of star-K-Menger and star-K-Hurewicz spaces. If $\kappa$ is any infinite cardinal, then by \cite[Theorem 1]{HWWE}, there exists a Tychonoff space $X$ with $e(X)\geq\kappa$. The next example shows that such a space is indeed star-K-Hurewicz (hence star-K-Menger).
\begin{Ex}[{\!\cite[Theorem 1]{HWWE}}]\rm
\label{E6}
For any infinite cardinal $\kappa$, there exists a Tychonoff star-K-Hurewicz (hence star-K-Menger) strongly star-Lindel\"{o}f space $X(\kappa)$ such that $e(X(\kappa))\geq\kappa$.
\end{Ex}
\begin{proof}
For each $\alpha<\kappa$, define $f_\alpha\in\{0,1\}^\kappa$ by
\begin{equation*}
f_\alpha(\beta)=
\begin{cases}
1 & \text{if~} \beta=\alpha\\
0 & \text{otherwise}.
\end{cases}
\end{equation*}
Choose $D=\{f_\alpha : \alpha<\kappa\}$ and $$X(\kappa)=(\{0,1\}^\kappa\times(\kappa^++1))
\setminus((\{0,1\}^\kappa\setminus D)\times\{\kappa^+\})$$ as a subspace of the product space $\{0,1\}^\kappa\times(\kappa^++1)$. In \cite[Theorem 1]{HWWE}, Matveev showed that $X(\kappa)$ is a Tychonoff strongly star-Lindel\"{o}f space and $D\times\{\kappa^+\}$ is a closed and discrete subset of it. Hence $e(X(\kappa))\geq\kappa$.

It now suffices to show that $X(\kappa)$ is K-starcompact. Let $\mathcal{U}$ be an open cover of $X(\kappa)$. Choose a $\beta<\kappa^+$ such that $$D\times\{\kappa^+\}\subseteq St(\{0,1\}^\kappa\times[0,\beta],\mathcal{U})$$ (see the proof of \cite[Theorem 1]{HWWE}). Since $\{0,1\}^\kappa\times\kappa^+$ is countably compact, it is K-starcompact. Subsequently there is a compact subset $K$ of $X(\kappa)$ such that $$\{0,1\}^\kappa\times\kappa^+\subseteq St(K,\mathcal{U}).$$ Thus, like $X(\kappa)=(D\times\{\kappa^+\})\cup(\{0,1\}^\kappa\times\kappa^+)$, the set $(\{0,1\}^\kappa\times[0,\beta])\cup K$ witnesses that $X(\kappa)$ is K-starcompact. This concludes the proof.
\end{proof}

\begin{Ex}[{\!\cite[Lemma 2.3]{sKH}}]\rm
\label{E5}
For any infinite cardinal $\kappa>\omega$, there exists a Tychonoff star-K-Hurewicz (hence star-K-Menger) space $Y(\kappa)$ such that $e(Y(\kappa))\geq\kappa$ which is not strongly star-Lindel\"{o}f.
\end{Ex}
\begin{proof}
Consider $$Y(\kappa)=(\beta D\times[0,\kappa^+])\setminus((\beta D\setminus D)\times\{\kappa^+\})$$ as a subspace of $\beta D\times[0,\kappa^+]$, where $D=\{d_\alpha : \alpha<\kappa\}$ is the discrete space of cardinality $\kappa$. Then $Y(\kappa)$ is a Tychonoff star-K-Hurewicz space (see \cite[Lemma 2.3]{sKH}). It is easy to see that $D\times\{\kappa^+\}$ is a discrete and closed subset of $Y(\kappa)$. Thus $e(Y(\kappa))\geq\kappa$.

Next we claim that $Y(\kappa)$ is not strongly star-Lindel\"{o}f. Suppose that $Y(\kappa)$ is strongly star-Lindel\"{o}f. Obviously $$\mathcal{U}=\{\beta D\times[0,\kappa^+)\}\cup\{\{d_\alpha\}\times[0,\kappa^+] : \alpha<\kappa\}$$ is an open cover of $Y(\kappa)$. Then there exists a countable subset $A$ of $Y(\kappa)$ such that $Y(\kappa)=St(A,\mathcal{U})$. Since $A$ is countable, we get a $\gamma<\kappa$ with $$A\cap(\{d_\alpha\}\times[0,\kappa^+])=\emptyset$$ for all $\alpha>\gamma$. Choose $\gamma<\alpha_0<\kappa$. Then $$\langle d_{\alpha_0},\kappa^+\rangle\notin St(A,\mathcal{U})$$ because $\{d_{\alpha_0}\}\times[0,\kappa^+]$ is the only member of $\mathcal{U}$ containing the point $\langle d_{\alpha_0},\kappa^+\rangle$, which is a contradiction. Hence $Y(\kappa)$ is not strongly star-Lindel\"{o}f.
\end{proof}

A space $X$ is said to be metacompact (respectively, subparacompact) \cite{Burke} if every open cover of it has a point-finite open refinement (respectively, $\sigma$-discrete closed refinement). Observe that the spaces $X(\kappa)$ and $Y(\kappa)$ in the preceding examples contain a non-compact countably compact closed subspace which is homeomorphic to $\kappa^+$. Thus the spaces $X(\kappa)$ and $Y(\kappa)$ as considered above fail to be metacompact and subparacompact as well. The following problems can be raised.

\begin{Prob}\rm
\label{Q1}
Can the extent of a metacompact (or, subparacompact) star-K-Hurewicz space be arbitrarily large?
\end{Prob}

\begin{Prob}\rm
\label{QN2}
Can the extent of a metacompact (or, subparacompact) star-K-Menger space be arbitrarily large?
\end{Prob}

Next we give an affirmative answer to the above problems.
\begin{Ex}[{\!\cite[Example 3.4]{SVM}}]\rm
\label{E7}
For any infinite cardinal $\kappa$, there exists a Hausdorff (non-regular) metacompact subparacompact star-K-Hurewicz (hence star-K-Menger) space $Z(\kappa)$ such that $e(Z(\kappa))\geq\kappa$. Moreover $Z(\kappa)$ is not strongly star-Lindel\"{o}f for $\kappa>\omega$.
\end{Ex}
\begin{proof}
Let $aD=D\cup\{d\}$ be the one point compactification of the discrete space $D=\{d_\alpha : \alpha<\kappa\}$ of cardinality $\kappa$. We substitute the local base of the point $\langle d,\omega\rangle$ by the family $$\mathcal{B}=\{U\setminus(D\times\{\omega\}) : \langle d,\omega\rangle\in U\;\text{and}\; U\;\text{is an open set in}\; aD\times[0,\omega]\}$$ in the product space $aD\times[0,\omega]$ and let $Z(\kappa)$ be the space obtained by such substitution. Note that $Z(\kappa)$ is a Hausdorff (non-regular) metacompact subparacompact space with $e(Z(\kappa))\geq\kappa$ and also it is not strongly star-Lindel\"{o}f for $\kappa>\omega$ (see \cite[Example 3.4]{SVM}).

To show that $Z(\kappa)$ is star-K-Hurewicz, it suffices to show that $Z(\kappa)$ is K-starcompact. Let $\mathcal{U}$ be an open cover of $Z(\kappa)$. We pick a $V\in\mathcal{U}$ with $\langle d,\omega\rangle\in V$. Then there exists a $U\setminus(D\times\{\omega\})\in\mathcal{B}$ such that $U\setminus(D\times\{\omega\})\subseteq V$. We claim that $U\setminus(D\times\{\omega\})$ is a compact subset of $Z(\kappa)$. Let $\mathcal{V}$ be a cover of $U\setminus(D\times\{\omega\})$ by open sets in $Z(\kappa)$. Choose a $W\in\mathcal{V}$ and a $O\setminus(D\times\{\omega\})\in\mathcal{B}$ such that $$\langle d,\omega\rangle\in W\;\text{and}\;O\setminus(D\times\{\omega\})\subseteq W.$$ It can be easily concluded that a finite subset of $\mathcal{V}$ covers $U\setminus(D\times\{\omega\})$ because $O\setminus(D\times\{\omega\})$ contains all but finitely many points of $U\setminus(D\times\{\omega\})$. Thus $U\setminus(D\times\{\omega\})$ is a compact subset of $Z(\kappa)$. One can readily observe that $U\setminus(D\times\{\omega\})$ does not contain only the points of  $D\times\{\omega\}$ and the points of $$\{\langle d_\alpha, m\rangle : \alpha<\alpha_0\;\text{and}\;m<m_0\}$$ for some finite $\alpha_0<\kappa$ and for some $m_0\in\omega$. Now $$K=(U\setminus(D\times\{\omega\}))\cup\{\langle d_\alpha, m\rangle : \alpha<\alpha_0\;\text{and}\;m<m_0\}$$ guarantees for $\mathcal{U}$ that $Z(\kappa)$ is K-starcompact since $K$ intersects every member of $\mathcal{U}$.
\end{proof}

\section{Diagrams for star selection principles}
\begin{Def}
\label{DN1}
An open cover $\mathcal{U}=\{U_\alpha : \alpha<\kappa\}$ of a space $X$ is said to be a modified large cover if for each $\beta<\kappa$, $\{U_\alpha : \beta\leq\alpha<\kappa\}$ is also a cover of $X$. The collection of all modified large covers of $X$ is denoted by $\Lambda^*$.
\end{Def}

\begin{Lemma}
\label{LN7}
For a space $X$ the following assertions hold.
\begin{enumerate}[wide=0pt,label={\upshape(\arabic*)},leftmargin=*,
ref={\theLemma(\arabic*)}]
  \item\label{LN701} $S_{fin}(\mathcal{O},\mathcal{O})=S_{fin}(\mathcal{O},\Lambda^*)$.
  \item\label{LN1302} $SS_{fin}^*(\mathcal{O},\mathcal{O})=SS_{fin}^*(\mathcal{O},\Lambda^*)$.
   \item\label{LN1301} $SS_{comp}^*(\mathcal{O},\mathcal{O})=SS_{comp}^*(\mathcal{O},\Lambda^*)$.
  \item\label{LN702} $S_{fin}^*(\mathcal{O},\mathcal{O})=S_{fin}^*(\mathcal{O},\Lambda^*)$.
\end{enumerate}
\end{Lemma}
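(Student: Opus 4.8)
The plan is to prove all four equalities by a single device, treating each as a pair of inclusions between selection properties. In every case the inclusion $\supseteq$ is immediate: by Definition~\ref{DN1} a modified large cover is in particular an open cover, so $\Lambda^*\subseteq\mathcal{O}$, and therefore any sequence of selections witnessing the version with target $\Lambda^*$ automatically witnesses the version with target $\mathcal{O}$. Thus for each of (1)--(4) only the forward implication (target $\mathcal{O}\Rightarrow$ target $\Lambda^*$) requires work, and I would carry it out in full for the Menger case \ref{LN701}, the remaining three being verbatim with the obvious substitutions.

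For \ref{LN701} I would fix a sequence $(\mathcal{U}_n)$ of open covers and choose a partition $\mathbb{N}=\bigcup_{k\in\mathbb{N}}N_k$ into pairwise disjoint infinite sets subject to the extra growth condition $\min N_k\to\infty$; concretely one may take the dyadic blocks $N_k=\{\,2^{k-1}(2j-1):j\in\mathbb{N}\,\}$, for which $\min N_k=2^{k-1}\ge k$. For each fixed $k$ I would apply $S_{fin}(\mathcal{O},\mathcal{O})$ to the subsequence $(\mathcal{U}_n)_{n\in N_k}$ to obtain finite sets $\mathcal{V}_n\subseteq\mathcal{U}_n$ ($n\in N_k$) with $\bigcup_{n\in N_k}\mathcal{V}_n$ an open cover of $X$. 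Performing this for every $k$ defines $\mathcal{V}_n$ for all $n\in\mathbb{N}$.

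It then remains to verify that $\bigcup_{n\in\mathbb{N}}\mathcal{V}_n$, listed in the natural order (the members of $\mathcal{V}_1$, then those of $\mathcal{V}_2$, and so on), is a modified large cover; equivalently, that for every $m_0\in\mathbb{N}$ the tail union $\bigcup_{n\ge m_0}\mathcal{V}_n$ still covers $X$. This is exactly where the growth condition enters: since $\min N_k\to\infty$, there is a $k$ with $N_k\subseteq[m_0,\infty)$, whence $\bigcup_{n\in N_k}\mathcal{V}_n\subseteq\bigcup_{n\ge m_0}\mathcal{V}_n$, and as the left-hand family already covers $X$ so does the tail.

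The only genuine obstacle is this covering-of-tails step, and it is worth stressing why it is delicate. The familiar partition argument used to pass from target $\mathcal{O}$ to target $\Lambda$ guarantees only that each point is covered once inside every block, and with an arbitrary interleaved partition the sets covering a given point may all carry small indices, so a sufficiently late tail could fail to cover. Imposing $\min N_k\to\infty$ repairs this, because then every tail of the natural enumeration swallows an entire late block and inherits its covering property. For \ref{LN1302}, \ref{LN1301} and \ref{LN702} I would run the identical scheme, replacing the selected finite subfamilies and their union by, respectively, the finite sets $F_n\subseteq X$ together with the family $\{St(F_n,\mathcal{U}_n):n\in\mathbb{N}\}$, the compact sets $K_n$ together with $\{St(K_n,\mathcal{U}_n):n\in\mathbb{N}\}$, and the finite subfamilies $\mathcal{V}_n$ together with $\bigcup_{n\in\mathbb{N}}\{St(V,\mathcal{U}_n):V\in\mathcal{V}_n\}$; in each case applying the corresponding starred principle blockwise and invoking the same tail argument produces a modified large cover, which completes the proof.
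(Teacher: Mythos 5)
Your argument is correct; note that the paper states Lemma~\ref{LN7} without proof, so there is nothing to compare against, but what you give is the standard blockwise partition argument and it establishes all four equalities. Both halves are sound: $\Lambda^*\subseteq\mathcal{O}$ gives the trivial inclusion, and applying the relevant principle separately to each subsequence $(\mathcal{U}_n)_{n\in N_k}$ and checking that every tail of the natural enumeration absorbs a whole late block gives the other. One small remark: the extra growth condition $\min N_k\to\infty$ that you impose is automatic and need not be arranged by hand --- for any partition of $\mathbb{N}$ into infinitely many pairwise disjoint sets, a finite initial segment $[0,m_0]$ can meet at most $m_0+1$ of the blocks, so some (indeed all but finitely many) $N_k$ already lie in $[m_0,\infty)$; your dyadic choice is therefore harmless but unnecessary. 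The only genuinely delicate point, which you correctly identify and handle, is that the conclusion ``is a modified large cover'' is a property of an enumeration, so one must fix the natural order by $n$ and verify the tail condition against it rather than against the blocks.
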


\begin{Lemma}
\label{LN6}
For a space $X$ the following assertions hold.
\begin{enumerate}[wide=0pt,label={\upshape(\arabic*)},leftmargin=*]
  \item\label{LN601} $S_1(\mathcal{O},\mathcal{O})=S_1(\mathcal{O},\Lambda^*)$.
  \item\label{LN602} $SS_1^*(\mathcal{O},\mathcal{O})=SS_1^*(\mathcal{O},\Lambda^*)$.
  \item\label{LN602} $S_1^*(\mathcal{O},\mathcal{O})=S_1^*(\mathcal{O},\Lambda^*)$.
\end{enumerate}
\end{Lemma}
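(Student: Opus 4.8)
The plan is to prove each of the three equalities by splitting it into two inclusions, one of which is trivial. Since every modified large cover is in particular an open cover we have $\Lambda^*\subseteq\mathcal{O}$, so any selection that witnesses the target $\Lambda^*$ automatically witnesses the target $\mathcal{O}$; this gives the implications $S_1(\mathcal{O},\Lambda^*)\Rightarrow S_1(\mathcal{O},\mathcal{O})$, $SS_1^*(\mathcal{O},\Lambda^*)\Rightarrow SS_1^*(\mathcal{O},\mathcal{O})$ and $S_1^*(\mathcal{O},\Lambda^*)\Rightarrow S_1^*(\mathcal{O},\mathcal{O})$ for free. Thus all the content sits in the reverse implications, which I would handle by a single diagonalization-through-a-partition argument applied three times.

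For the reverse implication in $(1)$, I would fix a sequence $(\mathcal{U}_n)$ of open covers and partition $\mathbb{N}$ into infinitely many pairwise disjoint infinite subsets $\{I_k : k\in\mathbb{N}\}$. For each $k$ the subfamily $(\mathcal{U}_n : n\in I_k)$ is itself a sequence of open covers, so applying $S_1(\mathcal{O},\mathcal{O})$ inside the block $I_k$ yields a choice $V_n\in\mathcal{U}_n$ ($n\in I_k$) with $\{V_n : n\in I_k\}$ a cover of $X$. Carrying this out for every $k$ defines $V_n$ for all $n\in\mathbb{N}$, and I claim that $\{V_n : n\in\mathbb{N}\}\in\Lambda^*$. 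Indeed, given $m\in\mathbb{N}$ and $x\in X$, for each $k$ there is some $n_k\in I_k$ with $x\in V_{n_k}$; as the $I_k$ are pairwise disjoint the indices $n_k$ are distinct, so infinitely many of them exceed $m$, whence $x\in V_n$ for some $n\geq m$. Therefore every tail $\{V_n : n\geq m\}$ covers $X$, which is precisely the condition for $\{V_n : n\in\mathbb{N}\}$ to be a modified large cover.

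The proofs of $(2)$ and $(3)$ run identically, the only change being the objects chosen and the family whose tails must cover. For $(3)$ I would again select single sets $V_n\in\mathcal{U}_n$ but work with the stars $St(V_n,\mathcal{U}_n)$, applying $S_1^*(\mathcal{O},\mathcal{O})$ blockwise so that $\{St(V_n,\mathcal{U}_n) : n\in I_k\}$ covers $X$; for $(2)$ I would instead select points $x_n\in X$ and use $St(x_n,\mathcal{U}_n)$, applying $SS_1^*(\mathcal{O},\mathcal{O})$ on each block. In both cases the star is always formed with respect to the full cover $\mathcal{U}_n$ attached to the index $n$, so passing to a block leaves the individual stars unchanged, and the same counting of distinct indices shows that each tail of the resulting family of stars covers $X$.

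The only delicate point—the \emph{hard part}, such as it is—is the verification that the partition produces a modified large cover and not merely a large cover: one must observe that the distinctness of the indices $n_k$ across the infinitely many blocks forces infinitely many of them past any prescribed $m$, which is exactly what upgrades ``$x$ is covered within every block'' to ``$x$ is covered with arbitrarily large index.'' This is the same mechanism that drives Lemma~\ref{LN7}, so the present statement is the $S_1$-type counterpart of those $S_{fin}$-, $SS_{fin}$- and $SS_{comp}$-type equalities, and its proof is entirely parallel.
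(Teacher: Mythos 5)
Your proposal is correct: the inclusion $\Lambda^*\subseteq\mathcal{O}$ gives the easy direction, and the blockwise application of the selection principle over a partition of $\mathbb{N}$ into infinitely many infinite sets, together with the observation that the distinct witnesses $n_k\in I_k$ must eventually exceed any prescribed $m$, does yield that every tail of the selected family covers $X$, which is exactly the definition of a modified large cover; the point that the stars $St(V_n,\mathcal{U}_n)$ and $St(x_n,\mathcal{U}_n)$ are unaffected by passing to a block is the right thing to check for parts (2) and (3). The paper states Lemma~\ref{LN6} (and likewise Lemma~\ref{LN7}) without proof, so there is nothing to compare against, but your argument is the standard one the authors evidently have in mind.
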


The results obtained so far can be summarized into the following implication diagrams Figures~\ref{dig2}, \ref{dig3} and \ref{dig4}.
\begin{figure}[h!]
\begin{adjustbox}{max width=\textwidth,max height=\textheight,keepaspectratio,center}
\begin{tikzcd}[column sep=0.5ex,row sep=6ex,arrows={crossing over}]
%level 8
U_{fin}^*(\Omega,\Gamma)&&
S_{fin}^*(\Omega,\Gamma)\arrow[ll]\arrow[rr]&&
S_{fin}^*(\Omega,\Omega)\arrow[rr]&&
S_{fin}^*(\Omega,\Lambda)\arrow[rr]&&
S_{fin}^*(\Omega,\Lambda^*)\arrow[rr,equal]&&
S_{fin}^*(\Omega,\mathcal{O})&
\\
%level 7
&S_1^*(\Omega,\Gamma)\arrow[rr]\arrow[ru,equal]&&
S_1^*(\Omega,\Omega)\arrow[rr]\arrow[ru]&&
S_1^*(\Omega,\Lambda)\arrow[rr]\arrow[ru]&&
S_1^*(\Omega,\Lambda^*)\arrow[rr,equal]\arrow[ru]&&
S_1^*(\Omega,\mathcal{O})\arrow[ru]&&
\\
%level 6
U_{fin}^*(\Lambda,\Gamma)\arrow[uu]&&
S_{fin}^*(\Lambda,\Gamma)\arrow[ll]\arrow[uu]\arrow[rr]&&
S_{fin}^*(\Lambda,\Omega)\arrow[uu]\arrow[rr]&&
S_{fin}^*(\Lambda,\Lambda)\arrow[uu]\arrow[rr]&&
S_{fin}^*(\Lambda,\Lambda^*)\arrow[uu]\arrow[rr,equal]&&
S_{fin}^*(\Lambda,\mathcal{O})\arrow[uu]&
\\
%level 5
&S_1^*(\Lambda,\Gamma)\arrow[rr]\arrow[uu]\arrow[ru,equal]&&
S_1^*(\Lambda,\Omega)\arrow[rr]\arrow[uu]\arrow[ru]&&
S_1^*(\Lambda,\Lambda)\arrow[rr]\arrow[uu]\arrow[ru]&&
S_1^*(\Lambda,\Lambda^*)\arrow[rr,equal]\arrow[uu]\arrow[ru]&&
S_1^*(\Lambda,\mathcal{O})\arrow[uu]\arrow[ru]&&
\\
%level 4
U_{fin}^*(\Lambda^*,\Gamma)\arrow[uu]&&
S_{fin}^*(\Lambda^*,\Gamma)\arrow[ll]\arrow[uu]\arrow[rr]&&
S_{fin}^*(\Lambda^*,\Omega)\arrow[uu]\arrow[rr]&&
S_{fin}^*(\Lambda^*,\Lambda)\arrow[uu]\arrow[rr]&&
S_{fin}^*(\Lambda^*,\Lambda^*)\arrow[uu]\arrow[rr,equal]&&
S_{fin}^*(\Lambda^*,\mathcal{O})\arrow[uu]&
\\
%level 3
&S_1^*(\Lambda^*,\Gamma)\arrow[rr]\arrow[uu]\arrow[ru,equal]&&
S_1^*(\Lambda^*,\Omega)\arrow[rr]\arrow[uu]\arrow[ru]&&
S_1^*(\Lambda^*,\Lambda)\arrow[rr]\arrow[uu]\arrow[ru]&&
S_1^*(\Lambda^*,\Lambda^*)\arrow[rr,equal]\arrow[uu]\arrow[ru]&&
S_1^*(\Lambda^*,\mathcal{O})\arrow[uu]\arrow[ru]&&
\\
%level 2
U_{fin}^*(\mathcal{O},\Gamma)\arrow[uu]&&
S_{fin}^*(\mathcal{O},\Gamma)\arrow[ll]\arrow[uu]\arrow[rr]&&
S_{fin}^*(\mathcal{O},\Omega)\arrow[uu]\arrow[rr]&&
S_{fin}^*(\mathcal{O},\Lambda)\arrow[uu]\arrow[rr]&&
S_{fin}^*(\mathcal{O},\Lambda^*)\arrow[uu]\arrow[rr,equal]&&
S_{fin}^*(\mathcal{O},\mathcal{O})\arrow[uu]&
\\
%level 1
&S_1^*(\mathcal{O},\Gamma)\arrow[rr]\arrow[uu]\arrow[ru,equal]&&
S_1^*(\mathcal{O},\Omega)\arrow[rr]\arrow[uu]\arrow[ru]&&
S_1^*(\mathcal{O},\Lambda)\arrow[rr]\arrow[uu]\arrow[ru]&&
S_1^*(\mathcal{O},\Lambda^*)\arrow[rr,equal]\arrow[uu]\arrow[ru]&&
S_1^*(\mathcal{O},\mathcal{O})\arrow[uu]\arrow[ru]&&
\end{tikzcd}
\end{adjustbox}
\caption{Diagram for star-selection principles}
\label{dig2}
\end{figure}

\begin{figure}[h!]
\begin{adjustbox}{max width=\textwidth,max height=\textheight,keepaspectratio,center}
\begin{tikzcd}[column sep=0.5ex,row sep=6ex,arrows={crossing over}]
%level 8
&SS_{fin}^*(\Omega,\Gamma)\arrow[rr]&&
SS_{fin}^*(\Omega,\Omega)\arrow[rr]&&
SS_{fin}^*(\Omega,\Lambda)\arrow[rr]&&
SS_{fin}^*(\Omega,\Lambda^*)\arrow[rr,equal]&&
SS_{fin}^*(\Omega,\mathcal{O})&
\\
%level 7
SS_1^*(\Omega,\Gamma)\arrow[rr]\arrow[ru,equal]&&
SS_1^*(\Omega,\Omega)\arrow[rr]\arrow[ru]&&
SS_1^*(\Omega,\Lambda)\arrow[rr]\arrow[ru]&&
SS_1^*(\Omega,\Lambda^*)\arrow[rr,equal]\arrow[ru]&&
SS_1^*(\Omega,\mathcal{O})\arrow[ru]&&
\\
%level 6
&SS_{fin}^*(\Lambda,\Gamma)\arrow[uu]\arrow[rr]&&
SS_{fin}^*(\Lambda,\Omega)\arrow[uu]\arrow[rr]&&
SS_{fin}^*(\Lambda,\Lambda)\arrow[uu]\arrow[rr]&&
SS_{fin}^*(\Lambda,\Lambda^*)\arrow[uu]\arrow[rr,equal]&&
SS_{fin}^*(\Lambda,\mathcal{O})\arrow[uu]&
\\
%level 5
SS_1^*(\Lambda,\Gamma)\arrow[rr]\arrow[uu]\arrow[ru,equal]&&
SS_1^*(\Lambda,\Omega)\arrow[rr]\arrow[uu]\arrow[ru]&&
SS_1^*(\Lambda,\Lambda)\arrow[rr]\arrow[uu]\arrow[ru]&&
SS_1^*(\Lambda,\Lambda^*)\arrow[rr,equal]\arrow[uu]\arrow[ru]&&
SS_1^*(\Lambda,\mathcal{O})\arrow[uu]\arrow[ru]&&
\\
%level 4
&SS_{fin}^*(\Lambda^*,\Gamma)\arrow[uu]\arrow[rr]&&
SS_{fin}^*(\Lambda^*,\Omega)\arrow[uu]\arrow[rr]&&
SS_{fin}^*(\Lambda^*,\Lambda)\arrow[uu]\arrow[rr]&&
SS_{fin}^*(\Lambda^*,\Lambda^*)\arrow[uu]\arrow[rr,equal]&&
SS_{fin}^*(\Lambda^*,\mathcal{O})\arrow[uu]&
\\
%level 3
SS_1^*(\Lambda^*,\Gamma)\arrow[rr]\arrow[uu]\arrow[ru,equal]&&
SS_1^*(\Lambda^*,\Omega)\arrow[rr]\arrow[uu]\arrow[ru]&&
SS_1^*(\Lambda^*,\Lambda)\arrow[rr]\arrow[uu]\arrow[ru]&&
SS_1^*(\Lambda^*,\Lambda^*)\arrow[rr,equal]\arrow[uu]\arrow[ru]&&
SS_1^*(\Lambda^*,\mathcal{O})\arrow[uu]\arrow[ru]&&
\\
%level 2
&SS_{fin}^*(\mathcal{O},\Gamma)\arrow[uu]\arrow[rr]&&
SS_{fin}^*(\mathcal{O},\Omega)\arrow[uu]\arrow[rr]&&
SS_{fin}^*(\mathcal{O},\Lambda)\arrow[uu]\arrow[rr]&&
SS_{fin}^*(\mathcal{O},\Lambda^*)\arrow[uu]\arrow[rr,equal]&&
SS_{fin}^*(\mathcal{O},\mathcal{O})\arrow[uu]&
\\
%level 1
SS_1^*(\mathcal{O},\Gamma)\arrow[rr]\arrow[uu]\arrow[ru,equal]&&
SS_1^*(\mathcal{O},\Omega)\arrow[rr]\arrow[uu]\arrow[ru]&&
SS_1^*(\mathcal{O},\Lambda)\arrow[rr]\arrow[uu]\arrow[ru]&&
SS_1^*(\mathcal{O},\Lambda^*)\arrow[rr,equal]\arrow[uu]\arrow[ru]&&
SS_1^*(\mathcal{O},\mathcal{O})\arrow[uu]\arrow[ru]&&
\end{tikzcd}
\end{adjustbox}
\caption{Diagram for strongly star-selection principles}
\label{dig3}
\end{figure}

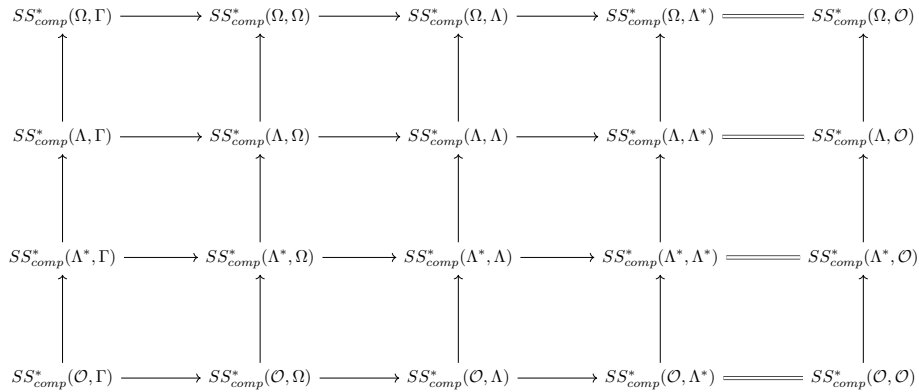
\begin{figure}[h!]
\begin{adjustbox}{max width=\textwidth,max height=\textheight,keepaspectratio,center}
\begin{tikzcd}[column sep=5ex,row sep=6ex,arrows={crossing over}]
%level 8
SS_{comp}^*(\Omega,\Gamma)\arrow[rr]&&
SS_{comp}^*(\Omega,\Omega)\arrow[rr]&&
SS_{comp}^*(\Omega,\Lambda)\arrow[rr]&&
SS_{comp}^*(\Omega,\Lambda^*)\arrow[rr,equal]&&
SS_{comp}^*(\Omega,\mathcal{O})
\\
%level 7
&&&&&&&&&
\\
%level 6
SS_{comp}^*(\Lambda,\Gamma)\arrow[uu]\arrow[rr]&&
SS_{comp}^*(\Lambda,\Omega)\arrow[uu]\arrow[rr]&&
SS_{comp}^*(\Lambda,\Lambda)\arrow[uu]\arrow[rr]&&
SS_{comp}^*(\Lambda,\Lambda^*)\arrow[uu]\arrow[rr,equal]&&
SS_{comp}^*(\Lambda,\mathcal{O})\arrow[uu]
\\
%level 5
&&&&&&&&
\\
%level 4
SS_{comp}^*(\Lambda^*,\Gamma)\arrow[uu]\arrow[rr]&&
SS_{comp}^*(\Lambda^*,\Omega)\arrow[uu]\arrow[rr]&&
SS_{comp}^*(\Lambda^*,\Lambda)\arrow[uu]\arrow[rr]&&
SS_{comp}^*(\Lambda^*,\Lambda^*)\arrow[uu]\arrow[rr,equal]&&
SS_{comp}^*(\Lambda^*,\mathcal{O})\arrow[uu]
\\
%level 3
&&&&&&&&
\\
%level 2
SS_{comp}^*(\mathcal{O},\Gamma)\arrow[uu]\arrow[rr]&&
SS_{comp}^*(\mathcal{O},\Omega)\arrow[uu]\arrow[rr]&&
SS_{comp}^*(\mathcal{O},\Lambda)\arrow[uu]\arrow[rr]&&
SS_{comp}^*(\mathcal{O},\Lambda^*)\arrow[uu]\arrow[rr,equal]&&
SS_{comp}^*(\mathcal{O},\mathcal{O})\arrow[uu]
\end{tikzcd}
\end{adjustbox}
\caption{Diagram for star-K-selection principles}
\label{dig4}
\end{figure}

\vskip 4cm
\noindent{\bf Acknowledgement:} The authors are thankful to the referee(s) for numerous useful comments and suggestions that considerably improved the presentation of the paper.

\end{document}